\setlist[itemize]{leftmargin=1.5em}
\newenvironment{proplist}{\begin{enumerate}[leftmargin=2.5em,labelwidth=0em,label=(\roman{enumi}),topsep=.1em,partopsep=0em,itemsep=-.3em]}{\end{enumerate}}
\tikzstyle{mybraces}=[mirrorbrace/.style={
\newtheorem{assumption}{Assumption}
\title[A High-dimensional Convergence Theorem for U-statistics]{A High-dimensional Convergence Theorem for U-statistics \\ with Applications to Kernel-based Testing}
\newcommand{\argdot}{{\,\vcenter{\hbox{\tiny$\bullet$}}\,}}
\newcommand{\tagaligneq}{\refstepcounter{equation}\tag{\theequation}}
\newcommand{\msup}{\sup\nolimits}
\newcommand{\minf}{\inf\nolimits}
\newcommand{\mmax}{\max\nolimits}
\newcommand{\Tr}{\text{\rm Tr}}
\def\diag{\text{\rm diag}}
\newcommand{\ind}{\mathbb{I}}
\def\bI{\mathbf{I}}
\def\bJ{\mathbf{J}}
\def\bU{\mathbf{U}}
\def\bV{\mathbf{V}}
\def\bW{\mathbf{W}}
\def\bX{\mathbf{X}}
\def\bY{\mathbf{Y}}
\def\bZ{\mathbf{Z}}
\def\ba{\mathbf{a}}
\def\bm{\mathbf{m}}
\def\bv{\mathbf{v}}
\def\bw{\mathbf{w}}
\def\bx{\mathbf{x}}
\def\by{\mathbf{y}}
\def\bz{\mathbf{z}}
\def\bmu{\boldsymbol{\mu}}
\def\bbeta{\boldsymbol{\beta}}
\def\bbeta{\mathbf{\beta}}
\def\bmu{\mathbf{\mu}}
\def\bzero{\mathbf{0}}
\def\E{\mathbb{E}}
\def\N{\mathbb{N}}
\def\P{\mathbb{P}}
\def\R{\mathbb{R}}
\def\Z{\mathbb{Z}}
\def\cF{\mathcal{F}}
\def\cH{\mathcal{H}}
\def\cI{\mathcal{I}}
\def\cN{\mathcal{N}}
\def\cP{\mathcal{P}}
\def\cV{\mathcal{V}}
\newcommand{\mean}{\mathbb{E}}
\newcommand{\Var}{\text{\rm Var}}
\newcommand{\Cov}{\text{\rm Cov}}
\DeclareMathOperator{\msum}{\medmath\sum}
\DeclareMathOperator{\mprod}{\medmath\prod}
\DeclareMathOperator{\mint}{\scaleobj{.8}{\int}}
\def\Mfullnu{M_{\rm full; \nu}}
\def\Mcondnu{M_{\rm cond; \nu}}
\def\Mmaxnu{M_{\rm max; \nu}}
\def\Mfullthree{M_{\rm full; 3}}
\def\Mfullfour{M_{\rm full; 4}}
\def\Mcondthree{M_{\rm cond; 3}}
\def\Mmaxthree{M_{\rm max; 3}}
\def\Mfulltwo{M_{\rm full; 2}}
\def\Mcondtwo{M_{\rm cond; 2}}
\def\sigfull{\sigma_{\rm full}}
\def\sigcond{\sigma_{\rm cond}}
\def\sigmax{\sigma_{\rm max}}
\newcommand{\ksd}{D^{\rm KSD}}
\newcommand{\uPksd}{u_P^{\rm KSD}}
\newcommand{\mmd}{D^{\rm MMD}}
\newcommand{\ummd}{u^{\rm MMD}}
\newcommand{\gksd}{g^{\rm KSD}}
\newcommand{\gmmd}{g^{\rm mmd}}
\renewenvironment{proof}[1][Proof]%
{%
\par\noindent{\bfseries\upshape #1\ }%
}%
{\jmlrQED}
\begin{document}

\maketitle

\begin{abstract}%
    We prove a convergence theorem for U-statistics of degree two, where the data dimension $d$ is allowed to scale with sample size $n$. We find that the limiting distribution of a U-statistic undergoes a phase transition from the non-degenerate Gaussian limit to the degenerate limit, regardless of its degeneracy and depending only on a moment ratio. A surprising consequence is that a non-degenerate U-statistic in high dimensions can have a non-Gaussian limit with a larger variance and asymmetric distribution. Our bounds are valid for any finite $n$ and $d$, independent of individual eigenvalues of the underlying function, and dimension-independent under a mild assumption. As an application, we apply our theory to two popular kernel-based distribution tests, MMD and KSD, whose high-dimensional performance has been challenging to study. In a simple empirical setting, our results correctly predict how the test power at a fixed threshold scales with $d$ and the bandwidth.
\end{abstract}

\begin{keywords}%
  High-dimensional statistics, U-statistics, distribution testing, kernel method
\end{keywords}

\section{Introduction} 

We consider a one-dimensional U-statistic of degree two built on $n$ i.i.d.~data points in $\R^d$. Numerous estimators can be formulated as a U-statistic: Modern applications include high-dimensional change-point detection \citep{wang2022inference}, sensitivity analysis of algorithms \citep{gamboa2022global} and convergence guarantees for random forests \citep{peng2022rates}. 

\vspace{.2em}

The asymptotic theory of U-statistics is well-established in the classical setting, where $d$ is fixed and small relative to $n$ (e.g. Chapter 5 of \cite{serfling1980approximation}). Classical theory shows that the large-sample asymptotic of a U-statistic depends on its martingale structure and moments: For U-statistics of degree two, this reduces to the notion of \emph{degeneracy}, i.e.~whether the variance of a certain conditional mean is zero. Non-degenerate U-statistics are shown to have a Gaussian limit, whereas degenerate ones converge to an infinite sum of weighted chi-squares. 

\vspace{.2em}

However, these results fail to apply to the modern context of high-dimensional data, where $d$ is of a comparable size to $n$. The key issue is that the moment terms, which determine degeneracy, may scale with $d$. Existing efforts on high-dimensional results either focus on U-statistics of a growing degree \citep{song2019approximating,chen2019randomized} and of growing output dimension \citep{chen2018gaussian} or rely on very specific data structures \citep{chen2010two, yan2021kernel}. In particular, these articles focus on a comparison to some Gaussian limit in high dimensions, and the effect of moments on a departure from Gaussianity has largely been ignored.

\vspace{.2em}

The practical motivation for our work stems from distribution tests, which typically employ U-statistics as a test statistic. In the machine learning community, it has been empirically observed that the power of kernel-based distribution tests can deteriorate in high dimensions, depending on hyperparameter choices and the class of alternatives \citep{reddi2015high, ramdas2015decreasing}. A theoretical analysis in the most general case has not been possible, due to the lack of a general convergence result for high-dimensional U-statistics. In the statistics community, there are similar interests in analysing U-statistics used in mean testing of high-dimensional data (e.g. \cite{chen2010two, wang2015high}). All existing results, to our knowledge, are limited by very specific data assumptions and a focus on obtaining Gaussian limits. 

\vspace{.2em}

\begin{figure}
    \centering \vspace{-.2em}
    \includegraphics[width=0.85\textwidth]{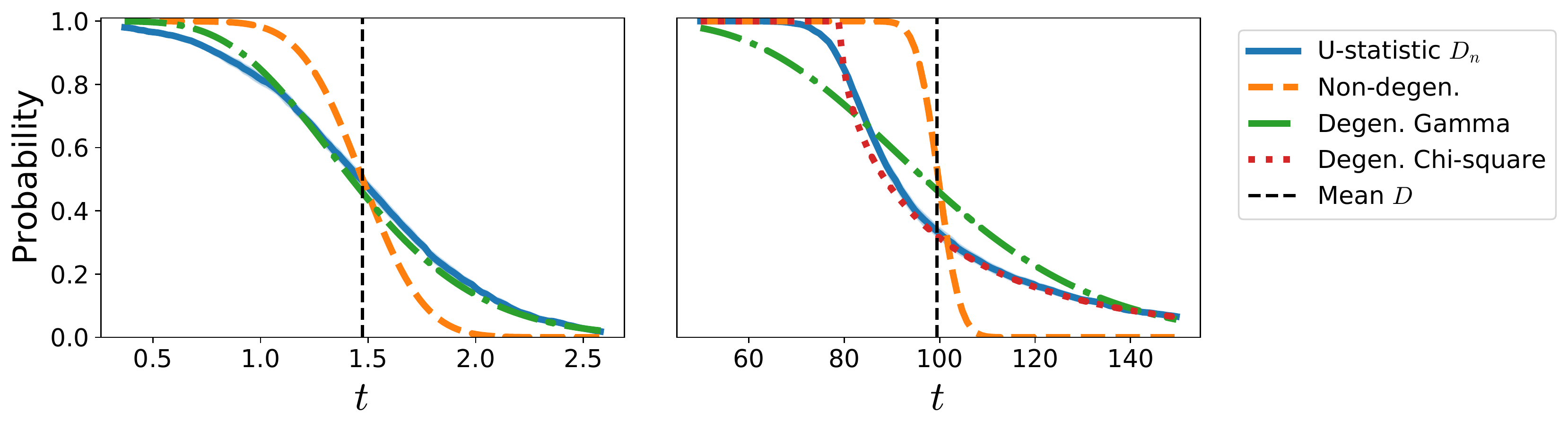} \vspace{-.2em}
    \caption{Behaviour of $\P(X > t)$ for $X=D_n$, a non-degenerate U-statistic, versus $X$ being different theoretical limits. \emph{Left.} KSD with RBF kernel, $n=50$ and $d=2000$. \emph{Right.} MMD with linear kernel, $n=50$ and $d=1000$. The left plot shows that $\P(D_n > t)$ \emph{disagrees} with the non-degenerate limit from known classical results but aligns with the degenerate limit from ours (moment-matched by a Gamma variable -- discussed in \cref{sect:chi:sq}). The right plot is when the limit predicted by our result can be computed exactly as a shifted-and-rescaled chi-square and shows asymmetry, which confirms a departure from Gaussianity. See the last paragraph of \cref{sect:examples} and \cref{appendix:gaussian:mean:shift} for simulation details.
    }
    \label{fig:intro} \vspace{-1em}
\end{figure}

In this paper, we prove a general convergence theorem for U-statistics of degree two, which holds in the high-dimensional setting and under very mild assumptions on the data. We observe a high-dimensional analogue of the classical behaviour: Depending on a moment ratio, the limiting distribution of U-statistics can take either the non-degenerate Gaussian limit, the degenerate limit or an intermediate distribution. Crucially, this happens \emph{regardless of} the statistic's degeneracy, as defined in the classical sense. We provide error bounds that are finite-sample valid and \emph{dimension-independent} under a mild assumption.

\vspace{.2em}

In the context of kernel-based distribution tests, we show that our results hold for \emph{Maximum Mean Discrepancy} (MMD) and for \emph{(Langevin) Kernelized Stein Discrepancy} (KSD) under some natural conditions. We investigate several examples under Gaussian mean-shift -- a setting purposely chosen to be as simple as possible to obtain good intuitions, while already capturing a rich amount of complex behaviours. 
Our theory correctly predicts the high-dimensional behaviour of the test power with a wider variance than classical results and, perhaps surprisingly, potential asymmetry (see \cref{fig:intro} for one such example). Our results enable us to characterise such behaviours based on the size of $d$ and hyperparameter choices.

\subsection{Overview of results}  \label{sec:overview:results}
Given some i.i.d.\,data $\{ \bX_i \}_{i=1}^n$ drawn from a distribution $R$ on $\R^d$ and a symmetric measurable function $u: \R^d \times \R^d \rightarrow \R$, the goal is to estimate the quantity $D \coloneqq \mean[ u(\bX_1,\bX_2)]$. The U-statistic provides an unbiased estimator, defined as \vspace{-.3em}
\begin{align*}
    D_n
    \;\coloneqq\; 
    \mfrac{1}{n(n - 1)} \msum_{1 \leq i \neq j \leq n} u(\bX_i, \bX_j) \;.
    \tagaligneq \label{eqn:general:u:defn} \vspace{-.3em}
\end{align*}

 Our main result is \cref{thm:u:gaussian:quad:general}. Loosely speaking, it says that as $n, d \rightarrow \infty$, the statistic $D_n$ converges in distribution to a quadratic form of Gaussians: \vspace{-.3em}
\begin{align*}
    D_n \;\xrightarrow{d}\;
    W + Z + D\;, \tagaligneq \label{eqn:informal:conv} 
\end{align*}
where $W$ is some infinite sum of weighted and centred chi-squares and $Z$ is some Gaussian. Define
$$
    \rho_d \coloneqq \sigfull \,/\, \sigcond
    \;,
    \text{ where }
    \sigfull 
    \;=\;
    \sqrt{\Var[  u(\bX_1,\bX_2) ]}
    \;\;\text{ and }\;\;
    \sigcond
    \;=\;
    \sqrt{\Var \mean[ u(\bX_1,\bX_2) | \bX_1 ]}
    \;,
$$
and recall that the classical notion of degeneracy is defined by $\sigcond=0$. We next observe that in \eqref{eqn:informal:conv}, $W+D$ is closely related to the classical degenerate limit, whereas $Z+D$ gives exactly the classical non-degenerate limit. It turns out that, up to a mild assumption, the type of asymptotic distribution of $D_n$ is \emph{completely determined} by the ratio $\rho_d$. This is reminiscent of the classical result, where the notion of degeneracy, i.e.~whether $\sigcond=0$, determines the limit of $D_n$. The difference in high dimensions is that $\sigfull$ and $\sigcond$ may scale \emph{differently} with $d$. Even if $\sigcond \neq 0$, $\rho_d$ can grow to infinity as $d$ grows, causing a non-degenerate $D_n$ to behave like a degenerate U-statistic. We show that, depending on $\rho_d$, \eqref{eqn:informal:conv} becomes  \vspace{-.3em}
\begin{align*} 
    D_n \;\xrightarrow{d}&\;
    W + D
    \;\;\;\text{ for } \rho_d = \omega(n^{1/2})\;
    &\text{ and }&&
    D_n \;\xrightarrow{d}&\;
    Z + D
    \;\;\;\text{ for } \rho_d = o(n^{1/2})\;. 
\end{align*}
The second result is the classical Berry-Ess\'een bound for U-statistics, while the first result is new. It recovers the classical degenerate limit as a special case but also applies to very general U-statistics in high dimensions regardless of degeneracy. 

\vspace{.2em}

The paper is organised as follows. \cref{sect:setup} provides definitions and a sketch-of-intuition on the role of moment terms in the limiting behaviour of $D_n$. \cref{sect:general:results} presents the main results along with a proof overview in \cref{sect:proof:overview}. \cref{sec:kernel:general:results} shows that these results apply to MMD and KSD under some natural conditions and \cref{sect:examples} studies the Gaussian mean-shift case in detail. \vspace{-.2em}

\subsection{Related literature} 

\emph{Convergence results for U-statistics.} Existing high-dimensional results focus either on a different setting or on showing asymptotic normality under very specific assumptions on data; some references are provided at the start of this section. The results that resemble our work more closely are finite-sample bounds for classical degenerate U-statistics. Those works focus on providing bounds under conditions on specific eigenvalues of a spectral decomposition of $D_n$, and 
we defer a list of references to Remark \ref{remark:assumption:L_nu}. Among them, \cite{yanushkevichiene2012bounds} provides a rate $O(n^{-1/12})$ under perhaps the least stringent assumption on eigenvalues, but the error is still pre-multiplied by the inverse square-root of the largest eigenvalue. These eigenvalues are intractable and yet depend on $d$ through the data distribution, which make them hard to apply to high-dimensional settings. In the \emph{classical} setting where $d$ is fixed, a recent work by \cite{bhattacharya2022asymptotic} proves a Gaussian-quadratic-form limit similar to ours for a random quadratic polynomial, which includes a simple U-statistic as a special case. However, their results are asymptotic and in particular do not identify a parameter that leads to the phase transition. Our finite-sample results require a very different proof technique and show how a moment ratio governs the transition.

\vspace{.2em}

\noindent
\emph{High-dimensional power analysis for MMD and KSD.} Some recent work has investigated the asymptotic behaviour of $D_n$ for MMD. \cite{yan2021kernel} prove a convergence result under a specific data model and kernel choice, so that $u(\bx,\by) = g(\|\bx-\by\|_2)$ for some function $g: \R \rightarrow \R$ and $\|\argdot\|_2$ being the vector norm. The dimension-independence of $g$ enables a Taylor expansion argument reminiscent of delta method and therefore gives a Gaussian limit. Such structures are not assumed in our work. A related work of \cite{gao2021two} provides a finite-sample bound under more general conditions. The results show asymptotic normality of a studentised version of $D_n$ rather than $D_n$ itself, and the error bound is only valid if a moment ratio, analogous to excess kurtosis, vanishes with $d$ (see their Theorem 13). Interestingly, this effect is obtained as a special case of our results for much more general settings: In \cref{sect:chi:sq}, we point out that the degenerate limit is Gaussian if and only if the excess kurtosis vanishes. Another recent line of work \citep{kim2020dimension,shekhar2022permutation} focuses on a studentised $D_n$ that is modified to exclude half of the terms. They show dimension-agnostic normality results at the cost of not using the full U-statistic $D_n$.

\section{Setup and motivation} \label{sect:setup}

We use the asymptotic notations $o, O, \Theta, \omega, \Omega$ defined in the usual way (see e.g.~Chapter 3 of \cite{cormen2009introduction}) for the limit $n \rightarrow \infty$, where the dimension is allowed to depend on $n$; we make the $n$-dependence explicit in the dimension $d_n$ whenever such asymptotics are considered.

\subsection{Moment terms in high dimensions} \label{sect:moment:terms}

Consider a U-statistic $D_n$ as defined in \eqref{eqn:general:u:defn} with respect to $(R,u)$ with mean $D=\mean[u(\bX_1,\bX_2)]$. For $\nu \geq 1$, denote the $L_\nu$ norms by $\| \argdot \|_{L_\nu} \coloneqq \mean[ |  \argdot |^\nu]^{1/\nu}$. The $\nu$-th central moment of $D_n$ are bounded from above and below in terms of two types of moment terms (see Lemma \ref{lem:u:moments} in the appendix):
$$
    \Mcondnu
    \coloneqq
    \big\| 
    \mean[ u(\bX_1, \bX_2) | \bX_2] - \mean[ u(\bX_1, \bX_2) ] 
    \big\|_{L_\nu},
    \Mfullnu
    \coloneqq
    \big\| 
     u(\bX_1, \bX_2) - \mean[ u(\bX_1, \bX_2)]
    \big\|_{L_\nu}
    . 
$$
In the special case $\nu=2$, the definitions from \cref{sec:overview:results} implies $\sigcond = \Mcondtwo$,  $\sigfull = \Mfulltwo$ and $\rho_d=\sigfull \,/\, \sigcond$. The fact that these moments may scale with $d$ has a significant effect on convergence results: For example, bounds of the form $\frac{\rm moment}{f(n)}$ for some increasing function $f$ of $n$ are no longer guaranteed to be small. This is yet another effect of the ``curse of dimensionality". For U-statistics, the classical Berry-Ess\'een result (see e.g. Theorem 10.3 of \cite{chen2011normal}) says that, if $\sigcond > 0$, then for a normal random variable $Z \sim \cN  ( D, 4 n^{-1} \sigcond^2 )$ and $\nu \in (2,3]$, we have
\begin{align*}
    \msup_{t \in \R} \Big| \P\Big( \mfrac{\sqrt{n}}{\sigcond} D_n < t \Big) - \P\Big( \mfrac{\sqrt{n}}{\sigcond} Z < t \Big) \Big| 
    \;\leq\; 
    \mfrac{6.1 \Mcondnu^\nu}{n^{(\nu-2)/2} \sigcond^\nu}
    +
    \mfrac{(1+\sqrt{2}) \rho_d}{ 2(n-1)^{1/2}} \;. \tagaligneq \label{eqn:berry:esseen}
\end{align*}
Indeed, the error bound in the classical Berry-Ess\'een result
is an increasing function of $n^{-1/2}\rho_d = \sigfull / (n^{1/2}\sigcond)$, which is not guaranteed to be small as $d$ grows.

\vspace{.2em}

The ratio $\Mcondnu / \sigcond$ also appears in classical error bounds. However, we do \emph{not} focus on how this ratio scales, since it appears in Berry-Ess\'een bounds even for sample averages. Error bounds in our main theorem will depend on similar ratios, and for our theorem to imply a convergence theorem, the following assumption is required: \vspace{-.3em}
\begin{assumption} \label{assumption:moment:ratio} There exists some $\nu \in (2,3]$ and some constant $C < \infty$ such that for all $n$ and $d$, we have the uniform bounds $\frac{\Mfullnu}{\sigfull} \leq C$ and $\frac{\Mcondnu}{\sigcond} \leq C$\;.
\end{assumption}

\subsection{Sketch of intuition} \label{sect:intuition}

We motivate our results by noting that the variance of $D_n$ defined in \eqref{eqn:general:u:defn} satisfies
\begin{align*}
    \Var[ D_n] \;=&\;
    O\Big( \mfrac{\mean[ (u(\bX_1,\bX_2) - D)(u(\bX_1,\bX_3) - D)]}{n} + \mfrac{\mean[ (u(\bX_1,\bX_2) - D)(u(\bX_1,\bX_2) - D)]}{n(n-1)} \Big)
    \\
    \;=&\;
    O\big( \mfrac{\sigcond^2}{n} + \mfrac{\sigfull^2}{n(n-1)} \big)\;.
\end{align*}
To study the asymptotic distribution of $D_n$, we need to understand how its asymptotic variance behaves as $n$ and $d$ grow. Suppose we are in the classical \emph{non-degenerate} setting, where $d$ is fixed and $\sigcond > 0$. The dominating term in $\Var[D_n]$ is $O(n^{-1} \sigcond^2)$. The contribution of the $\sigfull^2$ term is small, i.e.~the effect of the variance of each individual summand $u(\bX_1,\bX_2)$ is negligible. In fact, we can approximate $D_n$ by replacing each argument in the summand by an independent copy $\bX'_i$ of $\bX_i$ and applying CLT for an empirical average:
\begin{align*}
    D_n 
    \;=&\;
    D+ \mfrac{1}{n(n - 1)} \msum_{1 \leq i \neq j \leq n} 
    ( u(\bX_i, \bX_j) - D)
    \\
    \;\approx&\; 
    D+ 
    \mfrac{1}{n} \msum_{i=1}^n
    \Big( \mfrac{1}{n-1} \msum_{j \neq i} (u(\bX_i, \bX'_j)-D) \Big)
    +
    \mfrac{1}{n} \msum_{j=1}^n
    \Big( \mfrac{1}{n-1} \msum_{i \neq j} (u(\bX'_i, \bX_j)-D) \Big)
    \\
    \;=&\;
    D +
    \mfrac{2}{n} \msum_{i=1}^n \big( \mean[ u(\bX_i, \bX'_j) | \bX_i] - D \big)
    \;\approx\;
    \cN\big( D \,,\, \mfrac{4\sigcond^2}{n} \big)\;.
\end{align*}
This argument underpins results on CLT for non-degenerate U-statistics. In the classical degenerate setting, however, $d$ is still fixed but $\sigcond=0$, and the above argument fails to apply. Instead, one considers a spectral decomposition $u(\bx,\by) = \sum_{k=1}^\infty \lambda_k \phi_k(\bx) \phi_k(\by)$ for some eigenvalues $\{\lambda_k\}_{k=1}^\infty$ and eigenfunctions $\{\phi_k\}_{k=1}^\infty$, and compares the distribution of $D_n$ to a weighted sum of chi-squares:
\begin{align*}
    D_n 
    \;=&\;
    \mfrac{1}{n(n - 1)} \msum_{1 \leq i \neq j \leq n} \msum_{k=1}^\infty  \lambda_k \phi_k(\bX_i) \phi_k(\bX_j)
    \\
    \;=&\;
    \msum_{k=1}^\infty  \lambda_k \Big( 
    \Big( \mfrac{1}{n}\msum_{i=1}^n  \phi_k(\bX_i) \Big) \Big( \mfrac{1}{n}\msum_{j=1}^n  \phi_k(\bX_j) \Big)
    -
    \mfrac{1}{n^2} \msum_{i=1}^n \phi_k(\bX_i)^2
    \Big)
    \\
    \;\approx&\;
    \mfrac{1}{n}
    \msum_{k=1}^\infty  \lambda_k \, 
    \big( 
    \big(\sqrt{\Var[\phi_k(\bX_1)]} \, \xi_k + \mean[ \phi_k(\bX_1)]\big)^2
    -
    \mean[ \phi_k(\bX_1)^2]
    \big)
    \;,
\end{align*}
where $\xi_k$'s are i.i.d.\,standard normals. The limiting distributions in both settings enable one to construct consistent confidence intervals for $D_n$ and study $\P(D_n > t)$.

\vspace{.2em}

The key takeaway is that the asymptotic distribution of $D_n$ depends on the relative sizes of $\sigcond^2$ and $(n-1)^{-1} \sigfull^2$. This comparison reduces to degeneracy when $d$ is fixed, but is no longer so when $d$ grows. In the high-dimensional setting, $\sigcond$ and $\sigfull$ can scale with $d$ at \emph{different orders}, making it possible for the ratio $\rho_d $ to vary with $d$. In particular, a non-degenerate U-statistic with $\sigcond > 0$ may still satisfy $\rho_d = \omega(n^{1/2})$, i.e. $(n-1)^{-1} \sigfull^2 / \sigcond^2 \rightarrow \infty$ as $n$ and $d$ grow. In this case, the classical argument for a non-degenerate Gaussian limit would fail and a degenerate limit would dominate. This is exactly what we observe in the practical applications in \cref{sect:examples}, and motivates the need for results that explicitly addresses the high-dimensional setting.

\section{Main results} \label{sect:general:results}

The main result presented in this section is a finite-sample bound that compares $D_n$ to a quadratic form of infinitely many Gaussians. The limiting distribution is a sum of the non-degenerate limit and a variant of the degenerate limit, and subject to \cref{assumption:moment:ratio}, the error bound is \emph{independent} of $\rho_d$. In the case $\rho_d = o(n^{1/2})$, the non-degenerate limit dominates and our result agrees with the Gaussian limit given by a Berry-Ess\'een theorem for U-statistics. However when dimension is high such that $\rho_d = \omega(n^{1/2})$, the degenerate limit dominates and implies a \emph{larger asymptotic variance}. We also discuss how to obtain consistent distribution bounds that reflect the effect of a large dimension $d$ on the original statistic $D_n$.

\vspace{.2em}

Our results rest on a functional decomposition assumption. For a sequence of $\R^d \rightarrow \R$ functions $\{\phi_k\}_{k=1}^\infty$ and a sequence of real values $\{\lambda_k\}_{k=1}^\infty$, we define the $L_\nu$ approximation error for $\nu \geq 1$ and a given $K \in \N$ as
\begin{align*}
    \varepsilon_{K;\nu} \;\coloneqq\; \big\| \msum_{k=1}^K \lambda_k \phi_k(\bX_1) \phi_k(\bX_2) 
    -
    u(\bX_1,\bX_2) \big\|_{L_\nu}
    \;.
\end{align*}

\begin{assumption} \label{assumption:L_nu} There exists some $\nu \in (2,3]$ such that, for any fixed $n$ and $d$, as $K \rightarrow \infty$, the $L_\nu$ approximation error $\varepsilon_{K;\nu} \rightarrow 0$ for some choice of $\{\phi_k\}_{k=1}^\infty$ and $\{\lambda_k\}_{k=1}^\infty$.
\end{assumption}

\begin{remark} \label{remark:assumption:L_nu} (i) If \cref{assumption:L_nu} holds for some $\nu > 3$, it certainly holds for $\nu=3$. We restrict our focus to $\nu \in (2,3]$ for simplicity. (ii) \cref{assumption:L_nu} always holds for $\nu=2$ by the spectral decomposition of an operator on $L_2(\R^d, R)$. For degenerate U-statistics with $d$ fixed, the corresopnding orthonormal eigenbasis of functions and eigenvalues are used to prove asymptotic results  (see Section 5.5.2 of \cite{serfling1980approximation}) and finite-sample bounds \citep{bentkus1999optimal, gotze2005asymptotic, yanushkevichiene2012bounds}. In fact, these finite-sample bounds are dependent on the specific $\lambda_k$'s, making the results hard to apply. Instead, we forgo orthonormality at the cost of a convergence slightly stronger than $L_2$. This allows for a much more flexible choice of $\{\phi_k, \lambda_k\}_{k=1}^\infty$ and is particularly well-suited for a kernel-based setting; see Remark \ref{remark:assumption:L_nu:kernel} for a discussion.
\end{remark}

Before stating the results, we introduce some more notations. For every $K \in \N$, we define a diagonal matrix of the first $K$ ``eigenvalues" and a concatenation of the first $K$ ``eigenfunctions" by
\begin{align*}
    &
    \Lambda^K\;\coloneqq\; \diag\{ \lambda_1, \ldots, \lambda_K\}\;\in\R^{K \times K}\;,
    &&
    \phi^K(x)\;\coloneqq\; ( \phi_1(x), \ldots, \phi_K(x) )^\top\;\in\R^K\;. \tagaligneq \label{eqn:truncated:defn}
\end{align*}
We denote the mean and variance of $\phi^K(\bX_1)$ by $\mu^K \coloneqq \mean[ \phi^K(\bX_1)]$ and $\Sigma^K \coloneqq \Cov[\phi^K(\bX_1)]$.

\subsection{Result for the general case} \label{sect:general:thm}

Let $\eta^K_i$, with $i, K \in \N$, be i.i.d.~standard Gaussian vectors in $\R^K$. In the general case, the limiting distribution is given in terms of a quadratic form of Gaussians, defined by
\begin{align*}
    U^K_n
    \coloneqq
    \mfrac{1}{n(n-1)} \msum_{1 \leq i \neq j \leq n} (\eta^K_i)^\top (\Sigma^K)^{1/2} \Lambda^K (\Sigma^K)^{1/2} \eta^K_j
    +
    \mfrac{2}{n} \msum_{i=1}^n 
    (\mu^K)^\top \Lambda^K (\Sigma^K)^{1/2} \eta^K_i
    + D.
\end{align*}
We also denote the dominating moment terms by
\begin{align*}
    &
    \sigmax \;\coloneqq\; \max\{ \sigfull, (n-1)^{1/2} \sigcond\}\;,
    &&
    \Mmaxnu \;\coloneqq\; \max\{ \Mfullnu, (n-1)^{1/2} \Mcondnu\}\;.
\end{align*}
We are ready to state our main result -- a finite-sample error bound that compares $D_n$ to the limiting distribution of $U_n^K$, where the error is given in terms of $n$ and the moment terms.

\begin{theorem} \label{thm:u:gaussian:quad:general} There exists a constant $C>0$ such that, for all $u$, $R$, $d$ and $n$, if $\nu \in (2,3]$ satisfies \cref{assumption:L_nu}, then the following holds:
\begin{align*}
    \msup_{t \in \R}
    \Big| \P\Big( &\mfrac{\sqrt{n(n-1)}}{\sigmax} D_n > t \Big)  - \lim_{K \rightarrow \infty} \P\Big( \mfrac{\sqrt{n(n-1)}}{\sigmax} U_n^K > t\Big) \Big|
    \\ 
    \;\leq&\;
    C \, n^{- \frac{\nu - 2}{4\nu+2}}  \Big(
    \mfrac{ 
    (\Mfullnu)^\nu}
    {\sigmax^{\nu} }
    +
    \mfrac{
     ( (n-1)^{1/2}\,\Mcondnu)^\nu }{\,\sigmax^{\nu}}
    \Big)^{\frac{1}{2\nu+1}}
    \;\leq\;
    2^{\frac{1}{2\nu+1}} C \, n^{- \frac{\nu - 2}{4\nu+2}}  \Big(
    \mfrac{\Mmaxnu}{\sigmax} \Big)^{\frac{\nu}{2\nu+1}}\;.
\end{align*}
\end{theorem}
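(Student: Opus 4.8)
The plan is to reduce to a finite-dimensional problem via the truncation in \cref{assumption:L_nu}, to prove a Gaussian-substitution bound for the truncated statistic by a Lindeberg swap, and then to pass to the limit $K\to\infty$. Fix $\nu\in(2,3]$ as in \cref{assumption:L_nu} and write $c_n\coloneqq\sqrt{n(n-1)}/\sigmax$. For $K\in\N$ set $u_K(\bx,\by)\coloneqq\phi^K(\bx)^\top\Lambda^K\phi^K(\by)$, let $D_n^K$ be the U-statistic \eqref{eqn:general:u:defn} built from $u_K$, and put $\psi_i^K\coloneqq\phi^K(\bX_i)-\mu^K$. Substituting $\phi^K(\bX_i)=\psi_i^K+\mu^K$ into $D_n^K$ gives
\[
D_n^K \;=\; \frac{1}{n(n-1)}\sum_{1\le i\neq j\le n}(\psi_i^K)^\top\Lambda^K\psi_j^K \;+\; \frac{2}{n}\sum_{i=1}^n(\mu^K)^\top\Lambda^K\psi_i^K \;+\; (\mu^K)^\top\Lambda^K\mu^K ,
\]
so that $U_n^K$ is precisely $D_n^K$ with each centred feature vector $\psi_i^K$ replaced by the Gaussian vector $(\Sigma^K)^{1/2}\eta_i^K$, which has the same mean and covariance, and with the constant $(\mu^K)^\top\Lambda^K\mu^K=\mean[u_K(\bX_1,\bX_2)]$ replaced by $D$. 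Since $|D-\mean[u_K(\bX_1,\bX_2)]|\le\|u-u_K\|_{L_1}\le\varepsilon_{K;\nu}$, and since Lemma~\ref{lem:u:moments} applied to $u-u_K$ makes $\|D_n-D_n^K\|_{L_1}$ of order $\varepsilon_{K;\nu}$ for fixed $n$ (hence vanishing as $K\to\infty$) while $\Mfullnu(u_K)\to\Mfullnu$, $\Mcondnu(u_K)\to\Mcondnu$ and the associated $\sigma$'s converge, it suffices to establish the first displayed inequality for $D_n^K$ in terms of the moments of $u_K$, up to terms vanishing as $K\to\infty$. The convergence $u_K\to u$ in $L_2$ also forces $U_n^K$ to converge in distribution, so that $\lim_{K\to\infty}\P(c_nU_n^K>t)$ exists for every $t$ (the limiting law being absolutely continuous whenever $\sigmax>0$).

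The core step is the finite-$K$ comparison of $c_nD_n^K$ and $c_nU_n^K$, which I would carry out by a smoothed Lindeberg swap. Fix $t$ and take a smooth sandwiching pair for $\ind\{\argdot>t\}$ accurate at scale $\delta$, with first derivatives of order $\delta^{-1}$, second derivatives of order $\delta^{-2}$, and a H\"older modulus of the second derivative, of exponent $\nu-2$, of order $\delta^{-\nu}$; write $h_\delta$ for such a surrogate, and replace $\psi_1^K,\dots,\psi_n^K$ by $(\Sigma^K)^{1/2}\eta_1^K,\dots,(\Sigma^K)^{1/2}\eta_n^K$ one index at a time, tracking the change in $\mean[h_\delta(c_nD_n^K)]$. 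Conditionally on all feature vectors other than the $i$-th, the normalised statistic is an \emph{affine} function of $\psi_i^K$, with increment $s_i=\langle a_i,\psi_i^K\rangle$ where $a_i$ is $\tfrac{2\sqrt{n-1}}{\sqrt{n}\,\sigmax}$ times $\Lambda^K$ applied to the average of the $n-1$ remaining feature vectors (some already replaced by Gaussians). Taylor-expanding $h_\delta$ to second order in this increment, the zeroth-, first- and second-order terms have equal conditional expectation under $\psi_i^K$ and under its Gaussian replacement, because the two share mean and covariance, leaving a remainder of order $\delta^{-\nu}\mean[|s_i|^\nu]$. A conditional Rosenthal inequality --- using that, given $\bX_i$, the relevant average over $j$ has conditional mean $\mean[u_K(\bX_1,\bX_2)\mid\bX_1]-\mean[u_K]$, whose $L_\nu$-norm is $\Mcondnu(u_K)$, and summands with $L_\nu$-norm controlled by $\Mfullnu(u_K)$ --- bounds $\mean[|s_i|^\nu]$ by a constant times $n^{-\nu/2}\sigmax^{-\nu}\bigl(((n-1)^{1/2}\Mcondnu(u_K))^\nu+\Mfullnu(u_K)^\nu\bigr)$, and the Gaussian replacement obeys the same bound since its conditional $\nu$-th moments are smaller, by Jensen and Gaussian hypercontractivity. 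Summing the $n$ swaps gives a total swap error of order $\delta^{-\nu}\,n^{-(\nu-2)/2}\,\sigmax^{-\nu}\bigl(\Mfullnu(u_K)^\nu+((n-1)^{1/2}\Mcondnu(u_K))^\nu\bigr)$.

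It remains to pay for the smoothing, that is, to bound $\bigl|\P(c_nU_n^K>t)-\mean[h_\delta(c_nU_n^K)]\bigr|$ by the L\'evy concentration function of $c_nU_n^K$ at scale $\delta$. Here $c_nU_n^K$ is a degree-two polynomial in i.i.d.~standard Gaussians whose variance $c_n^2\Var[U_n^K]$ is bounded above and below by positive constants, uniformly in $K$ and $d$; this holds precisely because $\sigmax^2$ is comparable to $(n-1)\sigcond^2+\sigfull^2$, which is exactly why normalising by $\sigmax$ is the right choice. Hence the Carbery--Wright anti-concentration inequality for quadratic polynomials in Gaussians gives a bound of order $\delta^{1/2}$, uniformly in $K$ and $d$. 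Balancing $\delta^{1/2}$ against the swap error, namely against $\delta^{-\nu}n^{-(\nu-2)/2}(\Mmaxnu(u_K)/\sigmax)^{\nu}$, and optimising over $\delta$, produces the exponents in the statement, using $4\nu+2=2(2\nu+1)$; all estimates being uniform in $t$, this bounds the Kolmogorov distance, and letting $K\to\infty$ --- absorbing the deterministic $O(\varepsilon_{K;\nu})$ shift between $U_n^K$ and the all-Gaussian truncated statistic, and using right-continuity at the at most countably many discontinuities --- gives the first inequality. The second inequality is the elementary bound $(x^\nu+y^\nu)^{1/(2\nu+1)}\le2^{1/(2\nu+1)}\max\{x,y\}^{\nu/(2\nu+1)}$ applied with $x=\Mfullnu/\sigmax$ and $y=(n-1)^{1/2}\Mcondnu/\sigmax$.

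I expect the main obstacle to be the core finite-$K$ comparison: arranging the single-index Lindeberg swap to correctly account for both the degenerate bilinear part and the non-degenerate linear part of $D_n^K$ with exactly the right powers of $n$, and --- indispensably for the dimension-independence claim --- securing an anti-concentration estimate for the Gaussian quadratic form $c_nU_n^K$ whose constant does not deteriorate with $K$ or $d$. A secondary technicality is that the fractional exponent $\nu\in(2,3]$ forces the Taylor remainder to be controlled through a H\"older-modulus bound on $h_\delta''$ together with a correspondingly careful conditional Rosenthal inequality, rather than a clean third-derivative estimate.
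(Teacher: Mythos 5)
Your proposal is correct and follows essentially the same route as the paper: truncate via \cref{assumption:L_nu}, perform a smoothed Lindeberg swap (with an $(\nu-2)$-H\"older bound on the second derivative of the surrogate) whose per-swap remainder is controlled by martingale/Rosenthal-type moment bounds yielding the $\Mfullnu$ and $(n-1)^{1/2}\Mcondnu$ terms, absorb the deterministic shift $(\mu^K)^\top\Lambda^K\mu^K\to D$, apply Carbery--Wright anti-concentration to the $\sigmax$-normalised Gaussian quadratic form, and optimise $\delta$ to get the $n^{-(\nu-2)/(4\nu+2)}$ rate. The only cosmetic difference is that you invoke Jensen/hypercontractivity to claim the Gaussian replacements have no larger conditional $\nu$-th moments (they are comparable up to constants rather than smaller), whereas the paper bounds these Gaussian moments directly; this does not change the argument.
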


\begin{remark} If $\nu=3$, the RHS is given by $2^{3/7} C n^{-\frac{1}{14}} \big(\frac{\Mmaxthree}{\sigmax}\big)^{6/7}$. If \cref{assumption:moment:ratio} holds for $\nu$, the RHS can be replaced by $C'n^{- \frac{\nu - 2}{4\nu+2}}$ for some constant $C'$ and is dimension-independent.
\end{remark}

\begin{remark} At first sight, one may be tempted to move $\lim_{K \rightarrow \infty}$ inside $\P$ such that, instead of the cumbersome expression of $W_n^K$ with finite $K$, one may deal with random quantities in a Hilbert space. The reason to stick with $W_n^K$ is that in \cref{assumption:L_nu}, convergence of the infinite sum is required only in $L_\nu$ and not almost surely. This makes verification of the assumption substantially simpler in practice: In \cref{appendix:gaussian:mean:shift}, we illustrate how this assumption holds via a simple Taylor-expansion argument coupled with suitable tail behaviour of the data to control error terms. The same argument is not applicable if we instead require an almost sure convergence.
\end{remark}

\cref{thm:u:gaussian:quad:general} immediately implies a convergence theorem:

\begin{corollary} \label{cor:u:gaussian:quad:general} Let the dimension $d_n$ depend on $n$. Suppose Assumptions \ref{assumption:moment:ratio} and  \ref{assumption:L_nu} hold for some $\nu > 2$ and the sequential distribution limit $\bar U = \lim_{n \rightarrow \infty} \lim_{K \rightarrow \infty} \frac{\sqrt{n(n-1)}}{\sigmax} (U_n^K - D)$ exists. Then
\begin{align*}
    &\mfrac{\sqrt{n(n-1)}}{\sigmax} ( D_n - D) \xrightarrow{d} \bar U
    &&
    \;\text{ as }\; n \rightarrow \infty
    \;.
\end{align*}
\end{corollary}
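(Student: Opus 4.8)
\textbf{Proof proposal for \cref{cor:u:gaussian:quad:general}.}

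The plan is to deduce the convergence in distribution directly from the finite-sample bound in \cref{thm:u:gaussian:quad:general} by showing that the bound vanishes as $n \rightarrow \infty$, and then passing from the Kolmogorov-distance statement to weak convergence. First I would recall that the quantity controlled by the theorem is $\msup_t | \P( \frac{\sqrt{n(n-1)}}{\sigmax} D_n > t) - \lim_{K\to\infty}\P(\frac{\sqrt{n(n-1)}}{\sigmax} U_n^K > t)|$, i.e. the Kolmogorov distance between the law of $S_n \coloneqq \frac{\sqrt{n(n-1)}}{\sigmax}(D_n - D)$ and the law of $\bar U_n \coloneqq \lim_{K\to\infty}\frac{\sqrt{n(n-1)}}{\sigmax}(U_n^K - D)$ (the subtraction of the deterministic $D$ does not change the distance since both sides are shifted equally). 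The second, cleaner upper bound in the theorem is $2^{1/(2\nu+1)} C\, n^{-\frac{\nu-2}{4\nu+2}} (\Mmaxnu/\sigmax)^{\nu/(2\nu+1)}$. Under \cref{assumption:moment:ratio}, we have $\Mfullnu \le C\sigfull$ and $\Mcondnu \le C\sigcond$, hence $\Mmaxnu = \max\{\Mfullnu, (n-1)^{1/2}\Mcondnu\} \le C\max\{\sigfull, (n-1)^{1/2}\sigcond\} = C\sigmax$, so $\Mmaxnu/\sigmax \le C$ is bounded uniformly in $n,d$. Therefore the whole bound is $O(n^{-\frac{\nu-2}{4\nu+2}})$, and since $\nu > 2$ the exponent $\frac{\nu-2}{4\nu+2}$ is strictly positive, so the Kolmogorov distance between the law of $S_n$ and the law of $\bar U_n$ tends to $0$.

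Next I would invoke the hypothesis that the sequential limit $\bar U = \lim_{n\to\infty}\bar U_n$ exists in distribution, i.e. $\bar U_n \xrightarrow{d} \bar U$. The remaining step is to combine $d_{\mathrm{Kol}}(\mathrm{Law}(S_n), \mathrm{Law}(\bar U_n)) \to 0$ with $\bar U_n \xrightarrow{d} \bar U$ to conclude $S_n \xrightarrow{d} \bar U$. This is a routine triangle-inequality argument, but I would be slightly careful about the topology: $d_{\mathrm{Kol}}$ does not metrize weak convergence on all of $\R$ in general, yet it does dominate the Lévy metric, and convergence in the Lévy metric is equivalent to weak convergence. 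Concretely, for any continuity point $t$ of the cdf $F_{\bar U}$ of $\bar U$, write $|\P(S_n \le t) - F_{\bar U}(t)| \le |\P(S_n \le t) - \P(\bar U_n \le t)| + |\P(\bar U_n \le t) - F_{\bar U}(t)|$; the first term is at most the Kolmogorov distance, which $\to 0$, and the second term $\to 0$ because $\bar U_n \xrightarrow{d}\bar U$ and $t$ is a continuity point of $F_{\bar U}$. Hence $\P(S_n \le t) \to F_{\bar U}(t)$ at every continuity point $t$, which is precisely $S_n \xrightarrow{d} \bar U$, i.e. $\frac{\sqrt{n(n-1)}}{\sigmax}(D_n - D) \xrightarrow{d} \bar U$.

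I do not anticipate a serious obstacle here: the corollary is essentially a packaging of \cref{thm:u:gaussian:quad:general} plus the boundedness afforded by \cref{assumption:moment:ratio}. The only point requiring a modicum of care is the interchange of limits implicit in the statement — the theorem compares $D_n$ to $\lim_{K\to\infty}$ of $U_n^K$ for each fixed $n$, and the corollary then takes $n\to\infty$ of that, which matches the assumed sequential limit $\bar U = \lim_{n}\lim_K$; so no diagonal extraction or Moore–Osgood-type argument is needed, since the order of limits in the hypothesis is exactly the order in which they are applied. One should also note \cref{assumption:L_nu} is assumed to hold for the same $\nu > 2$ as \cref{assumption:moment:ratio}, which is what licenses applying the theorem with that $\nu$; the Remark after the theorem already records that under \cref{assumption:moment:ratio} the RHS becomes $C' n^{-\frac{\nu-2}{4\nu+2}}$, so one may simply cite that.
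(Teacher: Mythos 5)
Your argument is correct and is exactly the route the paper intends: the corollary is stated as an immediate consequence of \cref{thm:u:gaussian:quad:general}, since under \cref{assumption:moment:ratio} the bound is $O(n^{-\frac{\nu-2}{4\nu+2}})\to 0$ (reducing to $\nu=3$ if $\nu>3$), and the triangle inequality at continuity points of $F_{\bar U}$, together with the observation that centring by the deterministic $D$ leaves the Kolmogorov distance unchanged, yields the weak convergence. No gap worth flagging; the only cosmetic point is that one can argue directly with the iterated limit of the survival functions at continuity points of $\bar U$, avoiding the need to name the intermediate law $\bar U_n$.
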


$U_n^K$ is a quadratic form of Gaussians, which does not admit a closed-form c.d.f.\,in general and whose limiting behaviour depends heavily on $\lambda_k$ and $\phi_k$. Nevertheless, the presence of Gaussianity still allows us to obtain crude bounds that reflect how dimension $d$ affects its distribution. By combining such bounds with \cref{thm:u:gaussian:quad:general}, we can bound the c.d.f.\,of the original U-statistic $D_n$.

\begin{proposition} \label{prop:u:gaussian:quad:distribution bounds} There exists constants $C_1, C_2, C_3 >0$ such that, for all $u$, $R$, $d$, $n$ and $K$, if $\nu \in (2,3]$ satisfies \cref{assumption:L_nu}, then for all $\epsilon > 0$,
\begin{align*}
    \P( | D_n - D| > \epsilon )
    \;\geq&\;\;
    1
    -
    C_1 \Big( \mfrac{\sqrt{n(n-1)}}{\sigmax} \Big)^{1/2} \epsilon^{1/2}
    -
    C_2 \, n^{- \frac{\nu - 2}{4\nu+2}}  \Big(
    \mfrac{\Mmaxnu}{\sigmax} \Big)^{\frac{\nu}{2\nu+1}}\;,
    \\
    \P( | D_n - D| > \epsilon )
    \;\leq&\;\;
    C_3 \epsilon^{-2}
    \Big(\mfrac{\sigmax}{\sqrt{n(n-1)}} \Big)^2\;.
\end{align*}
\end{proposition}

\begin{remark} The second bound is a concentration inequality directly available via Markov's inequality, whereas the first bound is an anti-concentration result. Anti-concentration results are generally available only for random variables from known distribution families, and we obtain such a result by comparing $D_n$ to $U^K_n$. The error bounds are free of any dependence on $K$ and specific choices of $\phi_k$ and $\lambda_k$. The trailing error term involving $\Mmaxnu / \sigmax$ is inherited from \cref{thm:u:gaussian:quad:general} and is negligible, whereas the other error term is directly related to the inverse of the Markov error term.
\end{remark}

Proposition \ref{prop:u:gaussian:quad:distribution bounds} provide two-sided bounds on how likely it is for $D_n$ to be far from $D$. The next corollary provides a more explicit statement.

\begin{corollary} \label{cor:u:gaussian:quad:distribution bounds} Let the  dimension $d_n$ depend on $n$ and fix $\epsilon > 0$. Suppose Assumptions \ref{assumption:moment:ratio} and \ref{assumption:L_nu} hold for some $\nu \in (2,3]$. As $n \rightarrow \infty$, we have that $\P( | D_n - D | > \epsilon) \rightarrow 1$ \,if\, $\sigmax = \omega(n)$ and $\P( | D_n - D | > \epsilon) \rightarrow 0$ \,if\, $\sigmax = o(n)$.
\end{corollary}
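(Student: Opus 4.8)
The plan is to read off both limits directly from the two-sided bounds of \cref{prop:u:gaussian:quad:distribution bounds}, invoking \cref{assumption:moment:ratio} only to control the trailing moment-ratio term. First I would record the elementary fact that $\sqrt{n(n-1)} = \Theta(n)$ (indeed $n/\sqrt{2} \le \sqrt{n(n-1)} \le n$ for $n \ge 2$), so that in both bounds the factor $\sqrt{n(n-1)}$ may be replaced by $n$ up to universal constants. This reduces everything to the behaviour of $n/\sigmax$ (respectively $\sigmax/n$) and the moment-ratio error term.

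Next I would dispose of the error term $n^{-\frac{\nu-2}{4\nu+2}}(\Mmaxnu/\sigmax)^{\frac{\nu}{2\nu+1}}$ appearing in the lower bound of \cref{prop:u:gaussian:quad:distribution bounds}. By \cref{assumption:moment:ratio} there is a constant $C$ with $\Mfullnu \le C\sigfull$ and $\Mcondnu \le C\sigcond$ for all $n$ and $d$; taking maxima termwise gives $\Mmaxnu = \max\{\Mfullnu, (n-1)^{1/2}\Mcondnu\} \le C\max\{\sigfull, (n-1)^{1/2}\sigcond\} = C\sigmax$, so that $\Mmaxnu/\sigmax \le C$ uniformly in $n$ and $d$. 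Since $\nu > 2$ forces $\frac{\nu-2}{4\nu+2} > 0$, this term is $O\!\left(n^{-\frac{\nu-2}{4\nu+2}}\right) = o(1)$.

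With these two preliminaries the two cases are immediate. If $\sigmax = \omega(n)$, then $\sqrt{n(n-1)}/\sigmax \le n/\sigmax \to 0$, so for the fixed $\epsilon$ the first error term $C_1(\sqrt{n(n-1)}/\sigmax)^{1/2}\epsilon^{1/2}$ also vanishes; the lower bound then yields $\P(|D_n - D| > \epsilon) \ge 1 - o(1)$, which combined with the trivial upper bound $\P(\argdot) \le 1$ gives convergence to $1$. If instead $\sigmax = o(n)$, then $\sigmax/\sqrt{n(n-1)} \le \sqrt{2}\,\sigmax/n \to 0$, so the upper bound $C_3\epsilon^{-2}(\sigmax/\sqrt{n(n-1)})^2 \to 0$, giving convergence to $0$.

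I do not anticipate a genuine obstacle; the only subtlety is that the anti-concentration error term must be shown to vanish \emph{uniformly} in $d_n$, which is precisely the role of \cref{assumption:moment:ratio} — without it, $\Mmaxnu/\sigmax$ could grow with the dimension and the lower bound of \cref{prop:u:gaussian:quad:distribution bounds} would become vacuous. Everything else is the bookkeeping of substituting $\sqrt{n(n-1)} = \Theta(n)$ into the explicit bounds.
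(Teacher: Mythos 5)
Your proposal is correct and is essentially the paper's intended derivation: the corollary is read off directly from the two-sided bounds of Proposition \ref{prop:u:gaussian:quad:distribution bounds}, with \cref{assumption:moment:ratio} giving $\Mmaxnu/\sigmax \le C$ uniformly so the trailing term is $o(1)$, and $\sqrt{n(n-1)} = \Theta(n)$ handling the rest. (As a minor remark, the $\sigmax = o(n)$ case uses only the Chebyshev upper bound and does not actually need \cref{assumption:moment:ratio}.)
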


Another way of formulating the bounds in Proposition \ref{prop:u:gaussian:quad:distribution bounds} is the following: Similar to the intuition for a Gaussian, when $n$ is large (with $d_n$ depending on $n$), the distribution of $D_n$ is not only concentrated in an interval around $D$ with width being a multiple of $\frac{\sigmax}{n}$, but also ``well spread-out" within the interval. The probability mass gets concentrated around $D$ when $\sigmax = o(n)$, but spreads out along the whole real line when $\sigmax = \omega(n)$; the latter only happens in a high dimensional regime. 

\vspace{.3em}

To have a more precise understanding of the limiting behaviour of $D_n$, we need a better knowledge of $U_n^K$. By a closer examination of $U_n^K$, we see that it is a sum of three terms: A sum of weighted chi-squares with variance of the order $n^{-1}(n-1)^{-1} \sigfull^2$, a Gaussian with variance of the order $n^{-1} \sigcond^2$, and a constant $D$. The first term closely resembles the limit for degenerate U-statistics when $d$ is fixed, while the second term corresponds exactly to the Gaussian limit for non-degenerate U-statistics. It turns out that, unless we are at the boundary case where $\rho_d = \Theta(n^{1/2})$, we can always approximate $U_n^K$ by ignoring either the first or the second term. Ignoring the first term gives exactly the Gaussian limit, where a well-established result has already been provided in \eqref{eqn:berry:esseen}. Ignoring the second term gives an infinite sum of weighted chi-squares, which is discussed next.

\subsection{The case $\rho_d = \omega ( n^{1/2} )$} \label{sect:chi:sq}
Let $\{\xi_k\}_{k=1}^\infty$ be a sequence of i.i.d.~standard Gaussians in 1d, and for $K \in \N$, let $\{\tau_{k;d}\}_{k=1}^K$ be the eigenvalues of $(\Sigma^K)^{1/2} \Lambda^K (\Sigma^K)^{1/2}$. The limiting distribution we consider is given in terms of 
\begin{align*}
    W_n^K  
    \;\coloneqq\;
    \mfrac{1}{\sqrt{n(n-1)}}
    \msum_{k=1}^K \tau_{k;d} (\xi_k^2 - 1)
    +
    D
    \;. \tagaligneq \label{eqn:defn:WnK}
\end{align*}
Note that in this case, $\sigmax = \sigfull$. The next result adapts \cref{thm:u:gaussian:quad:general} by replacing $U_n^K$ with $W_n^K$:

\begin{proposition} \label{prop:u:chi:sq:general} There exists a constant $C>0$ such that, for all $u$, $R$, $d$, $n$ and $K$, if $\nu \in (2,3]$ satisfies \cref{assumption:L_nu}, then the following holds:
\begin{align*}
    \msup_{t \in \R} &\Big| \P\Big( \mfrac{\sqrt{n(n-1)}}{\sigfull} D_n > t \Big) - \lim_{K \rightarrow \infty} \P\Big( \mfrac{\sqrt{n(n-1)}}{\sigfull} W_n^K > t\Big) \Big|
    \\
    \;&\leq\;
    C
    \Big( 
    \mfrac{1}{(n-1)^{1/5}} 
    + 
    \Big( \mfrac{\sqrt{n-1}\, \sigcond}{\sigfull} \Big)^{2/5} 
    +
    \, n^{- \frac{\nu - 2}{4\nu+2}}  \Big(
    \mfrac{ 
    (\Mfullnu)^\nu}
    {\sigfull^{\nu} }
    +
    \mfrac{
     ((n-1)^{1/2}\Mcondnu)^\nu }{\sigfull^{\nu}}
    \Big)^{\frac{1}{2\nu+1}}  \Big)\;.
\end{align*}
\end{proposition}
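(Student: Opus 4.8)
The plan is to build on \cref{thm:u:gaussian:quad:general} and reduce the claim to an elementary Gaussian-to-Gaussian comparison between $U_n^K$ and $W_n^K$. Two preliminary observations set this up. First, the Kolmogorov distance between two laws is invariant under scaling by a fixed positive constant, so the normalising factor in \cref{thm:u:gaussian:quad:general} may be taken to be $\sigfull$ rather than $\sigmax$; and since $\sigmax \geq \sigfull$, the resulting error bound is at most the last summand of the bound claimed here. Second, writing $A \coloneqq (\Sigma^K)^{1/2}\Lambda^K(\Sigma^K)^{1/2}$ and $h_K \coloneqq \phi^K(\bX_1)^\top \Lambda^K \phi^K(\bX_2)$, a total-variance computation gives $\Tr(A^2) = \Var[h_K] - 2\,\Var[ \mean[ h_K | \bX_1 ] ]$, while \cref{assumption:L_nu} (via the reverse triangle inequality and the $L_2$-contractivity of conditional expectation) forces $|\sqrt{\Var[h_K]} - \sigfull| \leq 2\varepsilon_{K;\nu}$ and $|\sqrt{\Var[ \mean[ h_K | \bX_1 ] ]} - \sigcond| \leq 2\varepsilon_{K;\nu}$; in particular $\msum_{k=1}^K \tau_{k;d}^2 = \Tr(A^2)$ stays bounded in $K$, so $W_n^K$ is an $L^2$-bounded martingale in $K$ and converges a.s., which makes $\lim_{K\to\infty}\P(\tfrac{\sqrt{n(n-1)}}{\sigfull}W_n^K > t)$ well-defined, the $U_n^K$ limit being supplied by \cref{thm:u:gaussian:quad:general}. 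By the triangle inequality it then suffices to bound, for each fixed $K$, the Kolmogorov distance between $\tfrac{\sqrt{n(n-1)}}{\sigfull}U_n^K$ and $\tfrac{\sqrt{n(n-1)}}{\sigfull}W_n^K$ by the claimed right-hand side plus a term vanishing as $K \to \infty$.

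To compare $U_n^K$ and $W_n^K$ I would couple them. Diagonalise $A = \msum_{k=1}^K \tau_{k;d}\, v_k v_k^\top$ for an orthonormal eigenbasis $\{v_k\}$ of $\R^K$, set $\zeta_{i,k} \coloneqq v_k^\top \eta^K_i$ (so the $\zeta_{i,k}$ are i.i.d.\ standard normal over $i \leq n$, $k \leq K$), and couple $W_n^K$ to $U_n^K$ by $\xi_k \coloneqq n^{-1/2}\msum_{i=1}^n \zeta_{i,k}$. Substituting the eigen-expansion into the quadratic and linear parts of $U_n^K$ and using $(\msum_i \zeta_{i,k})^2 = n\xi_k^2$ and $\msum_i \zeta_{i,k}^2 = n + \msum_i(\zeta_{i,k}^2 - 1)$, a short computation gives
\begin{align*}
    \mfrac{\sqrt{n(n-1)}}{\sigfull}\bigl(U_n^K - W_n^K\bigr)
    \;=\;
    \mfrac{r_n}{\sigfull}\msum_{k=1}^K \tau_{k;d}(\xi_k^2 - 1)
    \;-\;
    \mfrac{1}{\sigfull\sqrt{n(n-1)}}\msum_{k=1}^K\tau_{k;d}\msum_{i=1}^n(\zeta_{i,k}^2 - 1)
    \;+\;
    \mfrac{2\sqrt{n-1}}{\sigfull}\msum_{k=1}^K c_k \xi_k\;,
\end{align*}
where $r_n = \sqrt{n/(n-1)} - 1 = O(n^{-1})$ captures the prefactor mismatch and the $c_k$ are the coordinates of $(\Sigma^K)^{1/2}\Lambda^K\mu^K$ in the basis $\{v_k\}$, so $\msum_{k=1}^K c_k^2 = \Var[ \mean[ h_K | \bX_1 ] ]$. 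The three summands are mean zero, $\msum_k \tau_{k;d}^2 \leq (\sigfull + 2\varepsilon_{K;\nu})^2$ and $\msum_k c_k^2 \leq (\sigcond + 2\varepsilon_{K;\nu})^2$, so taking variances term by term (using $\Var[\xi_k^2-1]=\Var[\zeta_{i,k}^2-1]=2$) yields
\begin{align*}
    \Var\Bigl[\mfrac{\sqrt{n(n-1)}}{\sigfull}\bigl(U_n^K - W_n^K\bigr)\Bigr]
    \;\leq\;
    C\Bigl(\mfrac{1}{n} + \mfrac{(n-1)\sigcond^2}{\sigfull^2}\Bigr) + \rho_{n,K}
    \;=:\; v_{n,K}\;,
\end{align*}
where $\rho_{n,K} \to 0$ as $K \to \infty$ for each fixed $n$.

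For the Kolmogorov bound I would pair this with anti-concentration of $\tfrac{\sqrt{n(n-1)}}{\sigfull}W_n^K$. Being a constant plus a degree-two polynomial in i.i.d.\ standard Gaussians with variance $\sigma_{n,K}^2 = 2\Tr(A^2)/\sigfull^2$, it obeys $\msup_t \P\bigl(\bigl|\tfrac{\sqrt{n(n-1)}}{\sigfull}W_n^K - t\bigr| \leq \delta\bigr) \leq C'(\delta/\sigma_{n,K})^{1/2}$ for every $\delta > 0$, by a small-ball inequality for low-degree polynomials of Gaussians (Carbery--Wright). To control $\sigma_{n,K}$ from below I split into cases: if $n$ lies below an absolute threshold or $\sqrt{n-1}\,\sigcond \geq \sigfull$, then $(n-1)^{-1/5}$ or $(\sqrt{n-1}\,\sigcond/\sigfull)^{2/5}$ is bounded below by an absolute constant, so the claimed bound exceeds one for $C$ large and holds trivially; otherwise $\sigcond^2 \leq \sigfull^2/(n-1)$ with $n$ large, and then $\Tr(A^2) \geq (\sigfull - 2\varepsilon_{K;\nu})^2 - 2(\sigcond + 2\varepsilon_{K;\nu})^2 \to \sigfull^2 - 2\sigcond^2 \geq \sigfull^2(1 - 2/(n-1))$ as $K \to \infty$, so $\sigma_{n,K}^2 \geq c > 0$ for all sufficiently large $K$. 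Now the sandwich inequality $d_{\mathrm{K}}(X, Y) \leq \P(|X - Y| > \delta) + \msup_t \P(|Y - t| \leq \delta)$, with Chebyshev applied to the mean-zero difference, gives $d_{\mathrm{K}} \leq v_{n,K}\delta^{-2} + C''\delta^{1/2}$, and optimising at $\delta$ of order $v_{n,K}^{2/5}$ gives $d_{\mathrm{K}} \leq C'''v_{n,K}^{1/5}$. Letting $K \to \infty$ so $\rho_{n,K} \to 0$, then combining with the \cref{thm:u:gaussian:quad:general} error and using $n^{-1/5} \leq (n-1)^{-1/5}$ and $((n-1)\sigcond^2/\sigfull^2)^{1/5} = (\sqrt{n-1}\,\sigcond/\sigfull)^{2/5}$, yields the claimed bound.

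I expect the main difficulty to be the anti-concentration step: one needs a small-ball bound for the weighted chi-square sum that is uniform in $K$ and in the eigenvalue configuration $\{\tau_{k;d}\}$ -- an elementary one-coordinate argument degrades as $\max_k \tau_{k;d} \to 0$, which does happen as $K$ grows -- and this in turn rests on the matching lower bound $\Tr(A^2) \geq c\,\sigfull^2$, which is exactly where \cref{assumption:L_nu} and the high-dimensional regime $\rho_d = \omega(n^{1/2})$ (so that $\sigcond^2$ is negligible relative to $\sigfull^2/n$) come in. The rest is bookkeeping, provided one keeps the complementary, non-high-dimensional regime separate so that all constants stay absolute.
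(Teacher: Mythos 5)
Your overall architecture is essentially the paper's, and modulo one local error it does prove the bound: your decomposition of $\mfrac{\sqrt{n(n-1)}}{\sigfull}\bigl(U_n^K - W_n^K\bigr)$ into the prefactor-mismatch piece, the diagonal piece and the linear piece is exactly the paper's splitting of $U_n^K$ into a rescaled copy of $W_n^K-D$ plus three small zero-mean terms, your Chebyshev-plus-smoothing sandwich optimised at $\delta\asymp v_{n,K}^{2/5}$ reproduces the paper's choices $\epsilon_0=\epsilon_1=(n-1)^{-2/5}$ and $\epsilon_2=\bigl((n-1)\sigcond^2/\sigfull^2\bigr)^{2/5}$, and the third error term comes from \cref{thm:u:gaussian:quad:general} in both arguments. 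Two genuine differences are worth noting. First, you transfer the theorem by scale-invariance of the Kolmogorov distance (and $\sigmax\ge\sigfull$) rather than re-running its last step with $\sigma=\sigfull$; this is valid and slightly cleaner. Second, you apply Carbery--Wright to $W_n^K$, whereas the paper applies it to $U_n^K$ (Lemma \ref{lem:u:gaussian:quad:bound}); your route requires a lower bound on $\sum_k\tau_{k;d}^2=\Tr\bigl((\Lambda^K\Sigma^K)^2\bigr)$ and hence your case split on whether $\sqrt{n-1}\,\sigcond\ge\sigfull$, while the paper avoids the split because the variance of $U_n^K$ also carries the $(n-1)\sigcond^2$ contribution. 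Your identity $\Tr\bigl((\Lambda^K\Sigma^K)^2\bigr)=\Var[h_K]-2\,\Var\bigl[\mean[h_K\mid\bX_1]\bigr]$ is the correct one when $\mu^K\neq 0$, so the extra care (and the trivial-regime escape) is a legitimate price for anti-concentrating $W_n^K$ instead of $U_n^K$.

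The one genuine gap is the claim that $(W_n^K)_{K}$ is an $L^2$-bounded martingale in $K$ and therefore converges almost surely, which you use to make $\lim_{K\to\infty}\P\bigl(\mfrac{\sqrt{n(n-1)}}{\sigfull}W_n^K>t\bigr)$ well-defined. The weights $\{\tau_{k;d}\}_{k\le K}$ are the eigenvalues of the $K\times K$ matrix $(\Sigma^K)^{1/2}\Lambda^K(\Sigma^K)^{1/2}$ and change with $K$: passing from $K$ to $K+1$ does not append an independent summand to a fixed partial sum, the entire eigenvalue configuration moves, so there is no filtration with respect to which $W_n^K$ is a martingale and almost-sure convergence does not follow from this argument. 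The paper treats exactly this point separately: existence of the distributional limit of $W_n^K$ is established via tightness together with pointwise convergence of the characteristic functions, using control of $\sum_k\tau_{k;d}^2$ and $\sum_k|\tau_{k;d}|^3$ (\cref{prop:W_n:existence:all:moments} and Lemma \ref{lem:W_n:exist:characteristic}). Your quantitative estimate is unaffected --- you can carry out the comparison at finite $K$ and pass to a $\limsup$, or simply invoke that proposition --- but as written the justification of the limit is incorrect and needs to be replaced.
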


\begin{remark}
In the case $\nu=3$, the error term above becomes
\begin{align*}
    C
    \Big( 
    \mfrac{1}{(n-1)^{1/5}} 
    + 
    \Big( \mfrac{\sqrt{n-1}\, \sigcond}{\sigfull} \Big)^{2/5} 
    +
    \, n^{- \frac{1}{14}}  \Big(
    \mfrac{ 
    (\Mfullthree)^3}
    {\sigfull^{3} }
    +
    \mfrac{
     \big((n-1)^{1/2} \Mcondthree\big)^3 }{\sigfull^3}
    \Big)^{\frac{1}{7}}  \Big)\;.
\end{align*}
In the case when \cref{assumption:moment:ratio} holds for $\nu$, the error term is $\Theta\big( \big(\frac{n-1}{\rho_d^2}\big)^{1/5} + n^{-\frac{\nu - 2}{4\nu+2}} \big)$.
\end{remark}

\begin{remark} Proposition \ref{prop:u:chi:sq:general} agrees with the classical results for degenerate U-statistics. In those results, $\{\phi_k\}_{k=1}^\infty$ are chosen such that they are orthonormal in $L_2(\R^d,R)$ and $\mean[\phi_k(\bX_1)] = 0$. This corresponds to $\Sigma^K$ being a diagonal matrix and the expression for $\tau_{k;d}$ can be simplified. 
\end{remark}

We seek to obtain a better understanding of the limiting distribution of $D_n$ in the case $\rho_d = \omega ( n^{1/2} )$. Write $W_n \coloneqq \lim_{K \rightarrow \infty} W_n^K$ as the distributional limit of $W_n^K$ as $K \rightarrow \infty$. Provided that $W_n$ exists, Proposition \ref{prop:u:chi:sq:general} gives the convergence of $D_n$ to $W_n$ in the Kolmogorov metric. The next lemma guarantees the existence of $W_n$. 

\begin{proposition} \label{prop:W_n:existence:all:moments} Fix $n,d$. If \cref{assumption:L_nu} holds for some $\nu \geq 2$ and $|D|, \sigfull < \infty$, $W_n$~exists.
\end{proposition}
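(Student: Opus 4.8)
The plan is to show that the sequence of random variables $\{W_n^K\}_{K\in\N}$ (with $n,d$ fixed) is Cauchy in a suitable sense, so that a distributional limit $W_n$ exists. First I would observe that, since $n$ and $d$ are fixed throughout, the prefactor $\tfrac{1}{\sqrt{n(n-1)}}$ and the additive constant $D$ are harmless; it suffices to show that $S^K \coloneqq \sum_{k=1}^K \tau_{k;d}(\xi_k^2-1)$ converges in distribution (indeed in $L_2$, which is stronger) as $K\to\infty$. Recall that $\{\tau_{k;d}\}_{k=1}^K$ are the eigenvalues of $A^K \coloneqq (\Sigma^K)^{1/2}\Lambda^K(\Sigma^K)^{1/2}$, so that $\sum_{k=1}^K \tau_{k;d} = \Tr(A^K) = \Tr(\Lambda^K \Sigma^K) = \sum_{k=1}^K \lambda_k \Var[\phi_k(\bX_1)] + \text{(off-diagonal covariance terms)}$, more precisely $\sum_k \tau_{k;d} = \sum_{k,\ell} \lambda_k \Cov[\phi_k(\bX_1),\phi_\ell(\bX_1)]\ind\{k=\ell\}$ is not quite right — rather $\Tr(\Lambda^K\Sigma^K) = \sum_{k} \lambda_k \Sigma^K_{kk}$. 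Similarly $\sum_k \tau_{k;d}^2 = \Tr((A^K)^2) = \Tr\big((\Lambda^K\Sigma^K)^2\big)$, which is a quadratic expression in the entries of $\Lambda^K$ and $\Sigma^K$.

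The key step is to connect $\sum_k \tau_{k;d}^2$ to the $L_2$ approximation quantities controlled by \cref{assumption:L_nu}. Let $g_K(\bx,\by) \coloneqq \sum_{k=1}^K \lambda_k \phi_k(\bx)\phi_k(\by)$, and note that \cref{assumption:L_nu} (which holds for $\nu\ge 2$, hence in particular the $L_2$ convergence $\varepsilon_{K;2}\to 0$) together with $\sigfull<\infty$ forces $\|g_K(\bX_1,\bX_2)\|_{L_2}$ to be uniformly bounded in $K$ and Cauchy. A direct computation gives $\mean\big[(g_K(\bX_1,\bX_2)-g_L(\bX_1,\bX_2))^2\big]$ in terms of the covariance matrices and eigenvalues, and in particular one can show that $\|S^K - S^L\|_{L_2}^2 = 2\sum_{k=L+1}^{K}\tau_{k;d}^2$ (for $K>L$, using $\Var[\xi_k^2-1]=2$ and independence) is controlled by the tail of $\Tr((\Lambda\Sigma)^2)$-type sums, which in turn is dominated by $\mean\big[(g_K-g_L)^2\big]$ after centering — i.e. by $\|g_K(\bX_1,\bX_2)-g_L(\bX_1,\bX_2)\|_{L_2}^2$ up to a bounded factor depending on first moments of $\phi_k$. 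Since the latter is Cauchy by \cref{assumption:L_nu}, the partial sums $S^K$ are Cauchy in $L_2$, hence converge in $L_2$ and therefore in distribution; adding back the constant and prefactor, $W_n \coloneqq \lim_{K\to\infty} W_n^K$ exists.

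The main obstacle I anticipate is the bookkeeping in relating $\sum_{k>L}^K \tau_{k;d}^2$ — a spectral quantity of $A^K$ that is \emph{not} simply a tail of the spectral quantity of $A^L$, since adding coordinates perturbs all eigenvalues — to the genuinely monotone/Cauchy object $\|g_K - g_L\|_{L_2}$. The clean way around this is to avoid eigenvalues entirely: work directly with the bilinear form, writing $S^K = \msum_{k=1}^K \lambda_k\big((\phi_k(\bX_1) - \mu_k)(\phi_k(\bX_1)-\mu_k) - \Var[\phi_k(\bX_1)]\big)$-type representation is not quite available because the $\phi_k$ need not be orthogonal, but one can still express $S^K$ as a quadratic form $(\eta^K)^\top A^K \eta^K - \Tr(A^K)$ in a single standard Gaussian vector $\eta^K$, compute second moments via $\mean[(\eta^\top A \eta - \Tr A)^2] = 2\Tr(A^2)$, and then bound $\Tr(A^2) = \mean_{\bX_1,\bX_2}\big[ \tilde g_K(\bX_1,\bX_2)^2\big]$ where $\tilde g_K$ is the covariance-kernel version of $g_K$ (i.e. with $\phi_k$ replaced by centered $\phi_k - \mu_k$). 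Establishing this identity — essentially that $\Tr\big((\Lambda^K\Sigma^K)^2\big)$ equals the $L_2$ norm of a centered kernel built from the $\phi_k$'s — is the crux; once it is in place, the Cauchy property transfers immediately from \cref{assumption:L_nu} (applied to the centered kernel, whose approximation error is controlled by $\varepsilon_{K;2}$ plus vanishing mean-correction terms) and the proof concludes.
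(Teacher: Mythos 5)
Your route is genuinely different from the paper's: the paper establishes existence by checking tightness (via the variance bound $\Var[W_n^K]\le \tfrac{2}{n(n-1)}(\sigfull+\varepsilon_{K;2})^2$) and then proving pointwise convergence of the characteristic function of $W_n^K-D$ — an explicit product of $(1-2ia_kt)^{-1/2}$ factors whose complex logarithm has to be tracked through branch cuts — and invoking L\'evy's continuity theorem; the quantitative input there is that $\sum_{k\le K}\tau_{k;d}^2=\Tr\big((\Lambda^K\Sigma^K)^2\big)$ converges. Your $L_2$-Cauchy idea would, if completed, be a more elementary substitute for that argument, and your key identity $\Tr\big((\Lambda^K\Sigma^K)^2\big)=\mean\big[\big(\sum_{k\le K}\lambda_k(\phi_k(\bX_1)-\mu_k)(\phi_k(\bX_2)-\mu_k)\big)^2\big]$ is correct and is essentially Lemma~\ref{lem:factorisable:moment:gaussian}.

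However, there is a genuine gap at exactly the point you flag and then declare solvable. The identity $\|S^K-S^L\|_{L_2}^2=2\sum_{k=L+1}^K\tau_{k;d}^2$ is false, because $\{\tau_{k;d}\}_{k\le K}$ are the eigenvalues of the full $K\times K$ matrix $A^K=(\Sigma^K)^{1/2}\Lambda^K(\Sigma^K)^{1/2}$ and \emph{all} of them change with $K$; and the repair you propose — "work with the bilinear form $(\eta^K)^\top A^K\eta^K-\Tr(A^K)$ in a single standard Gaussian vector" — does not fix it, because the quadratic forms at different truncation levels do not nest: $(\Sigma^L)^{1/2}$ is not the top-left block of $(\Sigma^K)^{1/2}$, so $A^L$ is not a principal sub-block of $A^K$ and the cross term $\Tr(A^K\tilde A^L)$ (with $A^L$ zero-padded) is not $\langle \bar g_K,\bar g_L\rangle_{L_2}$. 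The single-$K$ identity $\mean[(\eta^\top A\eta-\Tr A)^2]=2\Tr(A^2)$ therefore does not "immediately transfer" to a Cauchy statement. What is missing is a representation that is consistent across $K$: by Kolmogorov extension take one Gaussian sequence $(Z_k)_{k\ge1}$ with $\Cov(Z_k,Z_\ell)=\Cov(\phi_k(\bX_1),\phi_\ell(\bX_1))$ for all $k,\ell$, and set $\tilde S^K\coloneqq\sum_{k\le K}\lambda_k\big(Z_k^2-\Var[Z_k]\big)$. Then $\tilde S^K\overset{d}{=}S^K$ for every $K$ (diagonalize $\Lambda^K$-weighted form of $\cN(0,\Sigma^K)$), and for $L<K$ one gets the clean identity $\mean[(\tilde S^K-\tilde S^L)^2]=2\sum_{k,\ell=L+1}^K\lambda_k\lambda_\ell\,\Cov(\phi_k(\bX_1),\phi_\ell(\bX_1))^2=2\|\bar g_K-\bar g_L\|_{L_2}^2$, where $\bar g_K$ is the centered kernel; since $\bar g_K$ is obtained from $g_K$ by subtracting conditional means, $\|\bar g_K-\bar g_L\|_{L_2}\le 4\|g_K-g_L\|_{L_2}\le 4(\varepsilon_{K;2}+\varepsilon_{L;2})\to0$ under \cref{assumption:L_nu} with $\nu\ge2$ (and $\sigfull,|D|<\infty$ keep all second moments finite). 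This makes the copies $\tilde S^K$ Cauchy in $L_2$, hence convergent in distribution, and since weak convergence depends only on the marginals, $W_n$ exists. With that coupling inserted your proof is complete; without it, the crux step is asserted rather than proved.
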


While $W_n^K$ is a sum of chi-squares, the distributional limit $W_\infty \coloneqq \lim_{n\rightarrow \infty} \lim_{K \rightarrow \infty} W_n^K$ may actually be Gaussian. The crucial subtlety lies in the fact that the weights of $W_n^K$ may depend on $K$ and also on $n$ (through $d\equiv d_n$). In what is well-known in the probability literature as the ``fourth moment phenomenon" \citep{nualart2005central}, the necessary and sufficient condition for Gaussianity of $W_\infty$ is that the limiting excess kurtosis is zero. In our case, the limiting moments can be computed easily when \cref{assumption:L_nu} holds for $\nu \geq 4$, as they depend only on moments of the original function $u$ and \emph{not} on specific values of the intractable weights $\tau_{k;d}$. Lemma \ref{lem:WnK:moments} in the appendix shows that $\mean[W_n^K]=D$ for every $K \in \N$, \; $\lim_{K \rightarrow \infty} \Var[ W_n^K ] = \frac{2}{n(n-1)} \sigfull^2$ and 
\begin{align*}
    \lim_{K \rightarrow \infty} 
    \mean\big[ (W_n^K - D)^4 \big] \;=\; 
    \mfrac{12 (4 \mean[u(\bX_1,\bX_2)u(\bX_2,\bX_3)u(\bX_3,\bX_4)u(\bX_4,\bX_1)] + \sigfull^4)}{n^2(n-1)^2} \;,
\end{align*}
provided that \cref{assumption:L_nu} holds for $\nu \geq 1$, $\nu \geq 2$ and $\nu \geq 4$ respectively. If the excess kurtosis is indeed zero, Gaussian is still the correct limiting distribution for $D_n$, but now with a \emph{larger} variance (characterized by $\sigfull$) than the one naively predicted by the Gaussian CLT limit for non-degenerate U-statistics. Meanwhile, when the excess kurtosis is not zero, the limiting distribution is an infinite sum of weighted chi-squares. A naive example is the following:

\begin{lemma} \label{lem:chisq:limit:not:gaussian} Suppose there exists a finite $K_*$ such that $\lambda_k = 0$ for all $k > K_*$. Then $W_n = W_n^{K^*}$, which is a weighted sum of chi-squares.
\end{lemma}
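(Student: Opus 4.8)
The plan is to show that the law of $W_n^K$ stops changing once $K \ge K_*$, so that the distributional limit $W_n$ is just the law of $W_n^{K_*}$, which by construction is a constant plus a finite weighted sum of $\chi^2_1$ variables. The crux is an eigenvalue computation for the matrix whose spectrum defines the chi-square weights $\tau_{k;d}$.

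First, fix $K \ge K_*$ and write $M^K \coloneqq (\Sigma^K)^{1/2}\Lambda^K(\Sigma^K)^{1/2}$, whose spectrum (with multiplicity) is the multiset $\{\tau_{k;d}\}_{k=1}^K$. Since $\lambda_k = 0$ for $k > K_*$, splitting $\R^K$ into its first $K_*$ and last $K-K_*$ coordinates gives
\[
\Lambda^K = \begin{pmatrix} \Lambda^{K_*} & 0 \\ 0 & 0 \end{pmatrix},
\qquad
\Lambda^K \Sigma^K = \begin{pmatrix} \Lambda^{K_*}\Sigma^{K_*} & \ast \\ 0 & 0 \end{pmatrix},
\]
where the second identity uses that $\Sigma^{K_*}$ is exactly the top-left $K_*\times K_*$ block of $\Sigma^K$, because $\phi^{K_*}$ consists of the first $K_*$ coordinates of $\phi^K$. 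The matrix $\Lambda^K\Sigma^K$ is block upper triangular, so its spectrum with multiplicity is that of $\Lambda^{K_*}\Sigma^{K_*}$ together with $K-K_*$ zeros. I then invoke the elementary fact that, for square matrices of equal size, $AB$ and $BA$ have the same characteristic polynomial: with $A = (\Sigma^K)^{1/2}$, $B = \Lambda^K(\Sigma^K)^{1/2}$ this yields $\mathrm{spec}(M^K) = \mathrm{spec}(\Lambda^K\Sigma^K)$, and with $A = (\Sigma^{K_*})^{1/2}$, $B = \Lambda^{K_*}(\Sigma^{K_*})^{1/2}$ it yields $\mathrm{spec}(M^{K_*}) = \mathrm{spec}(\Lambda^{K_*}\Sigma^{K_*})$. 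Chaining these, the multiset $\{\tau_{k;d}\}_{k=1}^K$ equals $\{\tau_{k;d}\}_{k=1}^{K_*}$ with $K-K_*$ extra zeros appended. No invertibility of $\Sigma^{K_*}$ is used anywhere, and $M^K$, $M^{K_*}$ are genuine symmetric matrices, so the $\tau_{k;d}$ are real.

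Second, I translate this into a statement about $W_n^K = \frac{1}{\sqrt{n(n-1)}}\sum_{k=1}^K \tau_{k;d}(\xi_k^2 - 1) + D$. Every summand with $\tau_{k;d}=0$ vanishes identically and the $\{\xi_k\}$ are i.i.d., so $\sum_{k=1}^K \tau_{k;d}(\xi_k^2-1)$ has the same law as the corresponding sum over just the at-most-$K_*$ nonzero weights, hence the same law as $\sum_{k=1}^{K_*}\tau_{k;d}(\xi_k^2-1)$, for every $K \ge K_*$. Therefore $W_n^K$ and $W_n^{K_*}$ have the same distribution for all $K \ge K_*$; consequently the distributional limit $W_n = \lim_{K\to\infty} W_n^K$ exists — without appealing to \cref{prop:W_n:existence:all:moments} — and coincides with the law of $W_n^{K_*} = \frac{1}{\sqrt{n(n-1)}}\sum_{k=1}^{K_*}\tau_{k;d}(\xi_k^2 - 1) + D$, a constant plus a finite weighted sum of centred $\chi^2_1$ random variables.

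There is essentially no hard step; the only point to phrase carefully is that, since $W_n$ is defined solely as a distributional limit, the claimed identity $W_n = W_n^{K_*}$ is an equality in law (it becomes a literal equality of random variables once one fixes any convention ordering the eigenvalues with the zero ones last). Everything else is the two applications of the $AB$/$BA$ spectrum identity together with the block-triangular structure forced by $\lambda_k = 0$ for $k > K_*$.
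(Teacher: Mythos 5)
Your proof is correct and takes essentially the same route as the paper, whose entire argument is the one-line observation that $W_n^K = W_n^{K_*}$ (so the sequence stabilises) for all $K \ge K_*$; you simply supply the linear-algebra detail behind that observation, namely that the spectrum of $(\Sigma^K)^{1/2}\Lambda^K(\Sigma^K)^{1/2}$ for $K \ge K_*$ is that of the $K_*$-truncation padded with zeros, via the block-triangular structure and the $AB$/$BA$ spectrum identity. Your remark that, without fixing an ordering of the eigenvalues, the identity should be read as equality in law (whereas the paper asserts almost-sure equality) is a fair and harmless refinement, since $W_n$ is only defined as a distributional limit.
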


A weighted sum of chi-squares does not admit a closed-form distribution function. Fortunately in the case when $\tau_{k;d} \geq 0$ for all $k$, many numerical approximation schemes are available and used widely. These methods generally rely on matching the moments of $W_n$, which can be computed easily due to Proposition \ref{prop:W_n:existence:all:moments}. The simplest example is the Welch-Satterthwaite method, which approximates the distribution of $W_n$ by a gamma distribution with the same mean and variance. We refer readers to \cite{bodenham2016comparison} and \cite{duchesne2010computing} for a review of other moment-matching methods. 

\subsection{Proof overview} \label{sect:proof:overview}

The proof for \cref{thm:u:gaussian:quad:general} consists of three main steps:
\begin{proplist}
    \item \textbf{``Spectral" approximation.} We first use \cref{assumption:L_nu} to replace $u(\bX_i,\bX_j)$ with the truncated sum $\sum_{k=1}^K \lambda_k \phi_k(\bX_i)\phi_k(\bX_j)$, which gives a truncation error that vanishes as $K \rightarrow \infty$;
    \item \textbf{Gaussian approximation.} The truncated sum is a simple quadratic form of i.i.d.~vectors in $\R^d$, each of which can be approximated by a Gaussian vector. This is done by following \cite{chatterjee2006generalization}'s adaptataion of Lindeberg's telescoping sum argument. Similar proof ideas have been used to develop new convergence results in statistics and machine learning; examples include empirical risk \citep{montanari2022universality} and bootstrap for non-asymptotically normal estimators \citep{austern2020asymptotics}. This step introduces errors in terms of moment terms of $U_n^K$, which are then related to those of $D_n$;
    \item \textbf{Bound the distribution of $U_n^K$.} Step (ii) introduces errors in terms of the distribution of $U_n^K$, a quadratic form of Gaussians, over a short interval. These errors are then controlled by the distribution bounds from \cite{carbery2001distributional}.
\end{proplist}
The proof for Proposition \ref{prop:u:chi:sq:general} is similar, except that we use an additional Markov-type argument to remove the linear sum from $U_n^K$ and obtain the limit in terms of $W_n^K$. 
\vspace{-0.7em}

\section{Kernel-based testing in high dimensions} \label{sect:ksd:mmd}

Given two probability measures $P$ and $Q$ on $\R^d$, we consider the problem of testing $H_0: P=Q$ against $H_1: P \neq Q$ through some measure of discrepancy between $P$ and $Q$. We focus on  \emph{Maximum Mean Discrepancy} (MMD) and \emph{(Langevin) Kernelized Stein Discrepancy} (KSD), two kernel-based methods that use a U-statistic $D_n$ as the test statistic. It is well-known that $\sigcond=0$ under $H_0$ and the limit of $D_n$ is a weighted sum of chi-squares (see \cite{gretton2012kernel} for MMD and \cite{liu2016kernelized} for KSD). Instead, we are interested in quantifying the power of $D_n$ given as $\P_{H_1}(D_n > t)$. The test threshold $t$ is often chosen adaptively in practice, but we assume $t$ to be fixed for simplicity of analysis. The results in \cref{sect:general:results} offer two key insights to this problem:
\begin{proplist}
    \item $D_n$ may have different limiting distributions depending on $\rho_d$. In the non-Gaussian case, the confidence interval and thereby the distribution curve can be wider than what a Berry-Ess\'een bound predicts, and there may be potential asymmetry;
    \item We can completely characterise the high-dimensional behaviour of the power in terms of $\rho_d$, which in turn depends on the hyperparameters and the set of alternatives considered. 
\end{proplist} 
In this section, we first show that our results naturally apply to MMD and KSD. We then investigate their high-dimensional behaviours in an example of Gaussian mean-shift under simple kernels. Throughout, $\| \argdot \|_2$ denotes the vector Euclidean norm, which is not to be confused with $\|\argdot\|_{L_2}$. 
\vspace{-0.5em}
\subsection{Notations}
We follow the kernel definition from \cite{steinwart2012mercer} as below: \vspace{-0.3em}
\begin{definition} A function $\kappa: \R^d \times \R^d \rightarrow \R$ is called a \emph{kernel} on $\R^d$ if there exists a Hilbert space $(\cH, \langle \argdot, \argdot \rangle_{\cH})$ and a map $\phi: \R^d \rightarrow \cH$ such that $\kappa(\bx,\bx') = \langle \phi(\bx), \phi(\bx') \rangle_{\cH}$ for all $\bx,\bx' \in \cH$.
\end{definition} \vspace{-0.3em}
We give the minimal definitions of MMD and KSD, and refer interested readers to \cite{gretton2012kernel} and \cite{gorham2017measuring} for further reading.  Throughout, we let $\{\bY_j\}_{j=1}^n$ be i.i.d.~samples from $P$ and $\{\bX_i\}_{i=1}^n$ be i.i.d.~samples from $Q$. We also write $\bZ_i \coloneqq (\bX_i, \bY_i)$ and assume that $\kappa$ is measurable. MMD with respect to $\kappa$ is defined by \vspace{-0.5em}
$$
    \mmd(Q,P)
    \;\coloneqq\; \mean_{\bY, \bY' \sim P}[\kappa(\bY,\bY')] - 2 \mean_{\bY \sim P, \bX \sim Q}[\kappa(\bY,\bX)] + \mean_{\bX, \bX' \sim Q}[\kappa(\bX,\bX')] 
    \;. \vspace{-0.5em}
$$ 
A popular unbiased estimator for $\mmd$ is exactly a U-statistic: \vspace{-0.5em}
$$
    \mmd_n
    \;\coloneqq\; \mfrac{1}{n(n-1)} \msum_{1 \leq i \neq j \leq n} \ummd(\bZ_i,\bZ_j)
    \;, \vspace{-0.5em} 
$$
 where the summand is given by $ \ummd\big( (\bx,\by) , (\bx',\by') \big) \coloneqq \kappa(\bx,\bx') + \kappa(\by,\by') - \kappa(\bx,\by') - \kappa(\bx',\by)$. To define KSD, we assume that $\kappa$ is continuously differentiable with respect to both arguments, and $P$ admits a continuously differentiable, positive Lebesgue density $p$. The following formulation of KSD is due to Theorem 2.1 of \citet{chwialkowski2016kernel}: \vspace{-0.5em}
$$
    \ksd(Q, P) \;\coloneqq\; \mean_{\bX, \bX' \sim Q}[ \uPksd(\bX, \bX') ]\;, \vspace{-0.5em}
$$
where we assume $\mean_{\bX \sim Q}[ \uPksd(\bX, \bX) ] < \infty$ and the function $\uPksd: \R^d \times \R^d \rightarrow \R$ is given by
\begin{align*}
    \uPksd(\bx, \bx')
    =&\; \big( \nabla \log p(\bx) \big)^\top \big( \nabla \log p(\bx') \big) \kappa(\bx, \bx') \;+\; \big( \nabla \log p(\bx) \big)^\top  \nabla_2 \kappa(\bx, \bx') \\
    & + \big( \nabla \log p(\bx') \big)^\top  \nabla_1 \kappa(\bx, \bx') \;+\; \Tr(\nabla_1 \nabla_2 \kappa(\bx, \bx')) \;.
\end{align*}
$\nabla_1$ and $\nabla_2$ are the differential operators with respect to the first and second arguments of $\kappa$ respectively. The estimator is again a U-statistic, given by $\ksd_n \coloneqq \frac{1}{n(n-1)} \sum_{1 \leq i \neq j \leq n} \uPksd(\bX_i, \bX_j)$.

\subsection{General results} \label{sec:kernel:general:results}

We show that a kernel structure allows \cref{assumption:L_nu} to be fulfilled under some natural conditions. Let $\bV_1,\bV_2 \overset{i.i.d.}{\sim} R$ for some probability measure $R$ on $\R^b$ and $\kappa^*$ be a measurable kernel on $\R^b$. A sequence of functions $\{\phi_k\}_{k=1}^\infty$ in $L_2(\R^b, R)$ and a sequence of non-negative values $\{\lambda_k\}_{k=1}^\infty$ with $\lim_{k\rightarrow \infty}\lambda_k=0$ is called a \emph{weak Mercer representation} if
$$
\big| \msum_{k=1}^K \lambda_k \phi_k(\bV_1) \phi_k(\bV_2) - \kappa^*(\bV_1,\bV_2) \big| \rightarrow 0
\;\;\text{ almost surely }\;\;
\qquad\text{ as } K \rightarrow \infty 
\;.
$$
\cite{steinwart2012mercer} show that such a representation exists if $\mean[ \kappa^*(\bV_1,\bV_1) ] < \infty$, whose result is summarised in Lemma \ref{lem:mercer} in the appendix. To deduce from this the $L_\nu$ convergence of \cref{assumption:L_nu}, we need the following assumptions on the kernel $\kappa^*$:
\begin{assumption} \label{assumption:moment:bounded} Fix $\nu > 2$. Assume $\mean[\kappa^*(\bV_1,\bV_1)] < \infty$ and let  $\{\lambda_k\}_{k=1}^\infty$ and $\{\phi_k\}_{k=1}^\infty$ be a weak Mercer representation of $\kappa^*$ under $R$. Also assume that for some $\nu^* > \nu$, $\| \kappa^*(\bV_1,\bV_2) \|_{L_{\nu^*}} < \infty$ and $\sup_{K \geq 1} \| \sum_{k=1}^K \lambda_k \phi_k(\bV_1) \phi_k(\bV_2) \|_{L_{\nu^*}} < \infty$ . 
\end{assumption}

For MMD, we can use the weak Mercer representation of $\ummd$ to show that our results apply:
\begin{lemma} \label{lem:mmd:general} $\ummd$ defines a kernel on $\R^{2d}$. Moreover, if \cref{assumption:moment:bounded} holds for $\kappa^*=\ummd$ under $P \otimes Q$ for some $\nu > 2$, then \cref{assumption:L_nu} holds for $\min\{\nu,3\}$ with $u=\ummd$ and $R=P \otimes Q$.
\end{lemma}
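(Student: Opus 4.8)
The plan is to prove the two claims in turn; the first is immediate and the second reduces to a uniform-integrability argument.

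\emph{First claim, that $\ummd$ defines a kernel on $\R^{2d}$.} I would exhibit an explicit feature map. Since $\kappa$ is a kernel on $\R^d$, fix a Hilbert space $(\cH, \langle \argdot, \argdot\rangle_\cH)$ and a map $\phi: \R^d \to \cH$ with $\kappa(\bx,\bx') = \langle \phi(\bx), \phi(\bx')\rangle_\cH$; this in particular forces $\kappa$ to be symmetric. Define $\Phi: \R^{2d} \to \cH$ by $\Phi(\bx,\by) \coloneqq \phi(\bx) - \phi(\by)$. Expanding the inner product and using symmetry of $\kappa$ gives $\langle \Phi(\bx,\by), \Phi(\bx',\by')\rangle_\cH = \kappa(\bx,\bx') + \kappa(\by,\by') - \kappa(\bx,\by') - \kappa(\bx',\by) = \ummd\big((\bx,\by),(\bx',\by')\big)$, so $\ummd$ is a kernel on $\R^{2d}$ with feature space $\cH$; in particular $\ummd(\bz,\bz) = \|\Phi(\bz)\|_\cH^2 \ge 0$.

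\emph{Second claim.} Here I would apply the general framework of \cref{sect:setup} at ambient dimension $2d$, with ``data point'' $\bZ = (\bX,\bY) \sim R = P\otimes Q$ and $u = \ummd$ (nothing in \cref{sect:general:results} depends on the actual dimension, so working at $2d$ with paired data is harmless). Since $\ummd$ is a kernel by the first part and $\mean[\ummd(\bV_1,\bV_1)] < \infty$ under \cref{assumption:moment:bounded}, Lemma \ref{lem:mercer} supplies a weak Mercer representation $\{\lambda_k\}_{k\ge1},\{\phi_k\}_{k\ge1}$ of $\ummd$ under $R$: writing $\Delta_K \coloneqq \sum_{k=1}^K \lambda_k \phi_k(\bV_1)\phi_k(\bV_2) - \ummd(\bV_1,\bV_2)$ for i.i.d. $\bV_1,\bV_2\sim R$, one has $\Delta_K \to 0$ almost surely as $K\to\infty$. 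It then remains to upgrade this to convergence in $L_{\min\{\nu,3\}}$, which is exactly the statement $\varepsilon_{K;\min\{\nu,3\}} \to 0$ of \cref{assumption:L_nu}.

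The upgrade is the only real content, and the step I would flag as the main point; it is where the integrability margin $\nu^* > \nu$ in \cref{assumption:moment:bounded} is consumed. By the triangle inequality, $\sup_{K\ge1}\|\Delta_K\|_{L_{\nu^*}} \le \sup_{K\ge1}\big\|\sum_{k=1}^K \lambda_k\phi_k(\bV_1)\phi_k(\bV_2)\big\|_{L_{\nu^*}} + \|\ummd(\bV_1,\bV_2)\|_{L_{\nu^*}} < \infty$. Hence $\{|\Delta_K|^{\nu}\}_{K\ge1}$ is bounded in $L^{\nu^*/\nu}$ with exponent $\nu^*/\nu > 1$, so it is uniformly integrable; combined with $|\Delta_K|^{\nu}\to 0$ a.s., Vitali's convergence theorem yields $\mean[|\Delta_K|^{\nu}]\to 0$, i.e. $\varepsilon_{K;\nu}\to 0$. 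Since $R$ is a probability measure, $L_p$-norms are non-decreasing in $p$ and $\min\{\nu,3\}\le\nu$, so $\varepsilon_{K;\min\{\nu,3\}}\le \varepsilon_{K;\nu}\to 0$; finally $\nu > 2$ forces $\min\{\nu,3\}\in(2,3]$. This is precisely \cref{assumption:L_nu} for $u=\ummd$, $R=P\otimes Q$ and exponent $\min\{\nu,3\}$, completing the proof.
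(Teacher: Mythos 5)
Your proposal is correct and follows essentially the same route as the paper: the same feature map $\Phi(\bx,\by)=\phi(\bx)-\phi(\by)$ for the kernel claim, and the same uniform-integrability/Vitali argument (using the $L_{\nu^*}$ bounds from \cref{assumption:moment:bounded} together with the almost-sure convergence of the weak Mercer series, then monotonicity of $L_p$ norms to pass to $\min\{\nu,3\}$).
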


In the case of KSD, we use the representation of $\kappa$ directly. We require some additional assumptions for the score function $\nabla \log p(\bx)$ to be well-behaved and the differential operation on $\kappa$ to behave well under the representation.
\begin{assumption} \label{assumption:ksd} Fix $n$, $d$ and $\nu > 2$. Assume that \cref{assumption:moment:bounded} holds with $\nu$ for $\kappa$ under $Q$, with $\{\lambda_k\}_{k=1}^\infty$ and $\{\phi_k\}_{k=1}^\infty$ as the weak Mercer representation of $\kappa$ under $Q$ and $\nu^*$ being defined as in \cref{assumption:moment:bounded}. Further assume that (i) $\| \| \nabla \log p(\bX_1) \|_2  \|_{L_{2\nu^{**}}} < \infty$ for $\nu^{**}  = \frac{\nu(\nu+\nu^*)}{\nu^* - \nu}$ ; (ii) $\sup_{k \in \N} \| \phi_k(\bX_1) \|_{L_{2\nu}} < \infty$; (iii) $\phi_k$'s are differentiable with $\sup_{k \in \N} \| \|\nabla \phi_k(\bX_1)\|_2 \|_{L_{\nu}} < \infty$; (iv) As $K\rightarrow \infty$, we have the convergence $ \big\| \big\| \sum_{k=1}^K \lambda_k (\nabla \phi_k(\bX_1)) \phi_k(\bX_2) - \nabla_1 \kappa(\bX_1,\bX_2) \big\|_2  \big\|_{L_{2\nu}} \rightarrow 0$ as well as the convergence
$ \big\| \sum_{k=1}^K \lambda_k (\nabla \phi_k(\bX_1))^\top (\nabla \phi_k(\bX_2)) - \Tr(\nabla_1 \nabla_2 \kappa(\bX_1,\bX_2)) \big\|_{L_{\nu}}  \rightarrow 0$.
\end{assumption}
We can now form a decomposition of $\uPksd$. Given $\{\lambda_k\}_{k=1}^\infty$ and $\{\phi_k\}_{k=1}^\infty$ from \cref{assumption:ksd} and any fixed $d \in \N$, define the sequences $\{\alpha_k\}_{k=1}^\infty$ and $\{\psi_k\}_{k=1}^\infty$ as, for $1 \leq l \leq d$ and $k' \in \N$, \vspace{-.3em}
\begin{align*} 
    \alpha_{(k'-1) d + l} \;\coloneqq&\; \lambda_{k'}\;
    &\text{ and }&&
    \psi_{(k'-1) d + l}(\bx)
    \;\coloneqq&\; (\partial_{x_l} \log p(\bx)) \phi_{k'}(\bx) + \partial_{x_l} \phi_{k'}(\bx)
    \;.
    \tagaligneq \label{eqn:ksd:spectral}
\end{align*} \vspace{-2.2em}
\begin{lemma} \label{lem:ksd:general} If \cref{assumption:ksd} holds for some $\nu > 2$, then \cref{assumption:L_nu} holds for $\min\{\nu,3\}$ with $u=\uPksd$, $R=Q$, $\lambda_k=\alpha_k$ and $\phi_k=\psi_k$\;.
\end{lemma}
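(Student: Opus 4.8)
The plan is to verify \cref{assumption:L_nu} directly for the exponent $\nu>2$ supplied by \cref{assumption:ksd}, with $u=\uPksd$, $R=Q$, $\lambda_k=\alpha_k$ and $\phi_k=\psi_k$ as in \eqref{eqn:ksd:spectral}; since $\min\{\nu,3\}\le\nu$ and we work on a probability space, $L_\nu$ convergence of the truncated sum automatically yields the $L_{\min\{\nu,3\}}$ convergence required by the assumption. So the goal is $\varepsilon_{K;\nu}\to0$ as $K\to\infty$, where $\varepsilon_{K;\nu}=\big\|\msum_{k=1}^K\alpha_k\psi_k(\bX_1)\psi_k(\bX_2)-\uPksd(\bX_1,\bX_2)\big\|_{L_\nu}$ with $\bX_1,\bX_2\overset{i.i.d.}{\sim}Q$.

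First I would record the algebraic identity behind \eqref{eqn:ksd:spectral}. Writing $k=(k'-1)d+l$ with $k'\in\N$, $1\le l\le d$, and using $(\nabla\log p(\bx))^\top(\nabla\log p(\bx'))=\sum_{l=1}^d(\partial_{x_l}\log p(\bx))(\partial_{x'_l}\log p(\bx'))$, a term-by-term expansion of $\sum_{l=1}^d\psi_{(k'-1)d+l}(\bx)\psi_{(k'-1)d+l}(\bx')$ gives, for every $K'\in\N$ and with $S^{(1)}_{K'}(\bu,\bv)\coloneqq\sum_{k'\le K'}\lambda_{k'}\phi_{k'}(\bu)\phi_{k'}(\bv)$, $S^{(2)}_{K'}(\bu,\bv)\coloneqq\sum_{k'\le K'}\lambda_{k'}\phi_{k'}(\bu)(\nabla\phi_{k'})(\bv)$ and $S^{(3)}_{K'}(\bu,\bv)\coloneqq\sum_{k'\le K'}\lambda_{k'}(\nabla\phi_{k'}(\bu))^\top(\nabla\phi_{k'}(\bv))$,
\begin{align*}
\msum_{k=1}^{K'd}\alpha_k\psi_k(\bx)\psi_k(\bx')
\;=\;&\big(\nabla\log p(\bx)\big)^\top\big(\nabla\log p(\bx')\big)\,S^{(1)}_{K'}(\bx,\bx')
+\big(\nabla\log p(\bx)\big)^\top S^{(2)}_{K'}(\bx,\bx')\\
&+\big(\nabla\log p(\bx')\big)^\top S^{(2)}_{K'}(\bx',\bx)
+S^{(3)}_{K'}(\bx,\bx')\;.
\end{align*}
Comparing with the four summands of $\uPksd$, it then remains to show that each of the differences $S^{(1)}_{K'}(\bX_1,\bX_2)-\kappa(\bX_1,\bX_2)$, $S^{(2)}_{K'}(\bX_1,\bX_2)-\nabla_2\kappa(\bX_1,\bX_2)$, $S^{(2)}_{K'}(\bX_2,\bX_1)-\nabla_1\kappa(\bX_1,\bX_2)$ and $S^{(3)}_{K'}(\bX_1,\bX_2)-\Tr(\nabla_1\nabla_2\kappa(\bX_1,\bX_2))$, multiplied by the relevant factors $\nabla\log p(\bX_i)$, tends to $0$ in $L_\nu$; here the $S^{(1)}$-term converges almost surely by the weak Mercer representation of \cref{assumption:moment:bounded}, the two $S^{(2)}$-terms converge in $L_{2\nu}$ by \cref{assumption:ksd}(iv) (using symmetry of $\kappa$ to pass between $\nabla_1\kappa$ and $\nabla_2\kappa$, and i.i.d.-ness of $\bX_1,\bX_2$), and the $S^{(3)}$-term converges in $L_\nu$ by \cref{assumption:ksd}(iv) directly.

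The remaining work is Hölder bookkeeping tuned to the exponent $\nu^{**}=\tfrac{\nu(\nu+\nu^*)}{\nu^*-\nu}$. For the $S^{(1)}$ term, $\{S^{(1)}_{K'}(\bX_1,\bX_2)\}$ is bounded in $L_{\nu^*}$ and $\kappa(\bX_1,\bX_2)\in L_{\nu^*}$ by \cref{assumption:moment:bounded}, so the difference is uniformly integrable in $L_c$ for any $c<\nu^*$ and, together with its a.s.\ convergence to $0$, it converges to $0$ in $L_c$ by Vitali; one checks that $c=\tfrac{\nu+\nu^*}{2}$ lies strictly in $(\nu,\nu^*)$ and satisfies $\tfrac1{2\nu^{**}}+\tfrac1{2\nu^{**}}+\tfrac1c=\tfrac1\nu$, so generalized Hölder against the two factors $\|\nabla\log p(\bX_i)\|_2\in L_{2\nu^{**}}$ (\cref{assumption:ksd}(i)) closes this term. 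For the $S^{(2)}$ terms, Cauchy--Schwarz on the inner product followed by Hölder with $\tfrac1{2\nu}+\tfrac1{2\nu}=\tfrac1\nu$ reduces matters to $\big\|\,\|\nabla\log p(\bX_1)\|_2\big\|_{L_{2\nu}}<\infty$ (true since $\nu^{**}>\nu$) and the $L_{2\nu}$-convergences of \cref{assumption:ksd}(iv). Finally, for a general $K=K'd+r$ with $0\le r<d$, \cref{assumption:ksd}(ii)--(iii) with the score bound (i) give $\sup_k\|\psi_k(\bX_1)\psi_k(\bX_2)\|_{L_\nu}<\infty$, so the leftover partial block has $L_\nu$-norm at most $d\,|\lambda_{K'+1}|\sup_k\|\psi_k(\bX_1)\psi_k(\bX_2)\|_{L_\nu}\to0$ because $\lambda_{k'}\to0$; adding the pieces gives $\varepsilon_{K;\nu}\to0$.

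I expect the main obstacle to be exactly this moment accounting: checking that all the Hölder exponent constraints can be satisfied simultaneously with the exponents $2\nu$, $\nu^*$, $2\nu^{**}$ furnished by \cref{assumption:moment:bounded,assumption:ksd}, and in particular that the uniform-integrability exponent forced by the $S^{(1)}$ term lands strictly between $\nu$ and $\nu^*$ — which is precisely what the definition of $\nu^{**}$ is engineered to guarantee. The conceptual step, that termwise differentiation of the weak Mercer expansion of $\kappa$ produces a weak-Mercer-type representation of $\uPksd$, is already built into \cref{assumption:ksd}(iv), so no genuinely new analytic input beyond this bookkeeping is needed.
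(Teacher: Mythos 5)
Your proposal is correct and follows essentially the same route as the paper's proof: the same term-by-term expansion of the block-truncated sum into the four pieces matching $\uPksd$, the same Vitali argument (via the intermediate exponent $\tfrac{\nu+\nu^*}{2}$ and the H\"older pairing engineered by $\nu^{**}$) for the score-score term, Cauchy--Schwarz plus $L_{2\nu}$ H\"older with \cref{assumption:ksd}(iv) for the cross terms and the trace term, and the same control of the leftover partial block via $\lambda_{K'+1}\to 0$ together with the uniform moment bounds from \cref{assumption:ksd}(i)--(iii). No gaps.
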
 \vspace{-1em}
\begin{remark} \label{remark:assumption:L_nu:kernel} The benefits of formulating our results in terms of \cref{assumption:L_nu} are now clear: By forgoing orthonormality, we can choose a functional decomposition e.g.~in terms of the Mercer representation of a kernel, which is already widely considered in this literature. The non-negative eigenvalues from Lemma \ref{lem:mercer} also allow
moment-matching methods discussed in \cref{sect:chi:sq} to be considered. In fact, a Mercer representation is not even necessary: In \cref{appendix:rbf:decompose}, we construct a simple decomposition for the setup in \cref{sect:examples} such that \cref{assumption:L_nu} can be verified easily.
\end{remark}

\subsection{Gaussian mean-shift examples} \label{sect:examples}

We study KSD and MMD under Gaussian mean-shift, where $P = \cN(0, \Sigma)$ and $Q = \cN(\bmu, \Sigma)$ with mean $\mu \in \R^d$ and covariance $\Sigma \in \R^{d \times d}$ to be specified. Two simple kernels are considered in this section, namely the \emph{RBF} kernel and the \emph{linear} kernel.


\paragraph{RBF kernel.} We consider the RBF kernel $\kappa(\bx, \bx')=\exp(-\| \bx - \bx' \|_2^2/(2 \gamma))$, where $\gamma = \gamma(d)$ is a bandwidth potentially depending on $d$. A common strategy to choose $\gamma$ is the \emph{median heuristic}: \vspace{-.5em}
$$
    \gamma_{\textrm{med}}
    \;\coloneqq\;
    \textrm{Median}\left\{ \| \bV - \bV' \|_2^2: \bV, \bV' \in \cV\,,\; \bV \neq \bV' \right\} \;,
    \vspace{-.5em}
$$
where the samples $\cV = \{\bX_i\}_{i=1}^n$ for KSD and $\cV = \{\bX_i\}_{i=1}^n \cup \{\bY_i\}_{i=1}^n$ for MMD. In \cref{appendix:gaussian:mean:shift}, we include a further discussion of this setup as well as verification of \Cref{assumption:moment:ratio} and \Cref{assumption:L_nu}.

\vspace{.2em}

We focus on 
$\Sigma = I_d$, where the $d$-dependence of the moment ratio $\rho_d$ can be explicitly studied for both KSD and MMD. Importantly, we give bounds in terms of the bandwidth $\gamma$ and the scale of mean shift $\|\mu\|_2^2$, which reveal their effects on $\rho_d$ and thereby on the behaviour of the test power. The assumptions on $\gamma$ and $\|\mu\|_2^2$ in both propositions are for simplicity rather than necessity. \vspace{-.2em}

\begin{proposition}[KSD-RBF moment ratio]
\label{prop:ksd:var:ratio}
    Assume $\gamma = \omega(1)$ and $\| \mu \|_2^2 = \Omega(1)$. Under the Gaussian mean-shift setup with $\Sigma = I_d$, the KSD U-statistic satisfies that 
    \begin{proplist}
        \item If $\gamma = o(d^{1/2})$, then $\rho_d = \exp\big( \frac{3d}{4\gamma^2} + o\big( \frac{d}{\gamma^2} \big) \big) \, \Theta\Big( \frac{d}{\gamma \| \bmu \|_2^2} + \frac{d^{1/2}}{\gamma^{1/2} \| \bmu \|_2} + 1 \Big)$\;;
        \item If $\gamma = \omega(d^{1/2})$, then $\rho_d = \Theta\Big( \frac{d^{1/2} (1 + \gamma^{-1/2} \|\mu\|_2)}{\|\mu\|_2 \, ( 1 + \gamma^{-1} d^{1/2} \|\mu\|_2)} + 1  \Big)$\;;
        \item If $\gamma = \Theta(d^{1/2})$, then $\rho_d = \Theta\Big( \frac{d^{1/2}}{\| \bmu \|_2^2}
        + \frac{d^{1/4}}{ \| \bmu \|_2}
        + 1 \Big)$\;.
    \end{proplist} 
\end{proposition}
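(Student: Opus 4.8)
The plan is to make everything an explicit Gaussian integral. Since $p$ is the density of $\cN(0,I_d)$ we have $\nabla\log p(\bx)=-\bx$, so substituting this and the first and second derivatives of the RBF kernel into the definition of $\uPksd$ collapses the Stein kernel to $\uPksd(\bx,\bx')=h(\bx,\bx')\,\kappa(\bx,\bx')$ with $h(\bx,\bx')=\bx^\top\bx'+\tfrac d\gamma-\big(\tfrac1\gamma+\tfrac1{\gamma^2}\big)\|\bx-\bx'\|_2^2$ a quadratic polynomial. Then $D$, $\sigcond$ and $\sigfull$ become expectations, under $\bX_1,\bX_2$ i.i.d.\ $\cN(\bmu,I_d)$, of (a polynomial in $\bX_1,\bX_2$) times $\kappa(\bX_1,\bX_2)^j$ with $j\in\{1,2\}$, all exactly computable by completing the square. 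It helps to organise the ratio via the Hájek projection: with $g_1(\bx)\coloneqq\mean[\uPksd(\bx,\bX_2)]-D$ and $g_2$ the degenerate component, orthogonality gives $\sigfull^2=2\sigcond^2+\|g_2\|_{L_2}^2$, hence $\rho_d^2=2+\|g_2\|_{L_2}^2/\sigcond^2$, so the whole question is when the degenerate part dwarfs the projection.

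The crux is the conditional mean. Using Stein's identity $\mean_{\bX_2\sim P}[\uPksd(\bx,\bX_2)]=0$ (write $\bX_2=\bmu+\bY$, $\bY\sim P$) and one integration by parts against the Gaussian density of $Q$ gives the closed form
\begin{align*}
    g_1(\bx)+D \;=\; \mean[\uPksd(\bx,\bX_2)] \;=\; \frac{(\gamma+2)\,\bmu^\top\bx-\|\bmu\|_2^2}{\gamma+1}\,\Big(\tfrac{\gamma}{\gamma+1}\Big)^{d/2}\exp\!\Big(-\tfrac{\|\bx-\bmu\|_2^2}{2(\gamma+1)}\Big)\;,
\end{align*}
which vanishes at $\bmu=0$, as it must. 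Integrating this over $\bx\sim\cN(\bmu,I_d)$ gives $D=\|\bmu\|_2^2\,(\tfrac{\gamma}{\gamma+2})^{d/2}$; squaring and integrating gives $\mean[(\mean[\uPksd\mid\bX_1])^2]$ whose $\|\bmu\|_2^4$-coefficient is $A_d\coloneqq(\tfrac{\gamma}{\gamma+1})^d(\tfrac{\gamma+1}{\gamma+3})^{d/2}$ and whose remaining part is $\Theta(A_d\|\bmu\|_2^2)$; and completing the square in the double Gaussian gives $\mean[\uPksd(\bX_1,\bX_2)^2]$ with $\|\bmu\|_2^4$-coefficient $C_d\coloneqq(1+\tfrac4\gamma)^{-d/2}$ and remaining part $\Theta\big(C_d(\tfrac{d\|\bmu\|_2^2}{\gamma}+\tfrac{d^2}{\gamma^2}+\|\bmu\|_2^2+d)\big)$. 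Writing $B_d\coloneqq(\tfrac{\gamma}{\gamma+2})^d=D^2/\|\bmu\|_2^4$, subtraction yields $\sigcond^2=(A_d-B_d)\|\bmu\|_2^4+\Theta(A_d\|\bmu\|_2^2)$ and $\sigfull^2=(C_d-B_d)\|\bmu\|_2^4+\Theta\big(C_d(\tfrac{d\|\bmu\|_2^2}{\gamma}+\tfrac{d^2}{\gamma^2}+\|\bmu\|_2^2+d)\big)$.

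Now the asymptotics. The point is that $A_d$, $B_d$, $C_d$ agree to first order in $1/\gamma$ but split at second order: $\log(1+a)=a-\tfrac{a^2}2+O(a^3)$ with $a$ a multiple of $1/\gamma$ gives, uniformly, $A_d/B_d=\exp(\tfrac{d}{2\gamma^2}+o(\tfrac d{\gamma^2}))$ and $C_d/B_d=\exp(\tfrac{2d}{\gamma^2}+o(\tfrac d{\gamma^2}))$. Hence the cancellations $A_d-B_d$ and $C_d-B_d$ behave according to whether $d/\gamma^2\to\infty$ (i.e. $\gamma=o(d^{1/2})$, part (i)), $\to0$ ($\gamma=\omega(d^{1/2})$, part (ii)), or stays $\Theta(1)$ ($\gamma=\Theta(d^{1/2})$, part (iii)): in case (i) the subtraction keeps a factor $\exp(\tfrac{3d}{2\gamma^2}+o(\tfrac d{\gamma^2}))$ in $\rho_d^2=\sigfull^2/\sigcond^2$, which after square-rooting gives the $\exp(\tfrac{3d}{4\gamma^2}+o(\tfrac d{\gamma^2}))$; in case (ii) $e^x-1\sim x$ replaces it by the polynomial factor $d/\gamma^2$; in case (iii) the factor is $\Theta(1)$ and is absorbed into the $\Theta(\cdot)$. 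In each regime one then reads off which of the handful of monomials in $(d,\gamma,\|\bmu\|_2)$ dominates numerator and denominator, using $1+x+x^2\asymp(1+x)^2$ and $1+\sqrt x+x\asymp1+x$ to rewrite into the stated brackets (e.g. in (i), $\rho_d^2=\exp(\tfrac{3d}{2\gamma^2}+o(\tfrac d{\gamma^2}))\,\Theta\big(1+\tfrac{d}{\gamma\|\bmu\|_2^2}+\tfrac{d^2}{\gamma^2\|\bmu\|_2^4}\big)$). The assumptions $\gamma=\omega(1)$ and $\|\bmu\|_2^2=\Omega(1)$ are used only to discard genuinely lower-order terms and to ensure $\sigcond>0$ (equivalently $A_d\ne B_d$ and $\bmu\ne0$).

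I expect this last step to be the real work: not the Gaussian integrals, which are routine, but controlling the $o(d/\gamma^2)$ in the exponent of part (i), which forces one to track the \emph{relative} error of every integral evaluation -- the $O(\gamma^{-3})$ Taylor tails multiplied by $d$, and the polynomial prefactors -- uniformly in $d$ and $\gamma$, and to verify that no prefactor secretly shifts the exponential rate. I would organise the bookkeeping by first tabulating the base integrals $\mean[\|\bX_1-\bX_2\|_2^{2k}\kappa^j]$ and their conditional analogues (this also delivers \cref{assumption:moment:ratio} and \cref{assumption:L_nu}, carried out in \cref{appendix:gaussian:mean:shift}), and only then specialising to the three regimes.
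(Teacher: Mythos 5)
Your proposal is correct and takes essentially the same route as the paper: exact Gaussian-integral evaluation of $D$, $\sigcond^2$ and $\sigfull^2$ (your closed form for $\mean[\uPksd(\bx,\bX_2)]$ and your coefficients $A_d$, $B_d$, $C_d$ match Lemma \ref{lem:ksd:moments:analytical}), followed by the same three-regime Taylor analysis of the near-cancelling $\|\bmu\|_2^4$-coefficients, with the ratios $(1-\tfrac{1}{(2+\gamma)^2})^{-d/2}$ and $(1-\tfrac{4}{(2+\gamma)^2})^{-d/2}$ producing exactly the $\exp\big(\tfrac{3d}{4\gamma^2}+o(\tfrac{d}{\gamma^2})\big)$ rate and the polynomial brackets. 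The Hoeffding-orthogonality identity and the Stein-identity shortcut for the conditional mean are only cosmetic variations on the paper's completing-the-square computations.
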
 \vspace{-1.3em}
\begin{proposition}[MMD-RBF moment ratio]
\label{prop:MMD:variance:ratio} Consider the Gaussian mean-shift setup with $\Sigma = I_d$ and assume $\gamma = \omega(1)$ and $\| \mu \|_2^2 = \Omega(1)$. For the MMD U-statistic, if $\gamma=o(\|\mu\|_2^2)$ and $\gamma = o(d^{1/2})$, then $\rho_d = \Theta\big( \exp\big(\frac{3d}{4\gamma^2} + o\big( \frac{d}{\gamma^2} \big) \big) \big)$. If instead $\gamma = \omega(\| \mu \|_2^2)$, then
    \begin{proplist}
        \item For $\gamma = o(d^{1/2})$, we have $\rho_d = \Theta\Big( \frac{\gamma}{ \|\mu\|_2^2} \exp\Big(
        \frac{3d}{4\gamma^2} + o\big(\frac{d}{ \gamma^{2}}\big)
        \Big)  \Big)$\;;
        \item For $\gamma = \omega(d^{1/2})$, we have $\rho_d = \Theta\Big( \frac{ \|\mu\|_2 \,+\, d^{1/2} }{ \|\mu\|_2 +  \gamma^{-1}  d^{1/2} \|\mu\|_2^2 } \Big)$\;;
        \item For $\gamma = \Theta(d^{1/2})$, we have $\rho_d = O\Big( \,  \frac{ d^{1/2} }{\|\mu\|_2^2 } \, \Big)$ \;.
    \end{proplist}
\end{proposition}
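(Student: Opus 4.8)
The plan is to obtain closed-form expressions for the two moment quantities $\sigfull^2 = \Var[\ummd(\bZ_1,\bZ_2)]$ and $\sigcond^2 = \Var[\mean[\ummd(\bZ_1,\bZ_2)\mid \bZ_1]]$, and then read off the asymptotics of $\rho_d^2 = \sigfull^2/\sigcond^2$ regime by regime; matching upper and lower bounds on each of $\sigfull^2$ and $\sigcond^2$ give the $\Theta$ claims (for the $O$ statement in case (iii), an upper bound on $\sigfull^2$ and a lower bound on $\sigcond^2$ suffice). The key structural fact is that, for the RBF kernel with jointly Gaussian arguments, the expectation of any product of finitely many factors $\kappa(\argdot,\argdot)$ is an elementary Gaussian integral: iterating $\mean_{\bX\sim\cN(\mu,I_d)}[\exp(-t\|\bX-a\|_2^2)] = (1+2t)^{-d/2}\exp(-t\|\mu-a\|_2^2/(1+2t))$ (and completing the square whenever two such factors multiply, which merges them into a single shifted square plus a $\|\mu\|_2^2$ constant), every quantity below becomes a finite linear combination of terms of the form $(1+c/\gamma)^{-d/2}\exp(-c'\|\mu\|_2^2/(\gamma+c''))$ with explicit constants $c,c',c''$.

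For $\sigcond^2$: writing $\bZ_1=(\bX_1,\bY_1)$ with $\bX_1\sim\cN(\mu,I_d)$ and $\bY_1\sim\cN(0,I_d)$ independent, linearity of $\ummd$ gives $\mean[\ummd(\bZ_1,\bZ_2)\mid\bZ_1] = g_Q(\bX_1)-f_P(\bX_1)+g_P(\bY_1)-f_Q(\bY_1)$, where each of $g_Q,g_P,f_P,f_Q$ is the Gaussian smoothing of a single RBF factor and hence, up to the common scalar $(1+\gamma^{-1})^{-d/2}$, an unnormalised Gaussian bump centred at $0$ or $\mu$; substituting $\bX_1=\bY_1+\mu$ shows the $\bX_1$- and $\bY_1$-parts have equal variance, so $\sigcond^2 = 2\,\Var[g_Q(\bX_1)-f_P(\bX_1)]$, and expanding this variance yields only second moments and cross-terms of the type above. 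For $\sigfull^2$: expand $\Var[\kappa(\bX_1,\bX_2)+\kappa(\bY_1,\bY_2)-\kappa(\bX_1,\bY_2)-\kappa(\bX_2,\bY_1)]$ into its constituent variances and covariances; independence of the $\bX$- and $\bY$-samples kills most cross-covariances, and the survivors (for instance $\Cov[\kappa(\bX_1,\bX_2),\kappa(\bX_1,\bY_2)]$) reduce, after conditioning on the shared argument, to the same Gaussian integrals, so $\sigfull^2$ too is an explicit finite sum of $(1+c/\gamma)^{-d/2}\exp(-c'\|\mu\|_2^2/(\gamma+c''))$ terms.

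The final and main step is the asymptotic analysis. Since $\gamma=\omega(1)$, expanding $\log(1+c/\gamma)=c/\gamma-c^2/(2\gamma^2)+O(\gamma^{-3})$ shows that both $\sigfull^2$ and $\sigcond^2$ carry the same leading factor $\exp(-2d/\gamma)$, so $\rho_d^2$ is governed by the $d/\gamma^2$ correction together with $\mu$-dependent prefactors. The delicate point is that $\sigfull^2$ and $\sigcond^2$ are each differences of nearly-equal quantities, so one must expand far enough that these cancellations are genuinely resolved rather than over-bounded; carried through, this produces the $\exp(3d/(4\gamma^2)+o(d/\gamma^2))$ factor in $\rho_d$ whenever $\gamma=o(d^{1/2})$, whereas when $\gamma=\omega(d^{1/2})$ the $d/\gamma^2$ corrections are $o(1)$ and $\rho_d$ becomes purely polynomial in $d,\gamma,\|\mu\|_2$ (and $\gamma=\Theta(d^{1/2})$ absorbs the exponential into a constant). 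The polynomial prefactor is then tracked by separating the case $\|\mu\|_2^2/\gamma\to0$ — where $\exp(-c'\|\mu\|_2^2/(\gamma+c''))=1-c'\|\mu\|_2^2/\gamma+\ldots$, so the relevant differences are $\Theta(\|\mu\|_2^2/\gamma)$, which is exactly the source of the extra $\gamma/\|\mu\|_2^2$ factor in case (i) — from $\|\mu\|_2^2/\gamma\to\infty$, where the ``unshifted'' kernel contributions dominate; assembling $\rho_d^2=\sigfull^2/\sigcond^2$ in each regime and taking square roots gives the stated bounds. I expect this bookkeeping to be the only real obstacle: everything upstream is a finite collection of elementary Gaussian integrals, and the difficulty lies entirely in keeping the expansions accurate and uniform across the joint scalings of $\gamma$, $d$ and $\|\mu\|_2^2$.
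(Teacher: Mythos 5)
Your plan is correct and follows essentially the same route as the paper: first obtain exact expressions for $\sigcond^2$ and $\sigfull^2$ via iterated Gaussian integrals (the paper's Lemmas \ref{lem:gauss:integral:single}--\ref{lem:gauss:integral:double}, assembled into Lemma \ref{lem:mmd:moments:analytical}), then factor out the common $\exp(-2d/\gamma)$-type term and perform regime-by-regime Taylor expansions in $1/\gamma$, $d/\gamma^2$ and $\|\mu\|_2^2/\gamma$ that resolve the near-cancellations (which is exactly how the paper obtains the $\exp(3d/(4\gamma^2))$ factor and, in the $\gamma=\omega(\|\mu\|_2^2)$ case, the $\Theta(\|\mu\|_2^4/\gamma^2)$ conditional variance giving the extra $\gamma/\|\mu\|_2^2$). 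The remaining work is the case-by-case bookkeeping you already identify as the main obstacle, and it proceeds as you describe.
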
 \vspace{-.5em} \renewcommand{\thefootnote}{\fnsymbol{footnote}} 
The case $\|\mu\|_2 = \Omega(\|\Sigma\|_2) = \Omega(d^{1/2})$ is not very interesting, as it means that the signal-to-noise ratio (SNR) is high and can even increase with~$d$. WLOG we focus on a low SNR setting with $\| \mu \|_2 = \Theta(1)$. In this case, it has been shown that the median-heuristic bandwith scales as $\gamma_{\textrm{med}}=\Theta(d)$ \citep{reddi2015high, ramdas2015decreasing, wynne2022kernel}. While Propositions  \ref{prop:ksd:var:ratio} and \ref{prop:MMD:variance:ratio} do not directly address the case $\gamma = \gamma_{\textrm{med}}$ due to its data dependence, they do show that $\rho_d = \Theta(d^{1/2})$ for both KSD and MMD with a data-\emph{independent} bandwidth $\gamma=\Theta(d)$\footnote[2]{In our experiments, the data-independent choice $\gamma=d$ and the data-dependent $\gamma=\gamma_{\rm med}$ yield almost identical plots.}. In this case, the asymptotic distributions of $\ksd_n$ and $\mmd_n$ are \emph{(i)} the non-degenerate Gaussian limit predicted by \eqref{eqn:berry:esseen} when $d = o(n)$ and \emph{(ii)} the degenerate limit from Proposition \ref{prop:u:chi:sq:general} when $d = \omega(n)$.

\begin{figure} 
    \centering   
    \begin{minipage}[t]{0.485\textwidth}
        \centering
        \includegraphics[width=1.\textwidth]{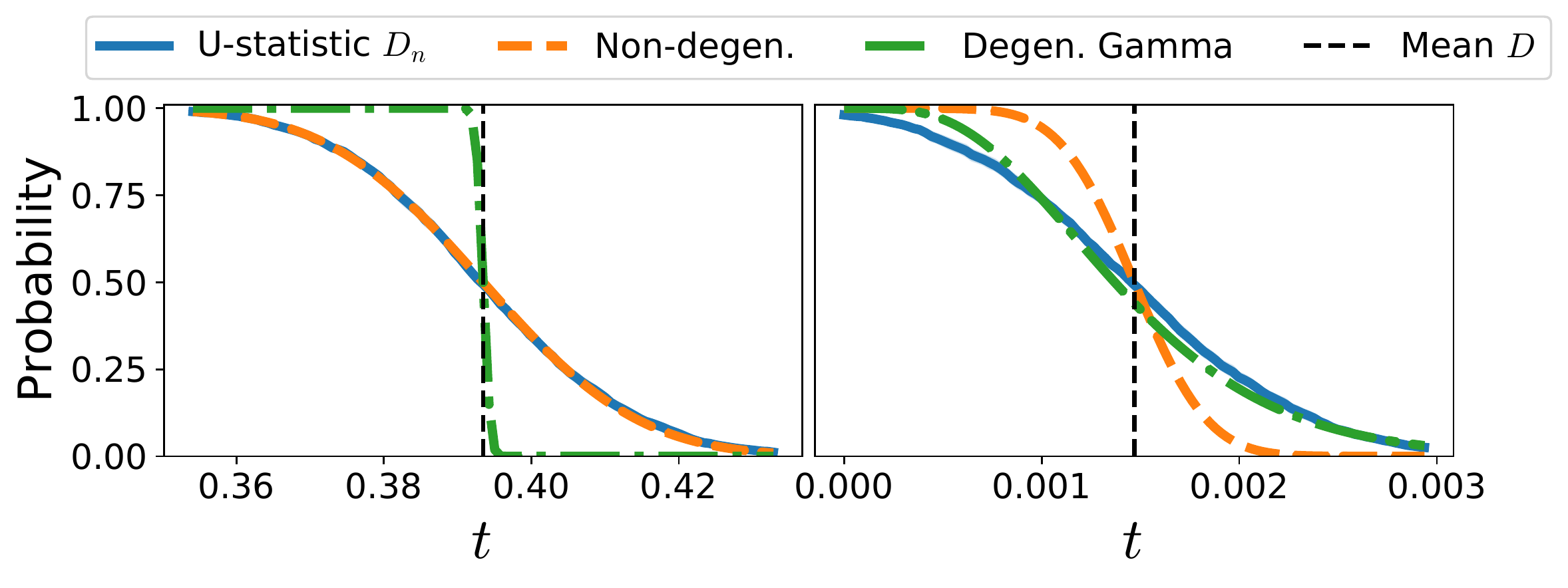}
        \vspace{-2.em}
        \caption{Behaviour of $\P(X > t)$ for $X=\mmd_n$ with the RBF kernel versus $X$ being the theoretical limits. \emph{Left}. $n = 1000$ and $d = 2$. \emph{Right}. $n = 50$ and $d = 1000$.}
        \label{fig:power:mmd:rbf}
    \end{minipage}
    \hfill
    \begin{minipage}[t]{0.485\textwidth}
        \centering
        \includegraphics[width=1.\textwidth]{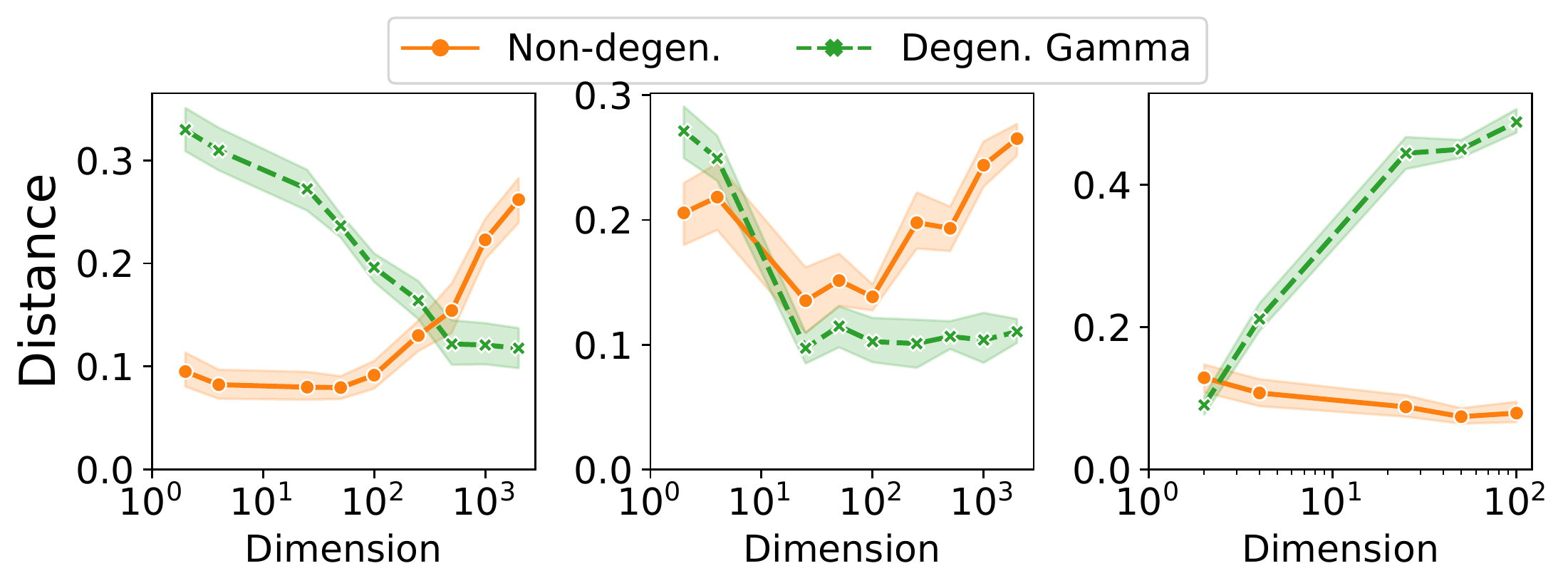}
        \vspace{-2.em}
        \caption{$L_\infty$ distance between the c.d.f.\ of $\ksd_n$ with RBF and those of the theoretical limits as $d$ varies. \emph{Left.} $n = 50$ fixed (high dimensions). \emph{Middle.} $n = \Theta(d^{1/2})$ (high dimensions). \emph{Right:} $n = \Theta(d^2)$ (low dimensions).}
        \label{fig:dist:vs:dim:ksd}
    \end{minipage} 
    \centering
    \begin{minipage}[t]{\textwidth}
        \vspace{1em}
        \centering
        \includegraphics[width=1.\textwidth]{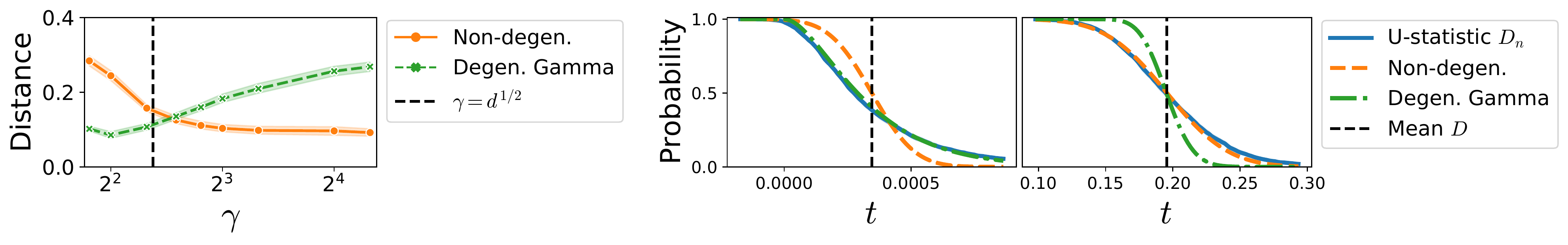}
        \vspace{-2.5em}
        \caption{Behaviour of $\P(\ksd_n > t)$ with RBF as $\gamma$ varies for $n=50$ and $d=27$. \emph{Left.} $L_\infty$ distance between the c.d.f.\ of $\ksd_n$ and the theoretical limits. \emph{Middle.} Distribution curves at $\gamma=4$. \emph{Right.} Distribution curves at $\gamma=16$.} \label{fig:gamma:vary:ksd}
    \end{minipage}
\end{figure}

\vspace{.2em}

Intriguingly, in both results, different regimes arise based on how $\gamma$ compares with the noise scale $\|\Sigma\|_2 = d^{1/2}$. In fact, a phase transition as $\gamma$ drops from $\omega(d^{1/2})$ to $o(d^{1/2})$ has been reported in \cite{ramdas2015decreasing} but with no further comments\footnote[3]{Their bandwidth $\gamma_{\rm Ramdas}$ is defined to equal our $\sqrt{2\gamma}$. Phase transition occurs at $\gamma_{\rm Ramdas}=d^{1/4}$ in their Figure 1. While their figure is for MMD with threshold chosen by a permutation test, ours is for KSD with a fixed threshold.}\footnote[4]{This was investigated in \citet[Section 10.4]{ramdas2015computational} in a special case when $\gamma = \omega(\|\bmu\|_2^2 + d)$ (case \emph{(ii)} of \Cref{prop:MMD:variance:ratio}) and $n = o(d^{5/2})$, where the author derived the test power of the RBF-kernel MMD for different SNRs.}. Our results offer one explanation: Such transitions may happen due to a change in the dependence of $\rho_d$ on $\gamma$, $\|\mu\|_2$ and $d$. \cref{fig:gamma:vary:ksd} shows a transition across different limits as $\gamma$ varies, where the transition occurs at around $\gamma \sim d^{1/2}$.

\paragraph{Linear kernel.} \cref{sect:chi:sq} discussed that the limit of $D_n$ can be non-Gaussian. This is true for MMD with a \emph{linear} kernel $\kappa(\bx, \bx') = \bx^\top \bx'$: It satisfies Lemma \ref{lem:chisq:limit:not:gaussian} with $K_*=d$ and the limit is a shifted-and-rescaled chi-square. \cref{fig:intro} verifies this for some $\Sigma \neq I_d$ by showing an asymmetric distribution curve close to the chi-square limit. We remark that a linear kernel, while not commonly used, is a valid choice here since $\mmd=0$ iff $P=Q$ under our setup.

\paragraph{Simulations.} We set $\mu = (2,0,\ldots,0)^\top \in \R^d$, $\Sigma=I_d$ and $\gamma=\gamma_{\rm med}$ for KSD with RBF and MMD with RBF. The exact setup for MMD with linear kernel is described in \cref{appendix:mmd:linear:moments}. The limits for comparison are the non-degenerate Gaussian limit in \eqref{eqn:berry:esseen} (``Non-degen.") and Gamma / shifted-and-rescaled chi-square (``Degen. Gamma" / ``Degen. Chi-square") distributions that match the degenerate limit in Proposition \ref{prop:u:chi:sq:general} by mean and variance. \cref{fig:intro} plots the distribution curves for KSD with RBF and MMD with linear kernel. \cref{fig:power:mmd:rbf} plots the same quantity for MMD with RBF. \cref{fig:dist:vs:dim:ksd} and \cref{fig:gamma:vary:ksd} examine the behaviour of KSD with RBF as $d$ or $\gamma$ varies (as a data-independent function of $d$, similar to \cite{ramdas2015decreasing}). Results involving $D_n$ are averaged over 30 random seeds, and shaded regions are $95\%$ confidence intervals\footnote[5]{The shaded regions are not visible for $\P(D_n > t)$ in \cref{fig:intro}, \ref{fig:power:mmd:rbf} and \ref{fig:gamma:vary:ksd} as the confidence intervals are very narrow.}. Code for reproducing all experiments can be found at \href{https://github.com/XingLLiu/u-stat-high-dim.git}{\texttt{github.com/XingLLiu/u-stat-high-dim.git}}.

\acks{KHH is supported by the Gatsby Charitable Foundation.
XL is supported by the President’s PhD Scholarships
of Imperial College London and the EPSRC StatML
CDT programme EP/S023151/1.
ABD is supported by Wave 1 of The UKRI Strategic Priorities Fund under the EPSRC Grant EP/T001569/1 and EPSRC Grant EP/W006022/1, particularly the “Ecosystems of Digital Twins” theme within those grants \& The Alan Turing Institute. We thank Antonin Schrab, Heishiro Kanagawa and Arthur Gretton for their helpful comments.}

\newpage
\bibliography{ref}

\appendix

\newpage
\noindent
The appendix is organised as follows. The first few appendices provide additional content:


\noindent
\textbf{\cref{appendix:gaussian:mean:shift}} states additional results for \cref{sect:examples} including moment computations and verification of assumptions.

\noindent
\textbf{\cref{appendix:tools}} presents auxiliary tools used in subsequent proofs.

\vspace{1em}

\noindent 
The remaining appendices consist of proofs:

\noindent
\textbf{\cref{appendix:main}} proves our main theorem. \cref{appendix:u:gaussian:quad:lemma} provides a list of intermediate lemmas that extends the proof overview in \cref{sect:proof:overview}.

\noindent
\textbf{\cref{appendix:proof:remaining}} proves the remaining results in \cref{sect:general:results}.

\noindent
\textbf{\cref{appendix:proof:ksd:mmd}} proves the results in \cref{sect:ksd:mmd}.

\noindent
\textbf{Appendices \ref{appendix:proof:gaussian:mean:shift}} and \textbf{\ref{appendix:proof:tools}} present proofs for the results in Appendices 
\ref{appendix:gaussian:mean:shift} and \ref{appendix:tools} respectively.

\vspace{1em}

\noindent
Throughout the appendix, we say that $C$ is an absolute constant whenever we mean that it is a number independent of all variables involved, including $u$, $R$, $d$, $n$ and $K$.

\section{Additional results for Gaussian mean-shift} \label{appendix:gaussian:mean:shift}

In this section, we consider the Gaussian mean-shift setup defined in \Cref{sect:examples}, where $Q = \cN(\bmu, \Sigma)$ and $P = \cN(0, \Sigma)$ with mean $\bmu \in \R^d$ and covariance matrix $\Sigma \in \R^{d \times d}$. We derive analytical expressions of the moments of U-statistics for (i) KSD with RBF, (ii) MMD with RBF and (iii) MMD with linear kernel. We also verify \cref{assumption:L_nu} for the three cases, which confirm that our error bounds apply. 

\paragraph{Remark on verification of \cref{assumption:moment:ratio}.} Recall that \cref{assumption:moment:ratio}, which controls the moment ratios $\Mfullnu / \sigfull$ and $\Mcondnu / \sigcond$ for some $\nu \in (2,3]$, is required for our bounds to imply a convergence theorem. As discussed in the main text, this issue is not specific to our theorem and is also relevant to e.g.~Berry-Ess\'een bounds for sample averages of $\{f(\bX_i)\}_{i=1}^n$ for $f:\R^d \rightarrow \R$ and $d$ large. A detailed verification requires a careful calculation to control the order of $\Mfullnu, \sigfull, \Mcondnu$ and $\sigcond$. For KSD and MMD with the RBF kernel, a careful control of $\sigfull$ and $\sigcond$ has already been done in the proof of Proposition \ref{prop:ksd:var:ratio} and Proposition \ref{prop:MMD:variance:ratio}, which involves examining multiple cases depending on the relative sizes of $\gamma$, $\|\mu\|_2$ and $d$ followed by an elaborate calculation. To perform this verification for all cases in full generality, in principle, one may expand on those calculations and follow a similar tedious argument. In the sections below, we perform this verification only for the setup in \cref{fig:intro}, i.e.~KSD with the RBF kernel in the case $\|\mu\|_2=\Theta(1)$ and $\gamma=\Omega(d)$ and MMD with the linear kernel in the general case. For MMD with the RBF kernel, we discuss the relevance of this verification to \cite{gao2021two}, who has done a verification of similar quantities but also in a special case. In \Cref{fig:moment:ratios}, we also include simulations verifying \Cref{assumption:moment:ratio} under the setups considered in \Cref{fig:intro}-\ref{fig:dist:vs:dim:ksd}, where we demonstrate that the moment ratios stay around 1 as the dimension varies from 1 to 2000.

\subsection{A decomposition of the RBF kernel} \label{appendix:rbf:decompose}

For both MMD and KSD, the key in verifying the assumptions for the RBF kernel is a functional decomposition. The usual Mercer representation of the RBF kernel is available only with respect to a univariate zero-mean Gaussian measure and involves some cumbersome Hermite polynomials. Since we do not actually require orthogonality of the functions in \cref{assumption:L_nu}, we opt for a simpler functional representation as given below. We also assume WLOG that the bandwidth $\gamma > 8$, since we only consider the case $\gamma=\omega(1)$ in our setup.

\begin{lemma} \label{lem:rbf:decomposition} Assume that $\gamma > 8$. Consider two independent $d$-dimensional Gaussian vectors $\bU \sim \cN(\mu_1, I_d)$ and $\bV \sim \cN(\mu_2, I_d)$ for some mean vectors $\mu_1,\mu_2 \in \R^d$. Then, for any $\nu \in (2,4]$ and $\mu_1,\mu_2 \in \R^d$, we have that
\begin{align*}
    \mean\Big[ \Big| \exp\Big( - \mfrac{1}{2\gamma} \| \bU - \bV\|_2^2 \Big) - 
    \mprod_{j=1}^d \Big(
    \msum_{k=0}^K \lambda^*_{k} \phi^*_{k}(U_j) \phi^*_{k}(V_j)
    \Big)
    \Big|^\nu \Big]
    \;\xrightarrow{K \rightarrow \infty}\;
    0\;.
\end{align*}
where $\phi^*_{k}(x) \coloneqq x^k e^{- x^2 / (2\gamma) }$ and $\lambda^*_{k} \coloneqq \frac{1}{k! \, \gamma^k} $ for each $k \in \N \cup \{0\}$.
\end{lemma}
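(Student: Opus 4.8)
The plan is to reduce the statement to a one-dimensional computation plus a stability argument for the product. Fix the bandwidth $\gamma > 8$ throughout. The core identity is the exponential series: for any real $t$,
$$
    e^{-t^2/(2\gamma)} \;=\; \msum_{k=0}^\infty \mfrac{1}{k!\,\gamma^k} \Big( \mfrac{- t^2}{2} \Big)^k \Big/ \text{(no --- careful)}
$$
— more precisely, writing $\| \bU - \bV\|_2^2 = \sum_{j=1}^d (U_j - V_j)^2$, we have the exact factorisation $\exp(-\frac{1}{2\gamma}\|\bU-\bV\|_2^2) = \prod_{j=1}^d \exp(-\frac{1}{2\gamma}(U_j-V_j)^2)$, and for each coordinate
$$
    \exp\Big( -\mfrac{1}{2\gamma}(u-v)^2 \Big)
    \;=\;
    e^{-u^2/(2\gamma)} e^{-v^2/(2\gamma)} \, e^{uv/\gamma}
    \;=\;
    e^{-u^2/(2\gamma)} e^{-v^2/(2\gamma)} \msum_{k=0}^\infty \mfrac{(uv)^k}{k!\,\gamma^k}
    \;=\;
    \msum_{k=0}^\infty \lambda^*_k \phi^*_k(u)\phi^*_k(v),
$$
which identifies $\phi^*_k(x) = x^k e^{-x^2/(2\gamma)}$ and $\lambda^*_k = 1/(k!\gamma^k)$ exactly as in the statement. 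So the partial product $\prod_{j=1}^d(\sum_{k=0}^K \lambda^*_k\phi^*_k(U_j)\phi^*_k(V_j))$ is precisely $\prod_{j=1}^d$ of the degree-$K$ truncation of the exponential series $\sum_k (U_jV_j)^k/(k!\gamma^k)$, scaled by the Gaussian-type prefactors which are common to both terms. The task is therefore to show the $L_\nu$ convergence of a product of truncated exponentials to the product of full exponentials.

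\textbf{Key steps.} First, I would pull out the common prefactor $\prod_j e^{-(U_j^2+V_j^2)/(2\gamma)}$, which is bounded by $1$ almost surely, so it suffices to bound the $L_\nu$ norm of $\prod_{j=1}^d e^{U_jV_j/\gamma} - \prod_{j=1}^d S_K(U_jV_j/\gamma)$, where $S_K(z) := \sum_{k=0}^K z^k/k!$ is the truncated exponential and I have absorbed the $1/\gamma^k$ into the argument. Second, I would use the telescoping/hybrid identity $\prod_j a_j - \prod_j b_j = \sum_{j=1}^d (\prod_{l<j} b_l)(a_j - b_j)(\prod_{l>j} a_l)$ to write the difference as a sum of $d$ terms, each controlled by $|e^{U_jV_j/\gamma} - S_K(U_jV_j/\gamma)|$ times products of the remaining factors. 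Third, via Hölder's inequality (with exponents chosen to spread the $d+1$ factors in each term), I would reduce to controlling: (a) the single-coordinate remainder $\| e^{U_1V_1/\gamma} - S_K(U_1V_1/\gamma) \|_{L_{\nu d}}$, which tends to $0$ as $K\to\infty$ by dominated convergence since $S_K(z)\to e^z$ pointwise and, for $\gamma>8$, the Gaussian tails of $U_1V_1$ make $e^{|U_1V_1|\nu d/\gamma}$ integrable (this is exactly where $\gamma$ being large — relative to the fixed but arbitrary $d$ and $\nu\le 4$ — is used; note $d$ is fixed here), with a uniform-in-$K$ dominating function $e^{|U_1V_1|/\gamma}$; and (b) the factors $\|e^{U_lV_l/\gamma}\|_{L_p}$ and $\|S_K(U_lV_l/\gamma)\|_{L_p}$ for appropriate $p$, which are bounded uniformly in $K$ by $\|e^{|U_lV_l|/\gamma}\|_{L_p} < \infty$ since $S_K \le e^{|z|}$ for $z \ge 0$ and $|S_K(z)| \le e^{|z|}$ always. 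Combining, each of the $d$ telescoping terms is bounded by (a vanishing quantity) $\times$ (a finite constant depending on $d,\gamma,\nu$), and summing over $j$ gives the result.

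\textbf{Main obstacle.} The genuine work is bookkeeping the integrability in step three: I must verify that $\mean[\exp(c|U_1V_1|)] < \infty$ for the relevant constant $c$ (roughly $\nu d/\gamma$ after Hölder, though one can be more careful and only need $c$ bounded in terms of $1/\gamma$ times constants — wait, $d$ enters through $\nu d$ in the Hölder split, but $\gamma>8$ fixed is not enough to beat arbitrary $d$). This is the subtle point: for the product-of-$d$-factors argument, if I split Hölder evenly I get exponent $\sim \nu d$, and $\mean[e^{\nu d |U_1 V_1|/\gamma}]$ is finite only if $\nu d/\gamma$ is below the critical threshold for the sub-exponential tail of $|U_1V_1|$ (product of two Gaussians has tails $\sim e^{-ct}$, so $\mean[e^{a|U_1V_1|}]<\infty$ iff $a < c$). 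Since $d$ is \emph{fixed} in the lemma statement (the convergence is "for any fixed $n$ and $d$" in Assumption~\ref{assumption:L_nu}), this is fine — but I should be explicit that the dominating function and its integrability depend on $d$, and that the claim is pointwise-in-$d$ convergence, not uniform in $d$. An alternative that sidesteps the Hölder blow-up: bound $|\prod_j a_j - \prod_j b_j| \le \prod_j \max(|a_j|,|b_j|) \cdot \sum_j \frac{|a_j-b_j|}{\max(|a_j|,|b_j|)} \le e^{\sum_j |U_jV_j|/\gamma}\sum_j|a_j-b_j|e^{-|U_jV_j|\cdot 0}$ — messier, so I would stick with telescoping plus Hölder and simply note the $d$-dependence of constants. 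A clean dominated-convergence wrapper over the whole $d$-fold product (dominating the entire difference by $2e^{\sum_j|U_jV_j|/\gamma}$, integrable when $d/\gamma$ small — again fine for fixed $d$, large $\gamma$, but the lemma only assumes $\gamma>8$, so one must instead use that for fixed $d$ and $K\to\infty$ the integrand goes to zero pointwise and is dominated, with integrability following because $\gamma$, though possibly as small as $8$, still makes the single-coordinate moment generating functions finite up to the needed order for \emph{fixed} $d$ — this requires $\nu d / \gamma$ small, which need not hold). Given this tension, the safest route is the telescoping identity combined with a careful choice of Hölder exponents that keeps every exponential moment below its critical threshold, which is possible precisely because we only need finiteness (not smallness) for all but one factor and $L_\nu\to 0$ (not uniform) for the remaining factor; I will present the argument in that form.
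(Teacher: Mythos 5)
There is a genuine gap, and it sits exactly at the point you flag as the ``main obstacle''. The Hölder-based combination step you commit to cannot be repaired by a cleverer choice of exponents: if you split the telescoped term $\bigl(\prod_{l<j} S_K\bigr)\,(e^{U_jV_j/\gamma}-S_K(U_jV_j/\gamma))\,\bigl(\prod_{l>j} e^{U_lV_l/\gamma}\bigr)$ by Hölder with exponents $p_0,\dots,p_d$ satisfying $\sum_l 1/p_l = 1$, then each exponential factor forces $\nu p_l/\gamma < 1$ (the moment generating function of $|U_lV_l|$, a product of unit-variance Gaussians, blows up at threshold $1$), i.e.\ $1/p_l > \nu/\gamma$ for every $l \neq 0$; summing gives $d\nu/\gamma < 1$, so the argument requires $\gamma > \nu d$, whereas the lemma fixes $\gamma > 8$ and allows arbitrary $d$. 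Your even split (exponent $\nu d$ on the remainder) fails for the same reason. Moreover, the reason you give for discarding the simpler dominated-convergence route is a miscalculation: since $\Sigma = I_d$ and $\bU \perp \bV$, the pairs $(U_j,V_j)$ are independent across $j$, so $\mean\bigl[e^{\nu\sum_j |U_jV_j|/\gamma}\bigr] = \prod_j \mean\bigl[e^{\nu|U_jV_j|/\gamma}\bigr]$, which is finite as soon as $\nu/\gamma < 1$ --- no condition of the form ``$\nu d/\gamma$ small'' is needed; the bound depends on $d$ but is finite for each fixed $d$, which is all the lemma asks. In short, the correct tool for the $d$-fold product is independence (expectation of the product factorizes exactly), not Hölder, and once you use it your telescoping skeleton goes through: each telescoped term has $\nu$-th moment equal to a product of one-dimensional moments, of which all but one are bounded uniformly in $K$ (by $\mean[e^{\nu|U_1V_1|/\gamma}]<\infty$ or, for the kernel factors, by $1$) and the remaining one-dimensional remainder moment tends to $0$.

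For comparison, the paper's proof takes the same two-level structure (one-dimensional expansion, then a telescoping sum over coordinates) but handles both levels differently from you: in one dimension it keeps the Gaussian prefactors and uses a Lagrange-remainder Taylor bound, absorbing the $e^{z/\gamma}$ remainder into $e^{-\nu(|U|-|V|)^2/(2\gamma)} \le 1$, so only polynomial Gaussian moments appear, and Stirling's formula shows the error is $o(1)$ precisely when $2\nu/\gamma < 1$ (this is where $\gamma > 8$ with $\nu \le 4$ enters); in $d$ dimensions it applies Jensen to the telescoping sum and then factors each expectation using the independence of coordinates. Your dominated-convergence version of the one-dimensional step is fine (it needs only $\gamma > \nu$), so the fix you need is solely at the multivariate stage: replace Hölder by the exact factorization across independent coordinates.
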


To see that Lemma \ref{lem:rbf:decomposition} indeed gives the functional decomposition we want in \cref{assumption:L_nu}, we need to rewrite the product of sums into a sum. To this end, let $g_d$ be the $d$-tuple generalisation of the Cantor pairing function from $\N$ to $(\N \cup \{0\} )^d$ and $[g_d(k)]_l$ be the $l$-th element of $g_d(k)$. Given $\{\lambda^*_l\}_{l=0}^\infty$ and $\{\phi^*_l\}_{l=0}^\infty$ from Lemma \ref{lem:rbf:decomposition}, we define, for every $k \in \N$ and $\bx = (x_1, \ldots, x_d) \in \R^d$,
\begin{align*}
    \alpha_{k}
    \;\coloneqq&\;
    \mprod_{l=1}^d \lambda^*_{[g_d(k)]_l}
    &\text{ and }&&
    \psi_{k}(\bx)
    \;\coloneqq&\;
    \mprod_{l=1}^d \phi^*_{[g_d(k)]_l}(x_l)\;. \tagaligneq \label{eqn:rbf:decomposition:alt:defn}
\end{align*}
With this construction, for each $K \in \N$, we can now write
\begin{align*}
    \mprod_{j=1}^d \Big(
    \msum_{k=0}^K &\lambda^*_{k} \phi^*_{k}(U_j) \phi^*_{k}(V_j)
    \Big)
    \\
    \;=&\;
    \msum_{k_1, \ldots, k_d=0}^K (\lambda^*_{k_1} \ldots \lambda^*_{k_d}) (\phi^*_{k_1}(U_1) \ldots \phi^*_{k_d}(U_d) ) (\phi^*_{k_1}(V_1) \ldots \phi_{k_d}(V_d) )
    \\
    \;=&\;
    \msum_{k_1, \ldots, k_d=0}^K \; \alpha_{g_d^{-1}(k_1, \ldots, k_d )}
    \; \psi_{g_d^{-1}(k_1, \ldots, k_d )}(\bU)
    \; \psi_{g_d^{-1}(k_1, \ldots, k_d )}(\bV)\;.
\end{align*}
Since the Cantor pairing function is such that $\min_{l \leq d} [g_d(K)]_l \rightarrow \infty$ as $K \rightarrow \infty$, Lemma \ref{lem:rbf:decomposition} indeed gives a functional decomposition in terms of $\{\alpha_k\}_{k=1}^\infty$ and $\{\psi_k\}_{k=1}^\infty$ as
\begin{align*}
    \mean\Big[ \Big| \exp\Big( - \mfrac{1}{2\gamma} \| \bU - \bV\|_2^2 \Big) - 
    \msum_{k=1}^K \alpha_{k} \psi_{k}(\bU) \psi_{k}(\bV)
    \Big|^\nu \Big]
    \;\xrightarrow{K \rightarrow \infty}\;
    0\;. \tagaligneq \label{eqn:rbf:decomposition:alt}
\end{align*}
We remark that both $\alpha_k$ and $\psi_k$ are independent of the mean vectors $\mu_1$ and $\mu_2$, which makes this representation useful for a generic mean-shift setting.

\subsection{KSD U-statistic with RBF kernel}
\label{appendix:ksd:rbf:moments}
Under the Gaussian mean-shift setup with an identity covariance matrix, gradient of the log-density is given by $\nabla \log p(\bx) = - \bx$ for $\bx \in \R^d$ and the U-statistic for the RBF-kernel KSD is
\begin{align*}
    \uPksd(\bx, \bx')
    \;=&\; 
    \big( \nabla \log p(\bx) \big)^\top \big( \nabla \log p(\bx') \big) \kappa(\bx, \bx') \;+\; \big( \nabla \log p(\bx) \big)^\top  \nabla_2 \kappa(\bx, \bx') 
    \\
    & + \big( \nabla \log p(\bx') \big)^\top  \nabla_1 \kappa(\bx, \bx') \;+\; \Tr(\nabla_1 \nabla_2 \kappa(\bx, \bx'))
    \\
    \;=&\;
    \exp\Big( - \mfrac{\| \bx - \bx' \|_2^2 }{2\gamma} \Big)
    \Big(
        \bx^\top \bx' 
        + \mfrac{1}{\gamma} \bx^\top (\bx' - \bx) 
        + \mfrac{1}{\gamma} (\bx')^\top (\bx - \bx')
    + \Big( \mfrac{d}{\gamma} - \mfrac{\| \bx - \bx' \|_2^2}{\gamma^2} \Big) 
    \Big)
    \\
    \;=&\;
    \exp\Big( - \mfrac{ \| \bx - \bx' \|_2^2 }{2\gamma}  \Big)
    \Big( 
        \bx^\top \bx' 
        - \mfrac{\gamma + 1}{\gamma^2} \| \bx - \bx' \|_2^2
        + \mfrac{d}{\gamma}
    \Big)
    \;. \tagaligneq \label{eqn:defn:ksd:rbf}
\end{align*}
We first verify that \cref{assumption:L_nu} holds by adapting $\{\alpha_k\}_{k=1}^\infty$ and $\{\psi_k\}_{k=1}^\infty$ from \cref{appendix:rbf:decompose}.

\begin{lemma} \label{lem:ksd:assumption:L_nu} Assume that $\gamma > 24$. For $k' \in \N$, consider
\begin{align*} 
    &\lambda_{(k'-1)(d+3) + 1} 
    \;=\;
    - \mfrac{\gamma+1}{\gamma^2} \alpha_{k'}
    \;,
    &&
    \phi_{(k'-1)(d+3) + 1} (\bx) 
    \;=\;
    \psi_{k'}(\bx) (\|\bx\|_2^2 + 1)
    \;,
    \\
    &\lambda_{(k'-1)(d+3) + 2} 
    \;=\;
    \mfrac{\gamma+1}{\gamma^2} \alpha_{k'}
    \;,
    &&
    \phi_{(k'-1)(d+3) + 2} (\bx) 
    \;=\;
    \psi_{k'}(\bx) \|\bx\|_2^2 
    \;,
    \\
    &\lambda_{(k'-1)(d+3) + 3} 
    \;=\;
    \Big( \mfrac{d}{\gamma} + \mfrac{\gamma+1}{\gamma^2} \Big) \alpha_{k'}
    \;,
    &&
    \phi_{(k'-1)(d+3) + 3}  (\bx) 
    \;=\;
    \psi_{k'}(\bx)
    \;,
\end{align*} 
and for $l=1, \ldots, d$, define
\begin{align*}
    &\lambda_{(k'-1)(d+3) + 3 + l} 
    \;=\;
    \mfrac{\gamma^2+2\gamma+2}{\gamma^2} \alpha_{k'}
    \;,
    &&
    \phi_{(k'-1)(d+3) + 3 + l}  (\bx) 
    \;=\;
    \psi_{k'}(\bx) x_l
    \;.
\end{align*}
Then \cref{assumption:L_nu} holds with any $\nu \in (2,3]$ for $u=\uPksd$, $\{\lambda_k\}_{k=1}^\infty$ and $\{\phi_k\}_{k=1}^\infty$ defined above.
\end{lemma}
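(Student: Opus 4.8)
The plan is to recognise the tabulated $\{\lambda_k\}_{k=1}^\infty,\{\phi_k\}_{k=1}^\infty$ as the weak Mercer representation $\{\alpha_k,\psi_k\}_{k=1}^\infty$ of the RBF kernel from \cref{appendix:rbf:decompose}, multiplied through by the polynomial factor that converts $\kappa(\bx,\bx')=\exp(-\|\bx-\bx'\|_2^2/(2\gamma))$ into $\uPksd$. First I would use $\|\bx-\bx'\|_2^2=\|\bx\|_2^2-2\bx^\top\bx'+\|\bx'\|_2^2$ to rewrite \eqref{eqn:defn:ksd:rbf} as $\uPksd(\bx,\bx')=\kappa(\bx,\bx')\,q(\bx,\bx')$, where $q(\bx,\bx')=\tfrac{\gamma^2+2\gamma+2}{\gamma^2}\bx^\top\bx'-\tfrac{\gamma+1}{\gamma^2}\|\bx\|_2^2-\tfrac{\gamma+1}{\gamma^2}\|\bx'\|_2^2+\tfrac{d}{\gamma}$ is a symmetric, degree-two-in-each-argument polynomial whose cross term $\bx^\top\bx'=\sum_{l=1}^d x_lx_l'$ is itself a sum of $d$ rank-one products.

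Next I would verify the exact algebraic identity that the block of $d+3$ functions indexed by $(k'-1)(d+3)+j$, $j=1,\dots,d+3$, was built to satisfy, namely that summing the corresponding rank-one products collapses the block to $\alpha_{k'}\psi_{k'}(\bx)\psi_{k'}(\bx')\,q(\bx,\bx')$. This is a direct expansion: the first two functions, with coefficients $\mp\tfrac{\gamma+1}{\gamma^2}\alpha_{k'}$ and monomials $\|\bx\|_2^2+1$ and $\|\bx\|_2^2$, combine — the ``$+1$'' being the standard device for symmetrising $\|\bx\|_2^2+\|\bx'\|_2^2$ — into $-\tfrac{\gamma+1}{\gamma^2}\alpha_{k'}\psi_{k'}(\bx)\psi_{k'}(\bx')(\|\bx\|_2^2+\|\bx'\|_2^2+1)$; the third function, with coefficient $(\tfrac{d}{\gamma}+\tfrac{\gamma+1}{\gamma^2})\alpha_{k'}$, cancels the stray additive $-\tfrac{\gamma+1}{\gamma^2}$ and leaves $\tfrac{d}{\gamma}$; and the last $d$ functions $\psi_{k'}(\bx)x_l$ with coefficient $\tfrac{\gamma^2+2\gamma+2}{\gamma^2}\alpha_{k'}$ sum to $\tfrac{\gamma^2+2\gamma+2}{\gamma^2}\alpha_{k'}\psi_{k'}(\bx)\psi_{k'}(\bx')\bx^\top\bx'$. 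Summing over $k'\le m$ then gives $\sum_{k=1}^{m(d+3)}\lambda_k\phi_k(\bx)\phi_k(\bx')=\big(\sum_{k'=1}^m\alpha_{k'}\psi_{k'}(\bx)\psi_{k'}(\bx')\big)\,q(\bx,\bx')$.

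Then I would pass to the $L_\nu$ statement. Applying \eqref{eqn:rbf:decomposition:alt} with $\bU=\bX_1,\bV=\bX_2\overset{i.i.d.}{\sim}Q=\cN(\bmu,I_d)$ — legitimate since the standing hypothesis $\gamma>24$ implies $\gamma>8$, the condition needed in \cref{lem:rbf:decomposition} — the partial sums $S_m\coloneqq\sum_{k'=1}^m\alpha_{k'}\psi_{k'}(\bX_1)\psi_{k'}(\bX_2)$ converge to $\kappa(\bX_1,\bX_2)$ in $L_{\nu^*}$ for every fixed $\nu^*\in(2,4]$; for $\nu\in(2,3]$ I would take $\nu^*=\tfrac{\nu+4}{2}\le\tfrac72$. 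Since the residual at truncation level $m(d+3)$ equals $(S_m-\kappa(\bX_1,\bX_2))\,q(\bX_1,\bX_2)$, Hölder with $\tfrac1\nu=\tfrac1{\nu^*}+\tfrac1r$ bounds its $L_\nu$ norm by $\|S_m-\kappa(\bX_1,\bX_2)\|_{L_{\nu^*}}\|q(\bX_1,\bX_2)\|_{L_r}$, whose second factor is finite (depending only on $d,\gamma$) as $q$ is degree two in two independent Gaussian vectors, and whose first factor vanishes as $m\to\infty$. For a general truncation level $K=m(d+3)+r$ with $0\le r<d+3$, the extra at most $d+3$ terms from block $m+1$ are each a $(d,\gamma)$-bounded multiple of $\alpha_{m+1}\psi_{m+1}(\bX_1)\psi_{m+1}(\bX_2)$ times a fixed monomial in $\bX_1,\bX_2$; since the general term of an $L_{\nu^*}$-convergent series tends to $0$, the same Hölder estimate makes these vanish in $L_\nu$. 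Hence $\varepsilon_{K;\nu}\to0$ as $K\to\infty$ for every $\nu\in(2,3]$, which is \cref{assumption:L_nu} for $u=\uPksd$ with the stated $\{\lambda_k\},\{\phi_k\}$.

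The proof is mostly bookkeeping, and the one place needing genuine care is the algebraic identity in the second step: one must check that the precise coefficients listed telescope \emph{exactly} to $q$, with no residual constant — in particular the interaction between the ``$+1$'' shift inside $\phi_{(k'-1)(d+3)+1}$ and the constant $(\tfrac{d}{\gamma}+\tfrac{\gamma+1}{\gamma^2})\alpha_{k'}$ attached to the third function. The only other subtlety is analytic: one transports $L_\nu$-convergence of the Mercer partial sums through multiplication by the \emph{unbounded} polynomial $q$, which forces a strictly larger auxiliary exponent $\nu^*>\nu$ — and this is exactly why the lemma asserts \cref{assumption:L_nu} only for $\nu\in(2,3]$, so that $\nu^*\le4$ stays in the range where \eqref{eqn:rbf:decomposition:alt} is available, rather than for all $\nu\le4$.
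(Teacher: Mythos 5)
Your proposal is correct, and its core is the same as the paper's: you verify the same exact algebraic identity (each block of $d+3$ pairs $(\lambda_k,\phi_k)$ collapses to $\alpha_{k'}\psi_{k'}(\bx)\psi_{k'}(\bx')\,q(\bx,\bx')$ with $q$ the degree-two polynomial factor of $\uPksd$), and you then transport the $L_{\nu^*}$ convergence of the RBF decomposition from Lemma \ref{lem:rbf:decomposition} through multiplication by $q$ via H\"older with an auxiliary exponent $\nu^*\in(\nu,4]$, exactly as the paper does (there the polynomial is split into the four pieces $T_{K';1},\dots,T_{K';3+l}$ before applying H\"older, but this is cosmetic). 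The one genuinely different sub-step is the treatment of truncation levels $K$ that are not multiples of $d+3$: the paper bounds the leftover partial block by an explicit computation, estimating $(\lambda^*_k)^\nu\,\mean[(\phi^*_k(\bX_1))^{2\nu}]$ through Gaussian absolute-moment formulas and Stirling's approximation, and it is precisely this estimate (the factor $(8\nu/\gamma)^{\nu k}$ with $\nu\le 3$) that consumes the hypothesis $\gamma>24$; you instead observe that the leftover terms are fixed-polynomial multiples of the single summand $\alpha_{m+1}\psi_{m+1}(\bX_1)\psi_{m+1}(\bX_2)$, whose $L_{\nu^*}$ norm tends to zero because consecutive partial sums of an $L_{\nu^*}$-convergent series have vanishing difference, and then you apply the same H\"older step. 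Your variant is shorter, avoids the Stirling computation entirely, and in fact only needs $\gamma>8$ (so the stated hypothesis $\gamma>24$ is more than sufficient); the paper's route is more self-contained in that it re-derives a quantitative bound on the individual eigen-terms rather than relying on the convergence of the whole series, but both are valid proofs of the lemma as stated.
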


\vspace{.2em}

The following result (proved in Appendix \ref{pf:ksd:moments:analytical}) provides analytical forms or upper bounds for the moments of KSD U-statistic. 

\begin{lemma}[KSD moments]
\label{lem:ksd:moments:analytical}
    Let $\kappa$ be a RBF kernel with bandwidth $\gamma = \omega(1)$, and let $\bX, \bX'$ be independent draws from $Q$. Under the mean-shift setup with an identity covariance matrix, it follows that
    \begin{proplist}
        \item For every $\bx \in \R^d$,
        \begin{align*}
            \gksd(\bx)
            \;\coloneqq&\;
            \E[ \uPksd(\bx, \bX') ]
            \\
            \;=&\;
            \left( \mfrac{\gamma}{\gamma + 1} \right)^{d/2} \exp\left( - \mfrac{1}{2 (\gamma + 1)} \| \bx - \bmu \|_2^2 \right) \left( 
                \mfrac{2 + \gamma}{1 + \gamma} \bmu^\top \bx 
                - \mfrac{1}{1 + \gamma} \| \bmu \|_2^2
            \right) \;;
        \end{align*}
        \item The mean is given by $\ksd(Q, P) = \left( \mfrac{\gamma}{\gamma + 2} \right)^{d/2} \| \mu \|_2^2$\;;
        \item The variance of the conditional expectation $\gksd(\bX)$ is given by
        \begin{align*}
            \sigcond^2 
            \;=&\;
            \left( \mfrac{\gamma^2}{(1 + \gamma)(3 + \gamma)} \right)^{d/2}
            \bigg(
            \mfrac{(2 + \gamma)^2}{(1 + \gamma)(3 + \gamma)} \| \bmu \|_2^2
            + \left( 1 - \left( \mfrac{(1 + \gamma)(3 + \gamma)}{(2 + \gamma)^2} \right)^{d/2} \right) \| \bmu \|_2^4 \bigg)
            \;;
        \end{align*}
        \item The variance of $\uPksd(\bX, \bX')$ is given by
        \begin{align*}
            \sigfull^2
            \;=&\;
            \left( \mfrac{\gamma}{4 + \gamma} \right)^{d/2}
            \bigg(
                d 
                + \mfrac{d^2}{\gamma^2}
                + \mfrac{2d \| \bmu \|_2^2}{\gamma}
                + 2\| \bmu \|_2^2
                + \left(1 - \left( \mfrac{\gamma (4 + \gamma)}{(2 + \gamma)^2} \right)^{d/2} \right) \| \bmu \|_2^4
                \\
                &\qquad\qquad\qquad\;
                 + o\left( 
                    d 
                    + \mfrac{d^2}{\gamma^2}
                    + \mfrac{d \| \bmu \|_2^2}{\gamma}
                    +  \| \bmu \|_2^2
                \right)
            \bigg)\;;
        \end{align*}
        \item For any $\nu > 2$, there exist positive constants $C_1, C_2$ depending only on $\nu$ such that the $\nu$-th absolute moment of the conditional expectation satisfies
        \begin{align*}
            \E[ | \gksd(\bX) |^\nu ]
            \;\leq\;
            \left( \mfrac{\gamma}{1 + \gamma} \right)^{\nu d / 2 }
            \left( \mfrac{1 + \gamma}{1 + \nu + \gamma} \right)^{d/2}
            ( C_1 \| \mu \|_2^\nu + C_2 \| \mu \|_2^{2\nu} ) 
            \;.
        \end{align*}
        \item For any $\nu > 2$, there exist positive constants $C_3, C_4, C_5, C_6$ depending only on $\nu$ such that  the $\nu$-th absolute moment of $\uPksd(\bX, \bX')$ satisfies
        \begin{align*}
            \E[ | \uPksd(\bX, \bX') |^\nu ]
            \;&\leq\;
            \left( \mfrac{\gamma}{2\nu + \gamma} \right)^{d/2}
            \Big(
                C_3 d^{\nu / 2} 
                + C_4 \Big( \mfrac{d}{\gamma} \Big)^\nu
                + C_5 \| \bmu \|_2^\nu
                + C_6 \| \bmu \|_2^{2\nu}
                \\
                &\qquad\qquad\qquad\quad\;\;
                + o\Big( 
                    d^{\nu / 2}
                    + \mfrac{d^\nu}{\gamma^\nu} 
                    + \mfrac{\| \bmu \|_2^{2\nu}}{\gamma^\nu} 
                \Big)
            \Big)
            \;.
        \end{align*}
    \end{proplist}
    In particular, when $\|\mu\|_2=\Theta(1)$ and $\gamma =\Omega(d)$, \cref{assumption:moment:ratio} holds with any $\nu>2$ for $\uPksd$ under $Q$.
\end{lemma}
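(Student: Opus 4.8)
### Proof proposal for Lemma \ref{lem:ksd:moments:analytical}

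The plan is to compute the moments in parts (i)–(iv) by direct Gaussian integration, exploiting the fact that every quantity appearing in $\uPksd$ is a polynomial in $\bx,\bx'$ times a Gaussian factor $\exp(-\|\bx-\bx'\|_2^2/(2\gamma))$, and that products of such objects remain in the same class. Concretely, for part (i) I would write $\uPksd(\bx,\bx')$ in the form given in \eqref{eqn:defn:ksd:rbf} and integrate over $\bx' \sim \cN(\bmu, I_d)$. The key computational device is the standard identity that, for $\bx' \sim \cN(\bmu,I_d)$, $\mean[\exp(-\tfrac{1}{2\gamma}\|\bx-\bx'\|_2^2) \, q(\bx')]$ equals $(\tfrac{\gamma}{\gamma+1})^{d/2}\exp(-\tfrac{1}{2(\gamma+1)}\|\bx-\bmu\|_2^2)$ times the expectation of $q$ under a new Gaussian with mean $\tfrac{\gamma \bmu + \bx}{\gamma+1}$ and covariance $\tfrac{\gamma}{\gamma+1}I_d$ (complete the square in the exponent). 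Applying this with $q$ the quadratic polynomial in the bracket of \eqref{eqn:defn:ksd:rbf} and simplifying the resulting first- and second-moment terms yields the claimed closed form for $\gksd(\bx)$. Part (ii) is then immediate: integrate $\gksd(\bx)$ once more over $\bx \sim \cN(\bmu,I_d)$ using the same completion-of-the-square identity, and the linear term integrates against the shifted Gaussian to give $(\tfrac{\gamma}{\gamma+2})^{d/2}\|\mu\|_2^2$.

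For parts (iii) and (iv) I would compute $\mean[\gksd(\bX)^2]$ and $\mean[\uPksd(\bX,\bX')^2]$ respectively. Squaring $\gksd$ doubles the Gaussian exponent (giving a $\tfrac{1}{\gamma+1}$ becoming effectively $\tfrac{2}{\gamma+1}$ after accounting for the integrating measure, hence the factor $(\tfrac{\gamma^2}{(1+\gamma)(3+\gamma)})^{d/2}$ after one more integration), and leaves a degree-four polynomial in $\bx$ whose Gaussian moments I would expand term by term; subtracting $\ksd(Q,P)^2$ from part (ii) gives $\sigcond^2$, and bookkeeping of which terms survive produces the stated expression including the $\|\mu\|_2^4$ correction with its $(1-(\cdots)^{d/2})$ factor. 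Part (iv) is the same strategy applied to $\uPksd(\bX,\bX')^2$: this is a product of two Gaussian factors, which combines into a single $\exp(-\tfrac{1}{\gamma}\|\bx-\bx'\|_2^2)$ (bandwidth halved, hence $(\tfrac{\gamma}{4+\gamma})^{d/2}$ after integrating both arguments), multiplied by a polynomial of degree up to eight; I would track only the leading terms in each of the scales $d$, $d^2/\gamma^2$, $d\|\mu\|_2^2/\gamma$, $\|\mu\|_2^2$, $\|\mu\|_2^4$ and collect everything else into the $o(\cdot)$ remainder, which is legitimate since $\gamma=\omega(1)$. For parts (v) and (vi), rather than exact moments I would bound $|\gksd(\bx)|^\nu$ and $|\uPksd(\bx,\bx')|^\nu$ crudely: bound the polynomial factors by their leading monomials up to constants, then integrate the pure-Gaussian piece $\exp(-\tfrac{\nu}{2\gamma}\|\bx-\bmu\|_2^2)$ or its two-argument analogue against $\cN(\bmu,I_d)$, which contributes the $(\tfrac{1+\gamma}{1+\nu+\gamma})^{d/2}$ and $(\tfrac{\gamma}{2\nu+\gamma})^{d/2}$ prefactors; the polynomial moments against a Gaussian are bounded by constants depending only on $\nu$ times the relevant powers of $d$, $1/\gamma$ and $\|\mu\|_2$.

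Finally, to conclude \cref{assumption:moment:ratio} in the regime $\|\mu\|_2=\Theta(1)$, $\gamma=\Omega(d)$, I would divide the bound in (v) by the leading behaviour of $\sigcond$ from (iii), and the bound in (vi) by that of $\sigfull$ from (iv). When $\gamma=\Omega(d)$ all the exponential-in-$d$ prefactors are $\Theta(1)$ (since $(\tfrac{\gamma}{c+\gamma})^{d/2}=\exp(-\tfrac{cd}{2\gamma}+o(1))=\Theta(1)$), the $d^2/\gamma^2$ and $d/\gamma$ terms are $O(1)$, and both $\sigcond^2$ and $\sigfull^2$ are $\Theta(1)$; matching the remaining polynomial factors in the numerator against the denominator then shows $\Mcondnu/\sigcond$ and $\Mfullnu/\sigfull$ are $O(1)$ uniformly in $d$. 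I expect the main obstacle to be part (iv): the square $\uPksd(\bX,\bX')^2$ is a product of a degree-six polynomial prefactor (from the $\bx^\top\bx' - \tfrac{\gamma+1}{\gamma^2}\|\bx-\bx'\|_2^2 + \tfrac{d}{\gamma}$ bracket, squared) with two coupled Gaussian factors, and carefully identifying which of the many cross terms contribute at each of the named scales versus into the $o(\cdot)$ — while keeping the combined-Gaussian change of measure straight — is the delicate bookkeeping; the other parts are routine Gaussian-integral computations once the completion-of-the-square lemma is in hand.
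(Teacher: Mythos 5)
Your strategy for parts (i)--(vi) is essentially the paper's own: the ``standard identity'' you invoke is exactly the paper's completion-of-the-square integrals (Lemmas \ref{lem:gauss:integral:single} and \ref{lem:gauss:integral:double}), applied once for (i)--(iii) and (v), and in its coupled two-argument form for (iv) and (vi), with the $\nu$-th moments handled by crude Jensen-type bounds on the polynomial factors; your prefactors $(\gamma/(4+\gamma))^{d/2}$, $((1+\gamma)/(1+\nu+\gamma))^{d/2}$ and $(\gamma/(2\nu+\gamma))^{d/2}$ all match. (Minor slips that do not affect the route: the bracket in $\uPksd$ is quadratic in $(\bx,\bx')$, so its square is degree four, not six or eight; and in (v) the exponent to be integrated is $\exp(-\tfrac{\nu}{2(1+\gamma)}\|\bx-\bmu\|_2^2)$, not $\exp(-\tfrac{\nu}{2\gamma}\|\bx-\bmu\|_2^2)$, though the prefactor you report is the correct one.)

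The genuine problem is in your final step verifying Assumption \ref{assumption:moment:ratio}. You assert that in the regime $\|\mu\|_2=\Theta(1)$, $\gamma=\Omega(d)$ ``both $\sigcond^2$ and $\sigfull^2$ are $\Theta(1)$.'' That is false for $\sigfull^2$: by your own part (iv), once the exponential prefactor is $\Theta(1)$ the variance is $\Theta\big(d+\tfrac{d^2}{\gamma^2}+\tfrac{d\|\mu\|_2^2}{\gamma}+\|\mu\|_2^2\big)=\Theta(d)$, driven by the leading $d$ term, which you appear to have dropped. Taken literally, your bookkeeping would then give $\Mfullnu/\sigfull = O(d^{1/2})$ rather than $O(1)$ (the bound in (vi) has $C_3 d^{\nu/2}$ as its dominant term and is tight at that order), so the uniform boundedness you claim would not follow. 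The correct matching, which is what the paper does, is $\sigfull = \Theta\big(d^{1/2}+\tfrac{d}{\gamma}\big)=\Theta(d^{1/2})$ against $\Mfullnu \leq \E[|\uPksd(\bX,\bX')|^\nu]^{1/\nu} = O\big(d^{1/2}+\tfrac{d}{\gamma}\big)=O(d^{1/2})$, so the ratio is $O(1)$; only $\sigcond^2$ and $\Mcondnu$ are $\Theta(1)$ and $O(1)$ respectively in this regime. The fix is a one-line correction, but as written this step fails.
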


\begin{figure}
    \centering
    \includegraphics[width=0.8\textwidth]{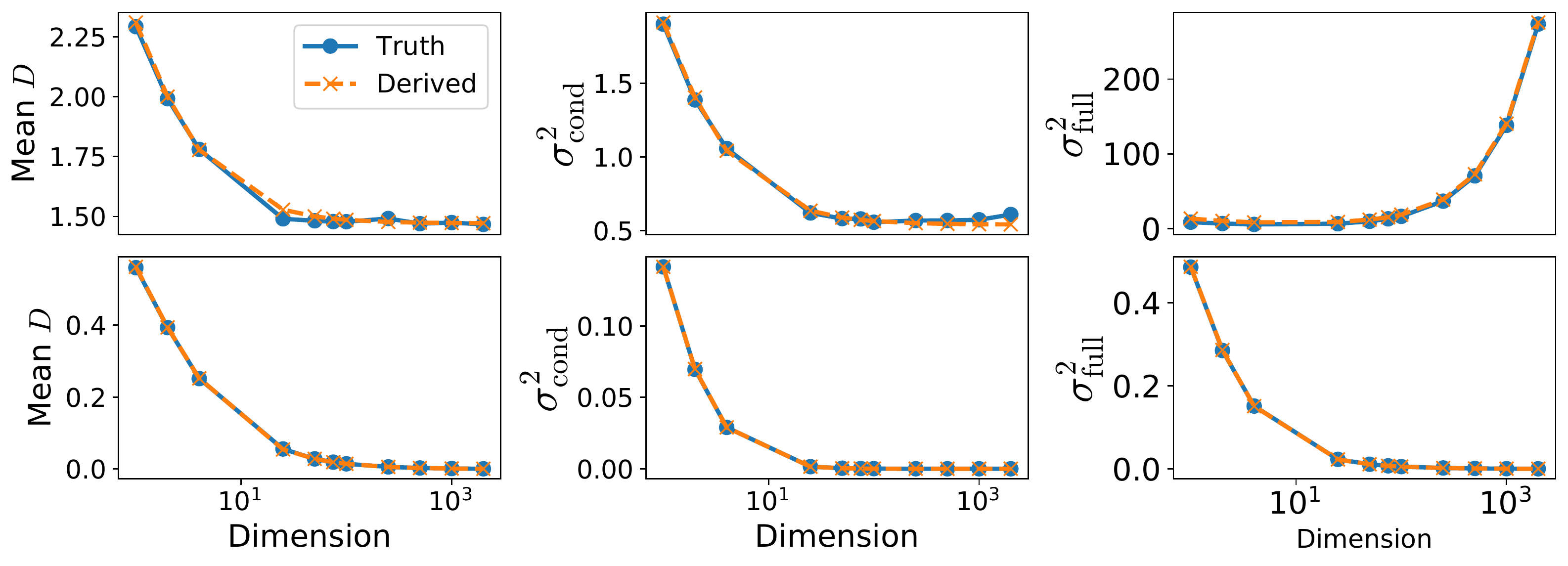}
    \caption{Verifying the analytical expressions for the first two moments of KSD and MMD. \emph{Top.} KSD moments derived in Lemma \ref{lem:ksd:moments:analytical}. \emph{Bottom.} MMD moments derived in Lemma \ref{lem:mmd:moments:analytical}. The ground truth is estimated using $n=4000$ samples for KSD and $n=10000$ samples for MMD, respectively, and the reported results are averaged over 5 random seeds.}
    \label{fig:mmd:moments}
\end{figure}


\begin{figure}
    \centering
    \begin{minipage}{0.47\textwidth}
        \centering
        \includegraphics[width=\textwidth]{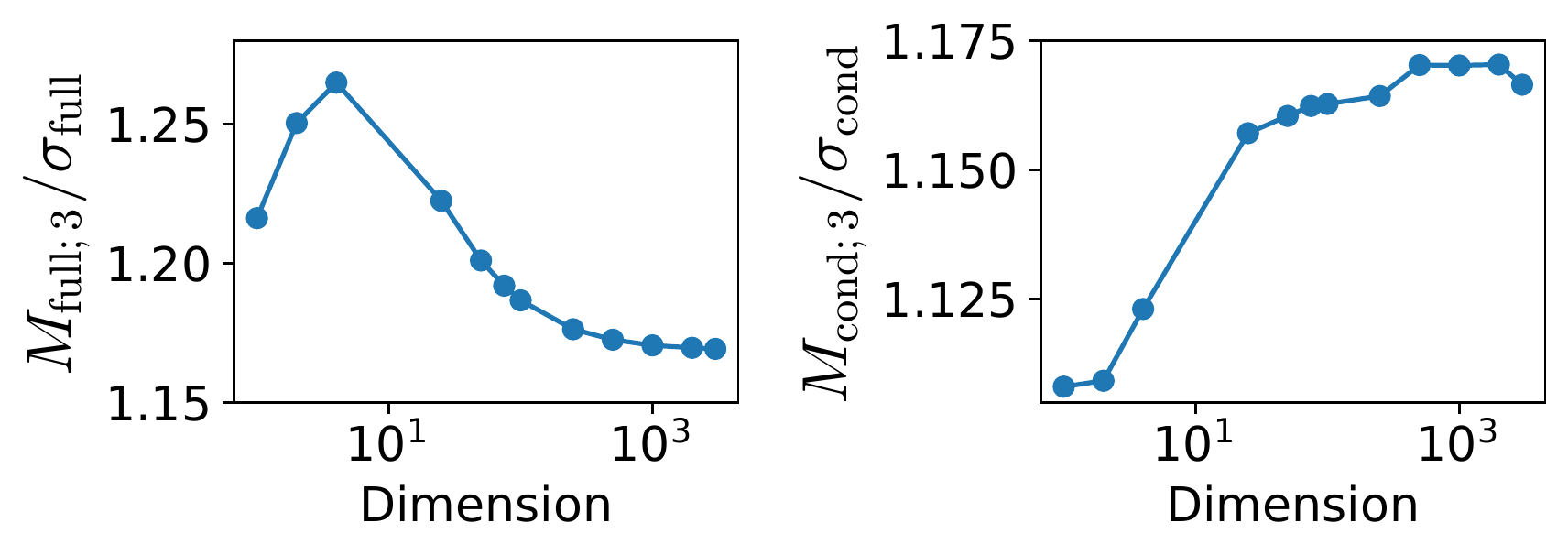}
        \label{fig:ksd:moment:ratios}
    \end{minipage}
    \hfill
    \begin{minipage}{0.47\textwidth}
         \centering
         \includegraphics[width=\textwidth]{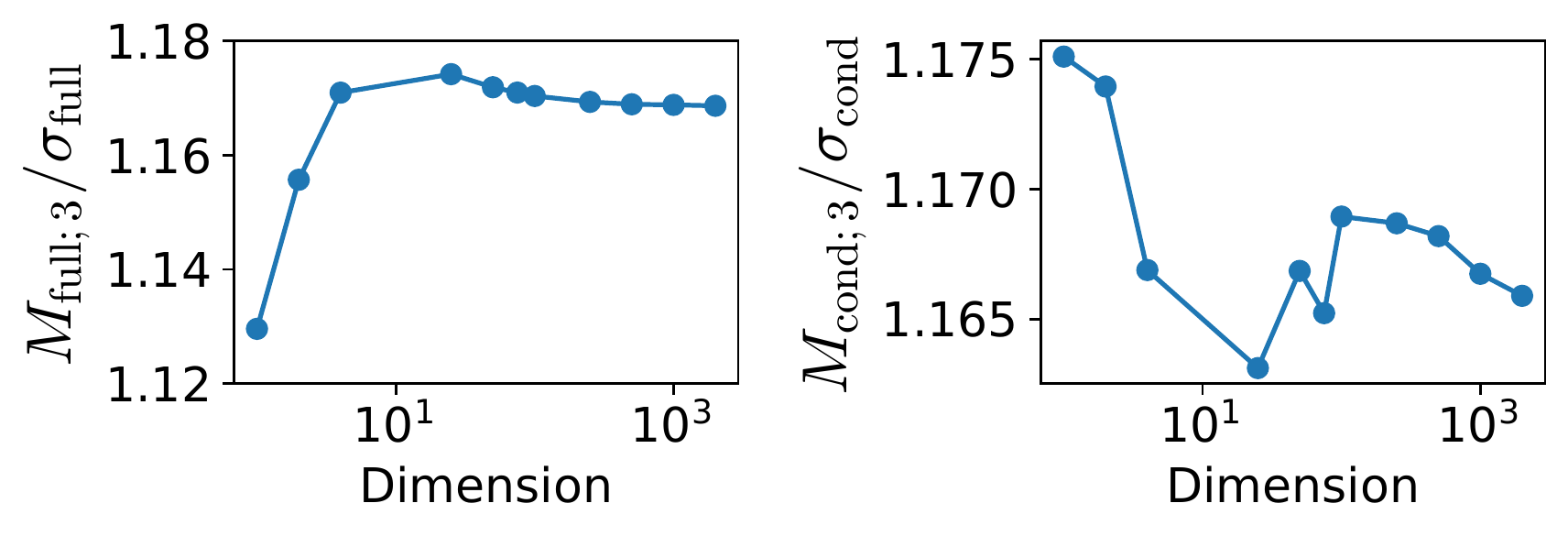}
         \label{fig:mmd:moment:ratios}
    \end{minipage}
    \vspace{-1.5em}
    \caption{Verifying \Cref{assumption:moment:ratio} for $\nu = 3$ for KSD and MMD with RBF kernels. All moment ratios appear to be bounded by a dimension-independent constant. \emph{Left and middle-left.} KSD moment ratios. \emph{Middle-right and right.} MMD moment ratios. The reported results are averaged over 5 random seeds.}
    \label{fig:moment:ratios}
\end{figure}

\subsection{MMD U-statistic with RBF kernel}
\label{appendix:mmd:rbf:moments}

Under the Gaussian mean-shift setup with an identity covariance matrix, the MMD U-statistic with a RBF kernel has the form
\begin{align*}
    &\ummd(\bz, \bz')
    \;=\;
    \kappa(\bx, \bx') + \kappa(\by, \by') - \kappa(\bx, \by') - \kappa(\bx', \by)
    \\
    &=\;
    \exp\left( - \mfrac{\| \bx - \bx'\|_2^2}{2\gamma} \right) 
    + \exp\left( - \mfrac{\|\by - \by'\|_2^2}{2\gamma} \right) 
    - \exp\left( - \mfrac{\| \bx - \by'\|_2^2}{2\gamma} \right) 
    - \exp\left( - \mfrac{\| \bx' - \by \|_2^2}{2\gamma} \right) 
    \;, \tagaligneq \label{eqn:defn:mmd:rbf}
\end{align*}
for $\bz \coloneqq (\bx, \by), \bz' \coloneqq (\bx', \by') \in \R^{2d}$. We first verify that \cref{assumption:L_nu} holds again by adapting $\{\alpha_k\}_{k=1}^\infty$ and $\{\psi_k\}_{k=1}^\infty$ from \cref{appendix:rbf:decompose}.

\begin{lemma} \label{lem:mmd:assumption:L_nu} Assume that $\gamma > 8$. Then \cref{assumption:L_nu} holds with any value $\nu \in (2,3]$ and the function $u((\bx,\by),(\bx',\by'))=\ummd((\bx,\by),(\bx',\by'))$ for $\bx,\by,\bx',\by' \in \R^d$, with the sequences of values and functions given for each $k \in \N$ as $\gamma_k = \alpha_k$ and $\phi_k(\bx, \by) = \psi_k(\bx) - \psi_k(\by)$.
\end{lemma}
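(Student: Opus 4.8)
The plan is to reduce the claim to the single-kernel $L_\nu$ decomposition \eqref{eqn:rbf:decomposition:alt} established in \cref{appendix:rbf:decompose}, using only the bilinearity of the truncated sum in the functions $\phi_k$ and the triangle inequality in $L_\nu$; in particular, unlike \cref{lem:mmd:general}, no Mercer/\cref{assumption:moment:bounded} machinery is needed, only the explicit bounded decomposition of the RBF kernel. Recall that under the Gaussian mean-shift setup with $\Sigma = I_d$, the vectors $\bX_1,\bX_2,\bY_1,\bY_2$ are mutually independent with $\bX_1,\bX_2 \sim \cN(\bmu, I_d)$ and $\bY_1,\bY_2 \sim \cN(\bzero, I_d)$, that $\bZ_i = (\bX_i,\bY_i)$, and that
\[
    \ummd(\bZ_1,\bZ_2) \;=\; \kappa(\bX_1,\bX_2) + \kappa(\bY_1,\bY_2) - \kappa(\bX_1,\bY_2) - \kappa(\bX_2,\bY_1)\;,
\]
with $\kappa$ the RBF kernel of bandwidth $\gamma > 8$.

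First I would substitute the proposed $\gamma_k = \alpha_k$ and $\phi_k(\bx,\by) = \psi_k(\bx) - \psi_k(\by)$ and expand
\[
    \phi_k(\bZ_1)\phi_k(\bZ_2) = \psi_k(\bX_1)\psi_k(\bX_2) + \psi_k(\bY_1)\psi_k(\bY_2) - \psi_k(\bX_1)\psi_k(\bY_2) - \psi_k(\bX_2)\psi_k(\bY_1)\;,
\]
so that $\sum_{k=1}^K \gamma_k \phi_k(\bZ_1)\phi_k(\bZ_2)$ decomposes into four truncated sums of the form $\sum_{k=1}^K \alpha_k \psi_k(\bU)\psi_k(\bV)$, one for each of the pairs $(\bU,\bV) \in \{(\bX_1,\bX_2),(\bY_1,\bY_2),(\bX_1,\bY_2),(\bX_2,\bY_1)\}$ (the last two entering with a minus sign). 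Since each $\psi_k$ is a finite product of bounded univariate maps $x \mapsto x^m e^{-x^2/(2\gamma)}$, each of these truncated sums --- and hence $\sum_{k=1}^K \gamma_k \phi_k(\bZ_1)\phi_k(\bZ_2)$ itself --- is a bounded random variable, so the approximation error $\varepsilon_{K;\nu}$ is well defined for every finite $K$.

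Subtracting $\ummd(\bZ_1,\bZ_2)$, matching each truncated sum with the corresponding term of $\ummd$, and applying the triangle inequality in $L_\nu$, I would bound $\varepsilon_{K;\nu}$ by the sum of the four $L_\nu$ errors $\|\sum_{k=1}^K \alpha_k \psi_k(\bU)\psi_k(\bV) - \kappa(\bU,\bV)\|_{L_\nu}$ over the pairs above. For each pair, $\bU$ and $\bV$ are independent Gaussians with identity covariance and means in $\{\bzero,\bmu\}$, so \eqref{eqn:rbf:decomposition:alt} --- which holds for any $\nu \in (2,4]$, any mean vectors, and $\gamma > 8$ --- applies with the matching choice of $(\mu_1,\mu_2)$ and shows each of the four errors vanishes as $K \to \infty$ (using symmetry of $\kappa$ for the $(\bX_2,\bY_1)$ term). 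Since $(2,3] \subset (2,4]$, summing the four vanishing terms gives $\varepsilon_{K;\nu} \to 0$ for every $\nu \in (2,3]$, which is precisely \cref{assumption:L_nu} for $u = \ummd$, $R$ the law of $\bZ = (\bX,\bY)$, $\lambda_k = \gamma_k = \alpha_k$ and $\phi_k(\bx,\by) = \psi_k(\bx)-\psi_k(\by)$. There is no serious obstacle beyond this bookkeeping; the one delicate point is that the \emph{same} index $k$ must govern all four pieces --- which is automatic from the definition of $\phi_k$ --- so that truncating the MMD representation at level $K$ truncates each of the four RBF representations at the same level $K$, letting \eqref{eqn:rbf:decomposition:alt} be invoked verbatim, while the restriction $\gamma > 8$ together with the range $\nu \in (2,3]$ is inherited directly from \cref{lem:rbf:decomposition}.
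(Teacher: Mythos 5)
Your proposal is correct and follows essentially the same route as the paper: expand $\phi_k(\bZ_1)\phi_k(\bZ_2)$ into the four cross-terms, match each with the corresponding RBF term of $\ummd$, control the four errors (triangle/Minkowski in $L_\nu$, where the paper uses a triangle plus Jensen step giving a $4^{\nu-1}$ factor), and invoke Lemma \ref{lem:rbf:decomposition} via \eqref{eqn:rbf:decomposition:alt} for each pair of independent identity-covariance Gaussians with arbitrary means. The minor differences (Minkowski vs. Jensen, the remark on boundedness of the truncated sums) are cosmetic and do not change the argument.
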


\vspace{.2em}

We next compute the moments. The analytical form of the population MMD (i.e.~the expectation) has been derived in previous works under both the Gaussian mean-shift setup with a general covariance matrix $\Sigma$ (\citet[Proposition 2, Corollary 19]{wynne2022kernel}; \citep[Proposition 1]{ramdas2015decreasing}) and an expression up to the learning term was also derived under a more general mean-shift setup \citep[Lemma 1]{reddi2015high}. We only consider the Gaussian mean-shift case with $\Sigma=I_d$ but provide expressions for the second moments and a generic moment bound, while making minimal assumptions on the kernel bandwidth compared to \citet{reddi2015high}.

\begin{lemma}[RBF-MMD moments]
\label{lem:mmd:moments:analytical}
    Let $\kappa$ be a RBF kernel and let $\bX, \bX' \sim Q$ and $\bY, \bY' \sim P$ be mutually independent draws. Under the mean-shift setup with an identity covariance matrix, it follows that
    \begin{proplist}
        \item For every $\bz = (\bx, \by) \in \R^{2d}$, 
        \begin{align*}
            \gmmd&(\bz) 
            \;\coloneqq\; \E[ \ummd(\bz, \bZ') ] \\
            \;&= \left( \mfrac{\gamma}{1 + \gamma} \right)^{d/2} 
            \Big(
            e^{ -\frac{1}{2 (1 + \gamma) } \| \bx - \bmu \|_2^2}
            + e^{ -\frac{1}{2 (1 + \gamma) } \| \by\|_2^2} 
            - e^{ -\frac{1}{2 (1 + \gamma) } \| \bx \|_2^2}
            - e^{ -\frac{1}{2 (1 + \gamma) } \| \by - \bmu \|_2^2}
            \Big) \;;
        \end{align*}
        \item The mean is given by\;$
                \mmd(Q, P)
                \;=\;
                2 \left( \mfrac{\gamma}{2 + \gamma} \right)^{d/2} \Big( 1 - \exp\left( -\mfrac{1}{2 (2 + \gamma) } \| \mu \|_2^2 \right) \Big)
            \; ;$
        \item The variance of the conditional expectation is given by
        \begin{align*}
            \sigcond^2
            \;&=\;
            2 \left( \mfrac{\gamma}{1 + \gamma} \right)^{d/2} \left( \mfrac{\gamma}{3 + \gamma} \right)^{d/2} \\
            &\;\quad \times
            \bigg(
                1 + \exp\left( -\mfrac{1}{3 + \gamma} \| \mu \|_2^2 \right) 
                + 2 \left( \mfrac{3 + \gamma}{2 + \gamma} \right)^{d/2} \left( \mfrac{1 + \gamma}{2 + \gamma} \right)^{d/2} \exp\left( - \mfrac{1}{2(2 + \gamma)} \| \mu \|_2^2 \right) \\
                &\qquad \quad
                - 2 \exp\left( - \mfrac{7 + 5\gamma}{4 (1 + \gamma)(3 + \gamma)} \| \mu \|_2^2 \right) 
                - \left( \mfrac{3 + \gamma}{2 + \gamma} \right)^{d/2} \left( \mfrac{1 + \gamma}{2 + \gamma} \right)^{d/2} \\
                &\qquad \quad
                - \left( \mfrac{3 + \gamma}{2 + \gamma} \right)^{d/2} \left( \mfrac{1 + \gamma}{2 + \gamma} \right)^{d/2} \exp\left( - \mfrac{1}{2 + \gamma} \| \mu \|_2^2 \right)
            \bigg) \; ;
        \end{align*}
        \item The variance is given by
        \begin{align*}
            \sigfull^2
            \;&=\;
            2 \left( \mfrac{\gamma}{4 + \gamma} \right)^{d/2} \bigg( 1 + \exp\left(- \mfrac{1}{4 + \gamma} \| \mu \|_2^2 \right) \bigg)
            - 2 \left( \mfrac{\gamma}{2 + \gamma} \right)^d 
            \\
            &\;
            \quad- 8 \left( \mfrac{\gamma}{3 + \gamma} \right)^{d/2} \left( \mfrac{\gamma}{1 + \gamma} \right)^{d/2}
            \exp\left( - \mfrac{2 + \gamma}{2(1 + \gamma)(3 + \gamma)} \| \mu \|_2^2 \right) 
            \\
            &\;
            \quad- 2 \left( \mfrac{\gamma}{2 + \gamma} \right)^d \exp\left( - \mfrac{1}{2 + \gamma} \| \mu \|_2^2 \right)
            + 8 \left( \mfrac{\gamma}{2 + \gamma} \right)^d \exp\left( - \mfrac{1}{2(2 + \gamma)} \| \bmu \|_2^2 \right)
            \; .
        \end{align*}
    \end{proplist}
\end{lemma}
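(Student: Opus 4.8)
The plan is to reduce every quantity in the lemma to closed-form Gaussian integrals of RBF kernels. The only tool needed is the \emph{Gaussian smoothing identity}: for $\bX \sim \cN(m, I_d)$, a fixed $\ba \in \R^d$ and $c > 0$,
\[
    \mean\big[ \exp\big( - \tfrac{1}{2c} \| \bX - \ba \|_2^2 \big) \big] \;=\; \big( \tfrac{c}{c+1} \big)^{d/2} \exp\big( - \tfrac{1}{2(c+1)} \| m - \ba \|_2^2 \big),
\]
which follows by completing the square in $\bX$ (equivalently, applying the moment-generating function of a noncentral chi-square coordinatewise, since both the kernel and the Gaussian density factorise over coordinates; this is also why $\| \bmu \|_2^2$ is the only feature of $\bmu$ that survives). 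Products of two kernel evaluations sharing one argument are handled by iterating this: the identity $\| \bX - \ba \|_2^2 + \| \bX - \bb \|_2^2 = 2 \| \bX - \tfrac{\ba + \bb}{2} \|_2^2 + \tfrac12 \| \ba - \bb \|_2^2$ turns $\mean\big[ \exp( - \tfrac{1}{2c} \| \bX - \ba \|_2^2 ) \exp( - \tfrac{1}{2c} \| \bX - \bb \|_2^2 ) \big]$ into a scalar factor times a single smoothing with effective bandwidth $c/2$. The two-sample form $\mean[\kappa(\bX, \bX')]$ for independent Gaussian $\bX, \bX'$ reduces to the above by conditioning on one argument and smoothing twice. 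These are the only computations used, applied repeatedly.

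For part (i), I would take the expectation of the four terms of $\ummd((\bx,\by), \bZ')$ over $\bZ' = (\bX', \bY')$ with $\bX' \sim \cN(\bmu, I_d)$, $\bY' \sim \cN(0, I_d)$, applying the smoothing identity with $c = \gamma$ to each; this yields the stated $\gmmd$. For part (ii), I would use the defining expansion $\mmd(Q,P) = \mean[\kappa(\bY, \bY')] - 2 \mean[\kappa(\bY, \bX)] + \mean[\kappa(\bX, \bX')]$ and the two-sample identity, collecting the three resulting terms (alternatively, integrate $\gmmd(\bZ)$ from part (i) once more).

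Parts (iii) and (iv) are the bulk of the computation. For $\sigcond^2 = \Var[\gmmd(\bZ)] = \mean[\gmmd(\bZ)^2] - \mmd(Q,P)^2$ with $\bZ = (\bX, \bY)$, I would expand $\gmmd(\bZ)^2$ into its sixteen cross terms; since two summands of $\gmmd$ depend only on $\bx$ and two only on $\by$, each product factorises into an $\bX$-expectation times a $\bY$-expectation, and each factor is either a single smoothing or a shared-argument smoothing with $c = 1 + \gamma$. For $\sigfull^2 = \Var[\ummd(\bZ_1, \bZ_2)] = \mean[\ummd(\bZ_1, \bZ_2)^2] - \mmd(Q,P)^2$ with $\bZ_i = (\bX_i, \bY_i)$ mutually independent, I would expand $\ummd(\bZ_1,\bZ_2)^2$ into sixteen products of two kernel evaluations; a product whose two factors share no argument splits into two independent one-kernel expectations, and one sharing a single argument is integrated over that argument by the shared-argument identity with $c = \gamma$, producing the prefactors $(\gamma/(4+\gamma))^{d/2}$, $(\gamma/(2+\gamma))^{d}$ and $(\gamma/(3+\gamma))^{d/2}(\gamma/(1+\gamma))^{d/2}$ visible in the statement. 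In each case one sums all contributions and subtracts $\mmd(Q,P)^2$.

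The main obstacle is organisational rather than conceptual: parts (iii)–(iv) require carefully enumerating the sixteen products, identifying which Gaussian arguments coincide, tracking the resulting powers of the ratios $(\gamma/(\gamma + j))^{d/2}$ and the exponential factors in $\| \bmu \|_2^2$, and then algebraically cancelling terms (notably against $\mmd(Q,P)^2$) to reach the displayed compact forms. The reduction guarantees every term has a closed form, so the only real risk is arithmetic slips; I would cross-check the final expressions against Monte Carlo estimates, as reported in \cref{fig:mmd:moments}.
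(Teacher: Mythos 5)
Your proposal is correct and follows essentially the same route as the paper: your Gaussian smoothing identity is Lemma~\ref{lem:gauss:integral:single}, your conditioning/two-sample reduction is Lemma~\ref{lem:gauss:integral:double}, and parts (iii)--(iv) are proved there by exactly the expansion-and-collect computation you outline. The only point to state explicitly is that the squared-kernel terms such as $\kappa(\bX,\bX')^2$ share \emph{both} arguments and are handled as RBF kernels with bandwidth $\gamma/2$ (giving the $(\gamma/(4+\gamma))^{d/2}$ prefactor), which your toolbox already covers.
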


While we do not verify \cref{assumption:moment:ratio} here, we remark that \cite{gao2021two} also encounter similar moment ratios when deriving finite-sample bounds for MMD with a studentised version of U-statistic (see e.g.~their Theorem 13). They show that those ratios are controlled under an elaborate list of assumptions; in particular, those assumptions hold for the RBF kernel under a condition that amounts to choosing $\gamma=\Theta(d)$ in our Gaussian mean-shift setup. For our case, as discussed, a rigorous verification of \cref{assumption:moment:ratio} can be done by following the proofs of Propositions \ref{prop:ksd:var:ratio} and \ref{prop:MMD:variance:ratio} to control $\Mcondnu$ and $\Mfullnu$ for any $\nu > 2$. \cref{fig:moment:ratios} also verifies \cref{assumption:moment:ratio} by simulation.

\subsection{MMD U-statistic with linear kernel}
\label{appendix:mmd:linear:moments}
In this section, we consider the mean-shift setup with a general covariance matrix $\Sigma \in \R^{d \times d}$, i.e., $Q = \cN(\bmu, \Sigma)$ and $P = \cN(0, \Sigma)$. The MMD with a linear kernel $\kappa(\bx, \bx') = \bx^\top \bx'$ has the form
\begin{align*}
    \ummd( \bz, \bz' )
    \;=&\;
    \bx^\top \bx' + \by^\top \by' - \bx^\top \by' - \by^\top \bx'
    \;,
\end{align*}
where $\bz \coloneqq (\bx, \by), \bz' \coloneqq (\bx', \by') \in \R^{2d}$. In this case, \cref{assumption:L_nu} holds directly because we can represent $\ummd$ as 
\begin{align*}
    \ummd(\bz, \bz') \;=\; 
    (\bx-\by)^\top (\bx'-\by') 
    \;=\;
    \msum_{l=1}^d (x_l - y_l)(x'_l-y'_l)
    \;=\;
    \msum_{l=1}^d \gamma_l \psi_l(\bz) \psi_l(\bz')\;, \tagaligneq \label{eqn:defn:mmd:linear}
\end{align*}
where $\gamma_l=1$, $\psi_l(\bz)=x_l-y_l$ and $\psi_l(\bz')=x'_l-y'_l$.

\vspace{1em}

We next compute the moment terms and verify that \cref{assumption:moment:ratio} holds. The next result, proved in \Cref{pf:linear:mmd:moments:analytical}, gives the analytical expressions of the first two moments of the linear-kernel MMD.
\begin{lemma}[Linear-MMD moments]
\label{lem:linear:mmd:moments:analytical}
    Let $\kappa$ be a linear kernel, and let $\bX, \bX' \sim Q$ and $\bY, \bY' \sim P$ be mutually independent draws. Write $\bZ=(\bX,\bY)$ and $\bZ'=(\bX',\bY')$.  Under the mean-shift setup, it follows that
    \begin{proplist}
        \item For every $\bz = (\bx, \by) \in \R^{2d}$, we have $\gmmd(\bz) \coloneqq \E[ \ummd(\bz, \bZ') ] = \bmu^\top \by - \bmu^\top \bx$\;;
        \item The mean is given by $\mmd(Q, P) = \| \bmu \|_2^2$\;;
        \item The variance of the conditional expectation $\gmmd(\bZ)$ is given by $\sigcond^2 = 2 \bmu^\top \Sigma \bmu$\;;
        \item The variance of $\ummd(\bZ, \bZ')$ is given by $\sigfull^2 = 4 \Tr(\Sigma^2) + 4 \bmu^\top \Sigma \bmu$\;;
        \item The third central moment of $\gmmd(\bZ)$ satisfies $\Mcondthree^3 \leq C (\mu^\top \Sigma \mu)^{3/2}$ for some absolute constant $C$\;;
        \item The third central moment of $\ummd(\bZ, \bZ')$ satisfies $\Mfullthree^3  \leq C \big( \Tr(\Sigma^2) + \mu^\top \Sigma \mu \big)^{3/2}$ for some absolute constant $C$\;.
    \end{proplist} 
    In particular, \cref{assumption:moment:ratio} holds with $\nu=3$ for $\ummd$ defined in \eqref{eqn:defn:mmd:linear} under $Q$. 
\end{lemma}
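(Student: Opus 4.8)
The plan is to reduce every moment in the lemma to an elementary Gaussian computation via the substitution $\bA \coloneqq \bX - \bY$ and $\bB \coloneqq \bX' - \bY'$. Since $\bX \sim \cN(\bmu,\Sigma)$ and $\bY \sim \cN(\bzero,\Sigma)$ are independent, $\bA$ and $\bB$ are i.i.d.\ $\cN(\bmu, 2\Sigma)$, and the summand factorises cleanly as $\ummd(\bZ,\bZ') = (\bX-\bY)^\top(\bX'-\bY') = \bA^\top\bB$. Given this, parts (i)--(iv) are one-line calculations: (i) follows from $\E[\bX'-\bY'] = \bmu$; (ii) from $\mmd(Q,P) = \E[\gmmd(\bZ)] = \bmu^\top(\E[\bX]-\E[\bY]) = \|\bmu\|_2^2$; (iii) from independence of $\bX$ and $\bY$, which splits $\sigcond^2 = \Var[\bmu^\top\bX] + \Var[\bmu^\top\bY] = 2\,\bmu^\top\Sigma\bmu$; and (iv) by conditioning on $\bA$, using $\E[\bB\bB^\top] = 2\Sigma + \bmu\bmu^\top$ together with $\E[\bA^\top M\bA] = \Tr(M\,\Cov[\bA]) + \E[\bA]^\top M \E[\bA]$, which after subtracting $\mmd(Q,P)^2 = \|\bmu\|_2^4$ leaves $\sigfull^2 = 4\Tr(\Sigma^2) + 4\,\bmu^\top\Sigma\bmu$. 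Part (v) is equally short: $\gmmd(\bZ) - \mmd(Q,P)$ is a centred univariate Gaussian of variance $\sigcond^2 = 2\,\bmu^\top\Sigma\bmu$, so $\Mcondthree^3 = \E[|\cN(0,\sigcond^2)|^3] = 2\sqrt{2/\pi}\,\sigcond^3 \le C(\bmu^\top\Sigma\bmu)^{3/2}$.

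The real content is part (vi), the only place where a genuine quadratic (not affine) form appears. I would decompose $\bA = \bmu + \tilde{\bA}$ and $\bB = \bmu + \tilde{\bB}$ with $\tilde{\bA},\tilde{\bB}$ i.i.d.\ $\cN(\bzero, 2\Sigma)$, so that $\ummd(\bZ,\bZ') - \mmd(Q,P) = \bA^\top\bB - \|\bmu\|_2^2 = \bmu^\top\tilde{\bA} + \bmu^\top\tilde{\bB} + \tilde{\bA}^\top\tilde{\bB}$, and then bound $\Mfullthree$ by the triangle inequality in $L_3$. The two affine terms are centred Gaussians of variance $2\,\bmu^\top\Sigma\bmu$, each contributing at most $C(\bmu^\top\Sigma\bmu)^{1/2}$. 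The cross term is handled by conditioning on $\tilde{\bB}$: then $\tilde{\bA}^\top\tilde{\bB} \sim \cN(0,\,2\,\tilde{\bB}^\top\Sigma\tilde{\bB})$, so $\E[|\tilde{\bA}^\top\tilde{\bB}|^3] = 2^{3/2}\cdot 2\sqrt{2/\pi}\cdot \E[(\tilde{\bB}^\top\Sigma\tilde{\bB})^{3/2}]$.

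It remains to show $\E[(\tilde{\bB}^\top\Sigma\tilde{\bB})^{3/2}] \le C(\Tr(\Sigma^2))^{3/2}$, and this is where the one subtlety lies, since a crude bound would introduce extra $d$-dependence. Diagonalising $\Sigma$ with eigenvalues $\{\lambda_i\}$, one has $X \coloneqq \tilde{\bB}^\top\Sigma\tilde{\bB} = 2\sum_i \lambda_i^2 h_i^2$ with $\{h_i\}$ i.i.d.\ standard Gaussian, so $\E[X] = 2\Tr(\Sigma^2)$ and $\Var[X] = 8\sum_i\lambda_i^4 \le 8(\sum_i\lambda_i^2)^2 = 2\,\E[X]^2$, the key inequality being $\sum_i\lambda_i^4 \le (\sum_i\lambda_i^2)^2$ (a comparison of the fourth and second Schatten norms of $\Sigma$). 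Jensen's inequality applied to the concave map $t\mapsto t^{3/4}$ then gives $\E[X^{3/2}] = \E[(X^2)^{3/4}] \le (\E[X^2])^{3/4} = (\Var[X] + \E[X]^2)^{3/4} \le (3\,\E[X]^2)^{3/4} \le C(\Tr(\Sigma^2))^{3/2}$. Collecting the three pieces and using $\sqrt{a}+\sqrt{b} \le \sqrt{2}\sqrt{a+b}$ gives $\Mfullthree \le C(\bmu^\top\Sigma\bmu + \Tr(\Sigma^2))^{1/2}$, which is (vi).

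Finally, the ``in particular'' claim is read off directly from (iii)--(vi): with $\sigcond = \sqrt{2}\,(\bmu^\top\Sigma\bmu)^{1/2}$, $\sigfull = 2(\Tr(\Sigma^2) + \bmu^\top\Sigma\bmu)^{1/2}$, and the above cubic bounds on $\Mcondthree$ and $\Mfullthree$, both $\Mcondthree/\sigcond$ and $\Mfullthree/\sigfull$ are bounded by an absolute constant uniformly in $d$ (and in $\bmu,\Sigma$), provided $\bmu \neq \bzero$ and $\Sigma \neq \bzero$, which holds under the mean-shift alternative; this is \cref{assumption:moment:ratio} with $\nu=3$. The main obstacle throughout is part (vi): unlike the conditional-mean quantities, the summand $\ummd(\bZ,\bZ')$ itself is bilinear, and the delicate point is to control the third moment of $\tilde{\bA}^\top\tilde{\bB}$ so that it scales like $(\Tr(\Sigma^2))^{3/2}$ rather than something growing faster in $d$ — which the conditioning argument together with the Schatten-norm inequality achieves.
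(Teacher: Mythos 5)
Your proposal is correct and takes essentially the same route as the paper: elementary Gaussian moment computations for (i)–(v), and for (vi) the same centring decomposition of $\ummd(\bZ,\bZ')-\|\bmu\|_2^2$ into two linear terms plus a bilinear term in centred $\cN(\bzero,2\Sigma)$ vectors, handled by conditioning on one vector and the trace inequality $\Tr(\Sigma^4)\le \Tr(\Sigma^2)^2$ so the bound stays dimension-free; your passage through $\E[X^{3/2}]\le \E[X^2]^{3/4}$ instead of the paper's $\E[X^{3/2}]\le \E[X^3]^{1/2}$ with its Gaussian quadratic-form moment lemma is an immaterial variation. One small remark: your computation in (i) yields $\bmu^\top\bx-\bmu^\top\by$, which agrees with the paper's own proof; the reversed sign in the lemma statement is a typo and does not affect (ii)–(vi).
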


In the last example in \Cref{sect:examples}, we chose $\mu = (0, 10, \ldots, 0) \in \R^d$ and a diagonal $\Sigma$ with $\Sigma_{11} = 0.5(d+1)$, $\Sigma_{ii} = 0.5$ for $i > 1$ and $\Sigma_{ij} = 0$ otherwise. Note that by the invariance of Gaussian distributions under orthogonal transformation, this is equivalent to choosing
$\Sigma$ as $0.5 \bI_d + 0.5 \bJ_d$, where $\bI_d \in \R^{d\times d}$ is the identity matrix, $\bJ_d \in \R^{d\times d}$ is the all-one matrix and $\mu$ is transformed by an appropriate orthogonal matrix of eigenvectors. Notably, this choice ensures the limit of $\ummd$ remains non-Gaussian. Indeed, when $Q$ and $P$ are Gaussian, the statistic $\mmd_n$ can be written as a sum of shifted-and-rescaled chi-squares, where the scaling factors are  $0.5(d+1), 0.5, \ldots, 0.5$, the eigenvalues of $\Sigma$. As $d$ grows, the eigenvalue $0.5(d+1)$ dominates, and the limiting distribution is then dominated by the first summand, thereby yielding a chi-square limit up to shifting and rescaling. This is numerically demonstrated in the right figure of \cref{fig:intro}. As a remark, we do not expect this exact setting to occur in practice; it should instead be treated as a toy setup to demonstrate the possibility of non-Gaussianity and convey an intuition of when this may occur.



\section{Auxiliary tools} \label{appendix:tools}

\subsection{Generic moment bounds} \label{appendix:moments}

We first present two-sided bounds on the moments of a martingale, which are useful in bounding $\nu$-th moment terms of different statistics. The original result is due to \citet{burkholder1966martingale}, and the constant $C_\nu$ is provided by  \citet{von1965inequalities} and \citet{dharmadhikari1968bounds}. 

\begin{lemma} \label{lem:martingale:bound} Fix $\nu > 1$. For a martingale difference sequence $Y_1,\ldots,Y_n$ taking values in $\R$,
\begin{align*}
    c_\nu \, n^{\min\{0, \nu/2-1 \}} \msum_{i=1}^n \mean[ |Y_i |^\nu ]
    \;\leq\;
    \mean \big[ \big| \msum_{i=1}^n Y_i \big|^\nu \big] 
    \;\leq\; 
    C_{\nu} \, n^{\max\{0,\nu/2-1\}} \msum_{i=1}^n \mean[ |Y_i |^\nu ]\;,
\end{align*}
for $C_{\nu} := \max\big\{ 2,  (8(\nu-1)\max\{1,2^{\nu-3}\})^{\nu} \big\}$ and some absolute constant $c_\nu > 0$ that depends only on $\nu$. 
\end{lemma}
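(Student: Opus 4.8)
The plan is to reduce both inequalities to the Burkholder--Davis--Gundy comparison of $\mean\big[\,|S_n|^\nu\,\big]$ with $\mean\big[\,\langle S\rangle_n^{\nu/2}\,\big]$, where $S_n \coloneqq \msum_{i=1}^n Y_i$ and $\langle S\rangle_n \coloneqq \msum_{i=1}^n Y_i^2$ denotes the quadratic variation, and then to pass between $\langle S\rangle_n^{\nu/2}$ and $\msum_{i=1}^n |Y_i|^\nu$ by the elementary power-mean inequality in $\R^n$. The two regimes $\nu \ge 2$ and $1 < \nu < 2$ behave oppositely under this last step: for $a_1,\dots,a_n \ge 0$ and $p=\nu/2$, convexity of $t\mapsto t^p$ gives $\big(\msum_i a_i\big)^p \le n^{p-1}\msum_i a_i^p$ when $p\ge 1$, while subadditivity gives $\big(\msum_i a_i\big)^p \le \msum_i a_i^p$ when $p\le 1$, and the two reverse inequalities hold with the exponents swapped. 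Applying this with $a_i=Y_i^2$, so that $a_i^p = |Y_i|^\nu$, is precisely what produces the exponents $\max\{0,\nu/2-1\}$ and $\min\{0,\nu/2-1\}$ in the two bounds.

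\textbf{Upper bound.} For $1 < \nu \le 2$ the claim with constant $2$ and exponent $0$ is exactly the von Bahr--Esseen inequality \citep{von1965inequalities}, which applies to martingale difference sequences. For $\nu > 2$, I would invoke the Burkholder inequality \citep{burkholder1966martingale} in the form $\mean\big[\,|S_n|^\nu\,\big] \le B_\nu\,\mean\big[\,\langle S\rangle_n^{\nu/2}\,\big]$ with the explicit value of $B_\nu$ recorded by \citet{dharmadhikari1968bounds}, and then bound $\langle S\rangle_n^{\nu/2} = \big(\msum_i Y_i^2\big)^{\nu/2} \le n^{\nu/2-1}\msum_i |Y_i|^\nu$ by the convexity step above; taking expectations gives the bound with exponent $\nu/2-1$. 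One then combines the two regimes into a single constant and checks that it is dominated by $C_\nu = \max\{2,(8(\nu-1)\max\{1,2^{\nu-3}\})^\nu\}$.

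\textbf{Lower bound.} Here I would use the reverse Burkholder--Davis--Gundy inequality $\mean\big[\,|S_n|^\nu\,\big] \ge A_\nu\,\mean\big[\,\langle S\rangle_n^{\nu/2}\,\big]$ for some $A_\nu>0$ depending only on $\nu$ \citep{burkholder1966martingale}. When $\nu \ge 2$, superadditivity of $t\mapsto t^{\nu/2}$ gives $\big(\msum_i Y_i^2\big)^{\nu/2} \ge \msum_i |Y_i|^\nu$, hence $\mean\big[\,|S_n|^\nu\,\big] \ge A_\nu\msum_i\mean[|Y_i|^\nu]$, matching the exponent $0=\min\{0,\nu/2-1\}$; when $1<\nu<2$, concavity of $t\mapsto t^{\nu/2}$ gives instead $\big(\msum_i Y_i^2\big)^{\nu/2} \ge n^{\nu/2-1}\msum_i|Y_i|^\nu$, matching the exponent $\nu/2-1$. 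Setting $c_\nu \coloneqq A_\nu$ finishes this direction (the case $n=1$ being trivial with $c_\nu=1$), and there is no need to track its numerical value since the statement leaves $c_\nu$ unspecified.

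\textbf{Main obstacle.} The power-mean manipulations and the applications of (reverse) Burkholder--Davis--Gundy are entirely routine; the only point requiring genuine care is the bookkeeping of the \emph{explicit} constant $C_\nu$ in the upper bound, which forces one to use the sharp form of each ingredient---the constant $2$ of von Bahr--Esseen for $\nu\le 2$, and the explicit bound $B_\nu \le (8(\nu-1)\max\{1,2^{\nu-3}\})^\nu$ for $\nu>2$---and then to verify that the maximum over the two regimes collapses to the stated closed form. The lower bound requires no comparable effort.
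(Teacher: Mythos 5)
Your proposal is correct and follows essentially the same route as the paper: the von Bahr--Esseen inequality for $1<\nu\le 2$ and the Burkholder/Dharmadhikari--Fabian--Jogdeo bound for $\nu>2$ in the upper direction, and Burkholder's lower square-function inequality combined with the Jensen (convexity/concavity) power-mean step, split at $\nu=2$, in the lower direction. The only cosmetic difference is that for $\nu>2$ the paper cites the Dharmadhikari--Fabian--Jogdeo inequality directly, which already carries the factor $n^{\nu/2-1}$ and the constant $(8(\nu-1)\max\{1,2^{\nu-3}\})^{\nu}$, so your detour through $\mean[\langle S\rangle_n^{\nu/2}]$ plus convexity is redundant there, and the explicit constant you quote is attached to that direct moment bound rather than to a Burkholder--Davis--Gundy comparison.
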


The next moment computation for a quadratic form of Gaussians is used throughout the proof:

\begin{lemma}[Lemma 2.3, \cite{magnus1978moments}] \label{lem:quadratic:gaussian:moments} Given a standard Gaussian vector $\eta$ in $\R^m$ and a symmetric $m \times m$ matrix $A$, we have that $\mean[ \eta^\top A \eta] = \Tr(A)$ and
\begin{align*}
    &
    \mean[ (\eta^\top A \eta)^2] \;=\; \Tr(A)^2 + 2 \Tr(A^2)\;,
    &&
    \mean[ (\eta^\top A \eta)^3] \;=\; \Tr(A)^3 + 6 \Tr(A)\Tr(A^2) + 8 \Tr(A^3)\;.
\end{align*}
\end{lemma}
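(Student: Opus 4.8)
The plan is to give a self-contained derivation; although the statement is attributed to \cite{magnus1978moments}, the argument is short. The first step is a reduction to a diagonal matrix: since $A$ is symmetric I would write $A = O\Lambda O^\top$ with $O$ orthogonal and $\Lambda = \diag\{\lambda_1,\ldots,\lambda_m\}$, and invoke the rotational invariance of the standard Gaussian to note that $\tilde\eta \coloneqq O^\top\eta$ is again a standard Gaussian vector in $\R^m$. Then $\eta^\top A \eta = \tilde\eta^\top \Lambda \tilde\eta = \sum_{i=1}^m \lambda_i \tilde\eta_i^2$ is a weighted sum of independent $\chi^2_1$ variables, so the task reduces to evaluating $\mean[(\sum_{i}\lambda_i g_i^2)^r]$ for $r \in \{1,2,3\}$ with $g_i$ i.i.d.\ $\cN(0,1)$, and re-expressing the answer through the power sums $p_r \coloneqq \sum_i \lambda_i^r = \Tr(A^r)$.

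For the computation itself I would expand the $r$-th power as $\sum_{i_1,\ldots,i_r}\lambda_{i_1}\cdots\lambda_{i_r}\,\mean[g_{i_1}^2\cdots g_{i_r}^2]$ and split the index sum by the pattern of coincidences among $i_1,\ldots,i_r$, using $\mean[g^{2b}] = (2b-1)!!$ on a block of $b$ equal indices, i.e.\ the factors $1,3,15$ for $b = 1,2,3$. The case $r=1$ gives $p_1 = \Tr(A)$ immediately. For $r=2$, the diagonal part contributes $3p_2$ and the off-diagonal part $p_1^2 - p_2$, which sum to $p_1^2 + 2p_2$. For $r = 3$, the three patterns (all three indices equal, exactly two equal, all distinct) contribute $15 p_3$, $9(p_1 p_2 - p_3)$ and $p_1^3 - 3 p_1 p_2 + 2 p_3$ respectively, using the elementary identities $\sum_{i\neq j}\lambda_i^2\lambda_j = p_1 p_2 - p_3$ and $\sum_{i,j,k\text{ distinct}}\lambda_i\lambda_j\lambda_k = p_1^3 - 3 p_1 p_2 + 2 p_3$; adding these yields $p_1^3 + 6 p_1 p_2 + 8 p_3 = \Tr(A)^3 + 6\Tr(A)\Tr(A^2) + 8\Tr(A^3)$.

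As a cross-check, and an alternative route that bypasses the eigendecomposition, I would keep in mind Isserlis'/Wick's theorem applied directly to $\mean[(\eta^\top A\eta)^r] = \sum A_{a_1 b_1}\cdots A_{a_r b_r}\,\mean[\eta_{a_1}\eta_{b_1}\cdots\eta_{a_r}\eta_{b_r}]$: summing over the $(2r-1)!!$ perfect matchings, each matching collapses the resulting Kronecker deltas into a product of traces of powers of $A$, and grouping the matchings by how many of the $r$ index pairs $(a_i,b_i)$ are matched internally recovers the coefficients $1,2$ for $r=2$ and $1,6,8$ for $r=3$. Either way, the only genuinely fiddly step — and the one I expect to be the main source of error — is the combinatorial bookkeeping for the third moment: in the Wick route one must verify that exactly $8$ of the $15$ matchings of six points leave no index pair internal, while in the diagonalised route one must carefully enumerate the partition-of-indices cases and their Newton-type identities. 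Everything else reduces to linearity of expectation and the moments of a scalar Gaussian.
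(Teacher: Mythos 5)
Your derivation is correct, and there is nothing in the paper to compare it against: the paper simply cites the identities from Magnus (1978, Lemma 2.3) and never proves them, so your self-contained argument is a legitimate (and welcome) supplement rather than an alternative to the authors' route. The diagonalisation step is sound — by rotational invariance $\eta^\top A\eta \overset{d}{=} \sum_i \lambda_i g_i^2$ — and your combinatorics check out: for $r=2$ the split $3p_2 + (p_1^2-p_2) = p_1^2+2p_2$ is right, and for $r=3$ the three coincidence patterns give $15p_3$, $3\cdot 3\,(p_1p_2-p_3)$ and $p_1^3-3p_1p_2+2p_3$ (the factor $3$ counting which two of the three positions coincide, times $\mean[g^4]\mean[g^2]=3$), summing to $p_1^3+6p_1p_2+8p_3$ as claimed. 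The Wick-theorem cross-check is also consistent: of the $15$ matchings of six points, $1$ has all three pairs internal ($\Tr(A)^3$), $3\times 2=6$ have exactly one internal pair ($\Tr(A)\Tr(A^2)$), and the remaining $8$ have none ($\Tr(A^3)$, using symmetry of $A$ to collapse orientations), recovering the coefficients $1,6,8$. One small point worth making explicit if you write this up: the symmetry of $A$ is what lets every cross-matched cycle be written as a trace of a power of $A$ rather than of mixed $A$/$A^\top$ factors in the Wick route; in the diagonalised route symmetry is of course used at the outset to obtain the orthogonal eigendecomposition.
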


The next two results are used for the moment computation involving an RBF kernel.

\begin{lemma}
\label{lem:gauss:integral:single}
    Fix $\bm_i \in \R^d$ and $a_i > 0$ for $i = 1, 2$. Let $\bX \sim \cN(\bm_1, a_1^2 I_d)$, and let $f: \R^d \to \R$ be a deterministic function such that $\E[| f(\bX) |] < \infty$. It follows that
    \begin{align*}
        \E\Big[ f(\bX) \exp\Big( - \mfrac{\| \bX - \bm_2 \|_2^2 }{2a_2^2} \Big) \Big] 
        \;&=\;
        \Big(\mfrac{a_2^2}{a_1^2 + a_2^2} \Big)^{d/2} \exp\Big( - \mfrac{ \| \bm_1 - \bm_2 \|_2^2}{2(a_1^2 + a_2^2)} \Big) \E[f(\bW)] \;,
    \end{align*}
    where $\bW \sim \cN(\bm, a^2 I_d )$ with $\,\bm
        \;\coloneqq\;
        \frac{a_1^2 a_2^2}{a_1^2 + a_2^2} \big( \frac{1}{a_1^2} \bm_1 + \frac{1}{a_2^2} \bm_2 \big)$ 
    and
    $a^2 \coloneqq \frac{a_1^2 a_2^2}{a_1^2 + a_2^2}$\;.
\end{lemma}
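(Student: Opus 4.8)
## Proof proposal for Lemma~\ref{lem:gauss:integral:single}

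The plan is to reduce the weighted expectation to an ordinary (unweighted) expectation by completing the square in the Gaussian exponent. First I would write out the integrand explicitly: the density of $\bX \sim \cN(\bm_1, a_1^2 I_d)$ contributes a factor $\exp(-\|\bx - \bm_1\|_2^2 / (2a_1^2))$ up to the normalising constant $(2\pi a_1^2)^{-d/2}$, and the weight contributes $\exp(-\|\bx - \bm_2\|_2^2/(2a_2^2))$. Multiplying these, the exponent becomes $-\tfrac{1}{2}\big( \tfrac{1}{a_1^2} + \tfrac{1}{a_2^2} \big)\|\bx\|_2^2 + \big( \tfrac{1}{a_1^2}\bm_1 + \tfrac{1}{a_2^2}\bm_2\big)^\top \bx - \tfrac{1}{2}\big(\tfrac{\|\bm_1\|_2^2}{a_1^2} + \tfrac{\|\bm_2\|_2^2}{a_2^2}\big)$. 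Setting $\tfrac{1}{a^2} = \tfrac{1}{a_1^2} + \tfrac{1}{a_2^2}$, i.e. $a^2 = \tfrac{a_1^2 a_2^2}{a_1^2 + a_2^2}$, and $\bm = a^2\big(\tfrac{1}{a_1^2}\bm_1 + \tfrac{1}{a_2^2}\bm_2\big)$ as in the statement, the quadratic-plus-linear part in $\bx$ is exactly $-\tfrac{1}{2a^2}\|\bx - \bm\|_2^2 + \tfrac{\|\bm\|_2^2}{2a^2}$.

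Next I would collect the constant (in $\bx$) terms. The total $\bx$-independent factor in the exponent is $\tfrac{\|\bm\|_2^2}{2a^2} - \tfrac{1}{2}\big(\tfrac{\|\bm_1\|_2^2}{a_1^2} + \tfrac{\|\bm_2\|_2^2}{a_2^2}\big)$, and a direct algebraic simplification (substituting the definitions of $a^2$ and $\bm$ and expanding $\|\bm\|_2^2$) collapses this to $-\tfrac{\|\bm_1 - \bm_2\|_2^2}{2(a_1^2 + a_2^2)}$. This is the one genuinely computational step, but it is routine: it is the standard Gaussian-product identity $\cN(\bx;\bm_1,a_1^2 I)\,\cN(\bx;\bm_2,a_2^2 I) = \cN(\bm_1;\bm_2,(a_1^2+a_2^2)I)\,\cN(\bx;\bm,a^2 I)$ in disguise. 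Tracking the normalising constants, $(2\pi a_1^2)^{-d/2}$ from the density of $\bX$ must be matched against $(2\pi a^2)^{-d/2}$, the normaliser of the $\cN(\bm, a^2 I_d)$ density we are building, leaving a leftover factor $(a^2/a_1^2)^{d/2} = (a_2^2/(a_1^2+a_2^2))^{d/2}$, which is exactly the prefactor claimed.

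Putting it together: $\E[f(\bX)\exp(-\|\bX-\bm_2\|_2^2/(2a_2^2))]$ equals $(a_2^2/(a_1^2+a_2^2))^{d/2}\exp(-\|\bm_1-\bm_2\|_2^2/(2(a_1^2+a_2^2)))$ times $\int f(\bx)\,(2\pi a^2)^{-d/2}\exp(-\|\bx-\bm\|_2^2/(2a^2))\,d\bx$, and the remaining integral is by definition $\E[f(\bW)]$ with $\bW \sim \cN(\bm, a^2 I_d)$. The integrability hypothesis $\E[|f(\bX)|] < \infty$, together with the fact that the exponential weight is bounded by $1$, justifies all manipulations under the integral sign (Fubini/dominated convergence are not even needed — everything is an absolutely convergent integral of nonnegative-weighted $f$), and it also ensures $\E[|f(\bW)|] < \infty$ since the Radon–Nikodym derivative $d\cN(\bm,a^2 I)/d\cN(\bm_1,a_1^2 I)$ is, up to constants, precisely the bounded weight times the prefactors. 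I do not anticipate any real obstacle here; the only thing to be careful about is bookkeeping the powers of $2\pi$ and the $d/2$ exponents, and confirming the constant-term simplification — which can be cross-checked against the known Gaussian-product formula.
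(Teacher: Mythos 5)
Your proposal is correct and follows essentially the same route as the paper's own proof: both expand the integrand, complete the square in $\bx$ to identify the $\cN(\bm, a^2 I_d)$ density, simplify the leftover constant to $-\|\bm_1-\bm_2\|_2^2/(2(a_1^2+a_2^2))$, and track the normalising constants to obtain the prefactor $(a_2^2/(a_1^2+a_2^2))^{d/2}$. No gaps.
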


\begin{lemma}
\label{lem:gauss:integral:double}
    Fix $\bm_1, \bm_2 \in \R^d$ and $a_i > 0$ for $i = 1, 2, 3$. Let $\bX \sim \cN(\bm_1, a_1^2 I_d)$ and $\bX' \sim \cN(\bm_2, a_2^2 I_d)$, and let $f: \R^d \times \R^d \to \R$ be a deterministic function with $\E[| f(\bX, \bX') |] < \infty$. Then
    \begin{align*}
        \E\Big[ f(\bX, \bX') &\exp\left( - \mfrac{ \| \bX - \bX' \|_2^2}{2a_3^2} \right) \Big] 
        \\
        \;=&\;
        \Big( \mfrac{a_3^2}{a_1^2 + a_2^2 + a_3^2} \Big)^{d/2} 
        \exp\Big( - \mfrac{ \| \bm_1 - \bm_2 \|_2^2}{2(a_1^2 + a_2^2 + a_3^2)} \Big) \E\Big[ f\Big(\bW, \bW' + \mfrac{a_2^2 }{a_2^2+a_3^2} \bW\Big) \Big]\;,
    \end{align*}
    where $\bW \sim \cN\left(\bm, a^2 I_d \right) $ and $\bW' \sim \cN\left(\bm', (a')^2 I_d \right)$ are independent with
    \begin{align*}
        \bm
        \;\coloneqq&\;
        \mfrac{a_1^2(a_2^2 + a_3^2)}{a_1^2 + a_2^2 + a_3^2} \left( \mfrac{1}{a_1^2} \bm_1 + \mfrac{1}{a_2^2 + a_3^2} \bm_2 \right) \;,
        &&
        \quad
        a^2
        \;\coloneqq\;
        \mfrac{a_1^2(a_2^2 + a_3^2)}{a_1^2 + a_2^2 + a_3^2} \;, \\
        \bm'
        \;\coloneqq&\;
        \mfrac{a_3^2}{a_2^2 + a_3^2} \bm_2 \;,
        &&
        (a')^2
        \;\coloneqq\;
        \mfrac{a_2^2 a_3^2}{a_2^2 + a_3^2} \;.
    \end{align*}
\end{lemma}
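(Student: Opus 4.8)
The plan is to obtain this identity by applying \cref{lem:gauss:integral:single} twice, peeling off one of the two variables at a time via the tower property. Since $\exp(-\|\bX-\bX'\|_2^2/(2a_3^2)) \le 1$, the integrand is dominated by $|f(\bX,\bX')|$, which is integrable by hypothesis, so all the interchanges of expectations below are justified by Fubini. \emph{Step 1 (condition on $\bX$).} For fixed $\bx \in \R^d$, apply \cref{lem:gauss:integral:single} to $\E[f(\bx,\bX')\exp(-\|\bx-\bX'\|_2^2/(2a_3^2))]$ with $\bX' \sim \cN(\bm_2,a_2^2 I_d)$, i.e.\ under the substitution $(\bm_1,a_1,\bm_2,a_2) \mapsto (\bm_2,a_2,\bx,a_3)$. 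A direct computation shows the resulting conditional mean equals $\bm' + \frac{a_2^2}{a_2^2+a_3^2}\bx$ with $\bm' = \frac{a_3^2}{a_2^2+a_3^2}\bm_2$, while the conditional variance is $(a')^2 = \frac{a_2^2 a_3^2}{a_2^2+a_3^2}$; the key feature is that this mean is \emph{affine} in $\bx$ with slope $\frac{a_2^2}{a_2^2+a_3^2}$. Writing $g(\bx) \coloneqq \E[f(\bx,\, \bm' + \frac{a_2^2}{a_2^2+a_3^2}\bx + a'\bxi)]$ for $\bxi \sim \cN(0,I_d)$, Step~1 gives
\[
    \E\Bigl[f(\bx,\bX')\exp\Bigl(-\tfrac{\|\bx-\bX'\|_2^2}{2a_3^2}\Bigr)\Bigr]
    \;=\; \Bigl(\tfrac{a_3^2}{a_2^2+a_3^2}\Bigr)^{d/2}\exp\Bigl(-\tfrac{\|\bx-\bm_2\|_2^2}{2(a_2^2+a_3^2)}\Bigr)\, g(\bx)\;.
\]

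\emph{Step 2 (integrate over $\bX$).} Taking the expectation over $\bX \sim \cN(\bm_1,a_1^2 I_d)$ and pulling out the constant leaves $\bigl(\tfrac{a_3^2}{a_2^2+a_3^2}\bigr)^{d/2}\,\E[g(\bX)\exp(-\|\bX-\bm_2\|_2^2/(2(a_2^2+a_3^2)))]$. Applying \cref{lem:gauss:integral:single} a second time, now under $(\bm_1,a_1,\bm_2,a_2) \mapsto (\bm_1,a_1,\bm_2,\sqrt{a_2^2+a_3^2})$, produces the factor $\bigl(\tfrac{a_2^2+a_3^2}{a_1^2+a_2^2+a_3^2}\bigr)^{d/2}\exp(-\|\bm_1-\bm_2\|_2^2/(2(a_1^2+a_2^2+a_3^2)))$ together with $\E[g(\bW)]$, where $\bW \sim \cN(\bm,a^2 I_d)$ with $\bm$ and $a^2$ exactly as in the statement. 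The two prefactors multiply to $\bigl(\tfrac{a_3^2}{a_1^2+a_2^2+a_3^2}\bigr)^{d/2}$, and unwinding $g$ gives $\E[g(\bW)] = \E[f(\bW,\, \bW' + \tfrac{a_2^2}{a_2^2+a_3^2}\bW)]$ with $\bW' = \bm' + a'\bxi \sim \cN(\bm',(a')^2 I_d)$ independent of $\bW$. Assembling the pieces yields precisely the claimed identity.

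\emph{Main obstacle.} There is no real conceptual difficulty; the work lies entirely in the bookkeeping of the Gaussian parameters across the two substitutions. The one point that genuinely matters is checking in Step~1 that the conditional mean depends affinely on $\bx$ with slope $\frac{a_2^2}{a_2^2+a_3^2}$, since it is this affine dependence that both legitimises the second application of \cref{lem:gauss:integral:single} and generates the coupled second argument $\bW' + \frac{a_2^2}{a_2^2+a_3^2}\bW$ in the final answer. An alternative one-shot proof would write the joint density of $(\bX,\bX')$, absorb the Gaussian kernel, and diagonalise the resulting $2d \times 2d$ quadratic form; the iterated route is preferable because it reuses \cref{lem:gauss:integral:single} verbatim and keeps every step at the level of a single Gaussian.
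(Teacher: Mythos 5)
Your proposal is correct and follows essentially the same route as the paper's proof: condition on $\bX$, apply Lemma \ref{lem:gauss:integral:single} to the inner expectation over $\bX'$ (producing the affine-in-$\bx$ conditional mean $\bm' + \frac{a_2^2}{a_2^2+a_3^2}\bx$ and variance $(a')^2$), then apply Lemma \ref{lem:gauss:integral:single} a second time to integrate out $\bX$ against the leftover Gaussian factor with bandwidth $\sqrt{a_2^2+a_3^2}$, and multiply the two prefactors. All parameter substitutions you report match the paper's computation, so nothing further is needed.
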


\vspace{.2em}

\subsection{Moment bounds for U-statistics} \label{appendix:u:moments}

We first present a result that bounds the moments of a U-statistic $D_n$ defined as in \eqref{eqn:general:u:defn}.

\begin{lemma} \label{lem:u:moments} Fix $n \geq 2$ and $\nu \geq 2$. Then, there exist absolute constants $c_\nu, C_{\nu} > 0$ depending only on $\nu$ such that
\begin{align*}
    \mean [ | D_n - \mean D_n |^\nu ] 
    \;&\leq\;
    C_{\nu} \, n^{\nu/2} 
    (n-1)^{-\nu} \Mcondnu^\nu 
    +  
    C_{\nu} \, (n-1)^{-\nu} \Mfullnu^\nu
    \;,
    \\
    \mean [ | D_n - \mean D_n |^\nu ] 
    \;&\geq\;
    c_{\nu}
    n(n-1)^{-\nu} \Mcondnu^\nu 
    +  
    c_{\nu}
    n^{-(\nu-1)} (n-1)^{-(\nu-1)} \Mfullnu^\nu
    \;.
\end{align*}
In other words,
\begin{align*}
    \mean [ | D_n - \mean D_n |^\nu ] 
    \;&=\; 
    O( n^{-\nu/2} \Mcondnu^\nu + n^{-\nu} \Mfullnu^\nu )\;,
    \\
    \mean [ | D_n - \mean D_n |^\nu ] 
    \;&=\; 
    \Omega( n^{-(\nu-1)} \Mcondnu^\nu + n^{-2(\nu-1)} \Mfullnu^\nu )\;.
\end{align*}
\end{lemma}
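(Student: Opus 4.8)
The plan is to decompose $D_n - \mean D_n$ into two orthogonal pieces via Hoeffding's decomposition and bound each piece separately using the martingale moment inequality (\cref{lem:martingale:bound}). Write $\tilde u(\bx,\by) \coloneqq u(\bx,\by) - D$ and set $g(\bx) \coloneqq \mean[\tilde u(\bx,\bX_2)\mid \bX_1=\bx]$ and $h(\bx,\by) \coloneqq \tilde u(\bx,\by) - g(\bx) - g(\by)$, so that $\mean[g(\bX_1)]=0$, $\mean[h(\bX_1,\bX_2)\mid \bX_1]=0$ a.s., $\|g(\bX_1)\|_{L_\nu} = \Mcondnu$, and (by the triangle inequality, since $\|g(\bX_1)\|_{L_\nu}\le \Mfullnu$) $\|h(\bX_1,\bX_2)\|_{L_\nu} \le 3\Mfullnu$, while also $\|h(\bX_1,\bX_2)\|_{L_\nu}\ge \Mfulltwo$-type lower bounds will be needed for the lower bound. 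This gives $D_n - \mean D_n = \frac{2}{n}\sum_{i=1}^n g(\bX_i) + \frac{1}{n(n-1)}\sum_{i\neq j} h(\bX_i,\bX_j) =: L_n + Q_n$, the linear (non-degenerate) and quadratic (degenerate) parts.

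For the \textbf{upper bound}, I would bound $\mean[|D_n - \mean D_n|^\nu] \le 2^{\nu-1}(\mean|L_n|^\nu + \mean|Q_n|^\nu)$. The term $L_n$ is $\frac{2}{n}$ times a sum of $n$ i.i.d.~centred variables, so \cref{lem:martingale:bound} gives $\mean|L_n|^\nu \le C_\nu 2^\nu n^{-\nu} n^{\nu/2}\Mcondnu^\nu = O(n^{-\nu/2}\Mcondnu^\nu)$. For $Q_n$, the standard trick is to view the completely degenerate U-statistic $\sum_{i\neq j}h(\bX_i,\bX_j)$ through a nested martingale structure: condition on $\bX_1,\ldots,\bX_k$ and peel off one index at a time, so that the increments form a martingale difference sequence; applying \cref{lem:martingale:bound} twice (once over $i$, once over $j$, using $\mean[h(\bX_i,\bX_j)\mid\bX_i]=0$ to kill cross terms) yields $\mean|\sum_{i\neq j}h(\bX_i,\bX_j)|^\nu = O(n^\nu \Mfullnu^\nu)$ — more precisely $O((n(n-1))^{\nu/2}\cdot(n(n-1))^{... }\Mfullnu^\nu)$; one has to track the two factors of $n^{\max\{0,\nu/2-1\}}$ carefully, but the upshot is the $\nu/2$-powers combine so the leading behaviour is $n^\nu\Mfullnu^\nu$, giving $\mean|Q_n|^\nu = O(n^{-\nu}\Mfullnu^\nu)$. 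Combining yields the claimed upper bound.

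For the \textbf{lower bound}, since $L_n$ and $Q_n$ are uncorrelated but not independent, I cannot simply add $L_2$-norms; instead I would argue that each of $\mean|L_n|^\nu$ and $\mean|Q_n|^\nu$ individually lower-bounds $\mean|D_n-\mean D_n|^\nu$ up to a constant. This follows from a conditioning/Jensen argument: $\mean[|D_n - \mean D_n|^\nu \mid \bX_1,\ldots,\bX_n \text{ suitably partitioned}]$ — more cleanly, use that $\mean|X+Y|^\nu \ge c_\nu \mean|X|^\nu$ whenever $\mean[Y\mid X]=0$ (a consequence of conditional Jensen applied to the convex function $|\cdot|^\nu$, since $\mean[|X+Y|^\nu\mid X]\ge |X+\mean[Y\mid X]|^\nu = |X|^\nu$); this applies with $X=L_n$, $Y=Q_n$ since $\mean[Q_n\mid \bX_1,\ldots,\bX_n]$... actually the cleaner route is $\mean[L_n\mid Q_n]$ is not zero, so I would instead observe $\mean[h(\bX_i,\bX_j)\mid \bX_i, (\bX_k)_{k\neq i,j}]=0$ to realize $Q_n$ as a martingale sum with $L_n$ measurable w.r.t.~the terminal $\sigma$-field — establishing $\mean|D_n - \mean D_n|^\nu \ge c_\nu\mean|Q_n|^\nu$ and separately $\ge c_\nu \mean|L_n|^\nu$. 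Then the lower bounds in \cref{lem:martingale:bound} give $\mean|L_n|^\nu \ge c_\nu n^{\min\{0,\nu/2-1\}}\cdot n\cdot (2/n)^\nu \Mcondnu^\nu$, which for $\nu\ge 2$ is $\Omega(n^{-(\nu-1)}\Mcondnu^\nu)$, and the analogous two-fold lower application gives $\mean|Q_n|^\nu = \Omega(n^{-2(\nu-1)}\Mfullnu^\nu)$.

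The \textbf{main obstacle} is the iterated martingale bookkeeping for the degenerate part $Q_n$: one must set up the right nested filtration so that the double sum $\sum_{i\neq j}h(\bX_i,\bX_j)$ decomposes into genuine martingale differences, apply \cref{lem:martingale:bound} in the correct order, and carefully combine the two exponents $\max\{0,\nu/2-1\}$ (resp.~$\min\{0,\nu/2-1\}$) so that the final power of $n$ is exactly $\nu$ (resp.~$2(\nu-1)$ for the lower bound, reflecting the $n^{-2(\nu-1)}$ rate). A subtlety is that $h$ is only \emph{one-sided} degenerate in each pair (conditionally mean-zero given either single argument), which is precisely what makes the two successive conditionings valid; ensuring the bound does not accidentally lose a factor here is the delicate point.
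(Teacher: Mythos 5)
Your upper bound is essentially sound and uses the same machinery as the paper, just organized differently: the paper applies the two-sided martingale inequality (Lemma \ref{lem:martingale:bound}) to the Doob increments $Y_i=\mean[D_n\mid\cF_i]-\mean[D_n\mid\cF_{i-1}]$ of the \emph{whole} statistic, and then inside each $Y_i$ splits off the conditional-mean term (your $g(\bX_i)$, their $B_i$) from $\sum_{j<i}h(\bX_i,\bX_j)$ (their $A_{ij}-B_j$) as a two-element martingale given $\sigma(\bX_i)$, with a third application over $j<i$; your Hoeffding-first route with a nested application to $\sum_{i\neq j}h(\bX_i,\bX_j)$ gives the same exponents, and the bookkeeping you flag ($n^{\nu/2-1}\cdot n\cdot n^{\nu/2-1}\cdot n=n^\nu$) indeed works out.

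The genuine gap is in your lower bound. The inequality $\mean|X+Y|^\nu\ge\mean|X|^\nu$ obtained from conditional Jensen needs $\mean[Y\mid\sigma(X)]=0$, and neither $\mean[Q_n\mid L_n]=0$ nor $\mean[L_n\mid Q_n]=0$ holds: $h$ is centred conditionally on each single \emph{full} argument $\bX_i$, not conditionally on $g(\bX_i)$ or on $\sum_i g(\bX_i)$. A concrete failure: take $g(\bX_i)$ i.i.d.\ Rademacher and $h(\bx,\by)=g(\bx)g(\by)$ (so $u=D+g(\bx)+g(\by)+g(\bx)g(\by)$ is symmetric and $h$ is completely degenerate); then $\mean[h(\bX_1,\bX_2)\mid g(\bX_1)+g(\bX_2)=0]=-1\neq 0$. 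Orthogonality of the Hoeffding projections only gives the $\nu=2$ case. Your proposed repair does not close this: a Burkholder-type lower bound needs later increments to be conditionally centred given the earlier $\sigma$-fields, but adjoining $\sigma(L_n)$ (or the $g(\bX_k)$'s) at time $0$ destroys $\mean[h(\bX_i,\bX_j)\mid\cdot]=0$, while leaving $L_n$ measurable only at the terminal time does not separate it from the $Q_n$ increments at all, so neither $\mean|D_n-\mean D_n|^\nu\gtrsim\mean|L_n|^\nu$ nor $\gtrsim\mean|Q_n|^\nu$ is established. The paper avoids the issue by never splitting $L_n$ from $Q_n$ at the level of the statistic: the two-sided bound on the Doob increments gives $\mean|D_n-\mean D_n|^\nu\ge c_\nu\sum_i\mean|Y_i|^\nu$, and within each $Y_i$ the pair $\bigl((n-1)g(\bX_i),\ \sum_{j<i}h(\bX_i,\bX_j)\bigr)$ is a genuine two-element martingale given $\sigma(\bX_i)$, so the same nested structure yields both the $n(n-1)^{-\nu}\Mcondnu^\nu$ term (note the extra factor $n$ from summing over $i$, which a single projection of $D_n$ onto one coordinate cannot produce) and the $n^{-(\nu-1)}(n-1)^{-(\nu-1)}\Mfullnu^\nu$ term. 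Your lower bound needs to be rerun in that form; as written, the step asserting $\mean|D_n-\mean D_n|^\nu\gtrsim\max\{\mean|L_n|^\nu,\mean|Q_n|^\nu\}$ is unsupported.
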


The next two results summarise how the moments of variables under the functional decomposition in \cref{assumption:L_nu} interact with the moments of the original statistic $u$ under $R$:

\begin{lemma} \label{lem:factorisable:moment} Let $\{\phi_k\}_{k=1}^\infty$,  $\{\lambda_k\}_{k=1}^\infty$ and $\varepsilon_{K;\nu}$ be defined as in \cref{assumption:L_nu}. For $\bX_1, \bX_2 \overset{i.i.d.}{\sim} R$, write $\mu_k \coloneqq \mean[\phi_k(\bX_1)]$ and let the moment terms $D, \Mcondnu, \Mfullnu$ be defined as in \cref{sect:moment:terms}. Then we have the following:
\begin{proplist}
    \item $
    \big| 
    \sum_{k=1}^K \lambda_k \mu_k^2
    -
    D
    \big| 
    \leq
    \varepsilon_{K;1}
$;
    \item for any $\nu \in [1,3]$, we have that
\begin{align*}
    \mfrac{1}{4} (\Mcondnu)^\nu - \varepsilon_{K;\nu}^\nu
    \;\leq\; 
    \mean\Big[ \Big| \msum_{k=1}^K \lambda_k (\phi_k(\bX_1) - \mu_k) \mu_k \Big|^\nu \Big] 
    \;\leq\; 
    4 ( (\Mcondnu)^\nu + \varepsilon_{K;\nu}^\nu )\;;
\end{align*}
    \item there exist some absolute constants $c, C > 0$ such that
\end{proplist}
\begin{align*}
    \mean\Big[ \Big|
    \msum_{k=1}^K
    \lambda_k (\phi_k(\bX_1) - \mu_k)(\phi_k(\bX_2) - \mu_k)
    \Big|^\nu \Big]
    \leq&\;
    4 C (\Mfullnu)^\nu    
    -
    \mfrac{1}{2} (\Mcondnu)^\nu
    + 
    (4 C + 2) \varepsilon_{K;\nu}^\nu\;,
    \\
    \mean\Big[ \Big|
    \msum_{k=1}^K
    \lambda_k (\phi_k(\bX_1) - \mu_k)(\phi_k(\bX_2) - \mu_k)
    \Big|^\nu \Big]
    \geq&\;
    \mfrac{c}{4}  (\Mfullnu)^\nu    
    - 
    8 (\Mcondnu)^\nu
    -
    (c + 8) \varepsilon_{K;\nu}^\nu\;.
\end{align*}
\end{lemma}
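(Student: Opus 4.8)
The plan is to treat the three parts as successive refinements of one comparison: relate each quantity built from the truncated decomposition $\sum_{k=1}^K \lambda_k\phi_k(\bX_1)\phi_k(\bX_2)$ to the corresponding quantity built from $u$ itself, where the difference is exactly $u - \sum_{k=1}^K \lambda_k\phi_k(\bX_1)\phi_k(\bX_2)$, whose $L_\nu$ norm is $\varepsilon_{K;\nu}$ by definition. For part (i), I would write $\sum_{k=1}^K \lambda_k\mu_k^2 = \mean[\sum_{k=1}^K \lambda_k\phi_k(\bX_1)\phi_k(\bX_2)]$ using independence of $\bX_1,\bX_2$, then subtract $D = \mean[u(\bX_1,\bX_2)]$ and apply Jensen: $|\mean[\sum_k\lambda_k\phi_k(\bX_1)\phi_k(\bX_2) - u(\bX_1,\bX_2)]| \leq \mean|\cdots| \leq \varepsilon_{K;1}$. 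That is immediate.

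For part (ii), the object $\sum_{k=1}^K \lambda_k(\phi_k(\bX_1)-\mu_k)\mu_k$ should be recognized as $\mean[\sum_k\lambda_k\phi_k(\bX_1)\phi_k(\bX_2)\mid\bX_1] - \mean[\sum_k\lambda_k\phi_k(\bX_1)\phi_k(\bX_2)]$, i.e. the centred conditional mean of the truncated kernel. Its analogue for $u$ is $\mean[u(\bX_1,\bX_2)\mid\bX_1] - D$, whose $L_\nu$ norm is $\Mcondnu$ — except note the conditioning variable: $\Mcondnu$ is defined with conditioning on $\bX_2$, but by symmetry of $u$ the two conditional means have the same law, so the norms agree. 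The difference between the truncated conditional mean and $\mean[u\mid\bX_1]-D$ is $\mean[(\sum_k\lambda_k\phi_k(\bX_1)\phi_k(\bX_2)) - u(\bX_1,\bX_2)\mid\bX_1]$ (the constants cancel in the centring), and conditional Jensen plus the tower property gives its $L_\nu$ norm $\leq\varepsilon_{K;\nu}$. Then the two-sided bound follows from the triangle inequality in $L_\nu$ together with the elementary inequality $(a+b)^\nu \le 2^{\nu-1}(a^\nu+b^\nu) \le 4(a^\nu+b^\nu)$ for $\nu\le 3$ applied in both directions (for the lower bound, $a^\nu \ge \frac14(a+b)^\nu$ when... actually I would use $\|\cdot\|_{L_\nu}$-triangle first, then raise to the $\nu$-th power using $(x-y)_+^\nu$ type bounds); the constants $\tfrac14$ and $4$ are chosen to absorb $2^{\nu-1}\le 4$.

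For part (iii), the quantity $\sum_{k=1}^K\lambda_k(\phi_k(\bX_1)-\mu_k)(\phi_k(\bX_2)-\mu_k)$ is the fully-centred (Hoeffding-type) second-order term of the truncated kernel. I would expand $\sum_k\lambda_k\phi_k(\bX_1)\phi_k(\bX_2) = \sum_k\lambda_k(\phi_k(\bX_1)-\mu_k)(\phi_k(\bX_2)-\mu_k) + [\text{linear terms in }\bX_1,\bX_2] + \sum_k\lambda_k\mu_k^2$, so that the degenerate part equals (truncated kernel) minus (its two conditional means) plus (its mean) — the standard Hoeffding decomposition algebra. Comparing with the same decomposition of $u$, the degenerate part of the truncated kernel differs from $u(\bX_1,\bX_2) - \mean[u\mid\bX_1] - \mean[u\mid\bX_2] + D$ by terms each controlled in $L_\nu$ by a multiple of $\varepsilon_{K;\nu}$ (using part (ii)-style estimates for the conditional-mean pieces and part (i) for the constant). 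Now $\|u(\bX_1,\bX_2) - \mean[u\mid\bX_1] - \mean[u\mid\bX_2] + D\|_{L_\nu}$ must be bounded above and below in terms of $\Mfullnu$ and $\Mcondnu$: the upper bound is the triangle inequality ($\|u - D\|_{L_\nu} + 2\|\mean[u\mid\bX_1]-D\|_{L_\nu} = \Mfullnu + 2\Mcondnu$, then I would instead argue more carefully since the stated bound has $-\tfrac12\Mcondnu^\nu$, not $+$), and the lower bound uses that the degenerate part is an $L_2$-orthogonal projection of $u-D$ onto the "doubly-centred" subspace — so in $L_2$ its norm is $\|u-D\|_{L_2}^2 - 2\|\mean[u\mid\bX_1]-D\|_{L_2}^2 \ge \sigfull^2 - 2\sigcond^2$ roughly, and one converts between $L_2$ and $L_\nu$ via Lemma \ref{lem:martingale:bound} (the martingale/Marcinkiewicz–Zygmund bounds) applied to the Hoeffding components, which for fixed low order only costs absolute constants.

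The main obstacle I expect is part (iii): getting the precise form with the \emph{subtracted} $\Mcondnu^\nu$ term rather than an added one, and the constant $c$ in the lower bound. This requires exploiting that the degenerate (doubly-centred) component is genuinely smaller than $u-D$ — its variance is $\sigfull^2$ minus (twice) the conditional-mean variance — rather than just bounding by the triangle inequality, and then transferring this $L_2$ subtraction identity to an $L_\nu$ inequality via the two-sided martingale moment bounds of Lemma \ref{lem:martingale:bound} at the cost of the absolute constants $c,C$. The bookkeeping of how the $\varepsilon_{K;\nu}^\nu$ errors accumulate through the Hoeffding decomposition (with coefficients like $4C+2$ and $c+8$) is routine but needs care to make sure every cross term is accounted for.
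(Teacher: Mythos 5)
Parts (i) and (ii) of your proposal match the paper's argument: compare the truncated quantity to the corresponding quantity for $u$, control the discrepancy by (conditional) Jensen's inequality using $\varepsilon_{K;\nu}$, and conclude with the elementary two-sided inequality $\frac14|a|^\nu-|b|^\nu\le|a+b|^\nu\le 4(|a|^\nu+|b|^\nu)$ valid for $\nu\in[1,3]$. That part is fine.

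Part (iii) has a genuine gap, and it is exactly where you predicted difficulty. Your upper bound via the triangle inequality gives $\Mfullnu+2\Mcondnu$, not the stated bound with a \emph{subtracted} $\tfrac12\Mcondnu^\nu$, and you leave this unresolved ("argue more carefully"). More seriously, your proposed lower-bound mechanism does not work: the $L_2$ orthogonality identity $\|u-\mean[u|\bX_1]-\mean[u|\bX_2]+D\|_{L_2}^2=\sigfull^2-2\sigcond^2$ cannot be "converted" into a lower bound of the form $\tfrac{c}{4}\Mfullnu^\nu-8\Mcondnu^\nu-\dots$, because $\sigfull$ is the $L_2$ moment and can be arbitrarily smaller than $\Mfullnu$; there is no reverse H\"older inequality turning an $L_2$ lower bound into one featuring the $\nu$-th moment $\Mfullnu^\nu$ (and for $\nu<2$ the $L_2$ bound gives nothing at all). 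The missing idea is that both the subtracted term in the upper bound and the lower bound come from applying the \emph{two-sided} martingale moment inequality (Lemma \ref{lem:martingale:bound}) directly at the level of $\nu$-th moments, not through $L_2$. The paper writes the centred truncated kernel as $T_1+T_2$ with $T_1=\msum_k\lambda_k\mu_k(\phi_k(\bX_2)-\mu_k)$ and $T_2=\msum_k\lambda_k(\phi_k(\bX_1)-\mu_k)\phi_k(\bX_2)$, a two-element martingale difference sequence, then splits $T_2=R_1+R_2$ with $R_2$ the doubly-centred target, again a two-element martingale difference sequence. Two applications of Lemma \ref{lem:martingale:bound} give two-sided bounds on $m_K:=\mean[|\msum_k\lambda_k(\phi_k(\bX_1)\phi_k(\bX_2)-\mu_k^2)|^\nu]$ in terms of $\mean|T_1|^\nu+\mean|R_1|^\nu+\mean|R_2|^\nu$, and \emph{solving} these inequalities for $\mean|R_2|^\nu$ is what produces the subtracted multiples of $\mean|T_1|^\nu\asymp\Mcondnu^\nu$ in both directions; $m_K$ is then related to $\Mfullnu^\nu$ by the same elementary inequality as in part (ii), with all truncation errors absorbed into $\varepsilon_{K;\nu}^\nu$. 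You cite Lemma \ref{lem:martingale:bound}, but the way you propose to use it (transferring an $L_2$ subtraction identity to $L_\nu$) is not a valid step; without the solve-for-the-target step above, neither the $-\tfrac12\Mcondnu^\nu$ upper bound nor the $\Mfullnu^\nu$-based lower bound follows.
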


\vspace{1em}

The next result assumes the notations of Lemma \ref{lem:factorisable:moment}, and additionally denotes
\begin{align*}
    &
    \Lambda^K\;\coloneqq\; \diag\{ \lambda_1, \ldots, \lambda_K\}\;\in\R^{K \times K}\;,
    &&
    \phi^K(x)\;\coloneqq\; ( \phi_1(x), \ldots, \phi_K(x) )^\top\;\in\R^K\;.
\end{align*}

\vspace{1em}

\begin{lemma} \label{lem:factorisable:moment:gaussian} For $ \mu^K \coloneqq  \mean[  \phi^K(\bX_1)]$ and
$ \Sigma^K \coloneqq \Cov[ \phi^K(\bX_1)]$, we have
\begin{align*}
    \sigcond^2 - 4 \sigcond \varepsilon_{K;2} -4\varepsilon_{K;2}^2
    \;\leq\;
    &(\mu^K)^\top \Lambda^K \Sigma^K \Lambda^K (\mu^K) 
    \;\leq\; 
    (\sigcond + 2\varepsilon_{K;2})^2\;.
    \\
    (\sigfull - \varepsilon_{K;2})^2
    \;\leq\;
    &\Tr( (\Lambda^K \Sigma^K)^2 ) \;\leq\; (\sigfull + \varepsilon_{K;2})^2\;.
\end{align*}
In particular, for $\nu \in [1,3]$ and two i.i.d.\,zero-mean Gaussian vector $\bZ_1$ and $\bZ_2$ in $\R^K$ with variance $\Sigma^K$, there exists some absolute constant $C>0$ such that
\begin{align*}
    &
    \mean [| (\mu^K)^\top \Lambda^K\bZ_1  |^\nu ] \;\leq\; 7 \big(
    \sigcond^\nu +  8 \varepsilon_{K;2}^\nu \big)\;,
    \qquad
    \mean [| \bZ_1^\top \Lambda^K\bZ_2  |^\nu ] \;\leq\; 6 \big(
    \sigfull^\nu +  \varepsilon_{K;2}^\nu \big)\;,
    \\
    &
    \mean \big[ \big| (\phi^K(\bX_1) - \mu^K)^\top \Lambda^K\bZ_1  \big|^\nu \big]
    \;\leq\;
    8 C (\Mfullnu)^\nu    
    -
     (\Mcondnu)^\nu
    + 
    (8 C + 4) \varepsilon_{K;\nu}^\nu\;.
\end{align*}
\end{lemma}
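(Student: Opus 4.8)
The plan is to route everything through the truncated kernel
\begin{align*}
  S_K(\bx,\by) \;\coloneqq\; \msum_{k=1}^K \lambda_k \phi_k(\bx)\phi_k(\by) \;=\; \phi^K(\bx)^\top \Lambda^K \phi^K(\by)\;,
\end{align*}
which by \cref{assumption:L_nu} satisfies $\| S_K(\bX_1,\bX_2) - u(\bX_1,\bX_2) \|_{L_\nu} = \varepsilon_{K;\nu}$, and then to exploit its exact Hoeffding-type decomposition
\begin{align*}
  S_K(\bX_1,\bX_2) \;=&\; \msum_{k=1}^K\lambda_k\mu_k^2 \;+\; (\mu^K)^\top\Lambda^K\big(\phi^K(\bX_1)-\mu^K\big) \;+\; (\mu^K)^\top\Lambda^K\big(\phi^K(\bX_2)-\mu^K\big) \\
  &\;+\; \big(\phi^K(\bX_1)-\mu^K\big)^\top\Lambda^K\big(\phi^K(\bX_2)-\mu^K\big)\;,
\end{align*}
in which the three non-constant summands are pairwise uncorrelated. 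A short computation then identifies $(\mu^K)^\top\Lambda^K\Sigma^K\Lambda^K\mu^K$ with $\Var\big[\mean[S_K(\bX_1,\bX_2)\mid\bX_1]\big]$, and $\Tr\big((\Lambda^K\Sigma^K)^2\big)$ with the variance of the fully degenerate (second-order Hoeffding) component of $S_K$; in particular $\Var[S_K(\bX_1,\bX_2)] = 2(\mu^K)^\top\Lambda^K\Sigma^K\Lambda^K\mu^K + \Tr\big((\Lambda^K\Sigma^K)^2\big)$. These identifications are what make the two matrix quantities in the statement amenable to an $L_2$ argument.

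For the first pair of inequalities I would compare the centred conditional mean of $S_K$ with that of $u$: their difference is the centred version of $\mean[S_K-u\mid\bX_1]$, whose $L_2$ norm is at most $\|S_K-u\|_{L_2}=\varepsilon_{K;2}$ by the conditional Jensen inequality, so that $\big|\,\sqrt{(\mu^K)^\top\Lambda^K\Sigma^K\Lambda^K\mu^K}-\sigcond\,\big|\le\varepsilon_{K;2}$ by the triangle inequality; squaring then gives the stated two-sided bound. For the second pair, the upper bound is immediate from $\Tr\big((\Lambda^K\Sigma^K)^2\big)\le\Var[S_K(\bX_1,\bX_2)]\le(\sigfull+\varepsilon_{K;2})^2$, where the last step is the $L_2$ triangle inequality applied to $(S_K-\mean S_K)-(u-\mean u)$; the matching lower bound follows analogously by comparing the degenerate Hoeffding component of $S_K$ with that of $u$, which differ in $L_2$ by at most $\varepsilon_{K;2}$ since the projection onto degenerate components is an orthogonal (hence contractive) $L_2$-projection, and then squaring.

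For the three ``in particular'' estimates the device is to condition on the non-Gaussian argument (respectively on one of the two auxiliary Gaussian vectors) so that each statistic becomes a one-dimensional centred Gaussian whose conditional variance is a quadratic form, and then to use $\mean[|\cN(0,v)|^\nu]=v^{\nu/2}\mean[|Z|^\nu]\le 2v^{\nu/2}$ for $\nu\in[1,3]$. For $(\mu^K)^\top\Lambda^K\bZ_1$ the variance is the deterministic quantity $(\mu^K)^\top\Lambda^K\Sigma^K\Lambda^K\mu^K$, bounded by the first inequality of the lemma, and expanding $(\sigcond+2\varepsilon_{K;2})^\nu$ via $(a+b)^\nu\le 2^{\nu-1}(a^\nu+b^\nu)$ closes it. For $(\phi^K(\bX_1)-\mu^K)^\top\Lambda^K\bZ_1$, the conditional variance given $\bX_1$ is exactly $\mean[T_K(\bX_1,\bX_2)^2\mid\bX_1]$, where $T_K$ is the degenerate bilinear term above, so Jensen's inequality yields $\mean\big[(\mean[T_K^2\mid\bX_1])^{\nu/2}\big]\le\mean[|T_K|^\nu]$ for $\nu\in[2,3]$, and \cref{lem:factorisable:moment}(iii) finishes it with exactly the stated constants. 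For $\bZ_1^\top\Lambda^K\bZ_2$, conditioning on $\bZ_2$ gives conditional variance $\bZ_2^\top\Lambda^K\Sigma^K\Lambda^K\bZ_2$; writing $\bZ_2=(\Sigma^K)^{1/2}\eta$ and diagonalising makes this a weighted sum of i.i.d.\ $\chi^2_1$ variables with weights summing to $\Tr\big((\Lambda^K\Sigma^K)^2\big)$, whose $\nu/2$-th moment I would bound using the exact identities of \cref{lem:quadratic:gaussian:moments} at $\nu=2$ together with a monotonicity/interpolation step for $\nu\in(2,3]$ (or directly via \cref{lem:martingale:bound}); the second inequality of the lemma then converts $\Tr\big((\Lambda^K\Sigma^K)^2\big)$ into $\sigfull$.

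The main work is bookkeeping rather than ideas: keeping the numerical constants at the stated values requires carefully combining the Gaussian absolute-moment constant $\mean[|Z|^\nu]\le 2\sqrt{2/\pi}$, the elementary inequality $(a+b)^\nu\le 2^{\nu-1}(a^\nu+b^\nu)$, and the bounds supplied by \cref{lem:factorisable:moment}. The only estimate that is not routine is the $\nu/2$-th moment (with $\nu/2\in(1,\tfrac32]$) of the Gaussian quadratic form $\bZ_2^\top\Lambda^K\Sigma^K\Lambda^K\bZ_2$, which I would control by diagonalisation together with \cref{lem:martingale:bound}, relating the sum of squared eigenvalues back to $\Tr\big((\Lambda^K\Sigma^K)^2\big)$.
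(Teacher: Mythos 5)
Your treatment of the conditional-mean bound and of the three Gaussian moment bounds is correct and essentially the paper's argument: for $(\mu^K)^\top\Lambda^K\Sigma^K\Lambda^K\mu^K$ you compare $\mean[S_K\mid\bX_1]$ with $\mean[u\mid\bX_1]$ in $L_2$ (your triangle-inequality version is in fact a little cleaner than the paper's expand-the-square plus Cauchy--Schwarz and gives a slightly sharper constant), and the three ``in particular'' estimates are obtained exactly as in the paper by conditioning, the Gaussian absolute-moment formula, Jensen, and Lemma \ref{lem:factorisable:moment}(iii); note that both your Jensen step and the paper's corresponding step for the third bound need $\nu\ge 2$ (which is all that is used downstream), and for $\mean[|\bZ_1^\top\Lambda^K\bZ_2|^\nu]$ the ``monotonicity/interpolation'' you gesture at should simply be H\"older/Jensen down to the second moment of the quadratic form followed by Lemma \ref{lem:quadratic:gaussian:moments} together with $\Tr(A^4)\le\Tr(A^2)^2$, which is the paper's route.

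The genuine gap is your lower bound for $\Tr((\Lambda^K\Sigma^K)^2)$. Under your (statement-faithful) reading with $\Sigma^K=\Cov[\phi^K(\bX_1)]$, this trace equals $\mean[T_K^2]$ for the fully degenerate bilinear term $T_K=(\phi^K(\bX_1)-\mu^K)^\top\Lambda^K(\phi^K(\bX_2)-\mu^K)$, whereas the degenerate Hoeffding component of $u$ has $L_2$ norm $\sqrt{\sigfull^2-2\sigcond^2}$, not $\sigfull$. Your contractive-projection comparison therefore only yields $\Tr((\Lambda^K\Sigma^K)^2)\ge\big(\sqrt{\sigfull^2-2\sigcond^2}-\varepsilon_{K;2}\big)^2$ (when nonnegative), strictly weaker than the claimed $(\sigfull-\varepsilon_{K;2})^2$ whenever $\sigcond>0$; indeed the example $K=1$, $\lambda_1=1$, $\phi_1(x)=x$ with $\bX_1$ of nonzero mean $m$ and variance $s^2$ has $\varepsilon_{K;2}=0$, $\Tr((\Lambda^K\Sigma^K)^2)=s^4$ but $\sigfull^2=s^4+2m^2s^2$, so no argument under the covariance reading can reach the stated inequality. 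The paper's own proof takes a different identification at exactly this point: it writes $\Tr((\Lambda^K\Sigma^K)^2)=\mean\big[\big(\msum_{k=1}^K\lambda_k\phi_k(\bX_1)\phi_k(\bX_2)\big)^2\big]$, i.e.\ it works with the uncentred second-moment (Gram) matrix of $\phi^K$ rather than its covariance, and then compares the full truncated kernel to $u$ in $L_2$, obtaining $\big|\Tr((\Lambda^K\Sigma^K)^2)-\sigfull^2-\varepsilon_{K;2}^2\big|\le 2\,\sigfull\,\varepsilon_{K;2}$ and hence both one-sided bounds. So your decomposition-based route cannot be patched to deliver the stated lower bound; to recover it you must switch to the paper's uncentred identification, i.e.\ keep the $2\sigcond^2$ contribution of the linear Hoeffding terms inside the trace rather than projecting it away.
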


\vspace{1em}

The next lemma gives an equivalent expression for $W_n^K$ defined in \eqref{eqn:defn:WnK} and also controls the moments of $W_n^K$.

\vspace{1em}

\begin{lemma} \label{lem:WnK:moments} Let $\{\eta^K_i\}_{i=1}^n$ be a sequence of i.i.d.~standard Gaussian vectors in $\R^K$. Then
\begin{proplist}
    \item the distribution of $W_n^K$ satisfies 
\begin{align*}
    W_n^K \;\overset{d}{=}\;
    \mfrac{1}{n^{3/2}(n-1)^{1/2}} 
    \Big( \msum_{i,j=1}^n (\eta^K_i)^\top (\Sigma^K)^{1/2} \Lambda^K (\Sigma^K)^{1/2} \eta^K_j
    -
    n \Tr( \Sigma^K \Lambda^K )
    \Big)    
    + D\;;
\end{align*}
    \item the mean satisfies $\mean[ W_n^K] = D$ for every $K \in \N$;
    \item the variance is controlled as
\begin{align*}
    \mfrac{2}{n(n-1)} (\sigfull - \varepsilon_{K;2})^2
    \;\leq&\; 
    \Var[ W_n^K ] 
    \;\leq\; 
    \mfrac{2}{n(n-1)} (\sigfull + \varepsilon_{K;2})^2\;;
\end{align*}
    \item the third central moment is controlled as
\begin{align*}
    \mean\big[ (W_n^K - D)^3 \big] \;\leq&\; 
    \mfrac{8 \big( \mean[u(\bX_1,\bX_2)u(\bX_2,\bX_3)u(\bX_3,\bX_1)] - \Mfullthree^3 + ( \Mfullthree + \varepsilon_{K;3})^3 \big)}{n^{3/2}(n-1)^{3/2}} \;,
    \\
    \mean\big[ (W_n^K - D)^3 \big] \;\geq&\; 
    \mfrac{8 \big( \mean[u(\bX_1,\bX_2)u(\bX_2,\bX_3)u(\bX_3,\bX_1)] + \Mfullthree^3 - ( \Mfullthree + \varepsilon_{K;3})^3 \big)}{n^{3/2}(n-1)^{3/2}} \;;
\end{align*}
    \item the fourth central moment is controlled as
\begin{align*}
    \mean\big[ (W_n^K - D)^4 \big] 
    \;\leq&\;
    \mfrac{12}{n^2(n-1)^2} 
    \Big(
    4 \,\mean[ u(\bX_1,\bX_2) u(\bX_2,\bX_3) u(\bX_3,\bX_4)  u(\bX_4,\bX_1) ]
    \\
    &\;\qquad\qquad\quad
    -
    4 \Mfullfour^4
    + 
    4 (\Mfullfour + \varepsilon_{K;4})^4
    +
    (\sigfull + \varepsilon_{K;2})^4
    \Big)
    \;,
    \\
    \mean\big[ (W_n^K - D)^4 \big] 
    \;\geq&\;
    \mfrac{12}{n^2(n-1)^2} 
    \Big(
    4 \,\mean[ u(\bX_1,\bX_2) u(\bX_2,\bX_3) u(\bX_3,\bX_4)  u(\bX_4,\bX_1) ]
    \\
    &\;\qquad\qquad\quad
    +
    4 \Mfullfour^4
    - 
    4 (\Mfullfour + \varepsilon_{K;4})^4
    +
    (\sigfull - \varepsilon_{K;2})^4
    \Big)
    \;;
\end{align*}
    \item we also have a generic moment bound: For $m \in \N$, there exists some absolute constant $C_m > 0$ depending only on $m$ such that
\begin{align*}
    \mean \big[ (W_n^K)^{2m} \big] 
     \;\leq&\;
     \mfrac{C_m}{n^{m}(n-1)^{m} }
     (\sigfull + \varepsilon_{K;2})^{2m}
     +
     C_m \, D^{2m}\;;
\end{align*}
    \item if \cref{assumption:L_nu} holds for some $\nu \geq 2$ then $\lim_{K \rightarrow \infty} \Var[ W_n^K ] = \frac{2}{n(n-1)} \sigfull^2$. If \cref{assumption:L_nu} holds for some $\nu \geq 3$, then
\begin{align*}
    \lim_{K \rightarrow \infty} 
    \mean\big[ (W_n^K - D)^3 \big] \;=&\; 
    \mfrac{8 \mean[u(\bX_1,\bX_2)u(\bX_2,\bX_3)u(\bX_3,\bX_1)]}{n^{3/2}(n-1)^{3/2}} \;,
\end{align*}
and if \cref{assumption:L_nu} holds for some $\nu \geq 4$, then
\begin{align*}
    \lim_{K \rightarrow \infty} 
    \mean\big[ (W_n^K - D)^4 \big] \;=&\; 
    \mfrac{12 (4 \mean[u(\bX_1,\bX_2)u(\bX_2,\bX_3)u(\bX_3,\bX_4)u(\bX_4,\bX_1)] + \sigfull^4)}{n^2(n-1)^2} \;.
\end{align*}
\end{proplist}
\end{lemma}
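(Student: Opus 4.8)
The plan is to prove item (i) first---it is a pure distributional identity---and then read off (ii)--(vii) by elementary Gaussian moment arithmetic, the only genuinely substantive work being the translation of trace powers of $\Lambda^K\Sigma^K$ into the moment terms of $u$ that appear in (iii)--(v).

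For (i), set $M \coloneqq (\Sigma^K)^{1/2}\Lambda^K(\Sigma^K)^{1/2}$, a symmetric matrix with eigenvalues $\tau_{1;d},\dots,\tau_{K;d}$, and $S \coloneqq \sum_{i=1}^n \eta_i^K \sim \cN(0,nI_K)$. Then $\sum_{i,j=1}^n (\eta_i^K)^\top M \eta_j^K = S^\top M S \equdist n\,\eta^\top M \eta$ for $\eta \sim \cN(0,I_K)$, and diagonalising $M$ by an orthogonal matrix (which preserves standardness of $\eta$) gives $\eta^\top M \eta \equdist \sum_{k=1}^K \tau_{k;d}\xi_k^2$, while $\Tr(\Sigma^K\Lambda^K) = \Tr(M) = \sum_k \tau_{k;d}$. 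Substituting turns the right-hand side of (i) into $\frac{1}{\sqrt{n(n-1)}}\sum_k \tau_{k;d}(\xi_k^2-1) + D$, i.e.\ into $W_n^K$. Item (ii) is then immediate from $\mean[\xi_k^2-1]=0$.

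For (iii)--(v) I would work from $W_n^K - D = \frac{1}{\sqrt{n(n-1)}}\sum_k \tau_{k;d} Y_k$ with $Y_k \coloneqq \xi_k^2 - 1$ i.i.d., $\mean Y_k = 0$, $\mean Y_k^2 = 2$, $\mean Y_k^3 = 8$, $\mean Y_k^4 = 60$. Independence kills all off-diagonal terms in the second and third powers and all but the diagonal and the double-pair terms in the fourth, so $\mean[(\sum_k \tau_{k;d} Y_k)^j]$ equals $2\Tr(M^2)$, $8\Tr(M^3)$, and $48\Tr(M^4)+12(\Tr(M^2))^2$ for $j=2,3,4$ (one may instead centre and invoke Lemma~\ref{lem:quadratic:gaussian:moments}). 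Since $\Tr(M^m)=\Tr((\Lambda^K\Sigma^K)^m)$, it remains to relate these traces to moment terms of $u$. Expanding $\Tr((\Lambda^K\Sigma^K)^m)$ as a multilinear sum over $m$ indices and using $(\Sigma^K)_{kl}=\Cov[\phi_k(\bX_1),\phi_l(\bX_1)]$ recognises it as an $m$-fold cyclic expectation of the (recentred) truncated kernel $\hat u_K(\bx,\by)\coloneqq\sum_{k\le K}\lambda_k\phi_k(\bx)\phi_k(\by)$, of the form $\mean[\hat u_K(\bX_1,\bX_2)\hat u_K(\bX_2,\bX_3)\cdots\hat u_K(\bX_m,\bX_1)]$. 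For $m=2$ this lies in $[(\sigfull-\varepsilon_{K;2})^2,(\sigfull+\varepsilon_{K;2})^2]$ by Lemma~\ref{lem:factorisable:moment:gaussian}; for $m=3,4$ one passes from $\hat u_K$ to $u$ by a telescoping argument, swapping one factor at a time and controlling each swap, via generalised H\"older, by $\varepsilon_{K;m}$ times products of $L_m$-norms bounded through $\Mfullm$. Collecting the swaps yields the additive corrections $-\Mfullthree^3+(\Mfullthree+\varepsilon_{K;3})^3$ for $m=3$ and the $\Mfullfour$- and $\sigfull$-corrections for $m=4$; dividing by the relevant power of $\sqrt{n(n-1)}$ gives (iii), (iv), (v).

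For (vi), Minkowski's inequality gives $\|W_n^K\|_{L_{2m}} \le \frac{1}{\sqrt{n(n-1)}}\big\|\sum_k\tau_{k;d}(\xi_k^2-1)\big\|_{L_{2m}}+|D|$; the centred second-order Gaussian chaos $\sum_k\tau_{k;d}(\xi_k^2-1)$ obeys a dimension-free moment bound (hypercontractivity), so its $L_{2m}$-norm is at most $(2m-1)$ times its $L_2$-norm $\sqrt{2\Tr(M^2)}\le\sqrt2(\sigfull+\varepsilon_{K;2})$, and raising to the $2m$-th power (absorbing all numerical constants into $C_m$) gives the stated bound. Item (vii) is then immediate: Assumption~\ref{assumption:L_nu} with $\nu\ge2,3,4$ forces $\varepsilon_{K;2},\varepsilon_{K;3},\varepsilon_{K;4}\to0$ respectively as $K\to\infty$, so every $\varepsilon_K$-dependent correction in (iii)--(v) vanishes. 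The step I expect to require the most care is the bookkeeping in (iv)--(v): verifying that the multilinear expansion of $\Tr((\Lambda^K\Sigma^K)^m)$ really is the advertised cyclic expectation and that the telescoping to $u$ produces exactly the stated $\Mfull$-and-$\varepsilon_K$ corrections rather than looser ones; the remainder is routine Gaussian moment algebra.
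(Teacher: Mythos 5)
Your proposal is correct and follows essentially the same route as the paper's proof: orthogonal diagonalisation of $(\Sigma^K)^{1/2}\Lambda^K(\Sigma^K)^{1/2}$ for (i), chi-square moment algebra reducing (iii)--(v) to $\Tr((\Lambda^K\Sigma^K)^m)$, rewriting that trace as the cyclic expectation $\mean[\hat u_K(\bX_1,\bX_2)\cdots\hat u_K(\bX_m,\bX_1)]$ and passing to $u$ with H\"older-controlled swaps whose total is exactly $(\Mfullm+\varepsilon_{K;m})^m-\Mfullm^m$ (your telescoping and the paper's binomial expansion of $\prod(U_{ij}+\Delta_{ij})$ give identical corrections), and letting the $\varepsilon_{K;\nu}$ terms vanish for (vii). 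The only deviation is (vi), where you invoke hypercontractivity of the second Wiener chaos ($\|\cdot\|_{L_{2m}}\le(2m-1)\|\cdot\|_{L_2}$) in place of the paper's combinatorial count over index patterns in which every index appears at least twice; both yield a constant depending only on $m$ and the same bound $C_m\,n^{-m}(n-1)^{-m}(\sigfull+\varepsilon_{K;2})^{2m}+C_m D^{2m}$, so this is a valid (and somewhat shorter) alternative for that item.
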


\subsection{Distribution bounds} \label{appendix:distribution}

The following is a standard approximation of an indicator function for bounding the probability of a given event; see e.g. the proof of Theorem 3.3, \citet{chen2011normal}.

\begin{lemma} \label{lem:smooth:approx:ind} Fix any $m \in \N \cup \{0\}$ and $\tau, \delta \in \R$. Then there exists an $m$-times differentiable $\R \rightarrow \R$ function $h_{m;\tau;\delta}$ such that $h_{m;\tau+\delta;\delta}(x) \;\leq\; \ind_{\{x > \tau\}} \leq h_{m;\tau;\delta}(x)$. For $0 \leq r \leq m$, the $r$-th derivative $h^{(r)}_{m;\tau;\delta}$ is continuous and bounded above by $\delta^{-r}$. Moreover, for every $\epsilon \in [0,1]$, $h^{(m)}$ satisfies that
\begin{align*}
    | h^{(m)}_{m;\tau;\delta}(x) - h^{(m)}_{m;\tau;\delta}(y) |\;\leq\; C_{m,\epsilon} \, \delta^{-(m+\epsilon)}  \, |x - y|^{\epsilon}\;,
\end{align*}
with respect to the constant $C_{m,\epsilon} = \binom{m}{\lfloor m/2 \rfloor} (m+1)^{m+\epsilon}$.
\end{lemma}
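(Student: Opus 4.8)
The plan is to obtain $h_{m;\tau;\delta}$ by convolving the step function $\ind_{(\tau-\delta,\infty)}$ with a compactly supported spline kernel, following the construction used in the proof of Theorem~3.3 of \cite{chen2011normal}. Set $s \coloneqq \delta/(m+1)$, let $\phi_s \coloneqq s^{-1}\ind_{[0,s]}$ be the density of $\mathrm{Uniform}[0,s]$, and let $k \coloneqq \phi_s^{*(m+1)}$ be its $(m{+}1)$-fold self-convolution; equivalently $k$ is the density of $W \coloneqq U_1 + \cdots + U_{m+1}$ with $U_1,\dots,U_{m+1}$ i.i.d.\ $\mathrm{Uniform}[0,s]$. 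Define
\[
    h_{m;\tau;\delta}(x) \;\coloneqq\; \big(k * \ind_{(\tau-\delta,\infty)}\big)(x) \;=\; \int_{-\infty}^{x-\tau+\delta} k(t)\,dt \;=\; \P\big(W < x-\tau+\delta\big).
\]
Since $k$ is a probability density supported on $[0,\delta]$, $h_{m;\tau;\delta}$ is nondecreasing, takes values in $[0,1]$, equals $0$ on $(-\infty,\tau-\delta]$ and $1$ on $[\tau,\infty)$, and the family is translation covariant: $h_{m;\tau+\delta;\delta}(x)=h_{m;\tau;\delta}(x-\delta)$. The sandwich inequality then follows at once: $h_{m;\tau;\delta}(x)\ge\ind_{\{x>\tau\}}$ because $h$ equals $1$ on $(\tau,\infty)$ and is nonnegative elsewhere, while $h_{m;\tau+\delta;\delta}(x)=h_{m;\tau;\delta}(x-\delta)$ equals $0$ on $(-\infty,\tau]$ and never exceeds $1$, hence is $\le\ind_{\{x>\tau\}}$.

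Next I would verify the smoothness and the derivative bounds. Each convolution with $\phi_s$ raises the order of differentiability by one, so $k$ is a piecewise polynomial of degree $m$ lying in $C^{m-1}(\R)$ and supported in $[0,\delta]$; consequently $h'=k(\argdot-\tau+\delta)$ and $h^{(r)}=k^{(r-1)}(\argdot-\tau+\delta)$ for $1\le r\le m$, which is $C^{m-r}$ and in particular continuous, while $h^{(0)}=h\in[0,1]$. For the uniform bounds I would use the identity $\tfrac{d}{dx}(\phi_s*g)(x)=s^{-1}\big(g(x)-g(x-s)\big)=s^{-1}(\Delta_s g)(x)$, iterated to give $k^{(r-1)}=s^{-(r-1)}\Delta_s^{r-1}\phi_s^{*(m+2-r)}$; since $\phi_s^{*(m+2-r)}$ is a probability density with sup norm $\le s^{-1}$, bounding its finite difference yields $\|h^{(r)}\|_\infty\le c_m\, s^{-r}=O_m(\delta^{-r})$ for a constant $c_m$ depending only on $m$, which is the stated $\delta^{-r}$ order.

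The crux is the Hölder bound on $h^{(m)}$. Here $h^{(m)}=k^{(m-1)}(\argdot-\tau+\delta)$ is a continuous, piecewise-linear — hence Lipschitz — function, with derivative $\big(h^{(m)}\big)'=s^{-m}\Delta_s^{m}\phi_s=s^{-(m+1)}\sum_{j=0}^{m}\binom{m}{j}(-1)^j\ind_{[js,(j+1)s]}(\argdot-\tau+\delta)$. The intervals $[js,(j+1)s]$, $j=0,\dots,m$, partition $[0,\delta]$ with disjoint interiors, so at each point exactly one summand is active and $\big\|(h^{(m)})'\big\|_\infty\le s^{-(m+1)}\max_{0\le j\le m}\binom{m}{j}=\binom{m}{\lfloor m/2\rfloor}\,s^{-(m+1)}$; this is precisely where the central binomial coefficient enters. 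Combining this Lipschitz estimate with the uniform bound on $h^{(m)}$ via the elementary inequality
\[
    \big|h^{(m)}(x)-h^{(m)}(y)\big| \;\le\; \min\!\big\{2\|h^{(m)}\|_\infty,\ \|(h^{(m)})'\|_\infty\,|x-y|\big\} \;\le\; \big(2\|h^{(m)}\|_\infty\big)^{1-\epsilon}\big(\|(h^{(m)})'\|_\infty\,|x-y|\big)^{\epsilon}
\]
and substituting $s=\delta/(m+1)$ produces a bound of the form $\binom{m}{\lfloor m/2\rfloor}(m+1)^{m+\epsilon}\delta^{-(m+\epsilon)}|x-y|^{\epsilon}$, i.e.\ the claimed $C_{m,\epsilon}$.

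I expect the only genuinely delicate step to be this last one. Two choices must be made correctly: (i) the number of convolutions must be exactly $m+1$, so that $h^{(m)}$ is continuous but no smoother — exactly the regularity the Hölder claim asserts; and (ii) the $m$-th difference $\Delta_s^m\phi_s$ must be controlled through the disjointness of the translated indicators, since the crude bound $\sum_j\binom{m}{j}=2^m$ would not reproduce the sharp factor $\binom{m}{\lfloor m/2\rfloor}$ — a careful accounting of how many binomial coefficients can be simultaneously active is what pins the constant down. Everything else (smoothness of iterated convolutions, the finite-difference identities, and the interpolation inequality above) is routine bookkeeping.
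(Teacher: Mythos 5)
Your construction is the paper's own smoother in different clothing: the $(m+1)$-fold iterated moving averages of $\ind_{\{y>\tau\}}$ over windows of width $\delta/(m+1)$ used in the paper are exactly your convolution of the step function with the $(m+1)$-fold self-convolution of the uniform density on $[0,\delta/(m+1)]$, and your finite-difference derivative identities together with the disjoint-support argument pinning the Lipschitz constant of $h^{(m)}$ to $\binom{m}{\lfloor m/2\rfloor}s^{-(m+1)}$ are the same steps the paper takes. The only (cosmetic) divergence is the final Lipschitz-to-H\"older conversion, where the paper exploits that the transition is confined to an interval of length $\delta$ (so $|x-y|/\delta\le 1$ after projecting boundary points) rather than interpolating against $2\|h^{(m)}\|_\infty$; your interpolation yields a constant of the same form but somewhat larger than the stated $C_{m,\epsilon}$ (since $\|h^{(m)}\|_\infty$ carries a $2^{m}$-type factor exceeding $\binom{m}{\lfloor m/2\rfloor}$), a bookkeeping point rather than a gap, especially as the paper's own constant-tracking at these two spots is itself slightly loose.
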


\vspace{.2em} 

The next bound is useful for approximating the distribution of a sum of two (possibly correlated) random variables $X$ and $Y$ by the distribution of $X$ alone, provided that the influence of $Y$ is small.

\begin{lemma} \label{lem:approx:XplusY:by:Y} For two real-valued random variables $X$ and $Y$, any $a, b \in \R$ and $\epsilon > 0$, we have
\begin{align*}
    \P( a \leq X+Y \leq b) 
    \;\leq\;
    \P( a - \epsilon \leq X \leq b+\epsilon ) + \P(  |Y| \geq \epsilon )\;,
    \\
    \P( a \leq X+Y \leq b) 
    \;\geq\;
    \P( a + \epsilon \leq X \leq b-\epsilon ) - \P(  |Y| \geq \epsilon )
    \;.
\end{align*}
\end{lemma}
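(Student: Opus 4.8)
The plan is to prove both inequalities by splitting the sample space according to whether $|Y|$ is small or large, i.e.~intersecting with $\{|Y| < \epsilon\}$ or $\{|Y| \geq \epsilon\}$. On the first event one has the deterministic bound $|(X+Y) - X| = |Y| < \epsilon$, which lets us transfer control from $X+Y$ to $X$ at the cost of widening or narrowing the target interval by $\epsilon$ on each side.

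For the upper bound, I would first record the set inclusion
$$\{a \leq X+Y \leq b\} \;\subseteq\; \bigl(\{a \leq X+Y \leq b\} \cap \{|Y| < \epsilon\}\bigr) \cup \{|Y| \geq \epsilon\} \;\subseteq\; \{a - \epsilon \leq X \leq b + \epsilon\} \cup \{|Y| \geq \epsilon\},$$
where the second inclusion uses that on $\{|Y| < \epsilon\}$, if $X+Y \in [a,b]$ then $X = (X+Y) - Y \in [a-\epsilon, b+\epsilon]$. Taking probabilities and applying the union bound yields the first claimed inequality. For the lower bound, I would instead start from
$$\{a + \epsilon \leq X \leq b - \epsilon\} \cap \{|Y| < \epsilon\} \;\subseteq\; \{a \leq X + Y \leq b\},$$
which holds because on this event $X + Y \geq (a+\epsilon) - \epsilon = a$ and $X+Y \leq (b-\epsilon) + \epsilon = b$. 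Monotonicity of probability gives $\P(a \leq X+Y \leq b) \geq \P\bigl(\{a+\epsilon \leq X \leq b-\epsilon\} \cap \{|Y| < \epsilon\}\bigr)$, and the elementary estimate $\P(A \cap B) \geq \P(A) - \P(B^c)$ with $A = \{a+\epsilon \leq X \leq b-\epsilon\}$ and $B = \{|Y| < \epsilon\}$ finishes the argument.

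There is no substantive obstacle here; the lemma is purely a bookkeeping statement about events. The only point deserving mild care is keeping the direction of the $\epsilon$-shifts straight (the window grows by $\epsilon$ per side in the upper bound and shrinks by $\epsilon$ per side in the lower bound), and noting that the strict inequalities appearing on $\{|Y| < \epsilon\}$ are harmless since the target bounds are all stated with non-strict inequalities.
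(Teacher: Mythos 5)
Your proposal is correct and follows essentially the same route as the paper: split on whether $|Y|$ exceeds $\epsilon$, use the event inclusion (widened interval) plus a union bound for the upper bound, and the inclusion into the shrunken interval together with $\P(A \cap B) \geq \P(A) - \P(B^c)$ for the lower bound. The only cosmetic difference is your use of $\{|Y| < \epsilon\}$ where the paper conditions on $\{|Y| \leq \epsilon\}$, which is immaterial.
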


\vspace{.2em}

Theorem 8 of \cite{carbery2001distributional} gives a general anti-concentration result for a polynomial of random variables drawn from a log-concave density. The next lemma restates the result in the case of a quadratic form of a $K$-dimensional standard Gaussian vector $\eta$.

\begin{lemma} \label{lem:cdf:bound:gaussian:quad} Let $p(\bx)$ be a degree-two polynomial of $\bx \in \R^K$ taking values in $\R$. Then there exists an absolute constant $C$ independent of $p$ and $\eta$ such that, for every $t \in \R$,
\begin{align*}
    \P \big( |p(\eta)| \leq t \big) \;\leq\; C t^{1/2} (\mean[ |p(\eta)|^2 ])^{-1/4} \;\leq\; C t^{1/2} (\Var[ p(\eta) ])^{-1/4} \;.
\end{align*}
\end{lemma}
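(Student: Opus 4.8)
The plan is to derive this as a direct specialisation of the anti-concentration inequality of \cite{carbery2001distributional} (their Theorem 8) to the Gaussian case and to degree two. The two hypotheses needed are: (a) the law of $\eta$ is log-concave on $\R^K$, which holds since its density is proportional to $\exp(-\|\bx\|_2^2/2)$ and $\bx \mapsto \|\bx\|_2^2/2$ is convex; and (b) $p$ is a polynomial of degree at most $2$, which is given. The key feature we exploit is that the constant in \cite{carbery2001distributional} is \emph{dimension-free}, i.e.~independent of $K$, and depends on the degree and on the chosen integrability exponent only through explicit factors; fixing both of these to the value $2$ collapses all such factors into a single absolute constant.

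Concretely, I would invoke the scale-invariant form of the Carbery--Wright bound: there is an absolute constant $C_0$ such that, for every polynomial $p$ of degree at most $2$ on $\R^K$ and every $\epsilon>0$, $\P\big(|p(\eta)| \le \epsilon\,(\mean[|p(\eta)|^2])^{1/2}\big) \le C_0\,\epsilon^{1/2}$, where the exponent $1/2$ is $1/\deg p$ and $C_0$ absorbs the degree-$2$, $L^2$-norm normalisation. (If $p$ is constant, $\Var[p(\eta)]=0$ and the claimed bounds are vacuously true; if $\mean[|p(\eta)|^2]=0$ likewise. Otherwise $\mean[|p(\eta)|^2]\in(0,\infty)$ since all moments of polynomials of Gaussians are finite.) Then, substituting $\epsilon = t\,(\mean[|p(\eta)|^2])^{-1/2}$ for $t \ge 0$ gives
\begin{align*}
    \P\big( |p(\eta)| \le t \big) \;\le\; C_0\, t^{1/2}\, \big(\mean[|p(\eta)|^2]\big)^{-1/4}\;,
\end{align*}
which is the first inequality with $C = C_0$; for $t<0$ the left-hand side is zero and the bound is trivial.

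For the second inequality, note $\mean[|p(\eta)|^2] = \Var[p(\eta)] + (\mean[p(\eta)])^2 \ge \Var[p(\eta)] \ge 0$, hence $\big(\mean[|p(\eta)|^2]\big)^{-1/4} \le \big(\Var[p(\eta)]\big)^{-1/4}$, and chaining this with the first bound finishes the proof. I do not expect a genuine obstacle here: the only point requiring a little care is the bookkeeping needed to confirm that the constant produced by \cite{carbery2001distributional} is truly absolute once the degree and the integrability exponent are both frozen at $2$, and that it does not pick up any hidden dependence on $K$ or on the coefficients of $p$ — which is exactly the content of their theorem.
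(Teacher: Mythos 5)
Your proposal is correct and matches the paper's treatment: the paper likewise obtains this lemma as a direct restatement of Theorem 8 of \cite{carbery2001distributional}, applied with the log-concave Gaussian measure and degree two, and the second inequality follows from $\mean[|p(\eta)|^2] \geq \Var[p(\eta)]$ exactly as you argue. The only bookkeeping nuance (which you flag) is that in the Carbery--Wright parametrisation one takes the integrability exponent so that the moment appearing is the $L_2$ norm of $p$ (i.e.~$q/d=2$ with $d=2$), after which the constant is indeed absolute and dimension-free.
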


\subsection{Weak Mercer representation} \label{appendix:weak:mercer}

In \cref{sec:kernel:general:results}, we have used the \emph{weak Mercer representation} from \cite{steinwart2012mercer}. We summarise their result below, which combines their Lemma 2.3, Lemma 2.12 and Corollary 3.2:

\begin{lemma} \label{lem:mercer} Consider a probability measure $R$ on $\R^b$, $\bV_1,\bV_2 \overset{i.i.d.}{\sim} R$ and a measurable kernel $\kappa^*$ on $\R^b$. If $\mean[ \kappa^*(\bV_1,\bV_1) ] < \infty$, there exists a sequence of functions $\{\phi_k\}_{k=1}^\infty$ in $L_2(\R^b, R)$ and a bounded sequence of non-negative values $\{\lambda_k\}_{k=1}^\infty$ with $\lim_{k\rightarrow \infty}\lambda_k=0$, such that as $K$ grows,
$ \big| \sum_{k=1}^K \lambda_k \phi_k(\bV_1) \phi_k(\bV_2) - \kappa^*(\bV_1,\bV_2) \big| \rightarrow 0$. The series converges $R \otimes R$ almost surely. 
\end{lemma}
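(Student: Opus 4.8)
\noindent\emph{Proof of Lemma~\ref{lem:mercer} (sketch).} The statement is a reassembly of the Mercer-type results of \cite{steinwart2012mercer} (their Lemma~2.3, Lemma~2.12 and Corollary~3.2), so the plan is to set up the relevant integral operator and then invoke their theorem. First I would introduce the integral operator $T_{\kappa^*}\colon L_2(\R^b,R)\rightarrow L_2(\R^b,R)$, $(T_{\kappa^*}f)(\bv)\coloneqq \mean[\kappa^*(\bv,\bV_2)f(\bV_2)]$, and check it is well-defined, self-adjoint and positive. The quantitative input is the pointwise bound $|\kappa^*(\bv,\bv')|^2\leq \kappa^*(\bv,\bv)\,\kappa^*(\bv',\bv')$, which holds because $\kappa^*$ is a kernel (Cauchy--Schwarz in the feature Hilbert space). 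Combined with $\mean[\kappa^*(\bV_1,\bV_1)]<\infty$, this yields $\mean\mean[|\kappa^*(\bV_1,\bV_2)|^2]\leq \mean[\kappa^*(\bV_1,\bV_1)]^2<\infty$, so $T_{\kappa^*}$ is Hilbert--Schmidt, and moreover $\Tr(T_{\kappa^*})=\mean[\kappa^*(\bV_1,\bV_1)]<\infty$, so it is trace class.

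By the spectral theorem for compact self-adjoint positive operators, there is an at-most-countable orthonormal system $\{e_k\}$ in $L_2(\R^b,R)$ with non-negative eigenvalues $\{\lambda_k\}$ satisfying $\sum_k\lambda_k=\Tr(T_{\kappa^*})<\infty$; in particular $\{\lambda_k\}$ is bounded and $\lambda_k\rightarrow 0$. Taking $\phi_k\coloneqq e_k$ gives the claimed functions and values, and it remains only to establish the bilinear series representation $\kappa^*(\bV_1,\bV_2)=\sum_k\lambda_k\phi_k(\bV_1)\phi_k(\bV_2)$ with $R\otimes R$-almost-sure convergence of the partial sums.

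That final step is the crux, and is exactly the ``weak Mercer'' content of \cite{steinwart2012mercer}. Convergence in $L_2(R\otimes R)$ is elementary — it is just the Hilbert--Schmidt expansion of the kernel of $T_{\kappa^*}$ in the product basis $\{e_j\otimes e_k\}$ — and the difficulty is upgrading this to convergence $R\otimes R$-almost everywhere. I would invoke their argument, which identifies the RKHS of $\kappa^*$ with $\{\sum_k a_k\sqrt{\lambda_k}\,\phi_k:\sum_k a_k^2<\infty\}$, expands $\kappa^*(\argdot,\bv)$ in this basis for $R$-a.e.\,$\bv$, and then passes from coordinatewise to a.e. convergence of the bilinear partial sums via a monotone/dominated control together with Egorov- and Fubini-type arguments. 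The main obstacle is therefore not the operator theory, which is routine, but this measure-theoretic upgrade to almost-sure convergence, for which we rely directly on \cite{steinwart2012mercer}.
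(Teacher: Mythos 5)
Your proposal is correct and takes essentially the same route as the paper: both reduce the lemma to the weak Mercer results of \cite{steinwart2012mercer} (their Lemma 2.3, Lemma 2.12 and Corollary 3.2), with the crucial $R\otimes R$-almost-sure convergence deferred to that reference rather than proved from scratch. The operator-theoretic preliminaries you spell out (trace-class property via the Cauchy--Schwarz bound $|\kappa^*(\bv,\bv')|^2\leq\kappa^*(\bv,\bv)\kappa^*(\bv',\bv')$, spectral decomposition, $\sum_k\lambda_k<\infty$ hence $\lambda_k\rightarrow 0$) are sound but are already contained in the cited results, which the paper simply invokes directly, adding only the cosmetic step of padding a possibly finite index set with zero eigenvalues.
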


\section{Proof of the main result} \label{appendix:main}

In this section, we prove \cref{thm:u:gaussian:quad:general}. The proof is necessarily tedious as we seek to control ``spectral" approximation errors (i.e.~the error from a truncated functional decomposition) and multiple stochastic approximation errors at the same time. The section is organised as follows:
\begin{itemize}
    \item In \cref{appendix:u:gaussian:quad:lemma}, we list notations and key lemmas that formalise the steps in the proof outline in \cref{sect:proof:overview};
    \item In \cref{appendix:u:gaussian:quad:thm:general:body}, we present the proof body of \cref{thm:u:gaussian:quad:general}, which directly combines results from the different lemmas;
    \item In \cref{appendix:u:spectral:approx}, \ref{appendix:u:lindeberg:approx}, \cref{appendix:u:gaussian:quad:approx} and \ref{appendix:u:gaussian:quad:bound}, we present the proof of the key lemmas. Each section starts with an informal sketch of proof ideas followed by the actual proof of the result. 
\end{itemize}

\subsection{Auxiliary lemmas} \label{appendix:u:gaussian:quad:lemma}

Recall that our goal is to study the distribution of
\begin{align*}
    D_n \;\coloneqq\;\mfrac{1}{n(n-1)} \msum_{1 \leq i \neq j \leq n} u(\bX_i,\bX_j)\;.
\end{align*}
The three results in this section form the key steps of the proof. We fix $\sigma > 0$ to be some normalisation constant to be chosen later. 

\vspace{1em}

\emph{1. ``Spectral" approximation.} For $K \in \N$, we define the truncated version of $D_n$ by
\begin{align*}
    D^K_n 
    \;\coloneqq&\; 
    \mfrac{1}{n(n-1)} \msum_{1 \leq i \neq j \leq n} \msum_{k=1}^K \lambda_k \phi_k(\bX_i) \phi_k(\bX_j)
    \\
    \;=&\;
    \mfrac{1}{n(n-1)} \msum_{1 \leq i \neq j \leq n} (\phi^K(\bX_i))^\top \Lambda^K \phi^K(\bX_j)
    \;. 
\end{align*}
We also denote the rescaled statistics for convenience as
\begin{align*}
    &
    \tilde D_n 
    \;\coloneqq\; \mfrac{\sqrt{n(n-1)}}{\sigma} \, D_n\;,
    &&
    \tilde D_n^K 
    \;\coloneqq\; \mfrac{\sqrt{n(n-1)}}{\sigma} \, D_n^K\;.
\end{align*}
The first lemma allows us to study the distribution of $D_n^K$ in lieu of that of $D_n$ up to some approximation error that vanishes as $K$ grows.
\begin{lemma} \label{lem:u:spectral:approx} Fix $\delta, \sigma > 0$, $K \in \N$ and $t \in \R$. Then
\begin{align*}
    &
    \P( \tilde D_n^K > t+\delta ) - \varepsilon_K'
    \leq
    \P( \tilde D_n > t )  
    \leq
    \P (  \tilde D_n^K   > t-\delta ) +  \varepsilon_K' \;,
    &&
    \varepsilon_K' \coloneqq 
    \mfrac{3 n^{1/4}(n-1)^{1/4} \varepsilon_{K;1}^{1/2}}{\sigma^{1/2} \delta^{1/2} }\;.
\end{align*}
\end{lemma}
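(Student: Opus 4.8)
The core idea is simple: $\tilde D_n - \tilde D_n^K = \frac{\sqrt{n(n-1)}}{\sigma}(D_n - D_n^K)$ is a ``residual'' U-statistic whose summand is $r_K(\bX_i,\bX_j) \coloneqq u(\bX_i,\bX_j) - \sum_{k=1}^K \lambda_k \phi_k(\bX_i)\phi_k(\bX_j)$, and by \cref{assumption:L_nu} (in fact only $L_1$ control is needed here) this residual is small in $L_1$. So the plan is to (i) write $\tilde D_n = \tilde D_n^K + Y$ with $Y \coloneqq \frac{\sqrt{n(n-1)}}{\sigma}(D_n - D_n^K)$, (ii) invoke \cref{lem:approx:XplusY:by:Y} with $X = \tilde D_n^K$ and this $Y$, which immediately gives $\P(\tilde D_n > t) = \P(X + Y > t)$ sandwiched between $\P(X > t-\delta) + \P(|Y| \ge \delta)$ and $\P(X > t+\delta) - \P(|Y|\ge\delta)$ (the one-sided tail versions of that lemma, or apply it with $b \to \infty$), and (iii) bound $\P(|Y| \ge \delta)$ by Markov's inequality applied to $|Y|^{1/2}$: $\P(|Y|\ge \delta) \le \delta^{-1/2}\,\mean[|Y|^{1/2}]$.

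The one remaining step is to show $\mean[|Y|^{1/2}] \le 3 \,n^{1/4}(n-1)^{1/4}\sigma^{-1/2}\varepsilon_{K;1}^{1/2}$, which then yields exactly $\varepsilon_K' = 3 n^{1/4}(n-1)^{1/4}\varepsilon_{K;1}^{1/2}/(\sigma^{1/2}\delta^{1/2})$. By Jensen, $\mean[|Y|^{1/2}] \le (\mean|Y|)^{1/2}$, and $\mean|Y| \le \frac{\sqrt{n(n-1)}}{\sigma}\,\mean|D_n - D_n^K|$. Now $D_n - D_n^K = \frac{1}{n(n-1)}\sum_{i\ne j} r_K(\bX_i,\bX_j)$ is an average of $n(n-1)$ terms each with $\mean|r_K(\bX_1,\bX_2)| \le \|r_K(\bX_1,\bX_2)\|_{L_1} = \varepsilon_{K;1}$ (here $\varepsilon_{K;1} \le \varepsilon_{K;\nu}$, and the relevant $L_1$ convergence follows from \cref{assumption:L_nu} via monotonicity of $L_\nu$ norms), so by the triangle inequality $\mean|D_n - D_n^K| \le \varepsilon_{K;1}$. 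Hence $\mean|Y| \le \frac{\sqrt{n(n-1)}}{\sigma}\varepsilon_{K;1}$, so $\mean[|Y|^{1/2}] \le \big(\frac{\sqrt{n(n-1)}}{\sigma}\varepsilon_{K;1}\big)^{1/2} = \frac{n^{1/4}(n-1)^{1/4}}{\sigma^{1/2}}\varepsilon_{K;1}^{1/2}$, and combining gives $\P(|Y| \ge \delta) \le \frac{n^{1/4}(n-1)^{1/4}\varepsilon_{K;1}^{1/2}}{\sigma^{1/2}\delta^{1/2}} \le \varepsilon_K'$ (the factor $3$ is slack, presumably reserved for a cruder bound on the residual average, e.g.\ bounding $\mean|D_n - D_n^K|$ by separately controlling diagonal-type terms if the authors prefer not to use the clean triangle-inequality estimate).

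There is essentially no obstacle here: the lemma is a soft ``swap the summand'' step. The only mild subtlety is the choice of the $1/2$ power in Markov's inequality — using $|Y|^{1/2}$ rather than $|Y|$ produces the $\delta^{1/2}$ and $\varepsilon_{K;1}^{1/2}$ dependence in $\varepsilon_K'$, which is what the downstream bookkeeping in \cref{appendix:u:gaussian:quad:thm:general:body} needs, so one must be careful to take that route rather than the naive Markov bound. Everything else — Jensen, triangle inequality for $L_1$ norms of U-statistic summands, and the elementary sandwiching from \cref{lem:approx:XplusY:by:Y} — is routine.
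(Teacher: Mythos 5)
Your proposal is correct, and it reaches the stated bound by a genuinely different route than the paper. The paper's proof smooths the indicator with the $\delta^{-1}$-Lipschitz function $h_{\tau;\delta}$ from Lemma \ref{lem:smooth:approx:ind} (the $m=0$ case), splits the resulting expectation over the events $\{|\tilde D_n - \tilde D_n^K| \leq \epsilon\}$ and its complement, uses the Lipschitz property on the first and Markov on the second, and then optimizes the free parameter $\epsilon$; the square-root dependence on $\varepsilon_{K;1}$ and $\delta$ comes out of that optimization, and the constant $3$ arises as $1+2$ from the two optimized terms (not, as you speculate, from a cruder bound on the residual average). You instead skip the smoothing entirely: you split directly on $\{|Y|\geq\delta\}$ via the elementary sandwich of Lemma \ref{lem:approx:XplusY:by:Y} (or its trivially re-derived one-sided version, which also disposes of the closed-versus-open endpoint issue you flag), and you obtain the square-root form by applying Markov to $|Y|^{1/2}$ followed by Jensen. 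Both arguments rest on the same key estimate, the triangle-inequality bound $\mean|D_n - D_n^K| \leq \varepsilon_{K;1}$, and your route is shorter and gives the sharper constant $1 \leq 3$, so the stated $\varepsilon_K'$ certainly dominates your error term; the paper's smoothing machinery is genuinely needed only in the later Lindeberg step (Lemma \ref{lem:u:lindeberg:approx}), not here.
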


\vspace{1em}

\emph{2. Gaussian approximation via Lindeberg's technique.} The distribution of $D_n^K$ is easier to handle, as it is a double sum of a simple quadratic form of $K$-dimensional random vectors. 
Let $\bZ_1, \ldots, \bZ_n$ be i.i.d.\,Gaussian random vectors in $\R^K$ with mean and variance matching those of $\phi^K(\bX_1)$, and denote $Z_{ik}$ as the $k$-th coordinate of $\bZ_i$.
The goal is to replace $D_n^K$ by the random variable
\begin{align*}
    D^K_Z
    \;\coloneqq\; 
    \mfrac{1}{n(n-1)} \msum_{1 \leq i \neq j \leq n} \bZ_i^\top \Lambda^K\bZ_j
    \;=\;
    \mfrac{1}{n(n-1)} \msum_{1 \leq i \neq j \leq n} \msum_{k=1}^K \lambda_k Z_{ik} Z_{jk} 
    \;. 
\end{align*}
Notice that $D^K_Z$ takes the same form as $D_n^K$ except that each $\phi^K(\bX_i)$ is replaced by $\bZ_i$. Analogous to $\tilde D_n$ and $\tilde D_n^K$, we also define a rescaled version as
\begin{align*}
    \tilde D_Z^K \;\coloneqq\; \mfrac{\sqrt{n(n-1)}\, D_Z^K}{\sigma}\;.
\end{align*}
The second lemma replaces the distribution $\tilde D_n^K$ by that of $\tilde D_Z^K$, up to some approximation error that vanishes as $n$ grows:

\begin{lemma} \label{lem:u:lindeberg:approx}Fix $\delta, \sigma > 0$, $K \in \N$, $t \in \R$ and any $\nu \in (2,3]$. Then
\begin{align*}
    &
    \P( \tilde D_n^K > t-\delta ) 
    \;\leq\;
    \P(\tilde D_Z^K > t-2\delta) + E_{\delta;K}\;,
    &&
    \P( \tilde D_n^K > t+\delta ) 
    \;\geq\; 
    \P(\tilde D_Z^K > t+2\delta) - E_{\delta;K}\;,
\end{align*}
where the approximation error is defined as, for some absolute constant $C>0$,
\begin{align*}
    E_{\delta;K} \;\coloneqq&\; 
    \mfrac{C}{\delta^\nu n^{\nu/2-1}}
    \Big(
    \mfrac{ 
    (\Mfullnu)^\nu    
    + 
    \varepsilon_{K;\nu}^\nu}
    {\sigma^{\nu} }
    +
    \mfrac{
     (\Mcondnu)^\nu + \varepsilon_{K;\nu}^{\nu} }{(n-1)^{-\nu/2}\,\sigma^{\nu}}
    \Big)\;. 
\end{align*}
\end{lemma}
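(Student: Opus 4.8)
The plan is to run a Lindeberg-style swapping argument, replacing the i.i.d.\ blocks $\phi^K(\bX_i)\in\R^K$ by the Gaussian blocks $\bZ_i\in\R^K$ one at a time, in the spirit of \cite{chatterjee2006generalization}. Since only a c.d.f.\ comparison is needed, I would first reduce to smooth test functions via \cref{lem:smooth:approx:ind} with $m=2$: it suffices to bound $|\mean[h(\tilde D_n^K)]-\mean[h(\tilde D_Z^K)]|$ uniformly over $h=h_{2;\tau;\delta}$, which are twice differentiable with $\|h^{(r)}\|_\infty\le\delta^{-r}$ and with $h''$ being $(\nu-2)$-H\"older with constant $\lesssim\delta^{-\nu}$. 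Sandwiching $\ind_{\{x>\tau\}}$ between $h_{2;\tau;\delta}$ and $h_{2;\tau+\delta;\delta}$ and tracking the threshold shift by $\delta$ then turns such a bound into the claimed c.d.f.\ inequalities with the $\pm2\delta$ shifts and error $E_{\delta;K}$.

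For the swap, introduce hybrids $\bH_m\coloneqq(\bZ_1,\ldots,\bZ_m,\phi^K(\bX_{m+1}),\ldots,\phi^K(\bX_n))$ and let $S_m$ be the rescaled statistic $\tilde D^K$ evaluated at $\bH_m$, so $S_0=\tilde D_n^K$, $S_n=\tilde D_Z^K$, and $\mean[h(\tilde D_n^K)]-\mean[h(\tilde D_Z^K)]=\sum_{m=1}^n(\mean[h(S_{m-1})]-\mean[h(S_m)])$. The structural fact I would exploit is that $D_n^K$, a quadratic form over ordered pairs $i\ne j$ with no diagonal term, is affine in each single block when the others are frozen. Writing $T_m$ for the sum of the $n-1$ other blocks in $\bH_m$ and $\xi_m\coloneqq\phi^K(\bX_m)-\mu^K$, I would split $S_{m-1}=A_m+\frac{2}{\sigma\sqrt{n(n-1)}}\xi_m^\top\Lambda^K T_m$, where $A_m$ depends only on the other blocks (it absorbs the conditional mean of the linear term), and similarly $S_m=A_m+\frac{2}{\sigma\sqrt{n(n-1)}}(\bZ_m-\mu^K)^\top\Lambda^K T_m$. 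Because $\phi^K(\bX_m)$ and $\bZ_m$ share the mean $\mu^K$ and covariance $\Sigma^K$ and are independent of $A_m,T_m$, a second-order Taylor expansion of $h$ about $A_m$ has its zeroth-, first- and second-order terms cancel in expectation between $S_{m-1}$ and $S_m$; only the Taylor remainders survive, each bounded via the $(\nu-2)$-H\"older property of $h''$ by $\lesssim\delta^{-\nu}(n(n-1))^{-\nu/2}\sigma^{-\nu}\,\mean|\xi_m^\top\Lambda^K T_m|^\nu$ together with its Gaussian analogue.

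It remains to bound $\mean|\xi_m^\top\Lambda^K T_m|^\nu$. Writing $T_m=(n-1)\mu^K+\tilde T_m$ with $\tilde T_m$ a sum of $n-1$ i.i.d.\ centred blocks gives $\xi_m^\top\Lambda^K T_m=(n-1)\xi_m^\top\Lambda^K\mu^K+\xi_m^\top\Lambda^K\tilde T_m$. The first piece has $\nu$-th moment $\lesssim(n-1)^\nu(\Mcondnu^\nu+\varepsilon_{K;\nu}^\nu)$ by \cref{lem:factorisable:moment}(ii); for the second, conditioning on $\xi_m$ and applying the martingale moment bound \cref{lem:martingale:bound} to the sum over the other $n-1$ blocks reduces it to $\lesssim(n-1)^{\nu/2}\mean|\xi_m^\top\Lambda^K\xi_1|^\nu\lesssim(n-1)^{\nu/2}(\Mfullnu^\nu+\varepsilon_{K;\nu}^\nu)$ by \cref{lem:factorisable:moment}(iii) (using $\Mcondnu\le\Mfullnu$). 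Summing the per-swap remainders over $m=1,\ldots,n$ and simplifying the powers of $n$ and $n-1$ yields exactly $E_{\delta;K}$, and the Gaussian-block terms are controlled in the same way through \cref{lem:factorisable:moment:gaussian}. The main obstacle is the bookkeeping at this last stage: one must center the linear functional by its conditional mean before expanding (otherwise a spurious term of order $\sigma^{-\nu}(n(n-1))^{-\nu/2}(n-1)^\nu|D|^\nu$ appears that is absent from $E_{\delta;K}$), apply the martingale bound to $\tilde T_m$ conditionally on $\xi_m$ with the right exponents, and faithfully translate every moment back to $\Mcondnu$ and $\Mfullnu$ via \cref{lem:factorisable:moment}.
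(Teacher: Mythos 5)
Your proposal is correct and follows essentially the same route as the paper's proof: smooth the indicator with \cref{lem:smooth:approx:ind} ($m=2$), run Chatterjee-style Lindeberg swaps exploiting that the statistic is affine in each block so the first two Taylor terms cancel by moment matching, and control the H\"older remainder via \cref{lem:martingale:bound} together with \cref{lem:factorisable:moment} and \cref{lem:factorisable:moment:gaussian}. Your univariate expansion of $h$ about $A_m$ in the scalar increment is just a repackaging of the paper's second-order expansion of $h\circ F_i$ in the $i$-th block, and your per-swap moment bookkeeping (including the $(n-1)\mu^K$ split and the mixed data/Gaussian summands in $\tilde T_m$) reproduces the paper's Lemma \ref{lem:lindeberg:nu:moment:bound} and yields exactly $E_{\delta;K}$.
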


\vspace{1em}

\emph{3. Replace $D^K_Z$ by $U^K_n$.} As in the statement of \cref{thm:u:gaussian:quad:general}, let $\{\eta^K_i \}_{i=1}^n$ be the i.i.d.~standard normal vectors in $\R^K$, and recall the notations $\mu^K \coloneqq \mean[  \phi^K(\bX_1)]$ and $ \Sigma^K \coloneqq \Cov[ \phi^K(\bX_1)]$. We can then express $D^K_Z$ as
\begin{align*}
    D^K_Z
    \;=&\; 
    \mfrac{1}{n(n-1)} \msum_{1 \leq i \neq j \leq n} \big((\Sigma^K)^{1/2} \eta^K_i + \mu^K\big)^\top \Lambda^K
    \big((\Sigma^K)^{1/2} \eta^K_j + \mu^K\big)
    \\
    \;=&\;
    \mfrac{1}{n(n-1)} \msum_{1 \leq i \neq j \leq n} (\eta^K_i)^\top (\Sigma^K)^{1/2} \Lambda^K (\Sigma^K)^{1/2} \eta^K_j
    +
    \mfrac{2}{n} \msum_{i=1}^n 
    (\mu^K)^\top \Lambda^K (\Sigma^K)^{1/2} \eta^K_i
    \\
    &\;+ (\mu^K)^\top \Lambda^K \mu^K
    \;.
\end{align*}
This is similar to the desired variable $U_n^K$ except for the third term:
\begin{align*}
    U_n^K
    =&\; 
    \mfrac{1}{n(n-1)} \msum_{1 \leq i \neq j \leq n} (\eta^K_i)^\top (\Sigma^K)^{1/2} \Lambda^K (\Sigma^K)^{1/2} \eta^K_j
    +
    \mfrac{2}{n} \msum_{i=1}^n 
    (\mu^K)^\top \Lambda^K (\Sigma^K)^{1/2} \eta^K_i
    + D
    \;.
\end{align*}
As before, we denote $\tilde U_n^K \coloneqq \frac{\sqrt{n(n-1)} U_n^K}{\sigma}$. The next lemma shows that the distribution of $\tilde D^K_Z$ can be approximated by that of $\tilde U_n^K$, up to some approximation error that vanishes as $K \rightarrow \infty$.
\begin{lemma}  \label{lem:u:gaussian:quad:approx} For any $a, b \in \R$ and $\epsilon > 0$, we have that
\begin{align*}
    \P( a \leq \tilde D_Z^K \leq b) 
    \;\leq&\;\;
    \P\big( a - \epsilon \leq \tilde U_n^K \leq b+\epsilon \big) 
    + 
    \mfrac{ \varepsilon_{K;1}}{ \epsilon \, n^{-1/2} (n-1)^{-1/2} \sigma}\;,
    \\
    \P( a \leq \tilde D_Z^K \leq b) 
    \;\geq&\;\;
    \P( a + \epsilon \leq \tilde U_n^K \leq b-\epsilon ) 
    - 
    \mfrac{ \varepsilon_{K;1}}{ \epsilon \, n^{-1/2} (n-1)^{-1/2} \sigma}\;.
\end{align*}
\end{lemma}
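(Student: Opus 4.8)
The plan is to observe that $\tilde D_Z^K$ and $\tilde U_n^K$ differ only by a \emph{deterministic} quantity, which the truncation error $\varepsilon_{K;1}$ already controls. Comparing the two displayed expressions for $D_Z^K$ and $U_n^K$, the quadratic term $\frac{1}{n(n-1)}\sum_{i\neq j}(\eta^K_i)^\top (\Sigma^K)^{1/2}\Lambda^K(\Sigma^K)^{1/2}\eta^K_j$ and the linear term $\frac{2}{n}\sum_i (\mu^K)^\top\Lambda^K(\Sigma^K)^{1/2}\eta^K_i$ are literally identical in both, so
\begin{align*}
    D_Z^K - U_n^K \;=\; (\mu^K)^\top \Lambda^K \mu^K - D \;=\; \msum_{k=1}^K \lambda_k \mu_k^2 - D\;,
\end{align*}
a constant not depending on the Gaussian vectors $\eta^K_i$. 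By Lemma \ref{lem:factorisable:moment}(i) this constant has absolute value at most $\varepsilon_{K;1}$, hence $|\tilde D_Z^K - \tilde U_n^K| \leq \frac{\sqrt{n(n-1)}}{\sigma}\varepsilon_{K;1}$ surely.

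Next I would write $\tilde D_Z^K = \tilde U_n^K + Y$ with $Y \coloneqq \tilde D_Z^K - \tilde U_n^K$, which (being a constant of magnitude at most $\frac{\sqrt{n(n-1)}}{\sigma}\varepsilon_{K;1}$) satisfies, for any $\epsilon>0$,
\begin{align*}
    \P(|Y|\geq \epsilon) \;\leq\; \mfrac{|Y|}{\epsilon} \;\leq\; \mfrac{\sqrt{n(n-1)}\,\varepsilon_{K;1}}{\epsilon\,\sigma} \;=\; \mfrac{\varepsilon_{K;1}}{\epsilon\,n^{-1/2}(n-1)^{-1/2}\sigma}\;,
\end{align*}
by Markov's inequality (or, equivalently, by noting that the event $\{|Y|\geq\epsilon\}$ has probability either $0$ or $1$ and in the latter case the right-hand side exceeds $1$). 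Applying Lemma \ref{lem:approx:XplusY:by:Y} with $X = \tilde U_n^K$ and this $Y$ then yields both displayed inequalities of the lemma at once, with exactly the stated error term.

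There is essentially no obstacle: the lemma reduces to the already-established bound $|\sum_k \lambda_k \mu_k^2 - D|\leq \varepsilon_{K;1}$ from Lemma \ref{lem:factorisable:moment}(i) plus a one-line application of Lemma \ref{lem:approx:XplusY:by:Y}. If one prefers to sidestep Markov's inequality entirely, the same conclusion follows from a direct case split on whether $|\tilde D_Z^K - \tilde U_n^K| < \epsilon$ (in which case $\{a\leq \tilde D_Z^K\leq b\}\subseteq\{a-\epsilon\leq \tilde U_n^K\leq b+\epsilon\}$ and no error term is needed) or $|\tilde D_Z^K - \tilde U_n^K|\geq\epsilon$ (in which case the claimed error term is at least $1$ and the inequality is vacuous).
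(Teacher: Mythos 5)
Your proof is correct and follows essentially the same route as the paper: both isolate the deterministic difference $\tilde D_Z^K - \tilde U_n^K = \frac{\sqrt{n(n-1)}}{\sigma}\big((\mu^K)^\top\Lambda^K\mu^K - D\big)$, bound it by $\varepsilon_{K;1}$ via Lemma \ref{lem:factorisable:moment}(i), and conclude with Lemma \ref{lem:approx:XplusY:by:Y} plus a Markov-type bound on $\P(|Y|\geq\epsilon)$. Your remark that the Markov step can be replaced by a trivial $0$--$1$ case split is a harmless cosmetic simplification of the same argument.
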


\vspace{1em}

\emph{4. Bound the distribution of $\tilde U^K_n$ over a short interval.} If we are to use Lemma \ref{lem:u:spectral:approx} and Lemma \ref{lem:u:lindeberg:approx} directly, we would end up comparing $\P(\tilde D_n > t)$ against the probabilities $\P(\tilde U^K_n > t + 2\delta)$ and $\P(\tilde U^K_n > t - 2\delta)$ for some small $\delta > 0$. It turns out these are not too different from $\P(\tilde U^K_n > t )$: As $\tilde U^K_n$ is a quadratic form of Gaussians, we can ensure it is ``well spread-out" such that the probability mass of $\tilde U^K_n$ within a small interval $(t-2\delta, t+2\delta)$ is not too large. This is ascertained by the following lemma: 

\begin{lemma} \label{lem:u:gaussian:quad:bound} For $a \leq b \in \R$, there exists some absolute constant $C$ such that
\begin{align*}
    \P( a \leq \tilde U_n^K \leq b) 
    \;\leq&\; C (b-a)^{1/2}  \Big( 
    \mfrac{1}{\sigma^2} (\sigfull - \varepsilon_{K;2})^2
    +
    \mfrac{n-1}{\sigma^2} (\sigcond^2 -2 \sigcond \varepsilon_{K;2} - 4 \varepsilon_{K;2})
    \Big)^{-1/4}\;.
\end{align*}
\end{lemma}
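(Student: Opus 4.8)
The starting point is to recognise that, up to the additive constant $\frac{\sqrt{n(n-1)}}{\sigma}D$, the random variable $\tilde U_n^K = \frac{\sqrt{n(n-1)}}{\sigma} U_n^K$ is a degree-two polynomial in the standard Gaussian vector $\eta \coloneqq ((\eta^K_1)^\top,\ldots,(\eta^K_n)^\top)^\top \in \R^{nK}$: each cross term $(\eta^K_i)^\top (\Sigma^K)^{1/2}\Lambda^K(\Sigma^K)^{1/2}\eta^K_j$ with $i\neq j$ is bilinear in $\eta$, while $\frac{2}{n}\sum_{i}(\mu^K)^\top\Lambda^K(\Sigma^K)^{1/2}\eta^K_i$ is linear. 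The plan is therefore to set $p(\eta) \coloneqq \tilde U_n^K - \frac{a+b}{2}$, observe that $\{a \leq \tilde U_n^K \leq b\} = \{|p(\eta)| \leq \frac{b-a}{2}\}$, and apply the Carbery--Wright anti-concentration bound of \cref{lem:cdf:bound:gaussian:quad} (which holds for an arbitrary degree-two polynomial of a standard Gaussian vector in any dimension, here $nK$) to get
\begin{align*}
    \P\big(a \leq \tilde U_n^K \leq b\big) \;\leq\; C\Big(\tfrac{b-a}{2}\Big)^{1/2}\big(\mean[|p(\eta)|^2]\big)^{-1/4} \;\leq\; C\Big(\tfrac{b-a}{2}\Big)^{1/2}\big(\Var[\tilde U_n^K]\big)^{-1/4},
\end{align*}
where the final step uses $\mean[|p(\eta)|^2] = \Var[\tilde U_n^K] + \big(\mean[\tilde U_n^K] - \tfrac{a+b}{2}\big)^2 \geq \Var[\tilde U_n^K]$. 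It then remains only to lower bound $\Var[\tilde U_n^K]$.

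For the variance, I would split $U_n^K - D = A + B$, where $A \coloneqq \frac{1}{n(n-1)}\sum_{1\leq i\neq j\leq n}(\eta^K_i)^\top M \eta^K_j$ with $M \coloneqq (\Sigma^K)^{1/2}\Lambda^K(\Sigma^K)^{1/2}$ is the quadratic part and $B \coloneqq \frac{2}{n}\sum_{i=1}^n v^\top \eta^K_i$ with $v \coloneqq (\Sigma^K)^{1/2}\Lambda^K\mu^K$ is the linear part. Both have mean zero, and since $AB$ is a polynomial of odd total degree in the centred Gaussian vector $\eta$, its expectation vanishes (every odd-order moment of a centred Gaussian vanishes), so $\Var[U_n^K] = \Var[A] + \Var[B]$; the crucial point is that the two contributions add rather than potentially cancel. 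A direct second-moment computation (pairing Gaussian coordinates, exactly as in \cref{lem:quadratic:gaussian:moments}) gives $\Var[A] = \frac{2}{n(n-1)}\Tr(M^2) = \frac{2}{n(n-1)}\Tr\big((\Lambda^K\Sigma^K)^2\big)$ and $\Var[B] = \frac{4}{n}\|v\|_2^2 = \frac{4}{n}(\mu^K)^\top\Lambda^K\Sigma^K\Lambda^K\mu^K$, whence
\begin{align*}
    \Var[\tilde U_n^K] \;=\; \mfrac{n(n-1)}{\sigma^2}\,\Var[U_n^K] \;=\; \mfrac{2}{\sigma^2}\,\Tr\big((\Lambda^K\Sigma^K)^2\big) + \mfrac{4(n-1)}{\sigma^2}\,(\mu^K)^\top\Lambda^K\Sigma^K\Lambda^K\mu^K.
\end{align*}

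To close the argument I would invoke \cref{lem:factorisable:moment:gaussian}, which lower bounds $\Tr\big((\Lambda^K\Sigma^K)^2\big)$ by $(\sigfull - \varepsilon_{K;2})^2$ and $(\mu^K)^\top\Lambda^K\Sigma^K\Lambda^K\mu^K$ by the matching $\sigcond$-expression; substituting these in and absorbing the numerical factors (the $2$, the $4$, the $2^{-1/2}$ from $(\tfrac{b-a}{2})^{1/2}$, and the fourth root) into the absolute constant $C$ yields an inequality of the stated form, with the precise $\varepsilon_{K;2}$-dependence dictated by \cref{lem:factorisable:moment:gaussian}. I do not anticipate a genuine obstacle: the proof is essentially a clean invocation of Carbery--Wright plus a routine variance computation, and the only two points requiring a little care are (i) verifying that the double sum over $i\neq j$ is genuinely a degree-two polynomial of a standard Gaussian vector in $\R^{nK}$ so that \cref{lem:cdf:bound:gaussian:quad} applies verbatim, and (ii) the parity argument giving $\mean[AB]=0$ so that $\Var[A]$ and $\Var[B]$ combine additively.
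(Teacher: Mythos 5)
Your proposal is correct and follows essentially the same route as the paper: rewrite $\tilde U_n^K$ (up to its deterministic shift) as a degree-two polynomial of the concatenated standard Gaussian vector in $\R^{nK}$, apply the Carbery--Wright bound of \cref{lem:cdf:bound:gaussian:quad}, and lower-bound the variance by computing the quadratic and linear contributions separately (the cross term vanishing by the odd-moment/parity argument, exactly as in the paper's covariance computation) before invoking \cref{lem:factorisable:moment:gaussian}. The only cosmetic difference is that the paper organises the variance computation via Kronecker-product matrices and \cref{lem:quadratic:gaussian:moments}, whereas you pair terms in the double sum directly; both yield the same expressions.
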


\vspace{.5em}

\subsection{Proof body of \cref{thm:u:gaussian:quad:general}} \label{appendix:u:gaussian:quad:thm:general:body}
Fix $\delta, \sigma > 0$, $K \in \N$ and $t \in \R$. By Lemma \ref{lem:u:spectral:approx}, we have that
\begin{align*}
    &
    \P( \tilde D_n^K > t+\delta ) - \varepsilon_K'
    \leq
    \P( \tilde D_n > t )  
    \leq
    \P (  \tilde D_n^K   > t-\delta ) +  \varepsilon_K' \;,
    &&
    \varepsilon_K' \coloneqq 
    \mfrac{3 n^{1/4}(n-1)^{1/4} \varepsilon_{K;1}^{1/2}}{\sigma^{1/2} \delta^{1/2} }\;.
\end{align*}
By Lemma \ref{lem:u:lindeberg:approx}, we have
\begin{align*}
    &
    \P( \tilde D_n^K > t-\delta ) 
    \;\leq\;
    \P(\tilde D_Z^K > t-2\delta) + E_{\delta;K}\;,
    &&
    \P( \tilde D_n^K > t+\delta ) 
    \;\geq\; 
    \P(\tilde D_Z^K > t+2\delta) - E_{\delta;K}\;,
\end{align*}
where the error term is defined as, for some absolute constant $C' >0$,
\begin{align*}
    E_{\delta;K} \;\coloneqq&\; 
    \mfrac{C' }{\delta^\nu n^{\nu/2-1}}
    \Big(
    \mfrac{ 
    (\Mfullnu)^\nu    
    + 
    \varepsilon_{K;\nu}^\nu}
    {\sigma^{\nu} }
    +
    \mfrac{
     (\Mcondnu)^\nu + \varepsilon_{K;\nu}^{\nu} }{(n-1)^{-\nu/2}\,\sigma^{\nu}}
    \Big)\;. 
\end{align*}
To combine the two bounds, we consider the following decomposition:
\begin{align*}
    \P(\tilde D_Z^K > t-2\delta) \;&=\; \P(\tilde D_Z^K > t) + \P( t-2\delta < \tilde D_Z^K \leq t)\;,
    \\
    \P(\tilde D_Z^K > t+2\delta) \;&=\; \P(\tilde D_Z^K > t) - \P( t < \tilde D_Z^K \leq t+2\delta)\;. \tagaligneq \label{eqn:thm:u:gaussian:quad:general:decompose}
\end{align*}
This allows us to combine the earlier two bounds as
\begin{align*}
    \big| \P( \tilde D_n > t )  - \P(\tilde D_Z^K > t) \big|
    \;\leq&\;
    \mmax\{ 
    \P( t-2\delta \leq \tilde D_Z^K < t)\;,
    \P( t < \tilde D_Z^K \leq t+2\delta)
    \}
    + E_{\delta;K} + \varepsilon'_K\;, 
\end{align*}
which gives the error of approximating the c.d.f.~of $\tilde D_n$ by that of $\tilde D^K_Z$. Now fix some $\epsilon > 0$. By applying
Lemma \ref{lem:u:gaussian:quad:approx} and taking appropriate limits of the endpoints to change $\leq$ to $<$, $\geq$ to $>$ and taking the right endpoint to positive infinity, we can now approximate the c.d.f.~of $\tilde D^K_Z$ by that of $\tilde U_n^K$: 
\begin{align*}
    \P( t-2\delta \leq \tilde D_Z^K < t) 
    \;\leq&\;\;
    \P\big( t-2\delta - \epsilon \leq \tilde U_n^K < t+\epsilon \big) 
    + 
    \mfrac{ \varepsilon_{K;1}}{ \epsilon \, n^{-1/2} (n-1)^{-1/2} \sigma}\;,
    \\
    \P( t \leq \tilde D_Z^K < t+2\delta) 
    \;\leq&\;\;
    \P( t - \epsilon \leq \tilde U_n^K < t+2\delta+\epsilon ) 
    +
    \mfrac{ \varepsilon_{K;1}}{ \epsilon \, n^{-1/2} (n-1)^{-1/2} \sigma}\;,
    \\
    \P( \tilde D_Z^K > t) 
    \;\leq&\;\;
    \P\big( \tilde U_n^K > t-\epsilon \big) 
    + 
    \mfrac{ \varepsilon_{K;1}}{ \epsilon \, n^{-1/2} (n-1)^{-1/2} \sigma}\;,
    \\
    \P( \tilde D_Z^K > t) 
    \;\geq&\;\;
    \P( \tilde U_n^K > t + \epsilon  ) 
    - 
    \mfrac{ \varepsilon_{K;1}}{ \epsilon \, n^{-1/2} (n-1)^{-1/2} \sigma}\;.
\end{align*}
Substituting the bounds into the earlier bound and using a similar decomposition to \eqref{eqn:thm:u:gaussian:quad:general:decompose}, we get that the error of approximating the c.d.f.~of $\tilde D_n$ by that of $\tilde U_n^K$ is
\begin{align*}
    \big| \P( \tilde D_n > t )  - \P(\tilde U_n^K > t) \big|
    \;&\leq\;
    \mmax\{ 
    \P( t-\epsilon \leq \tilde U_n^K < t)\;,
    \P( t < \tilde U_n^K \leq t+\epsilon)
    \}
    \\
    &\;
    +
    \mmax\{ 
    \P( t-2\delta-\epsilon \leq \tilde U_n^K < t+\epsilon)\;,
    \P( t-\epsilon < \tilde U_n^K \leq t+2\delta+\epsilon)
    \}
    \\
    &\;
    + E_{\delta;K} + \varepsilon'_K + \mfrac{ 4 \varepsilon_{K;1}}{ \epsilon \, n^{-1/2} (n-1)^{-1/2} \sigma}\;.
\end{align*}
To bound the maxima, we recall that by Lemma \ref{lem:u:gaussian:quad:bound}, there exists some absolute constant $C''$ such that for any $a \leq b \in \R$,
\begin{align*}
    \P( a \leq \tilde U_n^K \leq b) 
    \;\leq&\; C'' (b-a)^{1/2}  \Big( 
    \mfrac{1}{\sigma^2} (\sigfull - \varepsilon_{K;2})^2
    +
    \mfrac{n-1}{\sigma^2} (\sigcond^2 -2 \sigcond \varepsilon_{K;2} - 4 \varepsilon)
    \Big)^{-1/4}\;.
\end{align*}
Substituting this into the above bound while noting $(2\delta+2\epsilon)^{1/2} \leq 2 \delta^{1/2} + 2 \epsilon^{1/2}$, we get that
\begin{align*}
    \big| \P( \tilde D_n &> t )  - \P(\tilde U_n^K > t) \big|
    \\
    \;\leq&\;
    C''
    \big( 6  \epsilon^{1/2} 
    +
    4 \delta^{1/2} \big)
    \Big( 
    \mfrac{1}{\sigma^2} (\sigfull - \varepsilon_{K;2})^2
    +
    \mfrac{n-1}{\sigma^2} (\sigcond^2 -2 \sigcond \varepsilon_{K;2} - 4 \varepsilon_{K;2})
    \Big)^{-1/4}
    \\
    &\;
    + E_{\delta;K} + \varepsilon'_K + \mfrac{ 4 \varepsilon_{K;1}}{ \epsilon \, n^{-1/2} (n-1)^{-1/2} \sigma}\;. 
\end{align*}
We now take $K \rightarrow \infty$. By \cref{assumption:L_nu}, $\varepsilon_{K;2} \rightarrow 0$ in the first term and the two trailing error terms vanish. The second error term becomes
\begin{align*}
    E_{\delta;K} \;\rightarrow\; 
    \mfrac{C'}{\delta^\nu n^{\nu/2-1}}
    \Big(
    \mfrac{ 
    (\Mfullnu)^\nu}
    {\sigma^{\nu} }
    +
    \mfrac{
     (\Mcondnu)^\nu }{(n-1)^{-\nu/2}\,\sigma^{\nu}}
    \Big)\;. 
\end{align*}
By additionally taking $\epsilon \rightarrow 0$ in the first term and taking a supremum over $t$ on both sides, we then obtain
\begin{align*}
    \msup_{t \in \R}
    \Big| \P( \tilde D_n > t )  - \lim_{K \rightarrow \infty} \P(\tilde U_n^K > t) \Big|
    \;\leq&\;
    4 C'' \delta^{1/2}
    \Big( 
    \mfrac{\sigfull^2}{\sigma^2}
    +
    \mfrac{\sigcond^2}{(n-1)^{-1}\sigma^2} 
    \Big)^{-1/4}
    \\
    &\;
    + 
    \mfrac{C'}{\delta^\nu n^{\nu/2-1}}
    \Big(
    \mfrac{ 
    (\Mfullnu)^\nu}
    {\sigma^{\nu} }
    +
    \mfrac{
     (\Mcondnu)^\nu }{(n-1)^{-\nu/2}\,\sigma^{\nu}}
    \Big)\;.
\end{align*}
Finally, we choose
\begin{align*}
    \delta \;=\; n^{- \frac{\nu - 2}{2\nu+1}}  \Big(
    \mfrac{ 
    (\Mfullnu)^\nu}
    {\sigma^{\nu} }
    +
    \mfrac{
     (\Mcondnu)^\nu }{(n-1)^{-\nu/2}\,\sigma^{\nu}}
    \Big)^{\frac{2}{2\nu+1}}
\end{align*}
and $\sigma = \sigmax \coloneqq \max\{ \sigfull, (n-1)^{1/2} \sigcond\}$. Then $\big( \frac{\sigfull^2}{\sigma^2} + \frac{\sigcond^2}{(n-1)^{-1}\sigma^2} \big)^{-1/4} \leq 1$, and by redefining constants, we get that there exists some absolute constant $C > 0$ such that
\begin{align*}
    &\;\msup_{t \in \R}
    \Big| \P\Big( \mfrac{\sqrt{n(n-1)}}{\sigmax} D_n > t \Big)  - \lim_{K \rightarrow \infty} \P\Big( \mfrac{\sqrt{n(n-1)}}{\sigmax} U_n^K > t\Big) \Big|
    \\
    \;&\leq\;
    C \, n^{- \frac{\nu - 2}{4\nu+2}}  \Big(
    \mfrac{ 
    (\Mfullnu)^\nu}
    {\sigmax^{\nu} }
    +
    \mfrac{
     (\Mcondnu)^\nu }{(n-1)^{-\nu/2}\,\sigmax^{\nu}}
    \Big)^{\frac{1}{2\nu+1}}  \tagaligneq \label{eqn:thm:u:gaussian:quad:general:last:step}
    \\
    \;&\leq\;
    2^{\frac{1}{2\nu+1}} C \, n^{- \frac{\nu - 2}{4\nu+2}}  \Big(
    \mfrac{\Mmaxnu}{\sigmax} \Big)^{\frac{\nu}{2\nu+1}}\;,
\end{align*}
where we have recalled that $\Mmaxnu \coloneqq \max\{ \Mfullnu, (n-1)^{1/2} \Mcondnu\}$. This finishes the proof.

\subsection{Proof of Lemma \ref{lem:u:spectral:approx}} \label{appendix:u:spectral:approx}
\emph{Proof overview.} The proof idea is reminiscent of the standard technique for proving that convergence in probability implies weak convergence. We first approximate each probability by the expectation of a $\delta^{-1}$ Lipschitz function $h$ that is uniformly bounded by $1$. This introduces an approximation error of $\delta$, while replaces the difference in probability by the difference $\mean[ h(\tilde D_n) - h(\tilde D^K_n)]$. The expectation can be further split by the events $\{ | \tilde D_n - \tilde D^K_n| < \epsilon \}$ and $\{ | \tilde D_n - \tilde D^K_n| \geq \epsilon \}$. In the first case, the expectation can be bounded by a Lipschitz argument; in the second case, we can use the boundedness of $h$ to bound the expectation by $2\P(  | \tilde D_n - \tilde D^K_n| \geq \epsilon )$, which is in turn bounded by a Markov argument to give the ``spectral" approximation error. Choosing $\epsilon$ appropriately gives the above error term.

\vspace{1em}

\begin{proof}[Proof of Lemma \ref{lem:u:spectral:approx}] For any $\tau \in \R$ and $\delta > 0$, let $h_{\tau;\delta}$ be the function defined in Lemma \ref{lem:smooth:approx:ind} with $m=0$, which satisfies
$$
h_{\tau+\delta;\delta}(x) \;\leq\; \ind_{\{x > \tau\}} \;\leq\; h_{\tau;\delta}(x)\;.
$$
By applying the above bounds with $\tau$ set to $t$ and $t-\delta$, we get that
\begin{align*}
    \P( \tilde D_n > t ) - \P \big( \tilde D_n^K > t-\delta \big)
    \;=&\; 
    \mean[ \ind_{\{\tilde D_n > t\}} - \ind_{\{\tilde D_n^K > t-\delta\}} ]
    \;\leq\; \mean[ h_{t;\delta}(\tilde D_n) - h_{t;\delta}(\tilde D_n^K) ] \;,
\end{align*}
and similarly
\begin{align*}
    \P\big(\tilde D_n^K > t+\delta \big) - \P( \tilde D_n > t ) 
    \;\leq\; \mean[ h_{t+\delta;\delta}(\tilde D_n^K) - h_{t+\delta;\delta}(\tilde D_n)  ] \;.
\end{align*}
Therefore, defining $\xi_\tau := | \mean[ h_{\tau;\delta}(\tilde D_n) - h_{\tau;\delta}(\tilde D_n^K) ] |$, we get that
\begin{align*}
    \P\big(\tilde D_n^K > t+\delta \big) - \xi_{t+\delta}
    \;\leq\;
    \P( \tilde D_n > t )  
    \;\leq\; 
    \P \big( \tilde D_n^K > t-\delta \big) + \xi_t \;.
\end{align*}
To bound quantities of the form $\xi_\tau$, fix any $\epsilon > 0$ and write $\xi_\tau = \xi_{\tau, 1} + \xi_{\tau, 2} $ where
\begin{align*}
    \xi_{\tau, 1}
    \;\coloneqq&\;
    \Big| \mean \Big[ 
    \big( h_{\tau;\delta}(\tilde D_n) - h_{\tau;\delta}(\tilde D_n^K) \big) 
    \ind_{\{ |\tilde D_n - \tilde D_n^K| \leq \epsilon \}} \Big] \Big|
    \;,
    \\
    \xi_{\tau, 2}
    \;\coloneqq&\;
    \Big| \mean \Big[ 
    \big( h_{\tau;\delta}(\tilde D_n) - h_{\tau;\delta}(\tilde D_n^K) \big) 
    \ind_{\{ |\tilde D_n - \tilde D_n^K| > \epsilon \}} \Big] \Big|\;.
\end{align*}
The first term can be bounded by recalling from Lemma \ref{lem:smooth:approx:ind} that $h_{\tau;\delta}$ is $\delta^{-1}$-Lipschitz:
\begin{align*}
     \xi_{\tau, 1} 
     \;\leq&\;
     \delta^{-1}
     \mean \big[ 
     \big| \tilde D_n - \tilde D_n^K \big|
        \ind_{\{ |\tilde D_n - \tilde D_n^K| \leq \epsilon \}} 
    \big] 
    \;\leq\;
     \delta^{-1} \epsilon \,
     \P \big( |\tilde D_n - \tilde D_n^K| \leq \epsilon \big)
     \;\leq\;
    \delta^{-1} \epsilon\;.
\end{align*}
The second term can be bounded by noting that $h_{\tau;\delta}$ is uniformly bounded above by $1$ and applying Markov's inequality:
\begin{align*}
    \xi_{\tau, 2}
    \;\leq\;
    2 \mean[ \ind_{\{ |\tilde D_n - \tilde D_n^K| > \epsilon \}} ]
    \;=\;
    2 \P(  |\tilde D_n - \tilde D_n^K| > \epsilon )
    \;\leq\;
    2 \epsilon^{-1} \mean\big[ | \tilde D_n - \tilde D_n^K | \big]\;.
\end{align*}
By the definition of $\tilde D_n$ and $\tilde D_n^K$, a triangle inequality and noting that $\bX_1, \ldots, \bX_n$ are i.i.d.\,, the absolute moment term can be bounded as
\begin{align*}
    \mean\big[ |\tilde D_n - \tilde D_n^K| \big]
    \;=&\;
    \mfrac{\sqrt{n(n-1)}}{\sigma} \,
    \mean\big[ | D_n - D_n^K | \big]
    \\
    \;=&\;
    \mfrac{1}{\sigma \sqrt{n(n-1)}}
    \Big\|
    \msum_{1\leq i \neq j \leq n}
    \big( u(\bX_i, \bX_j)   
    -
    \msum_{k=1}^K \lambda_k \phi_k(\bX_i) \phi_k(\bX_j) \big)
    \Big\|_{L_1}
    \\
    \;\leq&\;
    \mfrac{\sqrt{n(n-1)}}{\sigma} 
    \Big\|
    u(\bX_1, \bX_2)   
    -
    \msum_{k=1}^K \lambda_k \phi_k(\bX_1) \phi_k(\bX_2) 
    \Big\|_{L_1}
    \\
    \;=&\;  \sigma^{-1} \sqrt{n(n-1)} \, \varepsilon_{K;1} \;.
\end{align*}
Combining the bounds on $\xi_{\tau, 1}, \xi_{\tau, 2}$ and $\mean[ | \tilde D_n-\tilde D_n^K| ]$ and choosing $\epsilon = \big( \sqrt{n (n-1)} \sigma^{-1} \delta \varepsilon_{K;1}  )^{1/2} $, we get that
\begin{align*}
    \xi_\tau 
    \;\leq&\; 
    \delta^{-1} \epsilon + 2 \sqrt{ n (n-1)}\,\epsilon^{-1} \sigma^{-1} \varepsilon_{K;1} 
    \;=\;
    \mfrac{3 n^{1/4}(n-1)^{1/4} \varepsilon_{K;1}^{1/2}}{\sigma^{1/2} \delta^{1/2} }
    \;\eqqcolon\; \varepsilon_K'
    \;,
\end{align*}
which yields the desired bound
\begin{align*}
    \P( \tilde D_n^K > t+\delta ) - \varepsilon_K'
    \;\leq\;
    \P(  \tilde D_n > t )  
    \;\leq\; 
    \P (  \tilde D_n^K > t-\delta ) + \varepsilon_K' \;.
\end{align*}

\end{proof}
 
\subsection{Proof of Lemma \ref{lem:u:lindeberg:approx}} \label{appendix:u:lindeberg:approx}

For convenience, we denote $\bV_i \coloneqq \phi^K(\bX_i)$ throughout this section.

\vspace{1em}

\emph{Proof overview.} The key idea in the proof rests on Lindeberg's telescoping sum argument for central limit theorem. We follow \cite{chatterjee2006generalization}'s adaptataion of the Lindeberg idea for statistics that are not asymptotically normal. As before, the difference in probability is first approximated by a difference in expectation $\mean[h(\tilde D_n^K) - h(\tilde D^K_Z)]$ with respect to some function $h$, which introduces a further approximation error $\delta$. The next step is to note that both $\tilde D_n^K$ and $\tilde D^K_Z$ can be expressed in terms of some common function $\tilde f$, such that
\begin{align*}
    &
    \tilde D_n^K \;=\; \tilde f( \bV_1, \ldots, \bV_n )\;,
    &&
    \tilde D^K_Z \;=\; \tilde f( \bZ_1, \ldots, \bZ_n )\;.
\end{align*}
Denoting $g = h \circ \tilde f$, we can then write the difference in expectation in terms of Lindeberg's telescoping sum as
\begin{align*}
    \mean[h(\tilde D_n^K) - h(\tilde D^K_Z)] 
    \;=&\; 
    \mean[ g( \bV_1, \ldots, \bV_1 ) - g( \bZ_1, \ldots, \bZ_n ) ]
    \\
    \;=&\;
    \msum_{i=1}^n
    \big( \mean[ g( \bV_1, \ldots,\bV_{i-1},  \bV_i, \bZ_{i+1}, \ldots, \bZ_n ) 
    \\
    &\;\qquad \qquad
    - g( \bV_1, \ldots, \bV_{i-1}, \;\bZ_i, \;\bZ_{i+1},\ldots, \bZ_n )  ] \big)\;.
\end{align*}
Since each summand differs only in the $i$-th argument, we can perform a second-order Taylor expansion about the $i$-th argument provided that the function $h$ such that $h$ is twice-differentiable. The second-order remainder term is further ``Taylor-expanded" to an additional $\epsilon$-order for any $\epsilon \in [0,1]$ by choosing $h''$ to be $\epsilon$-H\"older. Write $D_i$ as the differential operator with respect to the $i$-th argument and denote $\tilde f_i(\bx) \coloneqq \tilde f( \bV_1, \ldots,\bV_{i-1}, \bx, \bZ_{i+1}, \ldots, \bZ_n )$. Then informally speaking, the Taylor expansion argument amounts to bounding each summand as
\begin{align*}
    \big| \text{(summand)}_i \big| \;\leq&\; \mean[ D_i (h \circ \tilde f_i)(0) (\bV_i - \bZ_i)] + \mfrac{1}{2} \mean[ D_i^2 (h \circ \tilde f_i)(0) (\bV_i^2 - \bZ_i^2)]
    \\
    &\; + \mfrac{1}{6} \big(\text{H\"older constant of } h''\big)  \times \mean\big[ \big| D_i \tilde f_i(0) \bV_i  \big|^{2+\epsilon} +  \big| D_i \tilde f_i(0) \bZ_i   \big|^{2+\epsilon}  \big]\;,
\end{align*}
where we have used the fact that $\tilde f_i$ is a linear function in expressing the last quantity. The first two terms vanish because $h \circ \tilde f_i$ is independent of $\big(\bV_i, \bZ_i\big)$ and the first two moments of $\bV_i$ and $\bZ_i$ match. The third term is bounded carefully by noting the moment structure of $\bV_i$ and $\bZ_i$ to give the error term $\frac{1}{n} E_{\delta;K}$. Summing the errors over $1 \leq i \leq n$ then gives the Gaussian approximation error bound in Lemma \ref{lem:u:lindeberg:approx}.

\vspace{1em}

\begin{proof}[Proof of Lemma \ref{lem:u:lindeberg:approx}] For any $\tau \in \R$ and $\delta > 0$, let $h_{\tau;\delta}$ be the twice-differentiable function defined in Lemma \ref{lem:smooth:approx:ind} (i.e.\,$m=2$), which satisfies
$$
h_{\tau+\delta;\delta}(x) \;\leq\; \ind_{\{x > \tau\}} \;\leq\; h_{\tau;\delta}(x)\;.
$$
By applying the above bounds with $\tau$ set to $t-\delta$ and $t-2\delta$, we get that
\begin{align*}
    \P( \tilde D_n^K > t-\delta ) - \P( \tilde D_Z^K > t-2\delta)
    \;=&\; 
    \mean[ \ind_{\{\tilde D_n^K > t-\delta\}} - \ind_{\{\tilde D_Z^K > t-2\delta\}} ]
    \\
    \;\leq&\; \mean[ h_{t-\delta;\delta}( \tilde D_n^K) - h_{t-\delta;\delta}(\tilde D_Z^K) ] \;, 
\end{align*}
and similarly
\begin{align*}
    \P(\tilde D_Z^K > t+2\delta) - \P( \tilde D_n^K > t+\delta ) 
    \;=&\; 
    \mean[ \ind_{\{\tilde D_Z^K > t+2\delta\}} - \ind_{\{\tilde D_n^K  > t+\delta\}}]
    \\
    \;\leq&\; \mean[ h_{t+2\delta;\delta}(\tilde D_Z^K) - h_{t+2\delta;\delta}(\tilde D_n^K)  ] \;.
\end{align*}
Therefore, we obtain that
\begin{align*}
    &
    \P( \tilde D_n^K > t-\delta ) 
    \;\leq\;
    \P(\tilde D_Z^K > t-2\delta) + E'_{\delta;K}\;,
    &&
    \P( \tilde D_n^K > t+\delta ) 
    \;\geq\; 
    \P(\tilde D_Z^K > t+2\delta) - E'_{\delta;K}\;, \tagaligneq \label{eqn:lindeberg:intermediate}
\end{align*}
where $E'_{\delta;K} \coloneqq \msup_{\tau \in \R} | \mean[ h_{\tau;\delta}(\tilde D_n^K) - h_{\tau;\delta}(\tilde D_Z^K) ] |$. The next step is to bound $E'_{\delta;K}$, to which we apply Lindeberg's technique for proving central limit theorem. We denote the scaled mean as
\begin{align*} 
    \tilde \mu 
    \;\coloneqq\; 
    \mfrac{\mean[\bV_1]}{\sigma^{1/2} (n(n-1))^{1/4}} 
    \;=\; 
    \mfrac{\mean[\bZ_1]}{\sigma^{1/2} (n(n-1))^{1/4}}\;,
\end{align*}
and define the centred and scaled versions of $\bV_i$ and $\bZ_i$ respectively as 
\begin{align*}
    &
    \tilde \bV_i \;\coloneqq\; \mfrac{\bV_i}{\sigma^{1/2} (n(n-1))^{1/4}} - \tilde \mu\;,
    &&
    \tilde \bZ_i \;\coloneqq\; \mfrac{\bZ_i}{\sigma^{1/2} (n(n-1))^{1/4}} - \tilde \mu\;.
\end{align*}
We also define the function $f: (\R^{K})^n \rightarrow \R$ by
\begin{align*}
    f(\bv_1, \ldots, \bv_n) 
    \coloneqq
    \msum_{1 \leq i \neq j \leq n} (\bv_i+ \tilde \mu)^\top \Lambda^K(\bv_j+ \tilde \mu)
    \;,
    \;\;
    \text{ where we recall }
    \Lambda^K\coloneqq \diag\{\lambda_1, \ldots, \lambda_K\}\;.
\end{align*}
This allows us to express the random quantities in \eqref{eqn:lindeberg:intermediate} as
\begin{align*}
    &
    \tilde D_n^K \;=\; f( \tilde \bV_1, \ldots, \tilde \bV_n)\;,
    &&
    \tilde D_Z^K \;=\; f( \tilde \bZ_1, \ldots, \tilde \bZ_n)\;.
\end{align*}
By defining the random function
\begin{align*}
    &
    F_i(\bv) \;:=\;  f(\tilde \bV_1, \ldots, \tilde \bV_{i-1}, \bv, \tilde \bZ_{i+1}, \ldots, \tilde \bZ_n) 
    &&
    \text{ for } \bv \in \R^K \text{ and } 1 \leq i \leq n\;,
\end{align*}
we can write $E'_{\delta;K}$ into Lindeberg's telescoping sum as
\begin{align*}
    E'_{\delta;K}
    \;=&\;
    \msup_{\tau \in \R}
    | \mean[ h_{\tau;\delta} \circ f(\tilde \bV_1, \ldots, \tilde \bV_n) - h_{\tau;\delta}  \circ f(\tilde \bZ_1, \ldots, \tilde \bZ_n) ] |
    \\
    \;=&\; 
    \msup_{\tau \in \R}
    \Big| \msum_{i=1}^n \mean[ h_{\tau;\delta}(F_i( \tilde \bV_i) - h_{\tau;\delta}(F_i(\tilde \bZ_i)) ] \Big|
    \\
    \;\leq&\;
    \msup_{\tau \in \R}
    \msum_{i=1}^n | \mean[ h_{\tau;\delta} \circ F_i(\tilde \bV_i) - h_{\tau;\delta} \circ F_i(\tilde \bZ_i) ] |\;. 
\end{align*}
Since $h_{\tau;\delta} \circ f$ is twice-differentiable, by a second-order Taylor expansion around $\bzero \in \R^K$, there exists random values $\theta_V, \theta_Z \in (0,1)$ almost surely such that
\begin{align*}
    h_{\tau;\delta} \circ F_i(\tilde \bV_i) 
    \;&=\; 
    \mfrac{\partial h_{\tau;\delta} \circ F_i(\bx)}{\partial \bx} \Big|_{\bx=\bzero} \, \tilde \bV_i
    +
    \mfrac{1}{2}
    \mfrac{\partial^2 h_{\tau;\delta} \circ F_i(\bx)}{\partial \bx^2} \Big|_{\bx=\theta_V \tilde \bV_i}
    \, \tilde \bV_i^{\otimes 2}
    \;,
    \\
    h_{\tau;\delta} \circ F_i(\tilde \bZ_i) 
    \;&=\; 
    \mfrac{\partial h_{\tau;\delta} \circ F_i(\bx)}{\partial \bx} \Big|_{\bx=\bzero} \, \tilde \bZ_i
    +
    \mfrac{1}{2}
    \mfrac{\partial^2 h_{\tau;\delta} \circ F_i(\bx)}{\partial \bx^2} \Big|_{\bx=\theta_Z \tilde \bZ_i}
    \, \tilde \bZ_i^{\otimes 2}
    \;.
\end{align*}
Substituting this into the sum above gives
\begin{align*}
    E'_{\delta;K} \;\leq\; 
    \msup_{\tau \in \R}
    \Big(&
    \msum_{i=1}^n 
    \Big| \mean\Big[ 
    \mfrac{\partial h_{\tau;\delta} \circ F_i(\bx)}{\partial \bx} \Big|_{\bx=\bzero} \, 
    \big( \tilde \bV_i - \tilde \bZ_i)
    \Big] \Big|
    \\
    &\;
    + 
    \mfrac{1}{2}
    \msum_{i=1}^n 
    \Big| \mean\Big[  
    \mfrac{\partial^2 h_{\tau;\delta} \circ F_i(\bx)}{\partial \bx^2} \Big|_{\bx=\theta_V \tilde \bV_i}
    \, \tilde \bV_i^{\otimes 2}
    -
    \mfrac{\partial^2 h_{\tau;\delta} \circ F_i(\bx)}{\partial \bx^2} \Big|_{\bx=\theta_Z \tilde \bZ_i}
    \, \tilde \bZ_i^{\otimes 2}
    \Big] \Big| \, \Big)\;.
\end{align*}
The first sum vanishes because the only randomness of the derivative comes from $F_i$, who is independent of $(\tilde \bV_i, \tilde \bZ_i)$, and the mean of $\tilde \bV_i$ and $\tilde \bZ_i$ match. To handle the second sum, we make use of independence again and the fact that the second moment of $\tilde \bV_i$ and $\tilde \bZ_i$ also match: By subtracting and adding the term 
\begin{align*}
    \mean\Big[ \mfrac{\partial^2 h_{\tau;\delta} \circ F_i(\bx)}{\partial \bx^2} \Big|_{\bx=\bzero} ( \tilde \bV_i)^{\otimes 2} \Big]
    \;=\;
    \mean\Big[ \mfrac{\partial^2 h_{\tau;\delta} \circ F_i(\bx)}{\partial \bx^2} \Big|_{\bx=\bzero} ( \tilde \bZ_i)^{\otimes 2} \Big]\;,
\end{align*}
we can apply a triangle inequality to get that
\begin{align*}
    E'_{\delta;K} \;\leq\; 
    \mfrac{1}{2}
    \msup_{\tau \in \R}
    \Big(
    &
    \msum_{i=1}^n 
    \Big|
    \mean\Big[  
    \Big(
    \mfrac{\partial^2 h_{\tau;\delta} \circ F_i(\bx)}{\partial \bx^2} \Big|_{\bx=\theta_V \tilde \bV_i}
    -
    \mfrac{\partial^2 h_{\tau;\delta} \circ F_i(\bx)}{\partial \bx^2} \Big|_{\bx=\bzero}
    \Big)
    \, \tilde \bV_i^{\otimes 2}
    \Big] \Big|
    \\
    &\;
    +
    \msum_{i=1}^n 
    \Big| \mean\Big[  
    \Big(
    \mfrac{\partial^2 h_{\tau;\delta} \circ F_i(\bx)}{\partial \bx^2} \Big|_{\bx=\theta_Z \tilde \bZ_i}
    -
    \mfrac{\partial^2 h_{\tau;\delta} \circ F_i(\bx)}{\partial \bx^2} \Big|_{\bx=\bzero}
    \Big)
    \, \tilde \bZ_i^{\otimes 2}
    \Big] \Big|
    \, \Big)
    \;. \tagaligneq \label{eqn:lindeberg:eta:tau:intermediate}
\end{align*}
The final step is to bound the two sums by exploiting the derivative structure of $h_{\tau;\delta}$ and $F_i$. Note that $F_i$ is a linear function: its first derivative is given by
\begin{align*}
    \partial F_i(\bx) 
    \;=\; 
    2 \msum_{1 \leq j < i} \Lambda^K\tilde \bV_j
    +
    2 \msum_{i < j \leq n} \Lambda^K\tilde \bZ_j
    + 
    2 (n-1) \Lambda^K\tilde \mu
    \;\in\;\R^K
    \;,
\end{align*}
which is independent of $\bx$, while its higher derivatives vanish. By a second-order chain rule, this implies that almost surely
\begin{align*}
    &\;
    \Big|
    \Big(
    \mfrac{\partial^2 h_{\tau;\delta} \circ F_i(\bx)}{\partial \bx^2} \Big|_{\bx=\theta_V \tilde \bV_i}
    -
    \mfrac{\partial^2 h_{\tau;\delta} \circ F_i(\bx)}{\partial \bx^2} \Big|_{\bx=\bzero} \Big) \, \tilde \bV_i^{\otimes 2}
    \Big|
    \\
    \;=&\;
    \Big|
    \big(\partial^2 h_{\tau;\delta} \big(  F_i(\theta_V \tilde \bV_i) \big)
    -
    \partial^2 h_{\tau;\delta} \big(  F_i(\bzero) \big)
    \big)
    \,
    ( \partial F_i(\bzero)^\top \tilde \bV_i )^2 \Big|
    \\
    \;\leq&\;
    \big| \partial^2 h_{\tau;\delta} \big(  F_i(\theta_V \tilde \bV_i) \big)
    -
    \partial^2 h_{\tau;\delta} \big(  F_i(\bzero) \big)
    \big| \, \big|  \partial F_i(\bzero)^\top \tilde \bV_i  \big|^2\;.
\end{align*}
For $\nu \in (2,3]$, by the H\"older property of $\partial^2 h_{\tau;\delta}$ from Lemma \ref{lem:smooth:approx:ind}, we get that almost surely,
\begin{align*}
    \big|  \partial^2 h_{\tau;\delta} (F_i(\theta_V \tilde \bV_i)) - \partial^2 h_{\tau;\delta} (F_i(\bzero)) \big|
    \;\leq&\;
    18 \times 3^{\nu - 2} \delta^{-\nu} | F_i(\theta_V\tilde \bV_i) - F_i(\bzero)|^{\nu - 2}
    \\
    \;=&\;
    18 \times 3^{\nu - 2} \delta^{-\nu} | \partial F_i(\bzero)^\top (\theta_V \tilde \bV_i) |^{\nu - 2}
    \\
    \;\leq&\;
    54 \delta^{-\nu} | \partial F_i(\bzero)^\top \tilde \bV_i |^{\nu - 2}
    \;.
\end{align*}
In the last inequality, we have used that $\theta_V$ takes value in $[0,1]$. Combining the results, we get that each summand in the first sum in \eqref{eqn:lindeberg:eta:tau:intermediate} can be bounded as
\begin{align*}
    \Big|
    \mean\Big[  
    \Big(
    \mfrac{\partial^2 h_{\tau;\delta} \circ F_i(\bx)}{\partial \bx^2} \Big|_{\bx=\theta_V \tilde \bV_i}
    -
    \mfrac{\partial^2 h_{\tau;\delta} \circ F_i(\bx)}{\partial \bx^2} \Big|_{\bx=\bzero}
    \Big)
    \, \tilde \bV_i^{\otimes 2}
    \Big] \Big|
    \;\leq&\;
    54 \delta^{-\nu} \mean \big[ | \partial F_i(\bzero)^\top \tilde \bV_i |^{\nu} \big]\;.
\end{align*}
The exact same argument applies to the summands of the second sum to give
\begin{align*}
    \Big|
    \mean\Big[  
    \Big(
    \mfrac{\partial^2 h_{\tau;\delta} \circ F_i(\bx)}{\partial \bx^2} \Big|_{\bx=\theta_Z \tilde \bZ_i}
    -
    \mfrac{\partial^2 h_{\tau;\delta} \circ F_i(\bx)}{\partial \bx^2} \Big|_{\bx=\bzero}
    \Big)
    \, \tilde \bZ_i^{\otimes 2}
    \Big] \Big|
    \;\leq&\;
    54 \delta^{-\nu} \mean\big[  | \partial F_i(\bzero)^\top \tilde \bZ_i |^{\nu} \big]\;,
\end{align*}
so a substitution back into \eqref{eqn:lindeberg:eta:tau:intermediate} gives
\begin{align*}
    E'_{\delta;K} \;\leq&\; 
    27 \delta^{-\nu}
    \msum_{i=1}^n 
    \Big( 
    \mean\big[ | \partial F_i(\bzero)^\top \tilde \bV_i |^{\nu} \big]
    + 
    \mean \big[ | \partial F_i(\bzero)^\top \tilde \bV_i |^{\nu} \big] 
    \Big)
    \;.
\end{align*}
We defer to Lemma \ref{lem:lindeberg:nu:moment:bound} to show that there exists an absolute constant $C'>0$ such that the moment terms can be bounded as
\begin{align*}
     \mean\big[ | \partial F_i(\bzero)^\top \tilde \bV_i |^\nu \big]
     +
      \mean\big[ | \partial F_i(\bzero)^\top \tilde \bZ_i |^\nu \big]
    \;\leq&\;
      \mfrac{C'}{n^{\nu/2}} 
    \Big(
    \mfrac{ 
    (\Mfullnu)^\nu    
    + 
    \varepsilon_{K;\nu}^\nu}
    {\sigma^{\nu} }
    +
    \mfrac{
     (\Mcondnu)^\nu + \varepsilon_{K;\nu}^{\nu} }{(n-1)^{-\nu/2}\,\sigma^{\nu}}
    \Big)\;. \tagaligneq \label{eqn:lindeberg:nu:moment:bound}
\end{align*}
Combining with \eqref{eqn:lindeberg:intermediate} and defining $E_{\delta;K}$ to be the upper bound for $E'_{\delta;K}$, we get that
\begin{align*}
    &
    \P( \tilde D_n^K > t-\delta ) 
    \;\leq\;
    \P(\tilde D_Z^K > t-2\delta) + E_{\delta;K}\;,
    &&
    \P( \tilde D_n^K > t+\delta ) 
    \;\geq\; 
    \P(\tilde D_Z^K > t+2\delta) - E_{\delta;K}\;, 
\end{align*}
where we have made the $K$-dependence explicit and define, for $C \coloneqq 27C'$,
\begin{align*}
    E_{\delta;K} \;\coloneqq&\; 
    \mfrac{C}{\delta^\nu n^{\nu/2-1}}
    \Big(
    \mfrac{ 
    (\Mfullnu)^\nu    
    + 
    \varepsilon_{K;\nu}^\nu}
    {\sigma^{\nu} }
    +
    \mfrac{
     (\Mcondnu)^\nu + \varepsilon_{K;\nu}^{\nu} }{(n-1)^{-\nu/2}\,\sigma^{\nu}}
    \Big)\;. 
\end{align*}
\end{proof}

\begin{lemma} \label{lem:lindeberg:nu:moment:bound} \eqref{eqn:lindeberg:nu:moment:bound} holds.
\end{lemma}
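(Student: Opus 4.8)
The plan is to bound both $L_\nu$-moments appearing on the left of \eqref{eqn:lindeberg:nu:moment:bound} by splitting the scalar $\partial F_i(\bzero)^\top \tilde\bV_i$ (and, identically, $\partial F_i(\bzero)^\top \tilde\bZ_i$) into a bilinear ``degeneracy-type'' term and a linear ``non-degeneracy-type'' term, each controlled by a moment estimate from \cref{appendix:u:moments}. First I would rewrite $\partial F_i(\bzero)$: from the expression for its gradient obtained in the proof of \cref{lem:u:lindeberg:approx}, together with the definitions of $\tilde\bV_j,\tilde\bZ_j,\tilde\mu$, one checks $\partial F_i(\bzero) = \frac{2\Lambda^K}{\sigma^{1/2}(n(n-1))^{1/4}}\big(\sum_{j<i}\bV_j + \sum_{j>i}\bZ_j\big)$. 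Writing $\bar\bV_j \coloneqq \phi^K(\bX_j)-\mu^K$ and $\bar\bZ_j \coloneqq \bZ_j - \mu^K$ for the centred vectors and using $\tilde\bV_i = \bar\bV_i/(\sigma^{1/2}(n(n-1))^{1/4})$, this gives
\[
    \partial F_i(\bzero)^\top \tilde\bV_i \;=\; \mfrac{2}{\sigma\sqrt{n(n-1)}}\,(A_i + B_i)\;,
    \qquad
    A_i \;\coloneqq\; \Big(\tsum_{j<i}\bar\bV_j + \tsum_{j>i}\bar\bZ_j\Big)^\top \Lambda^K\bar\bV_i\;,
    \qquad
    B_i \;\coloneqq\; (n-1)(\mu^K)^\top\Lambda^K\bar\bV_i\;,
\]
and the same identity holds for the second moment with $\bar\bV_i$ replaced by $\bar\bZ_i$.

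For the linear term $B_i$, observe that $(\mu^K)^\top\Lambda^K\bar\bV_i$ has the law of $\sum_k \lambda_k(\phi_k(\bX_1)-\mu_k)\mu_k$, so \cref{lem:factorisable:moment}(ii) gives $\mean[|B_i|^\nu]\le 4(n-1)^\nu((\Mcondnu)^\nu+\varepsilon_{K;\nu}^\nu)$; its Gaussian counterpart $(n-1)(\mu^K)^\top\Lambda^K\bar\bZ_i$ obeys the same bound up to an absolute constant by \cref{lem:factorisable:moment:gaussian} combined with $\sigcond = \Mcondtwo \le \Mcondnu$ and $\varepsilon_{K;2}\le\varepsilon_{K;\nu}$ (monotonicity of $L_p$-norms on a probability space).

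For the bilinear term $A_i$ I would condition on $\bar\bV_i$: then $A_i$ is a sum of $n-1$ conditionally independent, conditionally mean-zero summands, so \cref{lem:martingale:bound} (applied conditionally, for $\nu\in(2,3]$) gives
\[
    \mean\big[ |A_i|^\nu \,\big|\, \bar\bV_i \big] \;\leq\; C_\nu (n-1)^{\nu/2-1} \Big( \tsum_{j<i} \mean\big[ |\bar\bV_j^\top\Lambda^K\bar\bV_i|^\nu \,\big|\, \bar\bV_i \big] + \tsum_{j>i} \mean\big[ |\bar\bZ_j^\top\Lambda^K\bar\bV_i|^\nu \,\big|\, \bar\bV_i \big] \Big)\;.
\]
Taking expectations over $\bar\bV_i$, each $j<i$ term equals $\mean[|\sum_k\lambda_k(\phi_k(\bX_1)-\mu_k)(\phi_k(\bX_2)-\mu_k)|^\nu]$, bounded by \cref{lem:factorisable:moment}(iii), and each $j>i$ term equals $\mean[|(\phi^K(\bX_1)-\mu^K)^\top\Lambda^K\bZ_1|^\nu]$, bounded by \cref{lem:factorisable:moment:gaussian}; both are $\le C((\Mfullnu)^\nu+\varepsilon_{K;\nu}^\nu)$ for an absolute $C$ once the (negative) $\Mcondnu$-contributions in those bounds are discarded. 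Since $i-1,n-i\le n-1$, this yields $\mean[|A_i|^\nu] = O\big((n-1)^{\nu/2}((\Mfullnu)^\nu+\varepsilon_{K;\nu}^\nu)\big)$, and the version with $\bar\bZ_i$ in place of $\bar\bV_i$ is treated identically, additionally invoking the estimate for $\mean[|\bZ_1^\top\Lambda^K\bZ_2|^\nu]$ from \cref{lem:factorisable:moment:gaussian} and $\sigfull = \Mfulltwo \le \Mfullnu$.

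Finally I would recombine: by the triangle inequality in $L_\nu$, $\|\partial F_i(\bzero)^\top\tilde\bV_i\|_{L_\nu} \le \frac{2}{\sigma\sqrt{n(n-1)}}(\|A_i\|_{L_\nu}+\|B_i\|_{L_\nu})$, where the bound on $\|A_i\|_{L_\nu}$ contributes a term of order $\sigma^{-1}n^{-1/2}((\Mfullnu)^\nu+\varepsilon_{K;\nu}^\nu)^{1/\nu}$ and the bound on $\|B_i\|_{L_\nu}$ a term of order $\sigma^{-1}(n-1)^{1/2}n^{-1/2}((\Mcondnu)^\nu+\varepsilon_{K;\nu}^\nu)^{1/\nu}$. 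Raising to the $\nu$-th power with $(a+b)^\nu\le 2^{\nu-1}(a^\nu+b^\nu)$, and adding the analogous bound for $\|\partial F_i(\bzero)^\top\tilde\bZ_i\|_{L_\nu}^\nu$, reproduces exactly the right-hand side of \eqref{eqn:lindeberg:nu:moment:bound} with a suitable constant $C'>0$. I expect the only real difficulty to be organisational: matching each mixed second-moment type ($\bar\bV$--$\bar\bV$, $\bar\bV$--$\bar\bZ$, $\bar\bZ$--$\bar\bZ$, and the $\mu^K$ cross-terms) to the right lemma in \cref{appendix:u:moments}, and tracking how the $1/\sqrt{n(n-1)}$ normalisation interacts with the $(n-1)^{\nu/2-1}$ factor from the martingale inequality so that the $\Mfull$-term ends up carrying an extra $n^{-1}$ relative to the $\Mcond$-term, as the statement requires.
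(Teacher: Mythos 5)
Your proposal is correct and follows essentially the same route as the paper's proof: decompose $\partial F_i(\bzero)^\top \tilde \bV_i$ into a bilinear part and a linear (mean) part, control the bilinear part with the martingale moment inequality of Lemma \ref{lem:martingale:bound}, and bound the individual summand moments via Lemmas \ref{lem:factorisable:moment} and \ref{lem:factorisable:moment:gaussian}, exactly as the paper does. The only cosmetic difference is that you merge the $\bar\bV$- and $\bar\bZ$-sums into a single conditional application of the martingale bound, whereas the paper splits into three terms by Jensen and applies the inequality to each bilinear sum separately with an explicit filtration; the resulting estimates and the final rate are the same.
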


\begin{proof}[Proof of Lemma \ref{lem:lindeberg:nu:moment:bound}] We seek to bound $\mean[ | \partial F_i(\bzero)^\top \tilde \bV_i |^{\nu} ] + \mean[ | \partial F_i(\bzero)^\top \tilde \bZ_i |^{\nu} ]$ for $\nu \in (2,3]$ and
\begin{align*}
    \partial F_i(\bzero) 
    \;=\; 
    2 \msum_{1 \leq j < i} \Lambda^K\tilde\bV_j
    +
    2 \msum_{i < j \leq n} \Lambda^K\tilde \bZ_j
    + 
    2 (n-1) \Lambda^K\tilde \mu
    \;\in\;\R^K
    \;.
\end{align*}
We first focus on bounding the first expectation. By convexity of the function $x \mapsto | x |^\nu$, we can apply Jensen's inequality to bound
\begin{align*}
    \mean\big[ | \partial F_i(\bzero)^\top& \tilde \bV_i |^\nu \big]
    \;=\;
    \mean\Big[ \Big| 2 \msum_{j < i}  \tilde\bV_j^\top \Lambda^K\tilde\bV_i
    + 
    2 \msum_{j > i} \tilde \bZ_j^\top \Lambda^K\tilde\bV_i 
    +
    2 (n-1) \tilde \mu^\top \Lambda^K\tilde \bV_i \Big|^\nu \Big]
    \\
    \;\leq&\;
    \mfrac{1}{3}
    \mean\big[ \big| 6 \msum_{j < i} \tilde \bV_j^\top \Lambda^K\tilde\bV_i \big|^\nu \big]
    + 
    \mfrac{1}{3}
    \mean\big[ \big| 6 \msum_{j > i} \tilde \bZ_j^\top \Lambda^K\tilde \bV_i \big|^\nu \big]
    + 
    \mfrac{1}{3}
    \mean\big[ \big| 6(n-1) \tilde \mu^\top \Lambda^K\tilde\bV_1 \big|^\nu \big]
    \\
    \;\leq&\;
    72 \big(
    \mean\big[ \big| \msum_{j < i} \tilde\bV_j^\top\Lambda^K\tilde\bV_i \big|^\nu \big]
    + 
    \mean\big[ \big| \msum_{j > i} \tilde \bZ_j^\top\Lambda^K\tilde \bV_i \big|^\nu \big]
    + 
    \mean\big[ \big| (n-1) \tilde \mu^\top\Lambda^K\tilde\bV_1 \big|^\nu \big]
    \big)
    \;,
\end{align*}
where we have noted that $\nu \leq 3$. Since $\tilde\bV_i$'s are i.i.d., $\tilde\bZ_i$'s are i.i.d.~and all variables involved are zero-mean, $(\tilde \bV_j^\top \Lambda^K\tilde \bV_i )_{j=1}^{i-1}$ forms a martingale difference sequence with respect to the filtration $\sigma(\tilde \bV_i, \tilde\bV_1), \ldots, \sigma(\tilde \bV_i, \tilde \bV_1, \ldots, \tilde \bV_{i-1})$, and so is $( \tilde \bZ_j^\top \Lambda^K\tilde \bV_i )_{j=i+1}^n$ with respect to the filtration $\sigma(\tilde \bV_i, \tilde \bZ_{i+1}), \ldots, \sigma(\tilde \bV_i, \tilde \bZ_{i+1}, \ldots, \tilde \bZ_n)$. This allows the above two moments of sums to be bounded via the martigale moment inequality from Lemma \ref{lem:martingale:bound}: There exists an absolute constant $C_0 > 0$ such that
\begin{align*}
    \mean\big[ | \partial F_i(\bzero)^\top &\tilde \bV_i |^\nu \big]
    \\
    \;\leq&\; 
    C_0
    \Big( 
    (i-1)^{\nu/2-1} 
    \msum_{j=1}^{i-1} \mean[ |\tilde \bV_j^\top \Lambda^K\tilde\bV_i |^\nu ]
    +
    (n-i)^{\nu/2-1} 
    \msum_{j=i+1}^{n} \mean[ |\tilde \bZ_j^\top \Lambda^K\tilde\bV_i |^\nu ]
    \\
    &\;\;\;\;\;\;\;
    + 
    (n-1)^\nu \, \mean[ | \tilde \mu^\top \Lambda^K\tilde \bV_1 |^\nu ] 
    \Big)
    \\
    \;\leq&\;
    C_0 (n-1)^{\nu/2}
    \Big(  
    \mean[ |\tilde \bV_1^\top \Lambda^K\tilde \bV_2 |^\nu ]
    +
    \mean[ |\tilde \bZ_1^\top  \Lambda^K \tilde \bV_1 |^\nu ]
    + 
    (n-1)^{\nu/2}
    \mean[ | \tilde \mu^\top \Lambda^K\tilde \bV_1 |^\nu ]
    \Big)\;.
\end{align*}
By the exact same argument, the other expectation we want to bound can also be controlled as
\begin{align*}
    \mean\big[ | \partial F_i(\bzero)^\top &\tilde \bZ_i |^\nu \big]
    \\
    &\leq\;
    C_0 (n-1)^{\nu/2}
    \Big( 
    \mean[ |\tilde \bZ_1^\top \Lambda^K \tilde\bZ_2 |^\nu ]
    +
    \mean[ |\tilde \bZ_1^\top \Lambda^K \tilde \bV_1 |^\nu ]
    + 
    (n-1)^{\nu/2}
    \mean[ | \tilde \mu^\top \Lambda^K\tilde \bZ_1 |^\nu ]
    \Big)\;.
\end{align*}
Finally, we  relate these moments terms to moments of $u(\bX_1,\bX_2)$, up to error terms that vanish as $K \rightarrow \infty$: Denoting $\mu_k \coloneqq \mean[\phi_k(\bX_1)]$, we have that by Lemma \ref{lem:factorisable:moment},
\begin{align*}
    \mean[ | \tilde \mu^\top \Lambda^K \tilde \bV_1 |^\nu ]
    \;=\;
    \mfrac{1}{\sigma^{\nu} n^{\nu/2}(n-1)^{\nu/2}}
    \mean\Big[ \Big| \msum_{k=1}^K \lambda_k (\phi_k(\bX_1) - \mu_k) \mu_k \Big|^\nu \Big] 
    \;\leq\;
    \mfrac{
    4 ( (\Mcondnu)^\nu + \varepsilon_{K;\nu}^{\nu} )}{\sigma^{\nu}\, n^{\nu/2}(n-1)^{\nu/2}}\;,
\end{align*}
and for some absolute constant $C_1 > 0$,
\begin{align*}
    \mean[ |\tilde \bV_1^\top \Lambda^K \tilde \bV_2 |^\nu ]
    \;=&\;
    \mfrac{1}{\sigma^{\nu} n^{\nu/2}(n-1)^{\nu/2}} \mean\Big[ \Big|
    \msum_{k=1}^K \lambda_k (\phi_k(\bX_1) - \mu_k) (\phi_k(\bX_2)-\mu_k)
    \Big|^\nu \Big]
    \\
    \;\leq&\;
    \mfrac{4 C_1 (\Mfullnu)^\nu    
    -
    \frac{1}{2} (\Mcondnu)^\nu
    + 
    (4 C_1 + 2) \varepsilon_{K;\nu}^\nu}
    {\sigma^{\nu} n^{\nu/2}(n-1)^{\nu/2}}
    \;\leq\;
    \mfrac{4 C_1 (\Mfullnu)^\nu    
    + 
    (4 C_1 + 2) \varepsilon_{K;\nu}^\nu}
    {\sigma^{\nu} n^{\nu/2}(n-1)^{\nu/2}}
    \;.
\end{align*}
For the moment terms involving the Gaussians $\tilde \bZ_1$ and $\tilde \bZ_2$, we apply Lemma \ref{lem:factorisable:moment:gaussian} to show that 
\begin{align*}
    \mean[ | \tilde \mu^\top  \Lambda^K \tilde \bZ_1 |^\nu ]
    \;=&\;
    \mfrac{\mean[ | (\mean[\bV_1])^\top  \Lambda^K \bZ_1 |^\nu ]}{\sigma^{\nu} n^{\nu/2}(n-1)^{\nu/2}}
    \;\leq\; 
    \mfrac{
    7 (\sigcond^\nu + 8 \varepsilon_{K;2}^\nu )
    }{\sigma^{\nu} n^{\nu/2}(n-1)^{\nu/2}}
    \;\leq\;
    \mfrac{
    7 ((\Mcondnu)^\nu + 8 \varepsilon_{K;\nu}^\nu )
    }{\sigma^{\nu} n^{\nu/2}(n-1)^{\nu/2}}
    \;,
    \\
    \mean[ |\tilde \bZ_1^\top \Lambda^K \tilde \bZ_2 |^\nu ]
    \;=&\;
    \mfrac{\mean[ | \bZ_1^\top \Lambda^K \bZ_2 |^\nu ]}{\sigma^{\nu} n^{\nu/2}(n-1)^{\nu/2}}
     \;\leq\;
     \mfrac{
     6 ( \sigfull^\nu + \varepsilon_{K;2}^\nu)
     }{\sigma^{\nu} n^{\nu/2}(n-1)^{\nu/2}}
     \;\leq\;
     \mfrac{
     6 ( (\Mfullnu)^\nu + \varepsilon_{K;\nu}^\nu)
     }{\sigma^{\nu} n^{\nu/2}(n-1)^{\nu/2}}
     \;.
\end{align*}
In the last inequalities for both bounds, we have noted that $L_2$ norm is dominated by $L_\nu$ norm since $\nu > 2$. Meanwhile by Lemma \ref{lem:factorisable:moment:gaussian} again, there exists some absolute constant $C_2 > 0$ such that
\begin{align*}
    \mean[ |\tilde \bZ_1^\top \Lambda^K \tilde \bV_1 |^\nu ]
    \;=&\;
    \mfrac{\mean[ | (\bV_1 - \mean[\bV_1] )^\top \Lambda^K \bZ_1 |^\nu ]}{\sigma^{\nu} n^{\nu/2}(n-1)^{\nu/2}}
    \;\leq\;
    \mfrac{8 C_2 (\Mfullnu)^\nu + (8C_2+4) \varepsilon_{K;\nu}^\nu}{\sigma^{\nu} n^{\nu/2}(n-1)^{\nu/2}}
    \;.
\end{align*}
Substituting the five moment bounds into the earlier bounds on $\mean[ | \partial F_i(\bzero)^\top \tilde \bV_i |^\nu ]$ and $\mean[ | \partial F_i(\bzero)^\top \tilde \bZ_i |^\nu ]$ and combining the constant terms, we get that there exists an absolute constant $C>0$ such that
\begin{align*}
     \mean\big[ | \partial F_i(\bzero)^\top \tilde \bV_i |^\nu \big]
     +
      \mean\big[ | \partial F_i(\bzero)^\top \tilde \bZ_i |^\nu \big]
    \;\leq&\;
      \mfrac{C}{n^{\nu/2}} 
    \Big(
    \mfrac{ 
    (\Mfullnu)^\nu    
    + 
    \varepsilon_{K;\nu}^\nu}
    {\sigma^{\nu} }
    +
    \mfrac{
     (\Mcondnu)^\nu + \varepsilon_{K;\nu}^{\nu} }{(n-1)^{-\nu/2}\,\sigma^{\nu}}
    \Big)\;.
\end{align*}
\end{proof}

\subsection{Proof of Lemma \ref{lem:u:gaussian:quad:approx}}  \label{appendix:u:gaussian:quad:approx}

\emph{Proof overview.} For convenience, we write
\begin{align*}
    U_0 \;\coloneqq\; 
    \mfrac{1}{n(n-1)} \msum_{1 \leq i \neq j \leq n} (\eta^K_i)^\top (\Sigma^K)^{1/2} \Lambda^K (\Sigma^K)^{1/2} \eta^K_j
    +
    \mfrac{2}{n} \msum_{i=1}^n 
    (\mu^K)^\top \Lambda^K (\Sigma^K)^{1/2} \eta^K_i\;,
\end{align*}
so that
\begin{align*}
    &
    \tilde D^K_Z \;=\; \mfrac{\sqrt{n(n-1)}}{\sigma} U_0 +  \mfrac{\sqrt{n(n-1)}}{\sigma}(\mu^K)^\top \Lambda^K \mu^K\;,
    &&
    \tilde U_n^K \;=\; \mfrac{\sqrt{n(n-1)}}{\sigma} U_0 +  \mfrac{\sqrt{n(n-1)}}{\sigma} D\;.
\end{align*}
To approximate the distribution of $\tilde D^K_Z$ by that of $\tilde U_n^K$, the proof boils down to replacing $(\mu^K)^\top \Lambda^K \mu^K$ by $D$. We use a Markov-type argument so that we obtain an error term that is separate from the distribution terms.

\vspace{1em}

\begin{proof}[Proof of Lemma \ref{lem:u:gaussian:quad:approx}] Recall that Lemma \ref{lem:approx:XplusY:by:Y} allows us to approximate the distribution of a sum of two random variables by a single one provided that the other is negligible. Writing
\begin{align*}
    \tilde D_Z^K \;=\; \tilde U_n^K + (\tilde D_Z^K - \tilde U_n^K)
    \;=\; \tilde U_n^K + \mfrac{\sqrt{n(n-1)}}{\sigma} \big((\mu^K)^\top \Lambda^K \mu^K - D\big)\;,
\end{align*}
we can apply Lemma \ref{lem:approx:XplusY:by:Y} to obtain that for any $a, b \in \R$ and $\epsilon > 0$,
\begin{align*}
    \P( a \leq \tilde D_Z^K \leq b) 
    \;\leq&\;
    \P\big( a - \epsilon \leq \tilde U_n^K \leq b+\epsilon \big) 
    + 
    \P\Big( \mfrac{\sqrt{n(n-1)}}{\sigma} \big|(\mu^K)^\top \Lambda^K \mu^K - D\big|  \geq \epsilon \Big)\;,
    \\
    \P( a \leq \tilde D_Z^K \leq b) 
    \;\geq&\;
    \P( a + \epsilon \leq \tilde U_n^K \leq b-\epsilon ) 
    - 
    \P\Big( \mfrac{\sqrt{n(n-1)}}{\sigma} \big|(\mu^K)^\top \Lambda^K \mu^K - D\big|  \geq \epsilon \Big)\;.
\end{align*}
Note that $|(\mu^K)^\top \Lambda^K \mu^K - D|$ is deterministic. By a Markov inequality and the bound from Lemma \ref{lem:factorisable:moment}, we get that
\begin{align*}
    \P\Big( \mfrac{\sqrt{n(n-1)}}{\sigma} \big|(\mu^K)^\top \Lambda^K \mu^K - D\big|  \geq \epsilon \Big)
    \;\leq&\;
    \mfrac{\sqrt{n(n-1)}}{\epsilon \sigma} \mean\Big[ \big|(\mu^K)^\top \Lambda^K \mu^K - D\big| \Big]
    \\
    \;=&\;
    \mfrac{ \big| \msum_{k=1}^K \lambda_K \mu_k^2 - D \big|}{ \epsilon \, n^{-1/2} (n-1)^{-1/2} \sigma}
    \;\leq\;
    \mfrac{ \varepsilon_{K;1}}{ \epsilon \, n^{-1/2} (n-1)^{-1/2} \sigma}
    \;.
\end{align*}
Combining the two results gives the desired bounds.
\end{proof}

\subsection{Proof of Lemma \ref{lem:u:gaussian:quad:bound}}
\label{appendix:u:gaussian:quad:bound}

\emph{Proof overview.} The key ingredient of the proof is Theorem 8 of \cite{carbery2001distributional}, which gives an anti-concentration bound for the distribution of a polynomial of Gaussians in terms of its variance. In Lemma \ref{lem:cdf:bound:gaussian:quad}, we have rewritten the result in the special case of a degree-two polynomial, which allows us to control the distribution of $\tilde U^K_n$ in terms of its variance. 

\vspace{1em}

We introduce some matrix shorthands: For any $m \in \N$, denote $O_m$ as the zero matrix in $\R^{m \times m}$, $J_m$ as the all-one matrix in $\R^{m \times m}$ and $I_m$ as the identity matrix in $\R^{m \times m}$. Define the $nK \times nK$ matrix $M$ as
\begin{align*}
    M
    \;\coloneqq\;
    \begin{pmatrix}
    O_K & \Lambda^K & \ldots & \Lambda^K \\
    \Lambda^K & O_K & \ddots &  \vdots \\
    \vdots  & \ddots & \ddots & \Lambda^K \\
    \Lambda^K & \hdots & \Lambda^K & O_K \\
    \end{pmatrix}
    \;=\;
    \Lambda^K \otimes (J_n - I_n)
    \;,
\end{align*}
as well as
\begin{align*}
    \mu \coloneqq \big( (\mu^K)^\top, \ldots, (\mu^K)^\top \big)^\top \in \R^{nK}
    \;,
    \;\;
    \Sigma \coloneqq \Sigma^K  \otimes I_n  \in \R^{nK \times nK}\;,
    \;\;
    \Lambda \coloneqq \Lambda^K  \otimes I_n  \in \R^{nK \times nK}
    \;.
\end{align*} 
We also consider the concatenated $nK$-dimensional standard Gaussian vector
\begin{align*}
    \eta \;\coloneqq\;  
    \big( ( \eta_1^K)^\top, \ldots, ( \eta_n^K)^\top \big)^\top\;.
\end{align*}
  
\vspace{1em}

\begin{proof}[Proof of Lemma \ref{lem:u:gaussian:quad:bound}]  The goal is to bound the distribution function between $a \leq b \in \R$ of
\begin{align*}
     \tilde U_n^K  \;=\; \mfrac{\sqrt{n(n-1)}}{\sigma} \, U_n^K 
     \;=&\;
    \mfrac{1}{\sigma \sqrt{n(n-1)}} \msum_{1 \leq i \neq j \leq n} (\eta^K_i)^\top (\Sigma^K)^{1/2} \Lambda^K (\Sigma^K)^{1/2} \eta^K_j
    \\
    &\;
    +
    \mfrac{2 \sqrt{n-1}}{\sigma \sqrt{n} } \msum_{i=1}^n 
    (\mu^K)^\top \Lambda^K (\Sigma^K)^{1/2} \eta^K_i
    + 
    \mfrac{\sqrt{n(n-1)}}{\sigma}\,
    D
    \\
    \;=&\;
    \mfrac{1}{\sigma \sqrt{n(n-1)}} \eta^\top \Sigma^{1/2} M \Sigma^{1/2} \eta
    +
    \mfrac{2 \sqrt{n-1}}{\sigma \sqrt{n} } \mu^\top \Lambda \Sigma^{1/2} \eta
    +
    \mfrac{\sqrt{n(n-1)}}{\sigma}\,
    D
    \;.
\end{align*}
For convenience, define
\begin{align*}
    Q_1 \;\coloneqq\; \eta^\top \Sigma^{1/2} M \Sigma^{1/2} \eta\;,
    \qquad 
    Q_2 \;\coloneqq\; \mu^\top \Lambda \Sigma^{1/2} \eta\;,
    \qquad 
    \tilde U_0 \;\coloneqq\;
    \mfrac{1}{\sigma \sqrt{n(n-1)}} Q_1
    +
    \mfrac{2 \sqrt{n-1}}{\sigma \sqrt{n} } Q_2 \;.
\end{align*}
Denote $\alpha \coloneqq \frac{b-a}{2}$ and $\beta \coloneqq \frac{a+b}{2}$. Rewriting the probability in terms of $\tilde U_0$, $\alpha$ and $\beta$, we get that
\begin{align*}
    \P( a \leq \tilde U_n^K \leq b)
    \;=&\;
    \P\Big( (\beta-\alpha) \, 
    \leq 
    \tilde U_0 + \mfrac{\sqrt{n(n-1)}}{\sigma}\,
    D
    \leq \,  (\beta+\alpha) \Big)
    \\
    \;=&\;
    \P\Big(  \Big| \tilde U_0 
    + \mfrac{\sqrt{n(n-1)}}{\sigma}\,
    D
    - \beta \Big| 
    \;\leq\; 
    \alpha \Big) \;.
\end{align*}
Since $ \tilde U_0 + \frac{\sqrt{n(n-1)}}{\sigma}\, D- \beta
$ is a degree-two polynomial of $\eta$, we can apply Lemma \ref{lem:cdf:bound:gaussian:quad} to bound the above probability: For an absolute constant $C'$, we have
\begin{align*}
    \P( a \leq \tilde U_n^K \leq b) 
    \;\leq&\; C' \alpha^{1/2} \big(\Var[ \tilde U_0 ]\big)^{-1/4}\;,
    \tagaligneq \label{eqn:gaussian:quad:bound:intermediate}
\end{align*}
where the variance term can be expanded as
\begin{align*}
    \Var\big[\tilde U_0 \big] 
    \;=&\;
    \mfrac{1}{n(n-1)\sigma^2} \Var[Q_1]
    +
    \mfrac{4 (n-1)}{n \sigma^2}
    \Var[Q_2]
    + \mfrac{4}{n \, \sigma^2} \, \Cov[ Q_1, Q_2
    ] \;.
\end{align*}
We now provide bound the individual terms in the variance. By noting that each summand in $Q_1$ is zero-mean when $i \neq j$ and that each summand in $Q_2$ is zero-mean, the covariance term can be computed as
\begin{align*}
    \Cov[ Q_1, Q_2
    ]
    \;=&\; 
    \msum_{1 \leq i \neq j \leq n}\msum_{l=1}^n 
    \mean \Big[ 
    (\eta^K_i)^\top (\Sigma^K)^{1/2} \Lambda^K (\Sigma^K)^{1/2} \eta^K_j
    \times 
    (\mu^K)^\top \Lambda^K (\Sigma^K)^{1/2} \eta^K_l
    \Big]
    \\
    \;=&\;
    \mfrac{1}{2}
    \mean\Big[  (\eta^K_1)^\top (\Sigma^K)^{1/2} \Lambda^K (\Sigma^K)^{1/2} \eta^K_1
    \times 
    (\mu^K)^\top \Lambda^K (\Sigma^K)^{1/2} \eta^K_1
    \Big]\;.
\end{align*}
Denote $\xi_k$ as the $k$-th coordinate of $\eta_1^K$. Then the above expectation is taken over a linear combination of terms of the form $\xi_{k_1} \xi_{k_2} \xi_{k_3}$. If any of $k_1, k_2, k_3$ is distinct from the other two indices, the expectation is zero; if $k_1=k_2=k_3$, the expectation is again zero by property of a standard Gauassian. Therefore, we have
\begin{align*}
    \Cov[ Q_1, Q_2
    ]\;=\;0\;.
\end{align*}
On the other hand, the first variance can be computed by using the moment formula for a quadratic form of Gaussian from Lemma \ref{lem:quadratic:gaussian:moments} and the cyclic property of trace:
\begin{align*}
    \Var[Q_1] \;=&\; 2 \Tr\big( (\Sigma^{1/2} M \Sigma^{1/2})^2 \big)
    \;=\;
    2 \Tr\big( (\Sigma M)^2 \big)
    \\
    \;=&\;
    2 \Tr\big( (\Sigma^K \Lambda^K)^2 \otimes (J_n - I_n)^2 \big)
    \\
    \;=&\;
    2 \Tr\big( (\Sigma^K \Lambda^K)^2 \otimes J_n^2 \big)
    -
    4 \Tr\big( (\Sigma^K \Lambda^K)^2 \otimes J_n \big)
    +
    2 \Tr\big( (\Sigma^K \Lambda^K)^2 \otimes I_n \big)
    \\
    \;=&\; \big( 2n^2 - 4n + 2n \big)
    \Tr\big( (\Sigma^K \Lambda^K)^2 \big)
    \\
    \;=&\;
    2 n (n-1)\Tr\big( (\Lambda^K\Sigma^K)^2 \big)
    \\
    \;\geq&\; 2 n(n-1) (\sigfull-\varepsilon_{K;2})^2\;.
\end{align*}
In the last inequality, we have used the bound from Lemma \ref{lem:factorisable:moment:gaussian} on $\Tr\big( (\Lambda^K\Sigma^K)^2 \big)$. The second variance is on a Gaussian random variable and can be bounded by Lemma \ref{lem:factorisable:moment:gaussian} again as
\begin{align*}
    \Var[Q_2] \;=&\; \mu^\top \Lambda \Sigma \Lambda \mu
    \;=\;
    n (\mu^K)^\top \Lambda^K \Sigma^K \Lambda^K \mu^K
    \;\geq\;
    n (\sigcond^2 -2 \sigcond \varepsilon_{K;2} - 4 \varepsilon_{K;2})
    \;.
\end{align*}
This implies that 
\begin{align*}
    \Var\big[\tilde U_0 \big] 
    \;\geq&\;
    \mfrac{2}{\sigma^2} (\sigfull - \varepsilon_{K;2})^2
    +
    \mfrac{4 (n-1)}{\sigma^2} (\sigcond^2 -2 \sigcond \varepsilon_{K;2} - 4 \varepsilon_{K;2})
    \;.
\end{align*}
Substituting this into \eqref{eqn:gaussian:quad:bound:intermediate} and redefining the constants, we get that there exists an absolute constant $C$ such that
\begin{align*}
    \P( a \leq \tilde U_n^K \leq b) 
    \;\leq&\; C (b-a)^{1/2}  \Big( 
    \mfrac{1}{\sigma^2} (\sigfull - \varepsilon_{K;2})^2
    +
    \mfrac{n-1}{\sigma^2} (\sigcond^2 -2 \sigcond \varepsilon_{K;2} - 4 \varepsilon_{K;2})
    \Big)^{-1/4}\;.
\end{align*}
\end{proof}

\section{Proofs for the remaining results in \cref{sect:general:results}} \label{appendix:proof:remaining}

\subsection{Proofs for variants and corollaries of the main result}

The upper bound in Proposition \ref{prop:u:gaussian:quad:distribution bounds} is a concentration inequality and is obtained by a standard argument via Chebyshev's inequality. The lower bound is a combination of the anti-concentration bound for a Gaussian quadratic form from Lemma \ref{lem:u:gaussian:quad:bound} and \cref{thm:u:gaussian:quad:general}.

\vspace{1em}

\begin{proof}[Proof of Proposition \ref{prop:u:gaussian:quad:distribution bounds}] Denote
$\tilde U_n^K \coloneqq \frac{\sqrt{n(n-1)} U_n^K}{\sigmax}$. In Lemma \ref{lem:u:gaussian:quad:bound}, we have shown that for any $a, b \in \R$ with $a \leq b$, there exists some absolute constant $C'$ such that
\begin{align*}
    \P( a \leq \tilde U_n^K \leq b) 
    \leq&\; C' (b-a)^{1/2}  \Big( 
    \mfrac{1}{\sigmax^2} (\sigfull - \varepsilon_{K;2})^2
    +
    \mfrac{n-1}{\sigmax^2} (\sigcond^2 -2 \sigcond \varepsilon_{K;2} - 4 \varepsilon_{K;2})
    \Big)^{-1/4}\;.
\end{align*}
Take $K \rightarrow \infty$ and using \cref{assumption:L_nu} for $\nu \geq 2$, we get that $\varepsilon_{K;2} \rightarrow 0$. For a fixed $\epsilon > 0$, set $a=\frac{\sqrt{n(n-1)}}{\sigmax} D-\epsilon$ and $b=\frac{\sqrt{n(n-1)}}{\sigmax}D+\epsilon$, we get that
\begin{align*}
    \lim_{K \rightarrow \infty} 
    \P\Big( \mfrac{\sqrt{n(n-1)}}{\sigmax} | U_n^K - D | \leq \epsilon \Big) 
    \;\leq&\; 
    \sqrt{2}\,C'\, \epsilon^{1/2} \Big( \mfrac{\sigfull^2}{\sigmax^2} 
    +
    \mfrac{(n-1)\sigcond^2}{\sigmax^2} \Big)^{-1/4}
    \;\leq\;
    \sqrt{2}\,C'\, \epsilon^{1/2} 
    \;.
\end{align*}
Now by \cref{thm:u:gaussian:quad:general}, there exists an absolute constant $C''$ such that
\begin{align*}
    \msup_{t \in \R}
    \Big| \P\Big( \mfrac{\sqrt{n(n-1)}}{\sigmax} D_n > t \Big)  - \lim_{K \rightarrow \infty} \P\Big( \mfrac{\sqrt{n(n-1)}}{\sigmax} U_n^K > t\Big) \Big|
    \;\leq&\;
    C'' \, n^{- \frac{\nu - 2}{4\nu+2}}  \Big(
    \mfrac{\Mmaxnu}{\sigmax} \Big)^{\frac{\nu}{2\nu+1}}\;.
\end{align*}
By a triangle inequality, we get that 
\begin{align*}
     \P\Big( \mfrac{\sqrt{n(n-1)}}{\sigmax} | D_n - D| > \epsilon \Big)
     \;\geq&\;
     \P\Big( \mfrac{\sqrt{n(n-1)}}{\sigmax} | U_n^K - D| > \epsilon \Big)
     - 2  C'' \, n^{- \frac{\nu - 2}{4\nu+2}}  \Big(
    \mfrac{\Mmaxnu}{\sigmax} \Big)^{\frac{\nu}{2\nu+1}}
    \\
    \;\geq&\;
    1
    -
    \sqrt{2} \, C' \epsilon^{1/2}
    -
    2  C'' \, n^{- \frac{\nu - 2}{4\nu+2}}  \Big(
    \mfrac{\Mmaxnu}{\sigmax} \Big)^{\frac{\nu}{2\nu+1}}\;.
\end{align*}
By replacing $\epsilon$ with $\frac{\sqrt{n(n-1)}}{\sigmax} \epsilon$ and redefining constants, we get the desired lower bound that there exists absolute constants $C_1, C_2 > 0$ such that
\begin{align*}
    \P( | D_n - D| > \epsilon )
    \;\geq&\;
    1
    -
    C_1 \Big( \mfrac{\sqrt{n(n-1)}}{\sigmax} \Big)^{1/2} \epsilon^{1/2}
    -
    C_2 \, n^{- \frac{\nu - 2}{4\nu+2}}  \Big(
    \mfrac{\Mmaxnu}{\sigmax} \Big)^{\frac{\nu}{2\nu+1}}\;.
\end{align*}
For the upper bound, we apply a Chebyshev inequality directly to $D_n$ and bound the variance by Lemma \ref{lem:u:moments}: There exists some absolute constant $C'_3 > 0$ such that
\begin{align*}
    \P( | D_n - D| > \epsilon )
    \;\leq\;
    \epsilon^{-2} \Var[ D_n]
    \;\leq&\;
    C'_3 \epsilon^{-2}
    \Big(\mfrac{\sigcond^2}{n^{-1}(n-1)^2}
    +  
    \mfrac{\sigfull^2}{(n-1)^2} \Big)
    \\
    \;\leq&\;
    C'_3 \epsilon^{-2}
    \Big(\mfrac{\sigmax}{n-1} \Big)^2
    \;\leq\;
    C_3 \epsilon^{-2}
    \Big(\mfrac{\sigmax}{\sqrt{n(n-1)}} \Big)^2\;.
\end{align*}
In the last inequality, we have noted that $\frac{1}{n-1} \leq \frac{2}{n}$ for $n \geq 2$ and defined $C_3 = 2 C'_3$. This finishes the proof.
\end{proof}

\vspace{1em}

\cref{thm:u:gaussian:quad:general} provides an approximation of the distribution of $D_n$ by that of a Gaussian quadratic form. Proposition \ref{prop:u:chi:sq:general} combines \cref{thm:u:gaussian:quad:general} with a Markov argument, which makes a further approximation of the Gaussian quadratic form by a weighted sum of chi-squares $U_n^K$. The approximation error introduced vanishes as $n,d$ grow provided that $\rho_d = \omega(n^{1/2})$, i.e.~$n^{-1/2} \sigfull = \omega ( \sigcond )$. 

\vspace{1em}

\begin{proof}[Proof of Proposition \ref{prop:u:chi:sq:general}] We first seek to compare $W_n^K$ to the distribution of
\begin{align*}
    U_n^K
    \;=\; 
    \mfrac{1}{n(n-1)} \msum_{1 \leq i \neq j \leq n} (\eta^K_i)^\top (\Sigma^K)^{1/2} \Lambda^K (\Sigma^K)^{1/2} \eta^K_j
    +
    \mfrac{2}{n} \msum_{i=1}^n 
    (\mu^K)^\top \Lambda^K (\Sigma^K)^{1/2} \eta^K_i
    +
    D\;,
\end{align*}
where $\{\eta^K_i\}_{i=1}^n$ are i.i.d.\,standard Gaussian vectors in $\R^K$. The first step is to write
\begin{align*}
    U_n^K \;=\; \mfrac{\sqrt{n-1}}{\sqrt{n}} W_0 + D +  \Big(1-\mfrac{\sqrt{n-1}}{\sqrt{n}}\Big) W_0 + W_1 + W_2\;,
\end{align*}
where we have defined the zero-mean random variables
\begin{align*}
    W_0 \;\coloneqq&\;
    \mfrac{1}{n(n-1)} 
    \Big( \msum_{i,j=1}^n (\eta^K_i)^\top (\Sigma^K)^{1/2} \Lambda^K (\Sigma^K)^{1/2} \eta^K_j
    -
    n \Tr( \Sigma^K \Lambda^K )
    \Big)\;,
    \\
    W_1
    \;\coloneqq&\;
    \mfrac{1}{n(n-1)} 
    \Big(
    \msum_{i=1}^n (\eta^K_i)^\top (\Sigma^K)^{1/2} \Lambda^K (\Sigma^K)^{1/2} \eta^K_i
    -
    n \Tr( \Sigma^K \Lambda^K )
    \Big)
    \;,
    \\
    W_2
    \;\coloneqq&\;
    \mfrac{2}{n} \msum_{i=1}^n 
    (\mu^K)^\top \Lambda^K (\Sigma^K)^{1/2} \eta^K_i
    \;.
\end{align*}
Fix $\epsilon_0, \epsilon_1, \epsilon_2 > 0$. We first use the bound from Lemma \ref{lem:approx:XplusY:by:Y}: For any $a, b \in \R$, we have
\begin{align*}
    &\P\Big( a \leq \mfrac{\sqrt{n(n-1)}}{\sigfull} \Big( \mfrac{\sqrt{n-1}}{\sqrt{n}} W_0+D \Big) \leq b \Big) 
    \\
    \;\leq&\;
    \P\Big( a - \epsilon_0 - \epsilon_1 - \epsilon_2 \leq \mfrac{\sqrt{n(n-1)}}{\sigfull} U_n^K \leq b + \epsilon_0 + \epsilon_1 + \epsilon_2  \Big)     \\
    &\;
    + \P\Big( \mfrac{\sqrt{n(n-1)}}{\sigfull} \Big( 1 - \mfrac{\sqrt{n-1}}{\sqrt{n}}\Big)  | W_0| \geq \epsilon_0 \Big)
    + \P\Big( \mfrac{\sqrt{n(n-1)}}{\sigfull}  |W_1| \geq \epsilon_1 \Big)
    \\
    &\;
    + \P\Big( \mfrac{\sqrt{n(n-1)}}{\sigfull}  |W_2| \geq \epsilon_2 \Big)
    \;
\end{align*}
and
\begin{align*}
    &\P\Big( a \leq \mfrac{\sqrt{n(n-1)}}{\sigfull} \Big( \mfrac{\sqrt{n-1}}{\sqrt{n}} W_0+D \Big) \leq b \Big) 
    \\
    \;\geq&\;
    \P\Big( a + \epsilon_0  + \epsilon_1 + \epsilon_2 \leq \mfrac{\sqrt{n(n-1)}}{\sigfull} U_n^K  \leq b - \epsilon_0 - \epsilon_1 - \epsilon_2  \Big) 
    \\
    &\;
    - \P\Big( \mfrac{\sqrt{n(n-1)}}{\sigfull} \Big( 1 - \mfrac{\sqrt{n-1}}{\sqrt{n}}\Big)  | W_0| \geq \epsilon_0 \Big)
    - \P\Big( \mfrac{\sqrt{n(n-1)}}{\sigfull}  |W_1| \geq \epsilon_1 \Big)
    \\
    &\;- \P\Big( \mfrac{\sqrt{n(n-1)}}{\sigfull}  |W_2| \geq \epsilon_2 \Big)
    \;.
\end{align*}
We now bound the error terms. By the Chebyshev's inequality, the variance formula of a quadratic form of Gaussians from Lemma \ref{lem:quadratic:gaussian:moments} and the bound from Lemma \ref{lem:factorisable:moment:gaussian}, we get that
\begin{align*}
    \P\Big( \mfrac{\sqrt{n(n-1)}}{\sigfull} |W_1| \geq \epsilon_1 \Big)
    \;\leq\;
    \epsilon_1^{-2} \Var\Big[ \mfrac{\sqrt{n(n-1)}}{\sigfull} W_1 \Big]
    \;=&\;
     \mfrac{2}{\epsilon_1^2 (n-1) \sigfull^2} \Tr\big( ( \Lambda^K \Sigma^K)^2 \big)
     \\
     \;\leq&\;
     \mfrac{2 (\sigfull+\varepsilon_{K;2})^2}{\epsilon_1^2 (n-1) \sigfull^2} \;.
\end{align*}
Similarly, by the Chebyshev's inequality, the variance formula of a Gaussian and the bound from Lemma \ref{lem:factorisable:moment:gaussian}, we get that
\begin{align*}
    \P\Big( \mfrac{\sqrt{n(n-1)}}{\sigfull} |W_2| \geq \epsilon \Big)
    \;\leq\;
    \epsilon_2^{-2} \Var\Big[ \mfrac{\sqrt{n(n-1)}}{\sigfull} W_2  \Big]
    \;=&\;
     \mfrac{4(n-1)}{\epsilon_2^2 \sigfull^2} \mean\big[ (\mu^K)^\top \Lambda^K \Sigma^K \Lambda^K \mu^K \big]
    \\
    \;\leq&\;
    \mfrac{4(n-1)(\sigcond+2\varepsilon_{K;2})^2}{\epsilon_2^2 \sigfull^2}\;.
\end{align*}
By Lemma \ref{lem:WnK:moments}, we can replace $W_0$ by using the following equality in distribution:
\begin{align*}
    \mfrac{\sqrt{n-1}}{\sqrt{n}} W_0
    \;=&\;
    \mfrac{1}{n^{3/2}(n-1)^{1/2}} 
    \Big( \msum_{i,j=1}^n (\eta^K_i)^\top (\Sigma^K)^{1/2} \Lambda^K (\Sigma^K)^{1/2} \eta^K_j
    -
    n \Tr( \Sigma^K \Lambda^K )
    \Big)
    \\
    \;\overset{d}{=}&\; W_n^K - D\;.
\end{align*}
Finally, using a Chebyshev's inequality together with the moment bound in Lemma \ref{lem:WnK:moments}, we get that
\begin{align*}
    \P\Big( \mfrac{\sqrt{n(n-1)}}{\sigfull} \Big( 1 - \mfrac{\sqrt{n-1}}{\sqrt{n}}\Big)  | W_0| \geq \epsilon_0 \Big)
    \;\leq&\;
    \mfrac{n(n-1)}{ \epsilon_0^2 \sigfull^2} \Big( 1 - \mfrac{\sqrt{n-1}}{\sqrt{n}} \Big)^2 
    \Var\big[ W_0 \big]
    \\
    \;=&\;
    \mfrac{n^2}{ \epsilon_0^2 \sigfull^2} \Big( 1 - \mfrac{\sqrt{n-1}}{\sqrt{n}} \Big)^2 
    \Var\big[ W_n^K \big]
    \\
    \;\leq&\;
    \mfrac{2n (\sigfull+\varepsilon_{K;2})^2}{\epsilon_0^2  (n-1) \sigfull^2} \Big( 1 - \mfrac{\sqrt{n-1}}{\sqrt{n}} \Big)^2 
    \\
    \;\leq&\;
    \mfrac{2 (\sigfull+\varepsilon_{K;2})^2}{\epsilon_0^2 (n-1) \sigfull^2}\;.
\end{align*}
In the last inequality, we have noted that $\sqrt{n} - \sqrt{n-1} \leq 1$. Combining the above bounds, we get that
\begin{align*}
    \P\Big( a \leq \mfrac{\sqrt{n(n-1)}}{\sigfull} W_n^K \leq b \Big) 
    \;\leq&\;
    \P\Big( a - \epsilon_0 - \epsilon_1 - \epsilon_2 \leq \mfrac{\sqrt{n(n-1)}}{\sigfull} U_n^K \leq b + \epsilon_0 + \epsilon_1 + \epsilon_2  \Big) 
    \\
    &\;
    +
    \mfrac{2 (\sigfull+\varepsilon_{K;2})^2}{(n-1) \sigfull^2} \big( \epsilon_0^{-2} + \epsilon_1^{-2} \big)
    +
    \mfrac{4(n-1)(\sigcond+2\varepsilon_{K;2})^2}{\epsilon_2^2 \sigfull^2}
    \;,
    \\
    \P\Big( a \leq \mfrac{\sqrt{n(n-1)}}{\sigfull}  W_n^K \leq b \Big) 
    \;\geq&\;
    \P\Big( a + \epsilon_0 + \epsilon_1 + \epsilon_2 \leq \mfrac{\sqrt{n(n-1)}}{\sigfull} U_n^K  \leq b - \epsilon_0 - \epsilon_1 - \epsilon_2  ) 
    \\
    &\;
    -
    \mfrac{2 (\sigfull+\varepsilon_{K;2})^2}{(n-1) \sigfull^2} \big( \epsilon_0^{-2} + \epsilon_1^{-2} \big)
    -
    \mfrac{4(n-1)(\sigcond+2\varepsilon_{K;2})^2}{\epsilon_2^2 \sigfull^2}
    \;.
\end{align*}
Taking $b \rightarrow \infty$ and $a \rightarrow t$ from the right, we get that
\begin{align*}
    \Big| 
    \P\Big( &\mfrac{\sqrt{n(n-1)}}{\sigfull} W_n^K > t \Big)  
    -
    \P\Big(  \mfrac{\sqrt{n(n-1)}}{\sigfull} U_n^K > t  \Big) 
    \Big|
    \\
    \;\leq&\;
    \max\Big\{ 
    \P\Big( t-\epsilon_0-\epsilon_1-\epsilon_2 \leq \mfrac{\sqrt{n(n-1)}}{\sigfull} U_n^K \leq t  \Big)\;,\;
     \P\Big( t \leq \mfrac{\sqrt{n(n-1)}}{\sigfull} U_n^K \leq \epsilon_0+\epsilon_1+\epsilon_2  \Big)
     \Big\}  
     \\
     &\;
     + 
    \mfrac{2 (\sigfull+\varepsilon_{K;2})^2}{(n-1) \sigfull^2} \big( \epsilon_0^{-2} + \epsilon_1^{-2} \big)
    +
    \mfrac{4(n-1)(\sigcond+2\varepsilon_{K;2})^2}{\epsilon_2^2 \sigfull^2}\;.
\end{align*}
This allows us to follow a similar argument to the proof of \cref{thm:u:gaussian:quad:general} to approximate $W_n^K$ by $U_n^K$. To bound the maxima, we apply Lemma \ref{lem:u:gaussian:quad:bound} with $\sigma=\sigfull$: There exists some absolute constant $C'$ such that for any $a \leq b \in \R$,
\begin{align*}
    \P\Big( a \leq  \mfrac{\sqrt{n(n-1)}}{\sigfull} &U_n^K \leq b \Big) 
    \\
    \;\leq&\; C' (b-a)^{1/2}  \Big( 
    \mfrac{1}{\sigfull^2} (\sigfull - \varepsilon_{K;2})^2
    +
    \mfrac{n-1}{\sigfull^2} (\sigcond^2 -2 \sigcond \varepsilon_{K;2} - 4 \varepsilon_{K;2})
    \Big)^{-1/4}\;.
\end{align*}
By additionally noting that $(\epsilon_0 + \epsilon_1+\epsilon_2)^{1/2} \leq \sqrt{\epsilon_0} + \sqrt{\epsilon_1} + \sqrt{\epsilon_2}$, we get that
\begin{align*}
    \Big| 
    \P\Big( &\mfrac{\sqrt{n(n-1)}}{\sigfull} W_n^K > t \Big)  
    -
    \P\Big(  \mfrac{\sqrt{n(n-1)}}{\sigfull} U_n^K > t  \Big) 
    \Big|
    \\
    \;\leq&\;
    C' (\sqrt{\epsilon_0} + \sqrt{\epsilon_1} + \sqrt{\epsilon_2})  
    \Big( 
    \mfrac{1}{\sigfull^2} (\sigfull - \varepsilon_{K;2})^2
    +
    \mfrac{n-1}{\sigfull^2} (\sigcond^2 -2 \sigcond \varepsilon_{K;2} - 4 \varepsilon_{K;2})
    \Big)^{-1/4}
     \\
     &\;
     + 
    \mfrac{2 (\sigfull+\varepsilon_{K;2})^2}{(n-1) \sigfull^2} \big( \epsilon_0^{-2} + \epsilon_1^{-2} \big)
    +
    \mfrac{4(n-1)(\sigcond+2\varepsilon_{K;2})^2}{\epsilon_2^2 \sigfull^2}\;.
\end{align*}
Taking $K \rightarrow \infty$ on both sides, the inequality becomes
\begin{align*}
    \Big| 
    \lim_{K \rightarrow \infty} \P\Big( &\mfrac{\sqrt{n(n-1)}}{\sigfull} W_n^K > t \Big)  
    -
    \lim_{K \rightarrow \infty} \P\Big(  \mfrac{\sqrt{n(n-1)}}{\sigfull} U_n^K > t  \Big) 
    \Big|
    \\
    \;\leq&\;
    C' (\sqrt{\epsilon_0} + \sqrt{\epsilon_1} + \sqrt{\epsilon_2})  
    \Big( 1
    +
    \mfrac{(n-1)\sigcond^2}{\sigfull^2} 
    \Big)^{-1/4}
    + 
    \mfrac{2}{n-1} \big(  \epsilon_0^{-2} + \epsilon_1^{-2} \big)
    +
    \mfrac{4(n-1) \sigcond^2}{\epsilon_2^2 \sigfull^2}
    \\
    \;\leq&\;
    C' (\sqrt{\epsilon_0} + \sqrt{\epsilon_1} + \sqrt{\epsilon_2})  
    + 
    \mfrac{2}{n-1} \big(  \epsilon_0^{-2} + \epsilon_1^{-2} \big)
    +
    \mfrac{4(n-1) \sigcond^2}{\epsilon_2^2 \sigfull^2}
    \;.
\end{align*}
Choosing $\epsilon_0 = \epsilon_1 = (n-1)^{-2/5}$ and $\epsilon_2 = \big( (n-1)\sigcond^2/\sigfull^2\big)^{2/5}$, redefining constants and taking a supremum over $t \in \R$, we get that there exists some absolute constant $C'' > 0$ such that
\begin{align*}
    \msup_{t \in \R} \Big| 
    \lim_{K \rightarrow \infty} \P\Big( \mfrac{\sqrt{n(n-1)}}{\sigfull} W_n^K > t \Big)  
    -
    \lim_{K \rightarrow \infty} \P\Big(  &\mfrac{\sqrt{n(n-1)}}{\sigfull} U_n^K > t  \Big) 
    \Big|
    \\
    \;\leq&\;
    C'' \Big( \mfrac{1}{(n-1)^{1/5}} + \Big( \mfrac{\sqrt{n-1}\, \sigcond}{\sigfull} \Big)^{2/5} \Big)  
    \;.
\end{align*}
The final step is to relate this bound to $D_n$. Consider the last step \eqref{eqn:thm:u:gaussian:quad:general:last:step} of the proof of \cref{thm:u:gaussian:quad:general} in \cref{appendix:u:gaussian:quad:thm:general:body}. If we set $\sigma=\sigfull$ instead of $\sigmax$, we get that there exists some absolute constant $C''' > 0$ such that
\begin{align*}
     \msup_{t \in \R} \Big| \P\Big( \mfrac{\sqrt{n(n-1)}}{\sigfull} D_n > t \Big)  - \lim_{K \rightarrow \infty} \P\Big( &\mfrac{\sqrt{n(n-1)}}{\sigfull} U_n^K > t\Big) \Big|
    \\
    \;&\leq\;
    C''' \, n^{- \frac{\nu - 2}{4\nu+2}}  \Big(
    \mfrac{ 
    (\Mfullnu)^\nu}
    {\sigfull^{\nu} }
    +
    \mfrac{
     (\Mcondnu)^\nu }{(n-1)^{-\nu/2}\,\sigfull^{\nu}}
    \Big)^{\frac{1}{2\nu+1}} \;.
\end{align*}
Setting $C= \max\{C'', C'''\}$ and using a triangle inequality, we get the desired bound that
\begin{align*}
    \msup_{t \in \R} &\Big| \P\Big( \mfrac{\sqrt{n(n-1)}}{\sigfull} D_n > t \Big) - \lim_{K \rightarrow \infty} \P\Big( \mfrac{\sqrt{n(n-1)}}{\sigfull} W_n^K > t\Big) \Big|
    \\
    \;&\leq\;
    C
    \Big( 
    \mfrac{1}{(n-1)^{1/5}} 
    + 
    \Big( \mfrac{\sqrt{n-1}\, \sigcond}{\sigfull} \Big)^{2/5} 
    +
    \, n^{- \frac{\nu - 2}{4\nu+2}}  \Big(
    \mfrac{ 
    (\Mfullnu)^\nu}
    {\sigfull^{\nu} }
    +
    \mfrac{
     (\Mcondnu)^\nu }{(n-1)^{-\nu/2}\,\sigfull^{\nu}}
    \Big)^{\frac{1}{2\nu+1}}  \Big)\;.
\end{align*}
\end{proof}

\subsection{Proofs for results on $W_n$}

\begin{proof}[Proof of Proposition \ref{prop:W_n:existence:all:moments}] To prove the existence of distribution, we seek to apply L\'evy's continuity theorem. We first verify that there exists a sufficiently large $K^*$ such that the sequence $(W_n^K)_{K \geq K^*}$ is tight. Since  \cref{assumption:L_nu} holds for some $\nu \geq 2$, we get that as $K \rightarrow \infty$,
\begin{align*}
    \varepsilon_{K;2} \;\coloneqq\; \mean\big[ \big| \msum_{k=1}^K \lambda_k \phi_k(\bX_1)\phi_k(\bX_2) - u(\bX_1,\bX_2) \big|^2 \big]^{1/2}  \rightarrow 0\;.
\end{align*}
In particular, there exists some sufficiently large $K^*$ such that $\varepsilon_{K;2} \leq 1$ for all $K \geq K^*$. By Lemma \ref{lem:WnK:moments}, we have that for all $K \geq K^*$,
\begin{align*}
    \Var[ W_n^{K} ] 
    \;\leq\; 
    \mfrac{2}{n(n-1)} (\sigfull + \varepsilon_{K;2})^2
    \;\leq\;
    \mfrac{2}{n(n-1)} (\sigfull + 1)^2
    \;.
\end{align*}
Note that by assumption, we have $|D| , \sigfull < \infty$. This implies that the sequence $(W_n^K)_{K \geq K^*}$ is tight by a Markov inequality:
\begin{align*}
    \lim_{x \rightarrow \infty} \Big( \msup_{K \geq K^*} \P\big( \big| W_n^K \big| > x \big) \Big)
    \;\leq&\;
    \lim_{x \rightarrow \infty} \Big( x^{-2} \msup_{K \geq K^*} \mean[ (W_n^K)^2 ] \Big)
    \\
    \;\leq&\;
    \lim_{x \rightarrow \infty} \mfrac{2 n^{-1}(n-1)^{-1} (\sigfull + 1)^2 + D^2}{x^2}  
    \;=\;0\;.
\end{align*}
We defer to Lemma \ref{lem:W_n:exist:characteristic} to show that the characteristic function of $(W_n^K - D)$ converges pointwise as $K \rightarrow \infty$. This allows us to apply L\'evy's continuity theorem and obtain that $W_n$ exists.
\end{proof}

\vspace{1em}

\begin{proof}[Proof of Lemma \ref{lem:chisq:limit:not:gaussian}] The result holds by noting that for all $k > K^*$, $W_n^K = W_n^{K^*}$ almost surely, and the latter random variable does not depend on $K$.
\end{proof}

\vspace{1em}

\begin{lemma} \label{lem:W_n:exist:characteristic} The characteristic function of $(W_n^K - D)$ converges pointwise as $K \rightarrow \infty$.
\end{lemma}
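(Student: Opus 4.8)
The plan is to show that for each fixed $t\in\R$ the sequence of characteristic functions $\psi_K(t):=\mean\bigl[\exp(\mathrm{i}t(W_n^K-D))\bigr]$ is Cauchy in $\C$; since $\C$ is complete this gives the asserted pointwise convergence. The case $t=0$ is trivial ($\psi_K(0)=1$), so fix $t\neq0$ and two indices $K<K'$. First I would realise $W_n^K$ and $W_n^{K'}$ on a common probability space through a \emph{nested Gaussian coupling}. By Lemma \ref{lem:WnK:moments}(i), $W_n^K-D$ has the same law as $\tfrac{1}{n^{3/2}(n-1)^{1/2}}\bigl(\bg^\top\Lambda^K\bg-n\Tr(\Sigma^K\Lambda^K)\bigr)$ with $\bg\sim\cN(0,n\Sigma^K)$. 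The entries of $\Sigma^K=\Cov[\phi^K(\bX_1)]$ are $\Sigma_{jk}:=\Cov[\phi_j(\bX_1),\phi_k(\bX_1)]$, which do not depend on the truncation level, so $\Sigma^K$ is the leading principal submatrix of $\Sigma^{K'}$; hence I can take $\bg'=(g_1,\dots,g_{K'})\sim\cN(0,n\Sigma^{K'})$, let $\bg$ be its first $K$ coordinates, and set $\tilde W_L:=D+\tfrac{1}{n^{3/2}(n-1)^{1/2}}\sum_{k=1}^{L}\lambda_k(g_k^2-n\Sigma_{kk})$ for $L\in\{K,K'\}$, so that $\tilde W_L\equdist W_n^L$ and therefore $\psi_L(t)=\mean[e^{\mathrm{i}t(\tilde W_L-D)}]$. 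Then
\begin{align*}
|\psi_{K'}(t)-\psi_K(t)|\;\le\;\mean\bigl[\,\bigl|e^{\mathrm{i}t(\tilde W_{K'}-D)}-e^{\mathrm{i}t(\tilde W_K-D)}\bigr|\,\bigr]\;\le\;|t|\,\mean\bigl[\,|\tilde W_{K'}-\tilde W_K|\,\bigr]\;\le\;|t|\,\bigl(\mean[(\tilde W_{K'}-\tilde W_K)^2]\bigr)^{1/2}.
\end{align*}

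Next I would evaluate the right-hand side. By construction $\tilde W_{K'}-\tilde W_K=\tfrac{1}{n^{3/2}(n-1)^{1/2}}\bigl(\bg_0^\top\Lambda_0\bg_0-\mean[\bg_0^\top\Lambda_0\bg_0]\bigr)$, where $\bg_0:=(g_{K+1},\dots,g_{K'})\sim\cN(0,n\Sigma_0)$, $\Sigma_0$ is the principal submatrix of $\Sigma^{K'}$ on indices $\{K+1,\dots,K'\}$, and $\Lambda_0:=\diag(\lambda_{K+1},\dots,\lambda_{K'})$. Since the variance of $\eta^\top A\eta$ for a standard Gaussian $\eta$ equals $2\Tr(A^2)$ by Lemma \ref{lem:quadratic:gaussian:moments}, and $\bg_0=(n\Sigma_0)^{1/2}\zeta$ with $\zeta$ standard Gaussian, one gets
\begin{align*}
\mean[(\tilde W_{K'}-\tilde W_K)^2]\;=\;\frac{2}{n(n-1)}\,A_{K,K'},\qquad A_{K,K'}\;:=\;\Tr\bigl((\Lambda_0\Sigma_0)^2\bigr)\;=\;\sum_{K<j,k\le K'}\lambda_j\lambda_k\,\Sigma_{jk}^2.
\end{align*}

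The key step is then to sandwich $A_{K,K'}$ by the $L_2$ truncation error. Writing $S_K:=\sum_{k=1}^K\lambda_k\phi_k(\bX_1)\phi_k(\bX_2)$ and $\mu_k:=\mean[\phi_k(\bX_1)]$, independence of $\bX_1,\bX_2$ gives $\mean[(S_{K'}-S_K)^2]=\sum_{K<j,k\le K'}\lambda_j\lambda_k(\mean[\phi_j(\bX_1)\phi_k(\bX_1)])^2=\sum_{K<j,k\le K'}\lambda_j\lambda_k(\Sigma_{jk}+\mu_j\mu_k)^2$; expanding the square, this equals $A_{K,K'}+2\,\bv^\top\Sigma_0\bv+\bigl(\sum_{K<k\le K'}\lambda_k\mu_k^2\bigr)^2$ with $\bv:=(\lambda_k\mu_k)_{K<k\le K'}$. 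The last two summands are nonnegative because $\Sigma_0\succeq0$, and $A_{K,K'}=\Tr((\Lambda_0\Sigma_0)^2)\ge0$ as well (it is a positive multiple of the variance computed above, equivalently $\|B^\top\Lambda_0 B\|_F^2$ for any $B$ with $\Sigma_0=BB^\top$). Hence $0\le A_{K,K'}\le\mean[(S_{K'}-S_K)^2]=\|S_{K'}-S_K\|_{L_2}^2$. Since \cref{assumption:L_nu} holds for some $\nu\ge2$ and $\varepsilon_{K;2}\le\varepsilon_{K;\nu}\to0$, the sequence $(S_K)$ converges in $L_2$ and is therefore $L_2$-Cauchy, so $A_{K,K'}\to0$ as $K,K'\to\infty$; combining with the two displays, $|\psi_{K'}(t)-\psi_K(t)|\le|t|\sqrt{2/(n(n-1))}\,\|S_{K'}-S_K\|_{L_2}\to0$, which proves pointwise convergence of $\psi_K$. (Incidentally the same estimate shows $(W_n^K)$ is $L_2$-Cauchy, which is more than required here.)

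The main obstacle is this last step: one must recognise that upon expanding $\mean[(S_{K'}-S_K)^2]$ into a ``$\Sigma$-part'' (which is exactly $A_{K,K'}$, the quantity governing $\mean[(\tilde W_{K'}-\tilde W_K)^2]$) and a ``$\mu$-part'', the $\mu$-part is automatically nonnegative, so that $A_{K,K'}$ is squeezed between $0$ and the known-small truncation error $\|S_{K'}-S_K\|_{L_2}^2$; without this observation the weights $\lambda_k$ may have mixed signs and no direct comparison is available. Secondary points of care are checking that the nested-Gaussian coupling is legitimate (this rests on the nestedness of the covariances $\Sigma^K$, which holds by construction) and that the Gaussian-quadratic-form computations go through without assuming $\Sigma^K$ or $\Sigma_0$ invertible (they do, since only the PSD square root is used).
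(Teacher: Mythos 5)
Your proof is correct, and it takes a genuinely different route from the paper's. The paper diagonalises: it writes $W_n^K-D$ as a sum of independent scaled chi-squares with weights $\tau_{k;d}/\sqrt{n(n-1)}$, computes the characteristic function in closed form as a product, and proves pointwise convergence by controlling the complex logarithm -- splitting into real and imaginary parts, Taylor-expanding $\log(1-2ia_kt)$ and $\arctan$, and carefully tracking the branch-cut integers $m_K$, with the quantitative input being $\sum_k\tau_{k;d}^2=\Tr\big((\Sigma^K\Lambda^K)^2\big)\to\sigfull^2$ from Lemma \ref{lem:factorisable:moment:gaussian}. You instead stay in the un-diagonalised coordinates, where the weights $\lambda_k$ and the covariance entries $\Sigma_{jk}$ are fixed and the matrices $\Sigma^K$ are nested, couple $W_n^K$ and $W_n^{K'}$ through a common Gaussian vector, bound $|\psi_{K'}(t)-\psi_K(t)|\le|t|\,\|\tilde W_{K'}-\tilde W_K\|_{L_2}$, identify that $L_2$ distance with the tail-block trace $\frac{2}{n(n-1)}\Tr\big((\Lambda_0\Sigma_0)^2\big)$, and sandwich it by $\|S_{K'}-S_K\|_{L_2}^2$ via the decomposition into the $\Sigma$-part, a PSD quadratic form in $(\lambda_k\mu_k)_k$, and a square -- all three nonnegative even when the $\lambda_k$ have mixed signs. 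What your route buys: it avoids all complex-analysis and branch bookkeeping, it sidesteps the fact (glossed over in the paper's Cauchy step for $\sum_k a_k^2$) that the eigenvalues $\tau_{k;d}$ themselves change with $K$, and it yields an $L_2$-Cauchy coupled sequence, which is more than pointwise cf convergence requires. What the paper's route buys is the explicit infinite-product form of the limiting characteristic function. Both arguments ultimately rest on the same input, the vanishing of the $L_2$ truncation error under \cref{assumption:L_nu} with $\nu\ge2$ (together with the implicit square-integrability of the $\phi_k$'s needed to define $\Sigma^K$, which the paper's proof uses equally), so your hypotheses match those under which the lemma is invoked in Proposition \ref{prop:W_n:existence:all:moments}.
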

\begin{proof}[Proof of Lemma \ref{lem:W_n:exist:characteristic}]
Define $a_k \coloneqq \frac{1}{\sqrt{n(n-1)}}\tau_{k;d}$ and  $T_k \coloneqq a_k (\xi_k^2 - 1)$, which allows us to write
\begin{align*}
    W_n^K  
    \;=\;
    \mfrac{1}{\sqrt{n(n-1)}}
    \msum_{k=1}^K \tau_{k;d} (\xi_k^2 - 1)
    +
    D
    \;=\;
    \msum_{k=1}^K T_k
    +
    D
    \;.
\end{align*}
Denote $i=\sqrt{-1}$ as the imaginary unit and $Y$ as a chi-squared random variable with degree $1$. Since each $T_k$ is a scaled and shifted chi-squared random variable with degree $1$, it has the characteristic function
\begin{align*}
    \psi_{T_k}(t) \;=\; \mean[ \exp(i t \, T_k) ] \;=\; 
    \mean[ \exp( i a_k Y t ) ] \exp ( - i  a_k t )
    \;=&\;
    ( 1 - 2 i a_k t )^{-1/2} \exp ( - i  a_k t )
    \;.
\end{align*}
Since $T_k$'s are independent, by the convolution theorem, the characteristic function of $W_n^K - D$ is given by
\begin{align*}
    \psi_{W_n^K - D}(t) 
    \;=\; 
    \exp \Big( - i  \msum_{k=1}^K a_k t \Big)
    \mprod_{k=1}^K ( 1 - 2 i a_k t )^{-1/2}
    \;.
\end{align*}
We want to prove that for every $t \in \R$, $\psi_{W_n^K - D}(t)$ converges to some function as limit $K \rightarrow \infty$. By taking the principal-valued complex logarithm (i.e.~discontinuity along negative real axis), we get that
\begin{align*}
    \log \psi_{W_n^K - D}(t) 
    \;=\; 
    \msum_{k=1}^K \Big( - i  a_k t  - \mfrac{1}{2} \log(1 - 2 i a_k t) \Big)  
    + 2 i m_K \pi 
    \;\eqqcolon\; S_K +  2 i m_K \pi \;, \tagaligneq \label{eqn:W_n:exist:mK}
\end{align*}
for some $m_K \in \N$ for each $K$ that adjusts for values at discontinuity. Now consider the real part of the logarithm:
\begin{align*}
    {\rm{Re}}\big(\log \psi_{W_n^K - D}(t) \big)
    \;=\;
    {\rm{Re}}(S_K)
    \;=&\;
    - \mfrac{1}{2} \msum_{k=1}^K \log | 1 - 2 i a_k t | 
    \\
    \;=&\;
    - \mfrac{1}{2} \msum_{k=1}^K \log\sqrt{1 + 4 a_k^2 t^2 }
    \;=\;
    - \mfrac{1}{4} \msum_{k=1}^K \log(1 + 4 a_k^2 t^2)\;.
\end{align*}
Recall by Lemma \ref{lem:factorisable:moment:gaussian} that 
\begin{align*}
     \msum_{k=1}^K a_k^2
     \;=\; 
     \mfrac{1}{n(n-1)}  \msum_{k=1}^K \tau_{k;d}^2  
     \;=\;
     \Tr( (\Sigma^K \Lambda^K)^2 )
     \xrightarrow{K \rightarrow \infty}  \sigfull^2\;. \tagaligneq \label{eqn:W_n:exist:aksquared:conv}
\end{align*}
Fix $\epsilon > 0$. The above implies that there exists a sufficiently large $K^*$ such that for all $K_1, K_2 \geq K^*$, $\sum_{k=K_1}^{K_2} a_k^2 < \epsilon$. Then for all $K_1, K_2 \geq K^*$, we have
\begin{align*}
    0 \;\leq\; \msum_{k=K_1}^{K_2} \log(1 + 4 a_k^2 t^2) \;\leq\; 4t^2 \msum_{k=K_1}^{K_2} a_k^2 \;\leq\; 4t^2 \epsilon\;.
\end{align*}
This implies that $({\rm{Re}}(S_K))_{K \in \N}$ is a Cauchy sequence and therefore converges. Now we handle the imaginary part. First let $m'_K \in \Z$ be such that 
\begin{align*}
    {{\rm{Im}}\Big(\msum_{k=1}^K \log(1 - 2 i a_k t)\Big)} \;=\; \msum_{k=1}^K \arctan(- 2 a_k t) + 2 m'_K \pi\;.
\end{align*}
Then we have
\begin{align*}
    {\rm{Im}}(S_K )
    \;=&\;
    \msum_{k=1}^K \Big( - a_k t  + \mfrac{1}{2} \arctan( 2 a_k t) \Big) - m'_K \pi
    \;\eqqcolon\;
    I_K -  m'_K \pi \tagaligneq \label{eqn:W_n:existence:defn:imaginary}
    \;. 
\end{align*}
To show that $I_K$ converges, we first note that by a third-order Taylor expansion, we have that $\arctan(x) = x + \frac{6(x_*)^2 - 2}{6(x_*^2+1)^3} x^3$ for some $x_* \in [0, x]$ (we use this to denote $[0,x]$ for $x \geq 0$ as well as $[x,0]$ for $x < 0$, with an abuse of notation). This implies that for all $K_1, K_2 \geq K^*$, where $K^*$ is defined as before,
\begin{align*}
    \Big|
    \msum_{k=K_1}^{K_2} \Big( - a_k t  + &\mfrac{1}{2} \arctan( 2 a_k t) \Big) \Big|
    \;=\;
    \Big|
    \msum_{k=K_1}^{K_2} \Big( - a_k t  + \mfrac{1}{2} \arctan( 2 a_k t) \Big) 
    \Big|
    \\
    \;\leq&\;
    \msum_{k=K_1}^{K_2} \msup_{b_k \in [0, a_k]} \Big| \mfrac{1}{2} \mfrac{24 b_k^2t^2-2}{6(4b_k^2 t^2 + 1)^3} 8 a_k^3 t^3    \Big|
    \\
    \;=&\;
    4 t^3 \msum_{k=K_1}^{K_2} |a_k|^3 \,
    \Big(\msup_{b_k \in [0, a_k]} \Big| \mfrac{24b_k^2t^2+6-8}{6(4b_k^2 t^2 + 1)^3} \Big| \Big)
    \\
    \;=&\;
    4 t^3 \msum_{k=K_1}^{K_2} |a_k|^3 \,
    \Big(\msup_{b_k \in [0, a_k]} \Big| \mfrac{1}{(4b_k^2 t^2 + 1)^2} - \mfrac{4}{3(4b_k^2 t^2 + 1)^2} \Big| \Big)
    \\
    \;\leq&\;
    20 t^3 \msum_{k=K_1}^{K_2} |a_k|^3
    \;\leq\; 20 t^3 \Big( \msum_{k=K_1}^{K_2} (a_k)^2 \Big)^{3/2} \;\leq\; 20t^3 \epsilon^{3/2}\;,
\end{align*}
where, in the last line, we have used the relative sizes of $l_p$ norms. This implies that $I_K$ converges. To show that \cref{eqn:W_n:existence:defn:imaginary} converges, we need to show that $m_K$ in \cref{eqn:W_n:existence:defn:imaginary} is eventually constant. By using \cref{eqn:W_n:existence:defn:imaginary} and a triangle inequality, we have that
\begin{align*}
    \pi | m'_{K+1} -  m'_{K} | \;\leq&\;
    | I_{K+1} - I_K | 
    + 
    \Big| {\rm{Im}}(S_{K+1}) - {\rm{Im}}(S_K) \Big|
    \\
    \;=&\;
    | I_{K+1} - I_K | + \big| a_{K+1} t + \mfrac{1}{2} \log(1-2 i a_{K+1} t) \big|\;.
\end{align*}
The first term converges to zero, since we have shown that $I_K$ converges. Since $a_K \rightarrow 0$ by \cref{eqn:W_n:exist:aksquared:conv} and the complex logarithm we use is continuous outside $\{ z: {\rm{Re}}(z) > 0\}$, the second term above also converges to zero. Therefore $| m'_{K+1} -  m'_{K} | \rightarrow 0$, and since $(m'_K)_{K \in \N}$ is an integer sequence, $(m'_K)_{K \in \N}$ converges. By \cref{eqn:W_n:existence:defn:imaginary}, this implies that ${\rm{Im}}(S_K)$ converges, and since we have shown ${\rm{Re}}(S_K)$ converges, we get that $S_K$ converges. Finally, to show that $\psi_{W_n^K - D}(t)$ converges, since ${\rm{Re}}(S_K) = {\rm{Re}}\big(\psi_{W_n^K - D}(t)\big)$, we only need to show that ${\rm{Im}}\big(\psi_{W_n^K - D}(t)\big)$ converges. By \cref{eqn:W_n:exist:mK}, this again reduces to showing that $m_K$ is eventually constant. As before, by a triangle inequality,
\begin{align*}
    2 \pi | m_{K+1} - m_K | 
    \;\leq&\; 
    | {\rm{Im}}(S_{K+1}) - {\rm{Im}}(S_K) | 
    + 
    \big| {\rm{Im}}(\log \psi_{W_n^{K+1} - D}(t))  - {\rm{Im}}(\log \psi_{W_n^{K} - D}(t)) \big|
    \\
    \;=&\;
    | {\rm{Im}}(S_{K+1}) - {\rm{Im}}(S_K) | 
    +
    \big| a_{K+1} t + \mfrac{1}{2} \log(1- 2i a_{K+1} t) \big|
    \;\xrightarrow{K \rightarrow \infty} 0\;,
\end{align*}
where the convergence of both terms has been shown earlier. This proves that the characteristic function $\psi_{W_n^K - D}(t)$ converges for every $t \in \R$.
\end{proof}

\section{Proofs for \cref{sect:ksd:mmd}}  \label{appendix:proof:ksd:mmd}

\subsection{Proofs for the general results}

\begin{proof}[Proof of Lemma \ref{lem:mmd:general}] To prove the first result, note that since $\kappa$ is a kernel, there exists a RKHS $\cH$ and a map $\Phi: \R^d \rightarrow \cH$ such that we can write
\begin{align*}
    \ummd\big((\bx,\by), (\bx',\by')\big)
    \;=&\;
    \langle \Phi(\bx), \Phi(\by') \rangle_{\cH}
    +
    \langle \Phi(\by), \Phi(\by') \rangle_{\cH}
    -
    \langle \Phi(\bx), \Phi(\by') \rangle_{\cH}
    -
    \langle \Phi(\bx'), \Phi(\by) \rangle_{\cH}
    \\
    \;=&\;
    \langle \Phi(\bx) - \Phi(\by), \Phi(\bx')-\Phi(\by') \rangle_{\cH}\;.
\end{align*}
Defining $\Phi_*\big((\bx,\by)\big) \coloneqq \Phi(\bx) - \Phi(\by)$ proves that $\ummd$ is a kernel. To prove the second result, note that by the definition of a weak Mercer representation, we have that almost surely
\begin{align*}
    \big| \msum_{k=1}^K \lambda_k \phi_k(\bZ_1) \phi_k(\bZ_2) - \ummd(\bZ_1,\bZ_2) \big| \;\xrightarrow{K \rightarrow \infty}\; 0\;,
\end{align*}
which in particular implies convergence in probability. The argument uses the Vitali convergence theorem. By \cref{assumption:moment:bounded}, there exists some  $\nu^* > \nu$ such that $\sup_{K \geq 1} \mean[| \sum_{k=1}^K \lambda_k \phi_k(\bZ_1) \phi_k(\bZ_2) |^{\nu^*}] < \infty$ and $\mean[ | \ummd(\bZ_1,\bZ_2)|^{\nu^*}] < \infty$. By a triangle inequality and a Jensen's inequality, we have
\begin{align*}
    &
    \;\msup_{K \geq 1} \mean\bigg[\Big| \msum_{k=1}^K \lambda_k \phi_k(\bZ_1) \phi_k(\bZ_2) - \ummd(\bZ_1,\bZ_2) \Big|^{\nu^*}\bigg] 
    \\
    \;\leq&\;
    \msup_{K \geq 1} \mean\bigg[\Big| \, \big| \msum_{k=1}^K \lambda_k \phi_k(\bZ_1) \phi_k(\bZ_2) \big| +
    \big| \ummd(\bZ_1,\bZ_2) \big| \, \Big|^{\nu^*}\bigg] 
    \\
    \;\leq&\;
    2^{\nu^*-1}
    \msup_{K \geq 1} \mean\Big[\big| \msum_{k=1}^K \lambda_k \phi_k(\bZ_1) \phi_k(\bZ_2) 
 \big|^{\nu^*}\Big] 
    +
    2^{\nu^*-1}
    \mean\Big[\big|\ummd(\bZ_1,\bZ_2) \big|^{\nu^*}\Big] 
    \;<\; \infty\;.
\end{align*}
This implies for any $\nu \in (2,\nu^*)$, the sequence $\big( \big( \sum_{k=1}^K \lambda_k \phi_k(\bZ_1) \phi_k(\bZ_2) - \ummd(\bZ_1,\bZ_2) \big)^\nu \big)_{K \in \N}$ is uniformly integrable, and therefore converges to zero in $L_1(\R^{2d}, P \otimes Q)$ by the Vitali convergence theorem.  Since convergence in $L_\nu$ implies convergence in $L_{\min\{\nu,3\}}$, we get that \cref{assumption:L_nu} holds for $\min \{\nu, 3\}$.
\end{proof}

\vspace{1em}

Before we prove the next result, recall that $\{\lambda_k\}_{k=1}^\infty$ and $\{\phi_k\}_{k=1}^\infty$ are defined as the weak Mercer representation for the kernel $\kappa$ under $Q$, and we have assumed that $\phi_k$'s are differentiable. We have also defined the sequence of values $\{\alpha_k\}_{k=1}^\infty$ and the sequence of functions $\{\psi_k\}_{k=1}^\infty$ in \eqref{eqn:ksd:spectral} as
\begin{align*}
    \alpha_{(k'-1) d + l} \;\coloneqq&\; \lambda_{k'}\;
    &\text{ and }&&
    \psi_{(k'-1) d + l}(\bx)
    \;\coloneqq&\; (\partial_{x_l} \log p(\bx)) \phi_{k'}(\bx) + \partial_{x_l} \phi_{k'}(\bx)
    \;,
\end{align*}
for $1 \leq l \leq d$ and $k' \in \N$. For convenience, we denote $\psi_{k';l} \coloneqq \psi_{(k'-1) d + l}$ in the proof below.

\vspace{1em}

\begin{proof}[Proof of Lemma \ref{lem:ksd:general}] Recall that $\psi_{k';l}(\bx)\coloneqq (\partial_{x_l} \log p(\bx)) \phi_{k'}(\bx) + \partial_{x_l} \phi_{k'}(\bx)$. Write $\tilde \psi_{k'}(\bx) \coloneqq ( \psi_{k';1}(\bx), \ldots, \psi_{k';n}(\bx) )^\top$. We first consider the error term with $dK'$ summands for some $K' \in \N$:
\begin{align*}
    &\;
    \mean\Big[ \Big| \msum_{k=1}^{dK'} \alpha_k \psi_k(\bX_1) \psi_k(\bX_2) - \uPksd(\bX_1,\bX_2) \Big|^\nu \Big]
    \\
    \;=&\;
    \mean\Big[ \Big| \msum_{l=1}^d \msum_{k'=1}^{K'} \lambda_{k'} \psi_{k';l}(\bX_1) \psi_{k';l}(\bX_2) - \uPksd(\bX_1,\bX_2) \Big|^\nu \Big]
    \\
    \;=&\;
    \mean\Big[ \Big| \msum_{k'=1}^{K'} \lambda_{k'} 
    \big( \tilde \psi_{k'}(\bX_1) \big)^\top \big( \tilde \psi_{k'}(\bX_2) \big) - \uPksd(\bX_1,\bX_2) \Big|^\nu \Big]
    \\
    \;=&\;
    \mean\big[ \big| T_1 + T_2 + T_3 + T_4 - \uPksd(\bX_1,\bX_2) \big|^\nu \big]
    \;,
\end{align*}
where the random quantities are defined in terms of $\bX_1, \bX_2 \overset{i.i.d.}{\sim} Q$:
\begin{align*}
    T_1
    \;\coloneqq&\;
    \big( \nabla \log p(\bX_1) \big)^\top \big( \nabla \log p(\bX_2) \big)
    \msum_{k'=1}^{K'}
    \lambda_{k'}
    \phi_{k'}(\bX_1)\phi_{k'}(\bX_2) \;,
    \\
    T_2
    \;\coloneqq&\; 
    \big( \nabla \log p(\bX_1) \big)^\top 
    \Big(
    \msum_{k'=1}^{K'}
    \lambda_{k'}
    \big(\nabla \phi_{k'}(\bX_2) \big) 
    \phi_{k'}(\bX_1)
    \Big)
    \;,
    \\
    T_3
    \;\coloneqq&\; 
    \big( \nabla \log p(\bX_2) \big)^\top
    \Big(
    \msum_{k'=1}^{K'}
    \lambda_{k'}   \big(\nabla \phi_{k'}(\bX_1) \big) \phi_{k'}(\bX_2)
    \Big)\;,
    \\
    T_4
    \;\coloneqq&\; 
    \msum_{k'=1}^{K'}
    \lambda_{k'} 
    \big( \nabla \phi_{k'}(\bX_1) \big)^\top  \big( \nabla \phi_{k'}(\bX_2) \big) \;.
\end{align*}
Recall that by \cref{assumption:moment:bounded}, there exists some $\nu^* > \nu$ such that we have $\| \kappa^*(\bZ_1,\bZ_2) \|_{L_{\nu^*}} < \infty$ and $\sup_{K \geq 1} \| \sum_{k=1}^K \lambda_k \phi_k(\bZ_1) \phi_k(\bZ_2) \|_{L_{\nu^*}} < \infty$. By using the proof of the second part of Lemma \ref{lem:mmd:general} above, for $\nu^{\Delta} \coloneqq \frac{\nu+\nu^*}{2} \in (\nu, \nu^*)$, we have
\begin{align*}
    \mean\Big[ \big| \msum_{k'=1}^{K'}
    \lambda_{k'}
    \phi_{k'}(\bX_1)\phi_{k'}(\bX_2) - u(\bX_1,\bX_2) \big|^{\nu^{\Delta}} \Big] \;\xrightarrow{ K' \rightarrow \infty}\; 0\;.
\end{align*}
Meanwhile by \cref{assumption:ksd},
$\big\| \| \nabla \log p(\bX_1) \|_2 \big\|_{L_{2{\nu^{**}}}} < \infty$, where 
\begin{align*}
    \nu^{**} 
    \;=\;
    \mfrac{\nu(\nu+\nu^*)}{\nu^* - \nu}
    \;=\; 
    \Big( \mfrac{1}{\nu} - \mfrac{2}{\nu + \nu^*} \Big)^{-1} 
    \;=\;
    \Big( \mfrac{1}{\nu} - \mfrac{1}{\nu^{\Delta}} \Big)^{-1} 
    \;>\; \nu\;.
\end{align*}
By a Cauchy-Schwarz inequality and a H\"older's inequality, we have that 
\begin{align*}
    \big\|
     \big( \nabla \log p(\bX_1) \big)^\top \big( \nabla \log p(\bX_2) \big)
    \big\|_{L_{\nu^{**}}}
    \;\leq&\;
    \big\| \, \|\nabla \log p(\bX_1)\|_2 \,
    \big\|_{L_{2\nu^{**}}}
     < \infty\;.
\end{align*}
Now by a H\"older's inequality and noting that $(\nu^{**})^{-1} + (\nu^{\Delta})^{-1} = \nu^{-1}$, we can now bound the error of using $T_1$ to approximate the first term of $\uPksd$ as
\begin{align*}
    &\;\mean[ |E_1|^\nu ] \;\coloneqq\; \mean\big[ \big| T_1 -  \big( \nabla \log p(\bX_1) \big)^\top \big( \nabla \log p(\bX_2) \big) \, u(\bX_1, \bX_2) \big|^\nu \big]
    \\
    \;&=\;
    \big\| T_1 -  \big( \nabla \log p(\bX_1) \big)^\top \big( \nabla \log p(\bX_2) \big) \, u(\bX_1, \bX_2) \big\|_{L_\nu}^\nu
    \\
    \;&\leq\; 
    \big\|
     \big( \nabla \log p(\bX_1) \big)^\top \big( \nabla \log p(\bX_2) \big)
    \big\|_{L_{\nu^{**}}}^\nu
    \, 
     \big\| \msum_{k'=1}^{K'}
    \lambda_{k'}
    \phi_{k'}(\bX_1)\phi_{k'}(\bX_2) - u(\bX_1,\bX_2) \big\|_{L_{\nu^{\Delta}}}^\nu
    \\
    \;&\xrightarrow{ K' \rightarrow \infty}\; 0\;.
\end{align*}
For $T_2$, we consider a similar approximation error quantity and apply a Cauchy-Schwarz inequality:
\begin{align*}
    \mean[ |E_2|^\nu ] \;\coloneqq&\; 
    \mean\big[ \big| T_2 -  \big( \nabla \log p(\bX_1) \big)^\top \nabla_2 \kappa(\bX_1,\bX_2)\big|^\nu \big]
    \\
    \;&=\;
    \mean\Big[ \Big| \big( \nabla \log p(\bX_1) \big)^\top \Big( 
    \msum_{k'=1}^{K'}
    \lambda_{k'}
    \big(\nabla \phi_{k'}(\bX_2)\big)
    \phi_{k'}(\bX_1)
    -
    \nabla_2 \kappa(\bX_1,\bX_2) \Big)\Big|^\nu \Big]
    \\
    &\leq\; 
    \| \, \|  \nabla  \log p(\bX_1) \|_2 \, \|_{L_{2\nu}}^\nu \Big\|  \Big\| 
    \msum_{k'=1}^{K'}
    \lambda_{k'}
    \big(\nabla \phi_{k'}(\bX_2)\big)
    \phi_{k'}(\bX_1)
    -
    \nabla_2 \kappa(\bX_1,\bX_2) \Big\|_2 \Big\|_{L_{2\nu}}^\nu
    \\
    \;&\xrightarrow{K' \rightarrow \infty}\; 0\;,
\end{align*}
where we have noted that the first term is bounded since $2\nu < 2\nu^{**}$ and used \cref{assumption:ksd}(iv). By symmetry of $\kappa$ and the fact that $\bX_1$ and $\bX_2$ are exchangeable, we have the same result for $T_3$:
\begin{align*}
    \mean[ |E_3|^\nu ] \;\coloneqq&\; 
    \mean\big[ \big| T_3 -  \big( \nabla \log p(\bX_2) \big)^\top \nabla_1 \kappa(\bX_1,\bX_2)\big|^\nu \big]
     \;\xrightarrow{K' \rightarrow \infty}\; 0\;.
\end{align*}
Meanwhile, the second condition of \cref{assumption:ksd}(iv) directly says that
\begin{align*}
    \mean[ |E_4|^\nu ] \;\coloneqq&\; 
    \mean\big[ \big| T_4 - \Tr\big( \nabla_1 \nabla_2 \kappa(\bX_1,\bX_2) \big) \big|^\nu \big] 
     \;\xrightarrow{K' \rightarrow \infty}\; 0\;.
\end{align*}
Combining the results and applying a Jensen's inequality to the convex function $x \mapsto |x|^\nu$, we have
\begin{align*}
    \mean\Big[ \Big| \msum_{k=1}^{dK'} \alpha_k \psi_k(\bX_1) \psi_k(\bX_2) - &\uPksd(\bX_1,\bX_2) \Big|^\nu \Big]
    \;=\;
    \mean\big[ \big| E_1 + E_2 + E_3 + E_4 \big|^\nu \big]
    \\
    \;\leq&\;
    \mean\big[ \big| \mfrac{1}{4} (4E_1) +  \mfrac{1}{4} (4E_2) +  \mfrac{1}{4} (4E_3) +  \mfrac{1}{4} (4E_4)   \big|^\nu \big]
    \\
    \;\leq&\;
    4^{\nu - 1} \big( \mean[ |E_1|^\nu ] + \mean[ |E_2|^\nu ] + \mean[ |E_3|^\nu ] + \mean[ |E_4|^\nu ] \big)
     \;\xrightarrow{K' \rightarrow \infty}\; 0\;.
\end{align*}
Now consider $K \in \N$ that is not necessarily divisible by $d$, and let $K'$ be the greatest integer such that $K \geq dK'$. Then by a triangle inequality and a similar Jensen's inequality as above, we get
\begin{align*}
    &\;
    \mean\Big[ \Big| \msum_{k=1}^{K} \alpha_k \psi_k(\bX_1) \psi_k(\bX_2) - \uPksd(\bX_1,\bX_2) \Big|^\nu \Big]
    \\
    \;\leq&\;
    2^{\nu-1}
    \mean\Big[ \Big| \msum_{k=1}^{dK'} \alpha_k \psi_k(\bX_1) \psi_k(\bX_2) - \uPksd(\bX_1,\bX_2) \Big|^\nu \Big]
    \\
    &\;
    +
    2^{\nu-1}
    \mean\Big[ \Big| \msum_{k=dK'+1}^{K} \alpha_k \psi_k(\bX_1) \psi_k(\bX_2) \Big|^\nu \Big]\;. \tagaligneq \label{eqn:ksd:general:intermediate}
\end{align*}
The first term is $o(1)$ as $K \rightarrow \infty$ by the previous argument, so we only need to focus on the second term. The expectation can be bounded by noting that $\alpha_k = \lambda_{K'+1} \geq 0$ for all $dK'+1 \leq k \leq K$ and using a triangle inequality followed by a Jensen's inequality:
\begin{align*}
    &\;
    \mean\Big[ \Big| \msum_{k=dK'+1}^{K} \alpha_k \psi_k(\bX_1) \psi_k(\bX_2) \Big|^\nu \Big]
    \\
    \;\leq&\;
    (\lambda_{K'+1})^\nu  \mean\Big[ 
    \Big( \mfrac{1}{K-dK'} \msum_{k=dK'+1}^{K}  (K-dK') | \psi_k(\bX_1) \psi_k(\bX_2) | \Big)^{\nu} \Big]
    \\
    \;\leq&\;
    (\lambda_{K'+1})^\nu (K-dK')^{\nu-1} \msum_{k=dK'+1}^{K}  \mean[ | \psi_k(\bX_1) \psi_k(\bX_2) |^{\nu} ]
    \\
    \;\leq&\;
    (\lambda_{K'+1})^\nu d^\nu \msup_{k \in \{dK'+1, \ldots, dK'+d\}} \mean[ | \psi_k(\bX_1) \psi_k(\bX_2) |^{\nu} ]
    \\
    \;=&\;
    (\lambda_{K'+1})^\nu d^\nu \msup_{1 \leq l \leq d } \mean[ | \psi_{dK'+l}(\bX_1) |^{\nu} ]^2 \;.
\end{align*}
In the last equality, we have noted that $\bX_1$ and $\bX_2$ are identically distributed. Now by the definition of $\psi_k$, another Jensen's inequality on $x \mapsto |x|^\nu$ and a Cauchy-Schwarz inequality, we have
\begin{align*}
    \mean[ | \psi_{dK'+l}(\bX_1) &|^{\nu} ] 
    \;=\;
    \mean[ | (\partial_{x_l} \log p(\bX_1)) \phi_{K'+1}(\bX_1) + \partial_{x_l} \phi_{K'+1}(\bX_1)  |^{\nu} ] 
    \\
    \;\leq&\;
    2^{\nu -1} \mean[ | (\partial_{x_l} \log p(\bX_1)) \phi_{K'+1}(\bX_1) |^\nu ]
    +
    2^{\nu -1} \mean[ | \partial_{x_l} \phi_{K'+1}(\bX_1)  |^\nu ]
    \\
    \;\leq&\;
    2^{\nu -1} \mean[ | \partial_{x_l} \log p(\bX_1)|^{2\nu}]^{1/2}
    \, \mean[ |\phi_{K'+1}(\bX_1) |^{2\nu} ]^{1/2}
    +
    2^{\nu -1} \mean[ | \partial_{x_l} \phi_{K'+1}(\bX_1)  |^\nu ]
    \\
    \;\leq&\;
    2^{\nu -1} \mean[ \| \nabla \log p(\bX_1) \|^{2\nu}_2 ]^{1/2}
    \, \mean[ |\phi_{K'+1}(\bX_1) |^{2\nu} ]^{1/2}
    +
    2^{\nu -1} \mean[ \| \nabla \phi_{K'+1}(\bX_1) \|_2^\nu ]
    \\
    \;=&\;
    2^{\nu -1} \| \| \nabla \log p(\bX_1) \|_2 \|_{L_{2\nu}}^{\nu}
    \, \| \phi_{K'+1}(\bX_1) \|_{L_{2\nu}}^{\nu}
    +
    2^{\nu -1} \| \| \nabla \phi_{K'+1}(\bX_1) \|_2 \|_{L_\nu}^\nu\;.
\end{align*}
By \cref{assumption:ksd}(i), (ii) and (iii), all three norms are bounded, so $ \mean[ | \psi_{dK'+l}(\bX_1) |^{\nu} ] < \infty$. By the definition of $\lambda_k$ from the weak Mercer representation, as $K \rightarrow \infty$ and therefore $K' \rightarrow \infty$, $\lambda_{K'+1} \rightarrow 0$, which implies  
\begin{align*}
    \mean\Big[ \Big| \msum_{k=dK'+1}^{K} \alpha_k \psi_k(\bX_1) \psi_k(\bX_2) \Big|^\nu \Big]
\;=\; o(1)\;.
\end{align*}
This means that both terms in \eqref{eqn:ksd:general:intermediate} converge to $0$ as $K \rightarrow \infty$. In other words,
\begin{align*}
    \mean\Big[ \Big| \msum_{k=1}^{K} &\alpha_k \psi_k(\bX_1) \psi_k(\bX_2) - \uPksd(\bX_1,\bX_2) \Big|^\nu \Big]
    \;\xrightarrow{K \rightarrow \infty}\; 0\;.
\end{align*}
Since $L_\nu$-convergence implies $L_{\min\{\nu,3\}}$-convergence and we have assumed that $\nu > 2$, we get that \cref{assumption:L_nu} holds for $\min\{\nu,3\}$ with respect to the $\uPksd$, $\alpha_k$ and $\psi_k$.
\end{proof}

\subsection{Proof of Proposition \ref{prop:ksd:var:ratio}}
From Lemma \ref{lem:ksd:moments:analytical}, we can write the variance ratio as 
\begin{align*}
    \mfrac{\sigfull^2}{\sigcond^2}
    \;=&\;
    \left( \mfrac{\gamma}{4 + \gamma} \right)^{d/2}
    \left( \mfrac{(1 + \gamma) (3 + \gamma)}{ \gamma^2 } \right)^{d/2}
    \mfrac{B}{A}
    \;=\;
    C \times
    \mfrac{B}{A}
    \;,
\end{align*}
where 
\begin{align*}
    A
    \;\coloneqq&\;
    \mfrac{(2 + \gamma)^2}{(1 + \gamma)(3 + \gamma)} \| \bmu \|_2^2
    + \Big( 1 - \left( \mfrac{(1 + \gamma)(3 + \gamma)}{(2 + \gamma)^2} \right)^{d/2} \Big) \| \bmu \|_2^4 
    \\
    \;=&\;
    \left( 1 + o(1) \right) \| \bmu \|_2^2
    + \Big( 1 - \left( 1 - \alpha \right)^{d/2} \Big) \| \bmu \|_2^4 
    \\
    B
    \;\coloneqq&\;
    d 
    + \mfrac{d^2}{\gamma^2}
    + \mfrac{2d \| \bmu \|_2^2}{\gamma}
    + 2\| \bmu \|_2^2
    + \Big(1 - \left( \mfrac{\gamma (4 + \gamma)}{(2 + \gamma)^2} \right)^{d/2} \Big) \| \bmu \|_2^4
    + o\Big( 
        d 
        + \mfrac{d^2}{\gamma^2}
        + \mfrac{d \| \bmu \|_2^2}{\gamma}
        +  \| \bmu \|_2^2
    \Big)
    \\
    \;=&\;
    d 
    + \mfrac{d^2}{\gamma^2}
    + \mfrac{2d \| \bmu \|_2^2}{\gamma}
    + 2\| \bmu \|_2^2
    + \Big(1 - \left( 1 - \delta \right)^{d/2}  \Big) \| \bmu \|_2^4
    + o\Big( 
        d 
        + \mfrac{d^2}{\gamma^2}
        + \mfrac{d \| \bmu \|_2^2}{\gamma}
        +  \| \bmu \|_2^2
    \Big)
    \\
    C
    \;\coloneqq&\;
    \Big( \mfrac{\gamma}{4 + \gamma} \Big)^{d/2}
    \Big( \mfrac{(1 + \gamma) (3 + \gamma)}{ \gamma^2 } \Big)^{d/2}
    \;=\;
    \Big( \mfrac{(1 + \gamma) (3 + \gamma)}{ \gamma (4 + \gamma) } \Big)^{d/2}
    \;,
\end{align*}
and we have written $\frac{(1 + \gamma)(3 + \gamma)}{(2 + \gamma)^2} = 1 - \alpha$ with $\alpha \coloneqq \frac{1}{(2 + \gamma)^2}$ and $\frac{\gamma(4 + \gamma)}{(2 + \gamma)^2} = 1 - \delta$ with $\delta \coloneqq \frac{4}{(2 + \gamma)^2}$. To simplify $A$ and $B$, we first rewrite
\begin{align*}
    1-\left( 1 - \alpha \right)^{d/2}
    \;=\;
    1-\exp\left( - \mfrac{d}{2} \log\left( 1 - \alpha \right) \right)
    \;\overset{(a)}{=}&\;
    1 - \exp\left( - \mfrac{d}{2} \left( \mfrac{1}{(2 + \gamma)^2} + O\left( \mfrac{1}{\gamma^{4}} \right)\right) \right)
     \\
    \;=&\;
    1-\exp\left( - \mfrac{d}{2(2 + \gamma)^2} + O\left(\mfrac{d}{ \gamma^{4}} \right) \right)
    \tagaligneq \label{eq:alpha:expansion}
    \;.
\end{align*}
In $(a)$, we have used a Taylor expansion by noting that $\gamma$ is small by the stated assumption $\gamma = \omega(1)$. Similarly we can obtain
\begin{align*}
    &1-(1 - \delta)^{d/2}
    \;=\;
    1-\exp\left( \mfrac{d}{2} \log(1 - \delta) \right)
    \;=\;
    1-\exp\left( \mfrac{d}{2} \left(- \mfrac{4}{(2 + \gamma)^2} + O\left( \mfrac{1}{\gamma^{4}} \right) \right) \right)
    \\
    &\;\qquad\qquad\qquad\qquad\qquad\qquad\qquad\qquad\;\;\;\;\;\;=\;
    1-\exp\left( - \mfrac{2d}{(2 + \gamma)^2} + O\left(\mfrac{d}{ \gamma^{4}} \right) \right)
    \tagaligneq \label{eq:gamma:expansion}
    \;,
    \\
    &\;
    C
    \;=\;
    \exp\left( \mfrac{d}{2} \log\left( 1 + \mfrac{3}{\gamma (4 + \gamma)} \right) \right)
    \;=\;
    \exp\left( 
        \mfrac{d}{2} \left( \mfrac{3}{\gamma(4 + \gamma)} + O\left( \mfrac{1}{\gamma^{4}} \right) \right)
    \right)
    \\
    &\qquad\qquad\qquad\qquad\qquad\qquad\qquad\;\;=\;
    \exp\left( 
        \mfrac{3d}{2\gamma (4 + \gamma)} + O\left(\mfrac{d}{\gamma^{4}}\right)
    \right)
    \tagaligneq \label{eq:factor:expansion}
    \;.
\end{align*}
Therefore, the terms $(1 - \alpha)^{d/2}$, $(1 - \gamma)^{d/2}$ and $C$ can be small, large or close to a constant, depending on whether $\gamma^2$ grows faster than, lower than, or at the same rate as $d$. We now consider the three cases individually.

\paragraph{Case 1: $\gamma = o(d^{1/2})$.}
In this case, $\frac{d}{(2 + \gamma)^2} = \omega(1)$, so we have $(1 - \alpha)^{d/2} = o(1)$ and $(1 - \delta)^{d/2} = o(1)$. Therefore
\begin{align*}
    A 
    \;=&\; (1 + o(1)) \| \bmu \|_2^2 + (1 + o(1)) \| \bmu \|_2^4 
    \;=\; \Theta(\| \bmu \|_2^2 + \| \bmu \|_2^4)
    \;=\;
    \Theta(\| \bmu \|_2^4)
    \;,
\end{align*}
and 
\begin{align*}
    B
    \;=&\;
    d 
    + \mfrac{d^2}{\gamma^2}
    + \mfrac{2d \| \bmu \|_2^2}{\gamma}
    + 2\| \bmu \|_2^2
    + \| \bmu \|_2^4
    + o\left( 
        d 
        + \mfrac{d^2}{\gamma^2}
        + \mfrac{d \| \bmu \|_2^2}{\gamma}
        +  \| \bmu \|_2^2
    \right)
    \\
    \;=&\;
    \Theta\left( 
        d 
        + \mfrac{d^2}{\gamma^2}
        + \mfrac{d \| \bmu \|_2^2}{\gamma}
        + \| \bmu \|_2^4
    \right)
    \;.
\end{align*}
Combining with the previous expressions for $A$, $B$ and $C$ yields
\begin{align*}
    \rho_d
    \;=\;
    \mfrac{\sigfull}{\sigcond}
    \;=\;
    \sqrt{C} \times \mfrac{\sqrt{B}}{\sqrt{A}}
    \;=&\;
    \exp\left( 
        \mfrac{3d}{4\gamma (4 + \gamma)} + O\left(\mfrac{d}{\gamma^{4}} \right)
    \right)
    \Theta\bigg( 
        \sqrt{\mfrac{d}{ \| \bmu \|_2^{4}}
        + \mfrac{d^2}{\gamma^2 \| \bmu \|_2^4}
        + \mfrac{d}{\gamma \| \bmu \|_2^2}
        + 1} \;
    \bigg)
    \\
    \;\overset{(a)}{=}&\;
    \exp\left( 
        \mfrac{3d}{4\gamma^2} + o\left(\mfrac{d}{\gamma^2} \right)
    \right)
    \Theta\bigg( 
        \sqrt{\mfrac{d^2}{\gamma^2 \| \bmu \|_2^{4}}
        + \mfrac{d}{\gamma \| \bmu \|_2^2}
        + 1} \;
    \bigg)
    \\
    \;\overset{(b)}{=}&\;
    \exp\left( 
        \mfrac{3d}{4\gamma^2} + o\left(\mfrac{d}{\gamma^2} \right)
    \right)
    \Theta\bigg( 
        \mfrac{d}{\gamma \| \bmu \|_2^2}
        + \mfrac{d^{1/2}}{\gamma^{1/2} \| \bmu \|_2}
        + 1
    \bigg)
    \;,
\end{align*}
where in $(a)$ we have used the fact that $\gamma = o(d^{1/2})$, and in $(b)$ we have noted that for $a, b, c > 0$, $\sqrt{a+b+c} \leq \sqrt{a} + \sqrt{b} + \sqrt{c}$ and by a Jensen's inequality, $\sqrt{a+b+c}  \geq \frac{1}{\sqrt{3}} (\sqrt{a} + \sqrt{b} + \sqrt{c})$. 

\paragraph{Case 2: $\gamma = \omega(d^{1/2})$.}
Since in this case $\frac{d}{\gamma^2}$ is small, we can use Taylor expansion to approximate the exponential term in \eqref{eq:alpha:expansion} to get
\begin{align*}
    1 - \left( 1 - \alpha \right)^{d/2}
    \;=\;
    1 - \exp\left( - \mfrac{d}{2(2 + \gamma)^2} + O\left(\mfrac{d}{ \gamma^{4}} \right) \right)
    \;=&\;
    1 - \left( 1 - \mfrac{d}{2 (2 + \gamma)^2} + O\Big(\mfrac{d^2}{\gamma^{4}} \Big) \right)
    \\
    \;=&\;
    \mfrac{d}{2 (2 + \gamma)^2} 
     + o\Big( \mfrac{d}{\gamma^2} \Big)
     \;.
\end{align*}
Using a similar argument applied to \eqref{eq:gamma:expansion}, we have
\begin{align*}
    1 - \left( 1 - \delta \right)^{d/2}
    \;=\;
    1 - \exp\left( - \mfrac{2d}{(2 + \gamma)^2} + O\left(\mfrac{d}{ \gamma^{4}} \right) \right)
    \;=&\;
    \mfrac{2d}{(2 + \gamma)^2} + o\Big( \mfrac{d}{\gamma^2} \Big)
    \;,
\end{align*}
and \eqref{eq:factor:expansion} yields
\begin{align*}
    C
    \;=&\;
    \exp\left( 
        \mfrac{3d}{2\gamma (4 + \gamma)} + O\left( \mfrac{d}{\gamma^{4}} \right)
    \right)
    \;=\;
    1 + \mfrac{3d}{2\gamma (4 + \gamma)} + O\Big( \mfrac{d^2}{\gamma^{4}} \Big)
    \;=\;
    1 + o(1)
    \;.
\end{align*}
We therefore conclude that
\begin{align*}
    A
    \;=&\;
    (1 + o(1)) \| \bmu \|_2^2 + \Big( \mfrac{d}{2(2 + \gamma)^2} + o \Big( \mfrac{d}{\gamma^2} \Big) \, \Big) \| \bmu \|_2^4 
    \;=\;
    \Theta\left( 
        \| \bmu \|_2^2 +  \mfrac{d}{\gamma^2} \| \bmu \|_2^4 
    \right)
    \;,
\end{align*}
A similar argument shows that
\begin{align*}
    B
    \;=&\;
    d 
    + \mfrac{d^2}{\gamma^2}
    + \mfrac{2d \| \bmu \|_2^2}{\gamma}
    + 2\| \bmu \|_2^2
    + \| \bmu \|_2^4 \left( \mfrac{2d}{ (2 + \gamma)^2 } + o\left( \mfrac{d}{\gamma^2} \right) \right)
    + o\left( 
        d 
        + \mfrac{d^2}{\gamma^2}
        + \mfrac{d \| \bmu \|_2^2}{\gamma}
        +  \| \bmu \|_2^2
    \right)
    \\
    \;=&\;
    \Theta\left(
    d 
    + \mfrac{d \| \bmu \|_2^2}{\gamma}
    + \| \bmu \|_2^2
    + \mfrac{d \| \bmu \|_2^4}{ \gamma^2 }
    \right)
    \;,
\end{align*}
where in the last line we noted that $\gamma = \omega(d^{1/2})$ implies $ \frac{d^2}{\gamma^2} = o(d)$. Combining the results gives
\begin{align*}
    \rho_d
    \;=\;
    \mfrac{\sigfull}{\sigcond}
    \;=&\;
    \sqrt{C} \times \mfrac{\sqrt{B}}{\sqrt{A}}
    \\
    \;=&\;
    \sqrt{1 + o(1)} \;\;
    \Theta\left( 
    \mfrac{d^{1/2} + \gamma^{-1/2}d^{1/2} \|\mu\|_2 + \|\mu\|_2 + \gamma^{-1} d^{1/2} \|\mu\|_2^2 }
    { \|\mu\|_2 + \gamma^{-1} d^{1/2} \|\mu\|_2^2}
    \right)
    \\
    \;=&\;
    \Theta\Big( \mfrac{d^{1/2} + \gamma^{-1/2}d^{1/2} \|\mu\|_2}{\|\mu\|_2 + \gamma^{-1} d^{1/2} \|\mu\|_2^2} + 1 \Big)
    \;=\;
    \Theta\Big( \mfrac{d^{1/2} (1 + \gamma^{-1/2} \|\mu\|_2)}{\|\mu\|_2 \, ( 1 + \gamma^{-1} d^{1/2} \|\mu\|_2)} + 1 \Big)\;.
\end{align*}

\paragraph{Case 3: $\gamma = \Theta(d^{1/2})$.}
Since in this case $\frac{d}{\gamma^{4}}$ is small, we have that $\exp\big( O\big(\frac{d}{\gamma^4} \big) \big) = 1 + O\big( \frac{d}{\gamma^4} \big)$ by a Taylor expansion. Substituting this into \eqref{eq:alpha:expansion}, we have
\vspace{-.2cm}
\begin{align*}
    0\;\leq\;
    1 - \left( 1 - \alpha \right)^{d/2}
    \;=&\;
    1 - \exp\left( - \mfrac{d}{2(2 + \gamma)^2} \right) \left(1 + O\left(\mfrac{d}{\gamma^{4}} \right) \right)
    \\
    \;=&\;
    1 - \exp\left( - \mfrac{d}{2(2 + \gamma)^2} \right)  
    + O\left(\mfrac{d}{\gamma^{4}} \right) 
    \;=\;
    \Theta(1)
    \;,
\end{align*}
where the last line holds as $1 - \exp\big( - \frac{d}{2(2 + \gamma)^2} \big) = \Theta(1)$. A similar argument applied to \eqref{eq:gamma:expansion} and \eqref{eq:factor:expansion} gives
\begin{align*}
    0\;\leq\;
    1 - \left( 1 - \delta \right)^{d/2}
    \;=&\;
    1 - \exp\left( - \mfrac{2d}{(2 + \gamma)^2} + O\left(\mfrac{d}{ \gamma^{4}} \right) \right)
    \;=\;
    \Theta(1)
    \;,
    \\
    0\;\leq\; C   
    \;=&\;
    \exp\left( 
        \mfrac{3d}{2\gamma (4 + \gamma)} + O\left(\mfrac{d}{\gamma^{4}} \right)
    \right)
    \;=\;
    \Theta(1)
    \;.
\end{align*}
Combining the above derivations yields
\begin{align*}
    A
    \;=&\;
    \Theta( \| \bmu \|_2^2 + \| \bmu \|_2^4)
    \;=\;
    \Theta( \| \bmu \|_2^4)
    \;,
    \\
    B
    \;=&\;
    \Theta\Big( d + \mfrac{d^2}{\gamma^2} + \mfrac{2d \| \bmu \|_2^2}{\gamma} + 2\| \bmu \|_2^2 + \| \bmu \|_2^4 \Big)
    \;=\;
    \Theta\Big( d + d^{1/2} \| \bmu \|_2^2 + \| \bmu \|_2^4 \Big)
    \;,
\end{align*}
where in the equality for $B$ we have used the fact that $\| \bmu \|_2^2 = \Omega(1)$ implies $\| \bmu \|_2^2 = O(\| \bmu \|_2^4)$ and that $\gamma = \Theta(d^{1/2})$ implies $\frac{d}{\gamma} = \Theta(d^{1/2})$. Therefore,
\begin{align*}
    \rho_d
    \;=\;
    \mfrac{\sigfull}{\sigcond}
    \;=\;
    \sqrt{C} \times \mfrac{\sqrt{B}}{\sqrt{A}}
    \;=&\;
    \Theta\left( 
        \sqrt{\mfrac{d}{\| \bmu \|_2^4}
        + \mfrac{d^{1/2}}{ \| \bmu \|_2^2}
        + 1}\;
    \right) 
    \;=\;
    \Theta\left( 
        \mfrac{d^{1/2}}{\| \bmu \|_2^2}
        + \mfrac{d^{1/4}}{ \| \bmu \|_2}
        + 1
    \right) 
    \;.
\end{align*}
This completes the proof.

\subsection{Proof of Proposition \ref{prop:MMD:variance:ratio}}
Recall the expressions of $\sigcond^2$ and $\sigfull^2$ for MMD-RBF from Lemma \ref{lem:mmd:moments:analytical}, which allow us to rewrite $\sigcond^2 = C A$ and $\sigfull^2 = C B$, where 
\begin{align*}
    A
    \;\coloneqq&\;
    1 + \exp\left( -\mfrac{1}{3 + \gamma} \| \mu \|_2^2 \right) 
    + 2 \left( \mfrac{3 + \gamma}{2 + \gamma} \right)^{d/2} \left( \mfrac{1 + \gamma}{2 + \gamma} \right)^{d/2} \exp\left( - \mfrac{1}{2(2 + \gamma)} \| \mu \|_2^2 \right) \\
    &\; 
    - 2 \exp\left( - \mfrac{2 + \gamma}{2(1 + \gamma)(3 + \gamma)} \| \mu \|_2^2 \right) 
    - \left( \mfrac{3 + \gamma}{2 + \gamma} \right)^{d/2} \left( \mfrac{1 + \gamma}{2 + \gamma} \right)^{d/2} \\
    &\; 
    - \left( \mfrac{3 + \gamma}{2 + \gamma} \right)^{d/2} \left( \mfrac{1 + \gamma}{2 + \gamma} \right)^{d/2} \exp\left( - \mfrac{1}{2 + \gamma} \| \mu \|_2^2 \right)
    \\
    B
    \;\coloneqq&\;
    \Big( \mfrac{3+\gamma}{4+\gamma} \Big)^{d/2}\Big( \mfrac{1+\gamma}{\gamma} \Big)^{d/2}
    \Big( 1 + \exp\Big( - \mfrac{1}{4+\gamma} \|\mu\|_2^2 \Big) \Big)
    -
    \Big( \mfrac{3+\gamma}{2+\gamma} \Big)^{d/2} \Big( \mfrac{1+\gamma}{2+\gamma} \Big)^{d/2}
    \\
    &\;
    -
    4
    \exp \Big( - \mfrac{2+\gamma}{2(1+\gamma)(3+\gamma)}  \| \mu \|_2^2 \Big)
    -
    \Big( \mfrac{3+\gamma}{2+\gamma} \Big)^{d/2} \Big( \mfrac{1+\gamma}{2+\gamma} \Big)^{d/2}
    \exp\Big( - \mfrac{1}{2+\gamma} \|\mu\|_2^2 \Big) 
    \\
    &\;
    +
    4 \Big( \mfrac{3+\gamma}{2+\gamma} \Big)^{d/2} \Big( \mfrac{1+\gamma}{2+\gamma} \Big)^{d/2}
    \exp\Big( - \mfrac{1}{2(2+\gamma)} \|\mu\|_2^2 \Big) 
    \\
    C 
    \;\coloneqq&\; 
    2\left( \mfrac{\gamma}{1 + \gamma} \right)^{d/2} \left( \mfrac{\gamma}{3 + \gamma} \right)^{d/2}
    \;.
\end{align*}
This implies that $\sigfull^2 / \sigcond^2 = B / A$, so it suffices to calculate the leading terms in $A$ and $B$, respectively. We first write $\frac{(3+\gamma)(1+\gamma)}{(2+\gamma)^2} = 1 - \frac{1}{(2+\gamma)^2} \eqqcolon 1 - \alpha$ and $\frac{(3 + \gamma)(1 + \gamma)}{(4 + \gamma)\gamma} = 1 + \frac{3}{\gamma(4 + \gamma)} =: 1 + \beta$, where $\alpha$ and $\beta$ are small as $\gamma = \omega(1)$ by assumption. Rearranging $A$ gives
\begin{align*}
    A
    \;=&\;
    1 + \exp\left( - \mfrac{1}{3 + \gamma} \| \bmu \|_2^2 \right)
    - 2 \exp\left( - \mfrac{2 + \gamma}{2(1 + \gamma)(3 + \gamma)} \| \mu \|_2^2 \right)
    \\
    &\;
    + (1 - \alpha)^{d/2} \Big(
        2\exp\Big( - \mfrac{1}{2(2 + \gamma)} \| \bmu \|_2^2 \Big)
        - 1
        - \exp\Big( - \mfrac{1}{2 + \gamma} \| \bmu \|_2^2 \Big)
    \Big) 
    \\
    \;\eqqcolon&\; A_1 + (1 - \alpha)^{d/2}  A_2\;,
\end{align*}
and similarly,
\begin{align*}
    B
    \;=&\;
    ( 1+\beta )^{d/2}
    \Big( 1 + \exp\Big( - \mfrac{1}{4+\gamma} \|\mu\|_2^2 \Big) \Big)
    - 4\exp \Big( - \mfrac{2+\gamma}{2(1+\gamma)(3+\gamma)}  \| \mu \|_2^2 \Big)
    \\
    &\;
    - (1-\alpha)^{d/2}
    \Big( 
        1 
        + \exp\Big( - \mfrac{1}{2+\gamma} \|\mu\|_2^2 \Big) 
        - 4 \exp\Big( - \mfrac{1}{2(2+\gamma)} \|\mu\|_2^2 \Big) 
    \Big)
    \\
    \;\eqqcolon&\;
    ( 1+\beta )^{d/2} B_1 - B_2 - (1-\alpha)^{d/2} B_3
    \;.
\end{align*}
These expressions can be simplified further depending on the relative growth rates of $d, \gamma$ and $\| \bmu \|_2^2$; we consider these cases individually.

\paragraph{Case 1: $\gamma = o(d^{1/2})$ and $\gamma = o(\| \bmu \|_2^2)$.}
Since $\gamma = o(\| \bmu \|_2^2)$, all exponential terms of the form $\exp\left( - \frac{1}{a(b + \gamma)}  \| \bmu \|_2^2 \right)$, for any positive constants $a, b$, are $o(1)$. Moreover, since we have assumed that $\gamma = \omega(1)$, we can apply a Taylor expansion to yield 
\begin{align}
    (1 - \alpha)^{d/2}
    \;=&\;
    \exp\left( \mfrac{d}{2}\log\left( 1 - \mfrac{1}{(2 + \gamma)^2} \right) \right)
    \nonumber
    \\
    \;=&\;
    \exp\left(
        \mfrac{d}{2} \left( - \mfrac{1}{(2 + \gamma)^2} + O\left( \mfrac{1}{\gamma^{4}} \right) \right)
    \right)
    \;=\;
    \exp\left(
        -\mfrac{d}{2 \gamma^2} + o\left(\mfrac{d}{ \gamma^{2}} \right)
    \right)
    \label{eq:mmd:alpha:expansion}
    \;.
\end{align}
Therefore, when $\gamma = o(d^{1/2})$, we have $(1 - \alpha)^{d/2} = o(1)$. Thus the dominating term in $A$ is the leading constant $1$ and
\begin{align*}
    A
    \;=\;
    1 + o(1)
    \;.
\end{align*}
To control $B$, we first consider a similar Taylor expansion by noting that $\gamma = \omega(1)$:
\begin{align*}
    (1 + \beta)^{d/2}
    \;=\;
    \exp\Big( 
        \mfrac{d}{2} \log \Big(1 + &\mfrac{3}{\gamma (4 + \gamma)} \Big)
    \Big)
    \;=\;
    \exp\left( 
        \mfrac{d}{2}\left(
            \mfrac{3}{\gamma( 4 + \gamma)} + O\left( \mfrac{1}{\gamma^{4}} \right)
        \right)
    \right)
    \\
    \;=&\;
    \exp\left( 
        \mfrac{3d}{2\gamma( 4 + \gamma)} + O\left(\mfrac{d}{ \gamma^{4}} \right)
    \right)
    \;=\;
    \exp\Big( 
        \mfrac{3d}{2\gamma^2} + o\left(\mfrac{d}{ \gamma^{2}} \right)
    \Big)
    \tagaligneq
    \label{eq:mmd:beta:expansion}
    \;.
\end{align*}
Since $\gamma = o(d^{1/2})$, we have that $(1 + \beta)^{d/2} = \omega(1)$. All exponential terms and $(1-\alpha)^{d/2}$ are $o(1)$ by the calculations above, so
\begin{align*}
    B 
    \;=\; (1 + \beta)^{d/2} + o((1 + \beta)^{d/2})
    \;=\;
    \Theta\Big( \exp\left(
        \mfrac{3d}{2\gamma^2} + o\left(\mfrac{d}{ \gamma^{2}}\right)
    \right) \Big)
    \;. 
\end{align*}
Combining the results for $A$ and $B$  gives
\begin{align*}
    \rho_d
    \;=\;
    \mfrac{\sigfull}{\sigcond} 
    \;=\; 
    \mfrac{\sqrt{B}}{\sqrt{A}}
    \;=\;
    \Theta\Big( \exp\left(
        \mfrac{3d}{4\gamma^2} + o\left(\mfrac{d}{ \gamma^{2}}\right)
    \right) \Big)
    \;.
\end{align*}

\paragraph{Case 2: $\gamma = o(d^{1/2})$ and $\gamma = \omega(\| \bmu \|_2^2)$.}
Since $\gamma = \omega(\| \bmu \|_2^2)$, we can bound $A_1$ by first extracting an exponential factor and then applying two second-order Taylor expansions:
\begin{align*}
    A_1\;=\;&\;
    1 
    + \exp\left( - \mfrac{1}{3 + \gamma} \| \bmu \|_2^2 \right)
    - 2 \exp\left( - \mfrac{2 + \gamma}{2(1 + \gamma)(3 + \gamma)} \| \mu \|_2^2 \right)
    \\
    \;=&\;
    1
    + \exp\left( - \mfrac{1}{3 + \gamma} \| \bmu \|_2^2 \right)
    \Big( 
        1 - 2\exp\left( \mfrac{\gamma}{2(1 + \gamma)(3 + \gamma)} \| \bmu \|_2^2 \right)
    \Big)
    \\
    \;=&\;
    1 
    +\Big( 
        1 
        - 
        \mfrac{ \| \bmu \|_2^2 }{3 + \gamma}
        + 
        \mfrac{\| \bmu \|_2^4}{2(3 + \gamma)^2}
        +
        O\Big( \mfrac{\| \bmu \|_2^6}{\gamma^3} \Big)
    \Big)
    \\
    &\;\qquad\qquad \times 
    \Big(
        - 1
        -
        \mfrac{\gamma  \| \bmu \|_2^2}{(1 + \gamma)(3 + \gamma)}
        -
        \mfrac{\gamma^2  \| \bmu \|_2^4}{4(1 + \gamma)^2(3 + \gamma)^2}
        + 
        O\Big( \mfrac{\| \bmu \|_2^6}{ \gamma^3} \Big)
    \Big)
    \\
    \;=&\;
    1 - 1 
    +
    \Big(\mfrac{1}{3 + \gamma} - \mfrac{\gamma}{(1 + \gamma)(3 + \gamma)} \Big) \| \bmu \|_2^2
    \\&\;
    +
    \Big( - \mfrac{1}{2(3+\gamma)^2} - \mfrac{\gamma^2}{4(1+\gamma)^2(3+\gamma)^2} + \mfrac{\gamma}{(1+\gamma)(3+\gamma)^2} \Big) \| \mu \|_2^4
    + 
    O\Big(\mfrac{\| \bmu \|_2^6}{\gamma^3}\Big)  
    \\
    \;=&\;
    \mfrac{1}{(1 + \gamma) (3 + \gamma)} \| \bmu \|_2^2
    +
    \mfrac{-2+\gamma^2}{4(3+4\gamma+\gamma^2)^2} \|\bmu\|_2^4
    + O\Big(\mfrac{\| \bmu \|_2^6}{\gamma^3}\Big)  
    \;.
\end{align*}
Note that the first term is on the order $\gamma^{-2} \|\mu\|_2^2$, the second term is on the order $\gamma^{-2} \|\mu\|_2^4$ and the third term is on the order $\gamma^{-3} \|\mu\|_2^6$. Since $\gamma^{-1}\|\mu\|_2^2 = o(1)$ and $\|\mu\|_2^2 = \Omega(1)$, the second term dominates and we get that
\begin{align*}
    A_1
    \;=&\;
    \mfrac{\|\bmu\|_2^4}{4\gamma^2} 
    + o\Big( \mfrac{\|\bmu\|_2^4 }{\gamma^2} \Big)\;.  \tagaligneq \label{eqn:mmd:A1:taylor}
\end{align*}
To control $A_2$, we use a similar Taylor expansion to get that
\begin{align*}
    A_2 
    \;=&\; 
    2\exp\left( - \mfrac{1}{2(2 + \gamma)} \| \bmu \|_2^2 \right)
    - 1
    - \exp\left( - \mfrac{1}{2 + \gamma} \| \bmu \|_2^2 \right)
    \\
    \;=&\;
    - 1 + \exp\Big( - \mfrac{1}{2 + \gamma} \| \bmu \|_2^2  \Big)
    \;
    \Big(
    2\exp\Big( \mfrac{1}{2(2 + \gamma)} \| \bmu \|_2^2 \Big) - 1
    \Big)
    \\
    \;=&\;
    -1 
    + 
    \Big( 1 -  \mfrac{\| \bmu \|_2^2 }{2 + \gamma} + \mfrac{\| \bmu \|_2^4 }{2(2 + \gamma)^2} + O\Big( \mfrac{\| \bmu \|_2^6 }{\gamma^3} \Big)\,  \Big)
    \;
    \Big( 1 +  \mfrac{\| \bmu \|_2^2 }{2 + \gamma} +  \mfrac{\| \bmu \|_2^4 }{4(2 + \gamma)^2} + O\Big( \mfrac{\| \bmu \|_2^6 }{\gamma^3} \Big)\,  \Big)
    \\
    \;=&\;
    \Big( \mfrac{1}{4(2+\gamma)^2} + \mfrac{1}{2(2+\gamma)^2} - \mfrac{1}{(2+\gamma)^2} \Big) \|\mu\|_2^4 
    + 
    O\Big( \mfrac{\| \bmu \|_2^6 }{\gamma^3} \Big)
    \\
    \;=&\; - \mfrac{\|\bmu\|_2^4}{4(2+\gamma)^2}  + 
    O\Big( \mfrac{\| \bmu \|_2^6 }{\gamma^3} \Big)\;.  \tagaligneq \label{eqn:mmd:A2:taylor}
\end{align*}
In particular, we have $A_2 = O( \gamma^{-2} \|\bmu\|_2^4 ) = O(A_1)$. Since $\gamma=o(d^{1/2})$, we have $(1 - \alpha)^{d/2} =  o(1) $ as before, which implies
\begin{align*}
    A 
    \;=\;
    A_1
    +
    (1-\alpha)^{d/2} A_2
    \;=\; 
    \mfrac{\|\bmu\|_2^4}{4\gamma^2} 
    + o\Big( \mfrac{\|\bmu\|_2^4 }{\gamma^2} \Big)
    \;.
\end{align*}
To control $B$, recall we have shown in Case 1 that $(1+\beta)^{d/2} = \omega(1)$ and $(1-\alpha)^{d/2}=o(1)$ for $\gamma=o(d^{1/2})$. All exponential terms are $O(1)$ and $B_1 = 2 + O(\gamma^{-1} \|\mu\|_2^2)$ by a Taylor expansion. By \eqref{eq:mmd:beta:expansion}, we obtain that
\begin{align*}
    B \;=\; (1+\beta)^{d/2} B_1 - B_2 - (1-\alpha)^{d/2} B_3 \;=&\; 2(1+\beta)^{d/2} + o((1+\beta)^{d/2} )
    \\
    \;=&\; \Theta\Big( \exp\left(
        \mfrac{3d}{2\gamma^2} + o\left(\mfrac{d}{ \gamma^{2}}\right)
    \right) \Big)\;.
\end{align*}
We hence conclude that
\begin{align*}
    \rho_d
    \;=\;
    \mfrac{\sigfull}{\sigcond}
    \;=\;
    \mfrac{\sqrt{B}}{\sqrt{A}}
    \;=&\;
    \Theta\Big( \mfrac{\gamma}{\|\mu\|_2^2} \exp\left(
        \mfrac{3d}{4\gamma^2} + o\left(\mfrac{d}{ \gamma^{2}}\right)
    \right) \Big)
    \;.
\end{align*}

\vspace{.2em}

\paragraph{Case 3: $\gamma = \omega(\| \bmu \|_2^2)$ and $\gamma = \omega(d^{1/2})$.} We first rewrite the expressions of $A$ and $B$ as
\begin{align*}
    A \;=&\; (A_1+A_2) - (1-(1-\alpha)^{d/2}) A_2\;, \tagaligneq \label{eqn:ksd:A:alt}
    \\
    B \;=&\; (B_1-B_2-B_3) + ((1+\beta)^{d/2}-1) B_1 + (1-(1-\alpha)^{d/2}) B_3\;. \tagaligneq \label{eqn:ksd:B:alt}
\end{align*}
Since $\gamma = \omega(d^{1/2})$, we can perform a further Taylor expansion on the expressions in \eqref{eq:mmd:alpha:expansion} and \eqref{eq:mmd:beta:expansion}:
\begin{align}
    (1 - \alpha)^{d/2}
    \;=&\;
    \exp\left(
        -\mfrac{d}{2(2+\gamma)^2} + O\Big(\mfrac{d}{ \gamma^4} \Big)
    \right)
    \;=\;
    1 - \mfrac{d}{2 (2 + \gamma)^2} + O\left(\mfrac{d}{ \gamma^{4} } \right)
    \;,
    \label{eq:mmd:pf:alpha:taylor}
    \\
    (1 + \beta)^{d/2}
    \;=&\;
    \exp\left( 
         \mfrac{3d}{2\gamma(4+\gamma)}  
         + O\Big(\mfrac{d}{ \gamma^{4}} \Big)
    \right)
    \;=\;
    1 + \mfrac{3d}{2\gamma(4 + \gamma)} 
    +
    O\left(\mfrac{d}{ \gamma^{4} } \right) \;,
    \label{eq:mmd:pf:beta:taylor}
\end{align}
On the other hand, since $\gamma^{-1}\|\mu\|^2_2$ is small, we can consider performing Taylor expansions on each exponential. By grouping the terms and extracting an appropriate exponential, we get that
\begin{align*}
    A_1 + A_2 
    \;=&\; 
    \exp\left( - \mfrac{1}{3 + \gamma} \| \bmu \|_2^2 \right)
    - \exp\Big( - \mfrac{1}{2 + \gamma} \| \bmu \|_2^2 \Big)
    \\
    &\;
    - 2 \exp\left( - \mfrac{2 + \gamma}{2(1 + \gamma)(3 + \gamma)} \| \mu \|_2^2 \right)
    +
    2\exp\Big( - \mfrac{1}{2(2 + \gamma)} \| \bmu \|_2^2 \Big)
    \\
    \;=&\;
    \exp\left( - \mfrac{1}{2 + \gamma} \| \bmu \|_2^2 \right)
    \Big( \exp\Big(\mfrac{1}{(3+\gamma)(2 + \gamma)} \| \bmu \|_2^2 \Big) - 1\Big)
    \\
    &\;
    -
    2\exp\Big( - \mfrac{1}{2(2 + \gamma)} \| \bmu \|_2^2 \Big)
    \Big(
    - \exp\left( - \mfrac{1}{2(6+11\gamma+6\gamma^2+\gamma^3)} \| \mu \|_2^2 \right)
    +
    1
    \Big)
    \\
    \;=&\;
    \mfrac{1}{(3+\gamma)(2 + \gamma)} \| \bmu \|_2^2 + o\Big( \mfrac{1}{(3+\gamma)(2 + \gamma)} \| \bmu \|_2^2 \Big) + O\Big( \mfrac{\| \bmu \|_2^2}{\gamma^{3}} \Big)
    \\
    \;=&\;
    \mfrac{\|\mu\|_2^2 }{(3+\gamma)(2 + \gamma)}
    +
    o\Big( \mfrac{\| \bmu \|_2^2}{ \gamma^{2}} \Big)
    \;. \tagaligneq \label{eqn:mmd:A1A2:taylor}
\end{align*}
In the last line, we have used that the dominating term is of the order $\| \bmu \|_2^2 / \gamma^2$. For $A_2$, we recall from \eqref{eqn:mmd:A2:taylor} that $A_2 = - \frac{\|\mu\|_2^4}{4\gamma^2} + o\big( \frac{\|\mu\|_2^4}{\gamma^2} \big)$. Substituting the computations into \eqref{eqn:ksd:A:alt} and using \eqref{eq:mmd:pf:alpha:taylor}, we obtain that
\begin{align*}
    A
    \;=&\; 
    (A_1+A_2)
    -
    (1-(1-\alpha)^{d/2}) A_2
    \\
    \;=&\;
    \mfrac{\|\mu\|_2^2 }{(3+\gamma)(2 + \gamma)}
    +
    o\Big( \mfrac{\| \bmu \|_2^2}{ \gamma^{2}} \Big)
    +
    \Big( \mfrac{d}
    {2(2+\gamma)^2} + O\Big( \mfrac{d}{\gamma^4} \Big) \Big)
    \Big(
    \mfrac{\|\mu\|_2^4}{4(2+\gamma)^2} + o\Big( \mfrac{\|\mu\|_2^4}{\gamma^2} \Big)
    \Big)
    \\
    \;=&\;
    \Theta\Big(  \mfrac{\|\mu\|_2^2}{\gamma^2}   
    + 
    \mfrac{d\|\mu\|_2^4 }{\gamma^4} \Big)
    \;.
\end{align*}
We use a similar argument to compute $B$. By grouping terms appropriately and performing Taylor expansions, we have
\begin{align*}
    B_1 - B_2 - B_3 
    \;=&\; \exp\Big( - \mfrac{1}{4+\gamma} \|\mu\|_2^2 \Big) - \exp\Big( - \mfrac{1}{2+\gamma} \|\mu\|_2^2 \Big)
    \\
    &\; - 4\exp \Big( - \mfrac{2+\gamma}{2(1+\gamma)(3+\gamma)}  \| \mu \|_2^2 \Big) +  4 \exp\Big( - \mfrac{1}{2(2+\gamma)} \|\mu\|_2^2 \Big) 
    \\
    \;=&\; \exp\Big( - \mfrac{1}{4+\gamma} \|\mu\|_2^2 \Big) \Big( 1 - \exp\Big( - \mfrac{2}{(4+\gamma)(2+\gamma)} \|\mu\|_2^2 \Big) \Big)
    \\
    &\;
    -
    4\exp\Big( - \mfrac{1}{2(2 + \gamma)} \| \bmu \|_2^2 \Big)
    \Big(
    - \exp\left( - \mfrac{1}{2(6+11\gamma+6\gamma^2+\gamma^3)} \| \mu \|_2^2 \right)
    +
    1
    \Big)\;,  
    \\
    \;=&\;
    \mfrac{2 \|\mu\|_2^2}{(4+\gamma)(2+\gamma)} 
    +
    o\Big( \mfrac{2 \|\mu\|_2^2}{(4+\gamma)(2+\gamma)}  \Big)
    + 
    O\left( \mfrac{\|\bmu\|_2^2}{\gamma^3} \right)
    \\
    \;=&\;
    \mfrac{2 \|\mu\|_2^2}{(4+\gamma)(2+\gamma)} 
    +
    o\Big( \mfrac{\|\bmu\|_2^2}{\gamma^2} \Big) \tagaligneq \label{eqn:mmd:B1B2B3:taylor}
    \;.
\end{align*}
By performing Taylor expansions again, we can control $B_1$ and $B_3$ as
\begin{align*}
    B_1 \;=&\;  1 + \exp\Big( - \mfrac{1}{4+\gamma} \|\mu\|_2^2\Big) \;=\; 2 + o(1)\;, \tagaligneq \label{eqn:mmd:B1:taylor}
    \\
    B_3 \;=&\; 
    1 + \exp\Big( - \mfrac{1}{2+\gamma} \|\mu\|_2^2 \Big)
    - 4\exp \Big( - \mfrac{1}{2(2+\gamma)}  \| \mu \|_2^2 \Big)
    \;=\;
    -2
    + o(1)\;. \tagaligneq \label{eqn:mmd:B3:taylor}
\end{align*}
Substituting the bounds into \eqref{eqn:ksd:B:alt} and using the bounds in \eqref{eq:mmd:pf:alpha:taylor} and \eqref{eq:mmd:pf:beta:taylor}, we obtain that
\begin{align*}
    B\;=&\;
    (B_1-B_2-B_3) + ((1+\beta)^{d/2}-1) B_1 + (1-(1-\alpha)^{d/2}) B_3
    \\
    \;=&\;
    \mfrac{2 \|\mu\|_2^2}{(4+\gamma)(2+\gamma)} +
    o\left( \mfrac{\|\bmu\|_2^2}{\gamma^2} \right)
    +
    \Big( \mfrac{3d}{2\gamma(4 + \gamma)} + O\left(\mfrac{d}{ \gamma^{4} } \right) \Big) \; (2+o(1))
    \\
    &\;+
    \Big( \mfrac{d}{2 (2 + \gamma)^2} + O\left(\mfrac{d}{ \gamma^{4} } \right) \Big)
    (-2+o(1))
    \\
    \;=&\;
    \mfrac{2 \|\mu\|_2^2}{(4+\gamma)(2+\gamma)} 
    +
    o\left( \mfrac{\|\bmu\|_2^2}{\gamma^2} \right)
    +
    \mfrac{2 d(6+4\gamma+\gamma^2)}{\gamma(2+\gamma)^2(4+\gamma)}
    +
    o\left( \mfrac{d}{\gamma^2} \right)
    \;=\;
    \Theta\Big( \mfrac{\|\mu\|_2^2}{\gamma^2} + \mfrac{d}{\gamma^2} \Big)
    \;.
\end{align*}
The variance ratio can therefore be bounded as
\begin{align*}
    \rho_d
    \;=\;
    \mfrac{\sigfull}{\sigcond}
    \;=\;
    \mfrac{\sqrt{B}}{\sqrt{A}}
    \;=\;
    \Theta\Big( \, \Big(\mfrac{ \gamma^{-2} \|\mu\|_2^2 + \gamma^{-2} d }{ \gamma^{-2} \|\mu\|_2^2 +  \gamma^{-4}  d\|\mu\|_2^4 } \Big)^{1/2} \, \Big)
    \;=&\;
    \Theta\Big( \,  \mfrac{ (\|\mu\|_2^2 + d)^{1/2} }{  ( \|\mu\|_2^2+  \gamma^{-2}  d\|\mu\|_2^4)^{1/2} } \, \Big)
    \\
    \;\overset{(a)}{=}&\;
    \Theta\Big( \,  \mfrac{ \|\mu\|_2 + d^{1/2} }{ \|\mu\|_2 +  \gamma^{-1}  d^{1/2} \|\mu\|_2^2 } \, \Big)
    \;.
\end{align*} 
In $(a)$, we have noted that for $a, b > 0$, $\sqrt{a+b} \leq \sqrt{a} + \sqrt{b}$ and, by the concavity of the square-root function, $\sqrt{a+b} \;=\; \sqrt{\frac{1}{2} (2a) + \frac{1}{2} (2b) } \geq \frac{1}{\sqrt{2}} (\sqrt{a} + \sqrt{b})$. 

\vspace{.2em}

\paragraph{Case 4:  $\gamma = \omega(\| \bmu \|_2^2)$ and $\gamma = \Theta(d^{1/2})$.} We can directly make use of the computations from Case 2 and 3 except that we control $(1-\alpha)^{d/2}$ and $(1+\beta)^{d/2}$ differently. Since 
\begin{align*}
    0 \;\leq\; (1-\alpha)^{d/2} \;=\; (1-\frac{1}{(2+\gamma)^2})^{d/2} \;\leq\; 1 \;,
\end{align*}
we see that $A=(A_1+A_2) - (1-(1-\alpha)^{d/2}) A_2$ takes value between $A_1+A_2$ and $A_1$, whose Taylor expansions under $\gamma=\omega(\|\mu\|_2^2)$ have been obtained in \eqref{eqn:mmd:A1A2:taylor} and \eqref{eqn:mmd:A1:taylor} respectively. Therefore,
\begin{align*}
    A \;=\; 
    \Theta\Big( (A_1+A_2) + A_1 \Big)
    \;=\; 
    \Theta\Big( \mfrac{\|\mu\|_2^4}{\gamma^2} \Big)\;.
\end{align*}
To compute $B$, we first recall the Taylor expansion from \eqref{eq:mmd:beta:expansion} using $\gamma = \omega(1)$ and additionally make use of $\gamma=\Theta(d^{1/2})$ to get
\begin{align*}
    (1 + \beta)^{d/2}
    \;=\;
    \exp\Big( 
        \mfrac{3d}{2\gamma^2} + o\left(\mfrac{d}{ \gamma^{2}} \right)
    \Big)
    \;=\;
    \exp\big( \Theta(1) \big)
    \;=\;
    O(1)
    \;.
\end{align*}
By using the expressions from \eqref{eqn:mmd:B1B2B3:taylor}, \eqref{eqn:mmd:B1:taylor} and \eqref{eqn:mmd:B3:taylor}, we get that
\begin{align*}
    B 
    \;=&\; 
    (B_1-B_2-B_3) + ((1+\beta)^{d/2}-1) B_1 + (1-(1-\alpha)^{d/2}) B_3
    \\
    \;=&\;
    \mfrac{2 \|\mu\|_2^2}{(4+\gamma)(2+\gamma)} 
    +
    o\Big( \mfrac{\|\bmu\|_2^2}{\gamma^2} \Big) 
    +
    ((1+\beta)^{d/2}-1)  (2+o(1)) 
    +
    (1-(1-\alpha)^{d/2}) ( - 2 + o(1))
    \\
    \;=&\;
    \Theta\Big( \mfrac{\|\bmu\|_2^2}{\gamma^2}
    +
    ((1+\beta)^{d/2}+(1-\alpha)^{d/2}-2) 
    \Big)
    \\
    \;=&\;
    O( \gamma^{-2} \|\bmu\|_2^2 + 1)
    \;=\; O(1)
    \;.
\end{align*}
In the last equality, we have noted that $\gamma^{-2} \| \mu\|_2^2 = o(\gamma^{-1}) = o(1) $ by assumption. By additionally noting that $\gamma=\Theta(d^{1/2})$, the variance ratio can therefore be bounded as
\begin{align*}
    \rho_d
    =
    \mfrac{\sigfull}{\sigcond}
    =
    \mfrac{\sqrt{B}}{\sqrt{A}}
    =
    O\Big( \,  \mfrac{ 1 }{ \gamma^{-1} \|\mu\|_2^2 } \, \Big)
    =
    O\Big( \,  \mfrac{ d^{1/2} }{ \|\mu\|_2^2 } \, \Big)
    \;.
\end{align*} 
This completes the proof.

\section{Proofs for \cref{appendix:gaussian:mean:shift}} \label{appendix:proof:gaussian:mean:shift} 

\subsection{Proofs for RBF decomposition and verifying \cref{assumption:L_nu}}

In this section, we prove Lemma \ref{lem:rbf:decomposition}, Lemma \ref{lem:ksd:assumption:L_nu} and Lemma \ref{lem:mmd:assumption:L_nu}.

\vspace{.2em}

\begin{proof}[Proof of Lemma \ref{lem:rbf:decomposition}] We first focus on the one-dimensional RBF kernel, denoted as $\kappa_1$, which can be expressed for $x, x' \in \R$ as
\begin{align*}
    | \kappa_1(x, x') |
    \;=\;
    \big| \exp(-(x - x')^2/(2 \gamma)) \big|
    \;=\; 
    \Big|
    \exp\Big( \mfrac{ x x'}{\gamma}  \Big)
    e^{- x^2 / (2\gamma) } e^{- (x')^2 / (2\gamma) }
    \Big|
    \;.
\end{align*}
By applying a Taylor expansion around $0$ to the infinitely differentiable function $z \mapsto \exp( \frac{z}{\gamma})$ for $z \in \R$, we obtain that for any $K \in \N$ and every $x, x' \in \R$.
\begin{align*}
    \Big| \kappa_1(x, x') - &\msum_{k=0}^K
    \mfrac{1}{k!} \Big( \mfrac{x x'}{\gamma} \Big)^k
    e^{- x^2 / (2\gamma) } e^{- (x')^2 / (2\gamma) }
    \Big|
    \\
    \;\leq&\;
    \msup_{z \in [0, x x']} \Big| \mfrac{1}{(K+1)!} \Big( \mfrac{x x'}{\gamma} \Big)^{K+1} e^{z/\gamma} \Big|
    \; e^{- x^2 / (2\gamma) } e^{- (x')^2 / (2\gamma) }\;.
\end{align*}
Fix $\nu \in (2,4]$. Consider two independent normal random variables $U \sim \cN(b_1,1)$ and $V \sim \cN(b_2,1)$ for some $b_1,b_2 \in \R$, and recall that $\phi^*_{k}(x) \coloneqq x^k
    e^{- x^2 / (2\gamma) }$ and $\lambda^*_{k} \coloneqq \frac{1}{k! \, \gamma^k} $. The above then implies that
\begin{align*}
    \mean\Big[
    \Big| \kappa_1(U, V) - \msum_{k=0}^K
    \lambda^*_{k} &\phi^*_{k}(U) \phi^*_{k}(V)
    \Big|^\nu
    \Big]
    \\
    \;\leq&\;
    \mean\Big[
    \msup_{z \in [0, UV]} \Big| \mfrac{1}{(K+1)!} \Big( \mfrac{UV}{\gamma} \Big)^{K+1} e^{z/\gamma} \Big|^\nu
    \; e^{- \nu U^2 / (2\gamma) } e^{- \nu V^2 / (2\gamma) }
    \Big]
    \\
    \;=&\;
    \mfrac{1}{((K+1)! \; \gamma^{K+1})^\nu } \, 
    \mean\big[
    |UV|^{\nu(K+1)}
    e^{- \nu U^2 / (2\gamma) - \nu V^2 / (2\gamma) + \msup_{z \in [0, UV]} \nu z/\gamma}
    \big]
    \\
    \;\leq&\;
    \mfrac{1}{((K+1)! \; \gamma^{K+1})^\nu} \, 
    \mean\big[
    |UV|^{\nu(K+1)}
    e^{- \nu (|U| - |V|)^2 / (2\gamma)}
    \big]
    \\
    \;\leq&\;
    \mfrac{1}{((K+1)! \; \gamma^{K+1})^\nu} \, 
    \mean\big[ |U|^{\nu(K+1)} \big] \; \mean\big[| V|^{\nu(K+1)}\big]\;.
\end{align*}
In the last inequality, we have noted that $U$ and $V$ are independent and bounded the exponential term from above by $1$. By the formula of absolute moments of a Gaussian, we get that 
\begin{align*}
    \mean\big[ |U - b_1 |^{\nu(K+1)} \big]  
    \;=\;
    \mean\big[ |V - b_2 |^{\nu(K+1)} \big]  
    \;=\; \mfrac{2^{(\nu K)/2}}{\sqrt{\pi}} \Gamma\Big( \mfrac{\nu K+1}{2}\Big)\;.
\end{align*}
By a Jensen's inequality applied to the convex function $x \mapsto |x|^{\nu(K+1)}$, we get that
\begin{align*}
    \mean\big[ & |U|^{\nu(K+1)} \big]  \;=\; \mean\big[ |b_1 + (U-b_1)|^{\nu(K+1)} \big] 
    \;=\;  
    \mean\Big[ \Big| \mfrac{1}{2} (2b_1) + \mfrac{1}{2} (2(U-b_1))\Big|^{\nu(K+1)} \Big]
    \\
    \;\leq&\; 2^{\nu(K+1)-1} \big( b^{\nu(K+1)} + \mean[ |U -b_1|^{\nu(K+1)}] \big) \;=\; \mfrac{(2b_1)^{\nu(K+1)}}{2} + \mfrac{2^{\frac{3}{2} \nu (K+1)}}{2 \sqrt{\pi}} \Gamma\Big( \mfrac{\nu (K+1) + 1}{2}\Big)\;.
\end{align*}
Similarly, we get that
\begin{align*}
    \mean\big[ |V|^{\nu(K+1)} \big]
    \;\leq&\;
    \mfrac{(2b_2)^{\nu(K+1)}}{2} + \mfrac{2^{\frac{3}{2} \nu (K+1)}}{2 \sqrt{\pi}} \Gamma\Big( \mfrac{\nu (K+1) + 1}{2} \Big)\;. \tagaligneq \label{eqn:rbf:decompose:gaussian:moment}
\end{align*}
Substituting these moment bounds and noting that $(K+1)! = \Gamma(K+2)$, we get that
\begin{align*}
    \mean\bigg[
    \bigg| \kappa_1(U, V) - \msum_{k=0}^K
    \lambda^*_{k} \phi^*_{k}(U) &\phi^*_{k}(V)
    \bigg|^\nu
    \bigg]
    \\
    \;\leq&\;
    \mfrac{ 1 }
    {\gamma^{\nu(K+1)} \big(\Gamma(K+2)\big)^\nu} 
    \Big( \mfrac{(2b_1)^{\nu(K+1)}}{2} 
    + 
    \mfrac{2^{\frac{3}{2} \nu (K+1)}}{2 \sqrt{\pi}} \Gamma\Big( \mfrac{\nu (K+1) + 1}{2}\Big) \Big)
    \\
    &\;\times 
    \Big( \mfrac{(2b_2)^{\nu(K+1)}}{2} 
    + 
    \mfrac{2^{\frac{3}{2} \nu (K+1)}}{2 \sqrt{\pi}} \Gamma\Big( \mfrac{\nu (K+1) + 1}{2}\Big) \Big)
    \\
    \;\eqqcolon&\; T \,(A_1+B)(A_2+B)
    \;.
\end{align*}
As $K$ grows, the dominating terms are the Gamma functions, so we only need to control their ratios. By Stirling's formula for the gamma function, we have $\Gamma(x) = \sqrt{2\pi} \,
x^{x-1/2} e^{-x} \big( 1 + O(x^{-1})\big)$ for $x > 0$. This immediately implies that
\begin{align*}
    T A_1 A_2
     \;=\;
     \Theta\Big(
     \mfrac{ (4b_1b_2 / \gamma)^{\nu(K+1)} }
    {(K+2)^{\nu(K+3/2)} e^{-\nu(K+2)}} 
     \Big)
     \;=\;o(1)\;
\end{align*}
as $K \rightarrow \infty$. Meanwhile,
\begin{align*}
    \mfrac{\Gamma\big( \frac{\nu (K+1) + 1}{2} \big)}{ \big(\Gamma(K+2)\big)^\nu}
    \;=&\;
    \Theta\Big( \mfrac{K^{\nu K/2}}{ K^{\nu K}}  \Big) \;=\; \Theta\Big( K^{-\nu K/2}\Big)\;,
\end{align*}
which implies that
\begin{align*}
    T A_1 B \;=\; \Theta \Big( ( 4\sqrt{2} b_1 / \gamma)^{\nu K}  K^{-\nu K/2} \Big) \;=\; o(1)\;,
\end{align*}
since the dominating term is $K^{-\nu K/2}$. Similarly, $T A_2 B = o(1)$. On the other hand, another application of Stirling's formula gives that
\begin{align*}
    \mfrac{\big(\Gamma\big( \frac{\nu (K+1) + 1}{2} \big)^2}{ \big(\Gamma(K+2)\big)^\nu}
    \;=&\;
    (2\pi)^{-(\nu-2)/2}
    \, 
    \mfrac{ \big( \frac{\nu (K+1)+1}{2} \big)^{\nu (K+1)} }{ (K+2)^{\nu(K+3/2)} }
    \,
    e^{ - \nu (K+1) - 1 + \nu(K+2) }
    \,
    \mfrac{\big(1+O(K^{-1})\big)^2}{\big( 1 + O(K^{-1}) \big)^\nu}
    \\
    \;=&\;
    \Theta\Big(  \mfrac{ (\nu/2)^{\nu K} K^{\nu K} }{ K^{\nu(K+3/2)} } \Big)
    \;=\;
    \Theta\big(  (\nu/2)^{\nu K} K^{-3\nu/2} \big)
    \;.
\end{align*}
This implies that
\begin{align*}
    T B^2 \;=\; 
    \Theta\big( (8/\gamma)^{\nu K} (\nu/2)^{\nu K} K^{-3\nu/2} \big)
    \;=\;
    \Theta\big( (2 \nu/\gamma)^{\nu K}  K^{-3\nu/2} \big)
    \;=\;o(1)\;,
\end{align*}
where we have recalled that $\nu \leq 4$ and used the assumption that $\gamma > 8$. In summary, we have proved that for $\nu \in (2,4]$ and \emph{any} fixed $b_1, b_2 \in \R$,
\begin{align*}
    \mean\Big[
    \Big| \kappa_1(U, V) - \msum_{k=0}^K
    \lambda^*_{k} &\phi^*_{k}(U) \phi^*_{k}(V)
    \Big|^\nu
    \Big] 
    \;\leq\; T(A_1+B)(A_2+B)
    \;\xrightarrow{K \rightarrow \infty}\; 0\;.
\end{align*}
To extend this to multiple dimensions, we note that for the vectors $\bx=(x_1,\ldots,x_d) \in \R^d$ and $\bx'=(x_1,\ldots,x_d) \in \R^d$, the multi-dimensional RBF kernel can then be expressed as
\begin{align*}
    \kappa(\bx,\bx') 
    \;=\; 
    \exp\big( - \|\bx-\bx'\|_2^2/(2\gamma)\big)
    \;=\; 
    \mprod_{l=1}^d \exp\big( - (x_l-x'_l)^2/(2\gamma)\big)
    \;=\;
    \mprod_{l=1}^d \kappa_1(x_l, x'_l)\;.
\end{align*}
Recall that we have defined the independent normal vectors $\bU \sim \cN(\bzero, I_d)$ and $\bV \sim \cN(\mu, I_d)$. Let $U_1, \ldots, U_d$ be the coordinates of $\bU$ and $V_1, \ldots, V_d$ be those of $\bV$, which are all independent since the covariance matrices are $I_d$. For $0 \leq l \leq d$ and $K \in \N$, define the random quantities
\begin{align*}
    S_{j;K} \;\coloneqq&\; \msum_{k=0}^K \lambda^*_{k} \phi^*_{k}(U_j) \phi^*_{k}(V_j)
    &\text{ and }&&
    W_{l;K} \;\coloneqq&\; 
    \Big( \mprod_{j=1}^l \kappa_1(U_j, V_j)\Big)
    \Big( \mprod_{j=l+1}^d S_{j;K} \Big)
    \;.
\end{align*}
In particular $\kappa(\bU,\bV) = W_{d;K}$. Now by expanding a telescoping sum and applying a triangle inequality followed by a Jensen's inequality, we have
\begin{align*}
    &\mean\big[ | \kappa(\bU,\bV) - W_{0;K} |^\nu \big]
    \;=\;
    \mean\Big[ \Big| \msum_{l=1}^d (W_{l;K} - W_{l-1;K}) \Big|^\nu \Big]
    \\
    \;\leq&\;
    \mean\Big[ \Big( \msum_{l=1}^d |W_{l;K} - W_{l-1;K}| \Big)^\nu \Big]
    \\
    \;\leq&\;
    d^{\nu-1} \msum_{l=1}^d \mean[ |W_{l;K} - W_{l-1;K}|^\nu ]
    \\
    \;=&\;
    d^{\nu-1} \msum_{l=1}^d
    \Big( \mprod_{j=1}^{l-1} \mean[ | \kappa_1(U_j,V_j)|^\nu]\Big)
    \mean[ | \kappa_1(U_l, V_l) - S_{l;K} |^\nu ]
    \Big( \mprod_{j=l+1}^d \mean[ | S_{j;K} |^\nu]\Big)
    \;. 
\end{align*}
In the last equality, we have used the independence of $U_j$'s and $V_j$'s. To bound the summands, we first note that $\kappa_1$ is uniformly bounded in norm by $1$, which implies that $\mean[|\kappa_1(U_j,V_j)|^\nu] \leq 1$. By the previous result, $\mean[ | \kappa_1(U_l, V_l) - S_{l;K} |^\nu ] = o(1)$ as $K \rightarrow \infty$. By a triangle inequality and a Jensen's inequality, we have that
\begin{align*}
    \mean[ | S_{j;K} |^\nu] 
    \;\leq&\; 
    \mean\big[ 
    \big| 
    | \kappa_1(U_j,V_j) | + | S_{j;K} - \kappa_1(U_j,V_j)| 
    \big|^\nu 
    \big]
    \\
    \;\leq&\;
    2^{\nu-1}
    \mean\big[ | \kappa_1(U_j,V_j) |^\nu \big] 
    + 
    2^{\nu-1}
    \mean\big[ | S_{j;K} - \kappa_1(U_j,V_j)|^\nu \big] \;\leq\; 2^{\nu - 1} + o(1)\;.
\end{align*}
This implies that each summand satisfies
\begin{align*}
    \Big( \mprod_{j=1}^{l-1} \mean[ | \kappa_1(U_j,V_j)|^\nu]\Big)
    \mean[ | \kappa_1(U_l, V_l) - S_{l;K} |^\nu ]
    \Big( \mprod_{j=l+1}^d \mean[ | S_{j;K} |^\nu]\Big)
    \;=\; o(1)\;
\end{align*}
as $K \rightarrow \infty$. Since $d$ is not affected by $K$, we have shown the desired result
\begin{align*}
    \mean\Big[ \Big| \kappa(\bU,\bV) - 
    \mprod_{j=1}^d \Big(
    \msum_{k=0}^K \lambda^*_{k} \phi^*_{k}(U_j) \phi^*_{k}(V_j)
    \Big)
    \Big|^\nu \Big]
    \;=\;
    \mean\big[ | \kappa(\bU,\bV) - W_{0;K} |^\nu \big]
    \;\xrightarrow{K \rightarrow \infty}\;
    0\;.
\end{align*}

\end{proof}

\vspace{.2em}

\begin{proof}[Proof of Lemma \ref{lem:ksd:assumption:L_nu}] We first rewrite $\uPksd$ as
\begin{align*}
    \uPksd(\bx, \bx')
    \;=&\;
    e^{ - \| \bx - \bx' \|_2^2 / (2\gamma) }
    \Big( 
        \bx^\top \bx' 
        - \mfrac{\gamma + 1}{\gamma^2} \| \bx - \bx' \|_2^2
        + \mfrac{d}{\gamma}
    \Big)
    \\
    \;=&\;
    e^{ - \| \bx - \bx' \|_2^2 / (2\gamma) }
    \Big( 
        - \mfrac{\gamma+1}{\gamma^2}(\|\bx\|_2^2 + \|\bx'\|_2^2)
        +
        \mfrac{\gamma^2+2\gamma+2}{\gamma^2}
        \bx^\top \bx'
        +
        \mfrac{d}{\gamma}
    \Big)
    \\
    \;=&\;
    e^{ - \| \bx - \bx' \|_2^2 / (2\gamma) }
    \Big( 
        - \mfrac{\gamma+1}{\gamma^2}
        (\|\bx\|_2^2 + 1)(\|\bx'\|_2^2 + 1)
        +
        \mfrac{\gamma+1}{\gamma^2} \|\bx\|_2^2 \|\bx'\|_2^2
    \\
    &\;\;\;\qquad\qquad\qquad+
        \mfrac{\gamma^2+2\gamma+2}{\gamma^2}
        \msum_{l=1}^d x_l x'_l
        +
        \Big( \mfrac{d}{\gamma} + \mfrac{\gamma+1}{\gamma^2} \Big)
    \Big)\;.
\end{align*}
For $K' \in \N$, write $S_{K'} \coloneqq \sum_{k'=1}^{K'} \alpha_{k'} \psi_{k'}(\bX_1)\psi_{k'}(\bX_2)$, and define the following random variables comparing each set of eigenvalue and eigenfunction to the corresponding term in $\uPksd$:
\begin{align*}
    T_{K';1} 
    \;=&\; 
    \msum_{k'=1}^{K'} \lambda_{(k'-1)(d+3)+1} \, \phi_{(k'-1)(d+3)+1}(\bX_1)  \, \phi_{(k'-1)(d+3)+1}(\bX_1)
    \\
    &\;\quad -
    e^{ - \| \bX_1 - \bX_2 \|_2^2 / (2\gamma) } \Big( - \mfrac{\gamma+1}{\gamma^2}
        (\|\bX_1\|_2^2 + 1)(\|\bX_2\|_2^2 + 1)\Big)
    \\
    \;=&\;
     - \mfrac{\gamma+1}{\gamma^2}
        (\|\bX_1\|_2^2 + 1)(\|\bX_2\|_2^2 + 1) S_{K'}\;,
    \\
    T_{K';2} 
    \;=&\; 
    \msum_{k'=1}^{K'} \lambda_{(k'-1)(d+3)+2} \, \phi_{(k'-1)(d+3)+2}(\bX_1)  \, \phi_{(k'-1)(d+3)+2}(\bX_1)
    \\
    &\;\quad -
    e^{ - \| \bX_1 - \bX_2 \|_2^2 / (2\gamma) } \Big( \mfrac{\gamma+1}{\gamma^2}
        \|\bX_1\|_2^2 \|\bX_2\|_2^2 \Big)
    \\
    \;=&\;
     \mfrac{\gamma+1}{\gamma^2}
        \|\bX_1\|_2^2 \|\bX_2\|_2^2 \, S_{K'}\;,
\end{align*}
\begin{align*}
    T_{K';3} 
    \;=&\; 
    \msum_{k'=1}^{K'} \lambda_{(k'-1)(d+3)+3} \, \phi_{(k'-1)(d+3)+3}(\bX_1)  \, \phi_{(k'-1)(d+3)+3}(\bX_1)
    \\
    &\;\quad -
     e^{ - \| \bX_1 - \bX_2 \|_2^2 / (2\gamma) } \Big( \mfrac{d}{\gamma} + \mfrac{\gamma+1}{\gamma^2} \Big)
    \\
    \;=&\;
    \Big( \mfrac{d}{\gamma} + \mfrac{\gamma+1}{\gamma^2} \Big) \, S_{K'}\;,
    \\
    T_{K';3+l} 
    \;=&\; 
    \msum_{k'=1}^{K'} \lambda_{(k'-1)(d+3)+3+l} \, \phi_{(k'-1)(d+3)+3+l}(\bX_1)  \, \phi_{(k'-1)(d+3)+3+l}(\bX_1)
    \\
    &\;\quad -
     e^{ - \| \bX_1 - \bX_2 \|_2^2 / (2\gamma) } \Big( \mfrac{\gamma^2+2\gamma+2}{\gamma^2} (\bX_1)_l (\bX_2)_l  \big)
    \\
    \;=&\;
    \Big( \mfrac{\gamma^2+2\gamma+2}{\gamma^2} (\bX_1)_l (\bX_2)_l \Big) \, S_{K'}\;
\end{align*}
for $l=1, \ldots, d$, where we have denoted the $l$-th coordinates of $\bX_1$ and $\bX_2$ by $(\bX_1)_l$ and $(\bX_2)_l$ respectively. We now bound the approximation error with $(d+3)K'$ summands for $K' \in \N$ and $\nu \in (2,3]$. Fix some $\nu_1 \in (\nu, 4]$ and let $\nu_2 = 1/(\nu^{-1} - \nu_1^{-1})$. By using the quantites defined above, a Jensen's inequality to the convex function $x \mapsto |x|^\nu$ and a H\"older's inequality to each $\mean[ | T_{K';l}  |^\nu ]$, we have
\begin{align*}
    &\;\mean\big[ \big| \msum_{k=1}^{(d+3)K'} \lambda_k \phi_k(\bX_1)\phi_k(\bX_2) - 
    \uPksd(\bX_1, \bX_2) \big|^\nu \big]
    \\
    \;=&\;
    \mean\big[ \big| \msum_{l=1}^{d+3}T_{K';l} \big|^\nu \big]
    \\
    \;\leq&\;
    (d+3)^{\nu - 1}
    \msum_{l=1}^{d+3}
    \mean[ | T_{K';l}  |^\nu ]
    \\
    \;\leq&\;
    (d+3)^{\nu - 1}
    \mean[ | S_{K'}  |^{\nu_1} ]^{\nu / \nu_1}
    \Big(
    \;
    \Big(\mfrac{\gamma+1}{\gamma^2}\Big)^\nu
     \mean[ (\|\bX_1\|_2^2 + 1)^{\nu_2}]^{\nu / \nu_2} \; \mean[ (\|\bX_2\|_2^2 + 1)^{\nu_2} ]^{\nu / \nu_2}
     \\
    &\;\qquad\qquad\qquad\qquad\qquad
    + \Big(\mfrac{\gamma+1}{\gamma^2}\Big)^\nu
     \mean\big[ \|\bX_1\|_2^{2\nu_2}\big]^{\nu / \nu_2} \mean\big[ \|\bX_2\|_2^{2\nu_2}\big]^{\nu / \nu_2}
     +
     \Big( \mfrac{d}{\gamma} + \mfrac{\gamma+1}{\gamma^2} \Big)^\nu
     \\
    &\;\qquad\qquad\qquad\qquad\qquad
    +
    \msum_{l=1}^d
     \Big( \mfrac{\gamma^2+2\gamma+2}{\gamma^2} \Big)^{\nu}
     \mean\big[ | (\bX_1)_l |^{\nu_2} \big]^{\nu / \nu_2} 
      \mean\big[ | (\bX_2)_l |^{\nu_2} \big]^{\nu / \nu_2}
    \Big)\;.
\end{align*}
The only $K'$-dependence above comes from $\mean[ | S_{K'}  |^{\nu_1} ]^{\nu / \nu_1} = \| S_{K'} \|_{L_{\nu_1}}^\nu$, which converges to 0 as $K'$ grows by Lemma \ref{lem:rbf:decomposition}. Therefore
\begin{align*}
    \mean\big[ \big| \msum_{k=1}^{(d+3)K'} \lambda_k \phi_k(\bX_1)\phi_k(\bX_2) - 
    \uPksd(\bX_1, \bX_2) \big|^\nu \big]
    \;\xrightarrow{K' \rightarrow \infty }0\;.
\end{align*}
Now for $K \in \N$ not necessarily divisible by $d+3$, we let $K'$ be the largest integer such that $dK' \leq K$. By a triangle inequality and a Jensen's inequality, we have
\begin{align*}
    &\;\mean\big[ \big| \msum_{k=1}^{K} \lambda_k \phi_k(\bX_1)\phi_k(\bX_2) - 
    \uPksd(\bX_1, \bX_2) \big|^\nu \big]
    \\
    \;\leq&\;
    \mean\Big[ \Big( 
    \big| \msum_{k=1}^{(d+3)K'} \lambda_k \phi_k(\bX_1)\phi_k(\bX_2) - 
    \uPksd(\bX_1, \bX_2) \big| 
    +
    \big| \msum_{k=(d+3)K'+1}^{K} \lambda_k \phi_k(\bX_1)\phi_k(\bX_2) \big|
    \Big)^\nu \Big]
    \\
    \;\leq&\;
    2^{\nu-1}
    \mean\big[
    \big| \msum_{k=1}^{(d+3)K'} \lambda_k \phi_k(\bX_1)\phi_k(\bX_2) - 
    \uPksd(\bX_1, \bX_2) \big|^\nu \big] 
    \\
    &\;
    +
    2^{\nu-1}
    \mean\big[
    \big| \msum_{k=(d+3)K'+1}^{K} \lambda_k \phi_k(\bX_1)\phi_k(\bX_2) \big|^\nu \big]\;.
\end{align*}
The goal is to show that the bound converges to $0$ as $K$ grows. We have already shown that the first term is $o(1)$, so we focus on the second term. The expectation in the second term can be bounded using a Jensen's inequality as
\begin{align*}
    \mean\big[
    \big| \msum_{k=(d+3)K'+1}^{K} \lambda_k &\phi_k(\bX_1)\phi_k(\bX_2) \big|^\nu \big]
    \;\leq\;
    \mean\big[
    \big( \msum_{k=(d+3)K'+1}^{K} | \lambda_k \phi_k(\bX_1)\phi_k(\bX_2) | \big)^\nu \big]
    \\
    \;\leq&\;
    (K-(d+3)K')^{\nu-1} \msum_{k=(d+3)K'+1}^{K} \mean\big[ \big(\lambda_k \phi_k(\bX_1)\phi_k(\bX_2) \big)^\nu \big]
    \\
    \;\leq&\;
    d^{\nu} \msup_{k \in \{(d+3)K'+1, \ldots, (d+3)K'+(d+3)\}} \mean\big[ \big(\lambda_k \phi_k(\bX_1)\phi_k(\bX_2) \big)^\nu \big]
    \\
    \;=&\;
    d^{\nu} \msup_{1 \leq l \leq d+3} \mean\big[ \big(\lambda_{(d+3)K'+l} \phi_{(d+3)K'+l}(\bX_1)\phi_{(d+3)K'+l}(\bX_2) \big)^\nu \big]\;.
\end{align*}
By observing the formula for $\lambda_k$ and $\phi_k$, we see that there exists some $K$-independent constant $C_{d,\gamma}$ such that for $1 \leq l \leq d+3$,
\begin{align*}
    |\lambda_{(d+3)K'+l}| \;&\leq\; C_{d,\gamma} \alpha_{K'+1}
    &\text{ and }&&
    |\phi_{(d+3)K'+l}| \;&\leq\; C_{d,\gamma} \psi_{K'+1}(\bx) ( \|\bx\|_2^2 + \|\bx\|_2 + 1 )\;.
\end{align*}
This allows us to obtain the bound
\begin{align*}
    &\;\mean\big[
    \big| \msum_{k=(d+3)K'+1}^{K}\lambda_k \phi_k(\bX_1)\phi_k(\bX_2) \big|^\nu \big]
    \\
    \;\leq&\;
    d^{\nu}
    C_{d,\gamma}^2
    \alpha_{K'+1}^\nu 
    \mean[ (\psi_{K'+1}(\bX_1) \psi_{K'+1}(\bX_2))^\nu \, (\|\bX_1\|_2^2 + \|\bX_1\|_2 + 1 )^\nu(\|\bX_2\|_2^2 + \|\bX_2\|_2 + 1 )^\nu] 
    \\
    \;\overset{(a)}{=}&\;
    d^{\nu}
    C'_{d,\gamma} 
    \Big( \mprod_{l=1}^d \lambda^*_{[g_d(K'+1)]_l} \Big)^\nu 
    \mean\Big[ \Big( \mprod_{l=1}^d \phi^*_{[g_d(K'+1)]_l} \big((\bX_1)_l\big) \; \phi^*_{[g_d(K'+1)]_l}\big((\bX_2)_l)\big) \Big)^\nu
    \\
    &\;\qquad\qquad\qquad\qquad \qquad\qquad \times (\|\bX_1\|_2^2 + \|\bX_1\|_2 + 1 )^\nu(\|\bX_2\|_2^2 + \|\bX_2\|_2 + 1 )^\nu\Big] 
    \\
    \;\overset{(b)}{\leq}&\;
    d^{\nu}
    C'_{d,\gamma} 
    \mean\big[ (\|\bX_1\|_2^2 + \|\bX_1\|_2 + 1 )^{2\nu}(\|\bX_2\|_2^2 + \|\bX_2\|_2 + 1 )^{2\nu} \big]^{1/2}
    \\
    &\;\; \times \Big( \mprod_{l=1}^d \lambda^*_{[g_d(K'+1)]_l} \Big)^\nu  \mean\Big[ \Big( \mprod_{l=1}^d \phi^*_{[g_d(K'+1)]_l} \big((\bX_1)_l\big) \; \phi^*_{[g_d(K'+1)]_l}\big((\bX_2)_l)\big) \Big)^{2\nu} \Big]^{1/2}
    \\
    \;\overset{(c)}{=}&\;
    d^{\nu}
    C'_{d,\gamma} 
    \mean\big[ (\|\bX_1\|_2^2 + \|\bX_1\|_2 + 1 )^{2\nu}(\|\bX_2\|_2^2 + \|\bX_2\|_2 + 1 )^{2\nu} \big]^{1/2}
    \\
    &\;\; \times \mprod_{l=1}^d \big(\lambda^*_{[g_d(K'+1)]_l}\big)^\nu  \; \Big( \mprod_{l=1}^d \mean\big[ \big(\phi^*_{[g_d(K'+1)]_l} \big((\bX_1)_l\big) \,\big)^{2\nu} \big] \mean\big[ \big(\phi^*_{[g_d(K'+1)]_l} \big((\bX_2)_l\big) \,\big)^{2\nu} \big] \Big)^{1/2}
    \\
    \;\overset{(d)}{=}&\;
    d^{\nu}
    C'_{d,\gamma} 
    \mean\big[ (\|\bX_1\|_2^2 + \|\bX_1\|_2 + 1 )^{2\nu}(\|\bX_2\|_2^2 + \|\bX_2\|_2 + 1 )^{2\nu} \big]^{1/2}
    \\
    &\;\; \times \mprod_{l=1}^d \Big( \big(\lambda^*_{[g_d(K'+1)]_l}\big)^\nu  \; \mean\big[ \big(\phi^*_{[g_d(K'+1)]_l} \big((\bX_1)_l\big) \,\big)^{2\nu} \big] \Big)
    \;,
\end{align*}
where we have used the definitions of $\alpha_k$ and $\psi_k$ from \eqref{eqn:rbf:decomposition:alt:defn} in $(a)$, a Cauchy-Schwarz inequality in $(b)$, the independence of $(\bX_1)_l$ and $(\bX_2)_l$ for $1 \leq l \leq d$ due to the identity covariance matrix in $(c)$ and finally the fact that $\bX_1$ and $\bX_2$ are identically distributed in $(d)$. The only quantity that depends on $K'$ now is 
\begin{align*}
    \big(\lambda^*_{[g_d(K'+1)]_l}\big)^\nu  \; \mean\big[ \big(\phi^*_{[g_d(K'+1)]_l} \big((\bX_1)_l\big) \,\big)^{2\nu} \big]
\end{align*}
for $1 \leq l \leq d$. We now seek to bound this quantity. Recall from Lemma \ref{lem:rbf:decomposition} that $ \lambda^*_{k} \coloneqq \mfrac{1}{k! \, \gamma^k}$, and for $V \sim \cN(b,1)$, we have
\begin{align*}
    \mean[ (\phi^*_{k}(U))^{2\nu} ]
    \;=\;
    \mean\big[ |U|^{2 \nu k} e^{- \nu U^2 / \gamma } \big] 
    \;\leq\;
    \mean\big[ |U|^{2 \nu k}\big]
    \;\leq\;
    \mfrac{(2b)^{2\nu k}}{2} + \mfrac{2^{3\nu k}}{2 \sqrt{\pi}} \, \Gamma\Big( \mfrac{2 \nu k +1}{2}\Big)\;.
\end{align*}
where we have used a bound similar to \eqref{eqn:rbf:decompose:gaussian:moment} in the proof of Lemma \ref{lem:rbf:decomposition}. By Stirling's formula for the gamma function, we have $\Gamma(x) = \sqrt{2\pi} \,
x^{x-1/2} e^{-x} \big( 1 + O(x^{-1})\big)$ for $x > 0$, which implies
\begin{align*}
    (\lambda^*_{k})^\nu \, \mean[ (\phi^*_{k}(U))^{2\nu} ]
    \;\leq&\;
    \mfrac{1}{ (k!)^\nu \, \gamma^{\nu k}} \Big(
    \mfrac{(2b)^{2\nu k}}{2} + \mfrac{2^{3\nu k}}{2 \sqrt{\pi}} \, \Gamma\Big( \mfrac{2 \nu k +1}{2}\Big)
    \Big)
    \\
    \;=&\;
    O\Big( \Big(\mfrac{8}{\gamma} \Big)^{\nu k} \mfrac{(\nu k)^{\nu k} e^{- \nu k} }{(k+1)^{\nu(k+1/2)} e^{-\nu(k+1)}} \Big)
    \\
    \;=&\;
    O\Big( \Big(\mfrac{8 \nu}{\gamma} \Big)^{\nu k}\Big)
    \;=\;
    O\Big( \Big(\mfrac{24}{\gamma} \Big)^{\nu k}\Big) \;=\; o(1)\;
\end{align*}
as $k \rightarrow \infty$, where we have used the assumption that $\gamma > 24$. By construction of $g_d$ in \eqref{eqn:rbf:decomposition:alt:defn}, as $K' \rightarrow \infty$, $\min_{1 \leq l \leq d} [g_d(K'+1)]_l \rightarrow \infty$, which implies that 
\begin{align*}
    \big(\lambda^*_{[g_d(K'+1)]_l}\big)^\nu  \; \mean\big[ \big(\phi^*_{[g_d(K'+1)]_l} \big((\bX_1)_l\big) \,\big)^{2\nu} \big]
    \;\xrightarrow{K' \rightarrow \infty}\; 0\;.
\end{align*}
Therefore
\begin{align*}
    \mean\big[
    \big| \msum_{k=(d+3)K'+1}^{K}\lambda_k \phi_k(\bX_1)\phi_k(\bX_2) \big|^\nu \big]
    \;\xrightarrow{K \rightarrow \infty}\; 0\;,
\end{align*}
which finishes the proof that
\begin{align*}
    \mean\big[ \big| \msum_{k=1}^{K} \lambda_k \phi_k(\bX_1)\phi_k(\bX_2) - 
    \uPksd(\bX_1, \bX_2) \big|^\nu \big]
    \;\xrightarrow{K \rightarrow \infty}\; 0\;.
\end{align*}
In other words, \cref{assumption:L_nu} holds.
\end{proof}

\vspace{.2em} 

\begin{proof}[Proof of Lemma \ref{lem:mmd:assumption:L_nu}] Fix $\nu \in (2,3]$. Consider the independent Gaussian vectors $\bX_1, \bX_2 \overset{i.i.d.}{\sim} P \equiv \cN(\bzero, I_d)$ and $\bY_1, \bY_2 \overset{i.i.d.}{\sim} Q \equiv \cN(\mu, I_d)$. Write $\bZ_1 = (\bX_1,\bY_1)$, $\bZ_2 = (\bX_2,\bY_2)$ and
\begin{align*}
    T_K(\bx,\bx') 
    \;\coloneqq\; 
    e^{-\|\bx-\bx'\|_2^2/(2\gamma)}
    -
    \msum_{k=1}^K \alpha_k \psi_k(\bx) \psi_k(\bx')
\end{align*}
for $K \in \N$, and recall that
\begin{align*}
    \ummd(\bZ_1,\bZ_2)
    \;=\; 
    e^{-\|\bX_1-\bX_2\|_2^2/(2\gamma)}
    -
    e^{-\|\bX_1-\bY_2\|_2^2/(2\gamma)}
    -
    e^{-\|\bX_2-\bY_1\|_2^2/(2\gamma)}
    +
    e^{-\|\bY_1-\bY_2\|_2^2/(2\gamma)}
    \;.
\end{align*}
Then by a triangle inequality and Jensen's inequality, we get that
\begin{align*}
    &\;\mean\Big[ \Big| \ummd(\bZ_1, \bZ_2) - \msum_{k=1}^K \lambda_k \phi_k(\bZ_1) \phi_k(\bZ_2) \Big|^\nu \Big] 
    \\
    \;=&\;\mean\Big[ \Big| \ummd(\bZ_1, \bZ_2) - \msum_{k=1}^K \alpha_k \big( \psi_k(\bX_1) - \psi_k(\bY_1)\big) \big( \psi_k(\bX_2) - \psi_k(\bY_2)\big)  \Big|^\nu \Big] 
    \\
    \;=&\;
    \mean\big[ \big| T_K(\bX_1,\bX_2) -  T_K(\bX_1,\bY_2)
    -
    T_K(\bX_2,\bY_1)
    +
    T_K(\bX_2,\bY_2)
    \big|^\nu \big]
    \\
    \;\leq&\;
    4^{\nu-1}
    \big(
    \mean[ | T_K(\bX_1,\bX_2) |^\nu ] 
    +
    \mean[ | T_K(\bX_1,\bY_2) |^\nu ]
    +
    \mean[ | T_K(\bX_2,\bY_1) |^\nu ]
    +
    \mean[ | T_K(\bY_1,\bY_2) |^\nu ]
    \big)
    \;.
\end{align*}
Since each expectation is taken with respect to a product of two Gaussian distributions with identity covariance matrices, by Lemma \ref{lem:rbf:decomposition} and \eqref{eqn:rbf:decomposition:alt}, they all decay to $0$ as $K \rightarrow \infty$. This proves that 
\begin{align*}
    \mean\Big[ \Big| \ummd(\bZ_1, \bZ_2) - \msum_{k=1}^K \lambda_k \phi_k(\bZ_1) \phi_k(\bZ_2) \Big|^\nu \Big] 
    \;\xrightarrow{K \rightarrow \infty}\; 0\;,
\end{align*}
and therefore \cref{assumption:L_nu} holds.
\end{proof}

\subsection{Proof for Lemma~\ref{lem:ksd:moments:analytical}}
\label{pf:ksd:moments:analytical}
We restate the KSD U-statistic for RBF under our Gaussian mean-shift setup from \eqref{eqn:defn:ksd:rbf}:
\begin{align}
    \uPksd(\bx, \bx')
    \;=&\;
    \exp\left( - \mfrac{1}{2\gamma} \| \bx - \bx' \|_2^2  \right)
    \Big( 
        \bx^\top \bx' 
        - \mfrac{\gamma + 1}{\gamma^2} \| \bx - \bx' \|_2^2
        + \mfrac{d}{\gamma}
    \Big)
    \;.
    \label{eq:uPksd:gaussian:simplified}
\end{align}

\subsubsection{Proof for $g(\bx)$}
Fix $\bx \in \R^d$. Taking expectation of $\uPksd(\bx, \bX')$ with respect to the distribution of $\bX'$,
\begin{align*}
    g(\bx)
    \;=&\;
    \E[ \uPksd(\bx, \bX') ]
    \\
    \;=&\;
    \E\Big[
    \exp\left( - \mfrac{1}{2\gamma} \| \bx - \bX' \|_2^2  \right)
        \left( 
            \bx^\top \bX' 
            - \mfrac{1 + \gamma}{\gamma^2} \| \bx - \bX' \|_2^2
            + \mfrac{d}{\gamma}
        \right)
    \Big]
    \\
    \;=&\;
    \left( \mfrac{\gamma}{1 + \gamma} \right)^{d/2}
    \exp\left( - \mfrac{1}{2(1 + \gamma)} \| \bx - \bmu \|_2^2 \right)
    \E\left[ 
        \bx^\top \bW' 
        - \mfrac{1 + \gamma}{\gamma^2} \| \bx - \bW' \|_2^2
        + \mfrac{d}{\gamma}
    \right]
    \;.
\end{align*}
where the third line follows by applying Lemma \ref{lem:gauss:integral:single}, and $\bW' \sim \cN\left( \frac{1}{1 + \gamma}\big(\mu + \frac{1}{\gamma}\bx\big), \frac{\gamma}{1 + \gamma} I_d \right)$. The proof is completed by calculating the expectation as
\begin{align*}
    &\;
    \E\left[ 
        \bx^\top \bW' 
        - \mfrac{1 + \gamma}{\gamma^2} \| \bx - \bW' \|_2^2
        + \mfrac{d}{\gamma}
    \right]
    \\
    \;=&\;
    \E\left[ 
        \bx^\top \bW' 
        - \mfrac{1 + \gamma}{\gamma^2} \left(
            \| \bW' \|_2^2 - 2\bx^\top \bW' + \| \bx \|_2^2
        \right)
        + \mfrac{d}{\gamma}
    \right]
    \\
    \;=&\;
    \mfrac{\gamma}{1 + \gamma} \left( \bmu + \mfrac{1}{\gamma} \bx \right)^\top \bx - \mfrac{\gamma + 1}{\gamma^2} \left( \mfrac{\gamma d}{1 + \gamma} + \mfrac{\gamma^2}{(1 + \gamma)^2} \big\| \bmu + \mfrac{1}{\gamma} \bx \big\|_2^2 
    - \mfrac{\gamma}{1 + \gamma} \left( \bmu + \mfrac{1}{\gamma} \bx \right)^\top \bx
    + \| \bx \|_2^2 \right) + \mfrac{d}{\gamma}
    \\
    \;=&\;
    \mfrac{2 + \gamma}{1 + \gamma} \bmu^\top \bx 
    - \mfrac{1}{1 + \gamma} \| \bmu \|_2^2
    \;.
\end{align*}

\subsubsection{Proof for $\ksd(Q,P)$}
Noting that $\ksd(Q, P) = \E[ \gksd(\bx) ]$, we can apply Lemma \ref{lem:gauss:integral:single} again to yield
\begin{align*}
    \ksd(Q, P)
    \;=&\;
    \left( \mfrac{\gamma}{1 + \gamma} \right)^{d/2}
    \E\Big[
        \exp\left( - \mfrac{1}{2(1 + \gamma)} \| \bX - \bmu \|_2^2 \right)
        \left(
            \mfrac{2 + \gamma}{1 + \gamma} \bmu^\top \bX 
            - \mfrac{1}{1 + \gamma} \| \bmu \|_2^2
        \right)
    \Big]
    \\
    \;=&\;
    \left( \mfrac{\gamma}{1 + \gamma} \right)^{d/2}
    \left( \mfrac{1 + \gamma}{2 + \gamma} \right)^{d/2}
    \E\Big[
        \mfrac{2 + \gamma}{1 + \gamma} \bmu^\top \bW - \mfrac{1}{1 + \gamma} \| \bmu \|_2^2
    \Big]
    \;,
\end{align*}
where $W \sim \cN\left( \frac{1 + \gamma}{2 + \gamma} \big( \bmu + \frac{1}{1 + \gamma} \bmu \big), \mfrac{1 + \gamma}{2 + \gamma} I_d \right)$. We then have 
\begin{align*}
    \ksd(Q, P) 
    \;=&\; 
    \left( \mfrac{\gamma}{1 + \gamma} \right)^{d/2}
    \left( \mfrac{1 + \gamma}{2 + \gamma} \right)^{d/2}
    \left( 
        \bmu^\top \left( \bmu + \mfrac{1}{1 + \gamma} \bmu \right)
        - \mfrac{1}{1 + \gamma}\|\bmu\|_2^2 
    \right)
    \\
    \;=&\;
    \left( \mfrac{\gamma}{2 + \gamma} \right)^{d/2} \| \bmu \|_2^2
    \;,
\end{align*}
as required.

\subsubsection{Proof for $\sigcond^2$}
We first calculate the second moment as
\begin{align*}
    \E[ g(\bX)^2 ]
    \;=&\;
    \left( \mfrac{\gamma}{1 + \gamma} \right)^{d}
    \E\Big[
        \exp\left( 
            - \mfrac{1}{1 + \gamma} \| \bX - \bmu \|_2^2 
        \right)
        \left( 
            \mfrac{2 + \gamma}{1 + \gamma} \bmu^\top \bX 
            - \mfrac{1}{1 + \gamma} \| \bmu \|_2^2 
        \right)^2
    \Big]
    \\
    \;=&\;
    \left( \mfrac{\gamma}{1 + \gamma} \right)^{d}
    \left( \mfrac{1 + \gamma}{3 + \gamma} \right)^{d/2}
    \E\Big[
        \left( 
            \mfrac{2 + \gamma}{1 + \gamma} \bmu^\top \bW 
            - \mfrac{1}{1 + \gamma} \| \bmu \|_2^2 
        \right)^2
    \Big]
\end{align*}
where in the last line we have applied Lemma \ref{lem:gauss:integral:single} while setting $\bW \sim \cN\big(\bm , \frac{1 + \gamma}{3 + \gamma} I_d \big)$ and $\bm \coloneqq \frac{1 + \gamma}{3 + \gamma} \big( \bmu + \frac{2}{1 + \gamma} \bmu \big) = \bmu$. This gives
\begin{align*}
    \E[ g(\bX)^2 ]
    \;=&\;
    \left( \mfrac{\gamma}{1 + \gamma} \right)^{d}
    \left( \mfrac{1 + \gamma}{3 + \gamma} \right)^{d/2}
    \E\Big[
            \left(\mfrac{2 + \gamma}{1 + \gamma} \right)^2 (\bmu^\top \bW)^2
            + \mfrac{1}{(1 + \gamma)^2}\| \bmu \|_2^4 
            - \mfrac{2(2 + \gamma)}{(1 + \gamma)^2} \| \bmu \|_2^2 \bmu^\top \bW
    \Big]
    \\
    \;=&\;
    \left( \mfrac{\gamma}{1 + \gamma} \right)^{d}
    \left( \mfrac{1 + \gamma}{3 + \gamma} \right)^{d/2}
    \Big(
        \left(\mfrac{2 + \gamma}{1 + \gamma} \right)^2 \bmu^\top \left( \mfrac{1 + \gamma}{3 + \gamma} I_d + \mu \mu^\top \right) \bmu
        + \mfrac{1}{(1 + \gamma)^2} \| \bmu \|_2^4
        \\
        &\qquad\qquad\qquad\qquad\qquad
        - \mfrac{2(2 + \gamma)}{(1 + \gamma)^2} \| \bmu \|_2^4
    \Big)
    \\
    \;=&\;
    \left( \mfrac{\gamma^2}{(1 + \gamma)(3 + \gamma)} \right)^{d/2}
    \Big(
        \mfrac{(2 + \gamma)^2}{(1 + \gamma)(3 + \gamma)} \| \bmu \|_2^2
        + \| \bmu \|_2^4
    \Big)
    \;.
\end{align*}
We hence obtain
\begin{align*}
    \sigcond^2
    \;=&\;
    \E[ g(\bX)^2 ] - \ksd(Q, P)^2
    \\
    \;=&\;
    \left( \mfrac{\gamma^2}{(1 + \gamma)(3 + \gamma)} \right)^{d/2}
    \Big(
        \mfrac{(2 + \gamma)^2}{(1 + \gamma)(3 + \gamma)} \| \bmu \|_2^2
        + \| \bmu \|_2^4
    \Big)
    - \left( \mfrac{\gamma}{2 + \gamma} \right)^d \| \bmu \|_2^4
    \\
    \;=&\;
    \left( \mfrac{\gamma^2}{(1 + \gamma)(3 + \gamma)} \right)^{d/2}
    \bigg(
        \mfrac{(2 + \gamma)^2}{(1 + \gamma)(3 + \gamma)} \| \bmu \|_2^2
        + \left( 1 - \left( \mfrac{(1 + \gamma)(3 + \gamma)}{(2 + \gamma)^2} \right)^{d/2} \right) \| \bmu \|_2^4
    \bigg)
    \;.
\end{align*}

\subsubsection{Proof for $\sigfull^2$}
For simplicity, we define $\bZ \coloneqq \bX - \bmu$ and $\bZ' \coloneqq \bX'z - \bmu$ so that $\bZ, \bZ'$ are independent copies from $\cN(0, I_d)$. By \eqref{eq:uPksd:gaussian:simplified}, the second moment can be simplified as
\begin{align*}
    &\;\E[ \uPksd(\bX, \bX')^2 ]
    \\
    \;=&\;
    \E\left[ 
        \exp\left( - \mfrac{1}{\gamma} \| \bX - \bX' \|_2^2  \right)
        \Big( 
            \bX^\top \bX' 
            - \mfrac{\gamma + 1}{\gamma^2} \| \bX - \bX' \|_2^2
            + \mfrac{d}{\gamma}
        \Big)^2
    \right]
    \\
    \;=&\;
    \E\left[ 
        \exp\left( - \mfrac{1}{\gamma} \| \bZ - \bZ' \|_2^2  \right)
        \Big( 
            (\bZ + \bmu)^\top (\bZ' + \bmu) 
            - \mfrac{\gamma + 1}{\gamma^2} \| \bZ - \bZ' \|_2^2
            + \mfrac{d}{\gamma}
        \Big)^2
    \right]
    \\
    \;=&\;
    \left( \mfrac{\gamma}{4 + \gamma} \right)^{d/2}
    \underbrace{
    \E\left[
        \Big( 
            (\bW + \bmu)^\top (\bW' + \alpha_1 \bW + \bmu) 
            - \alpha_2 \| (1-\alpha_1) \bW - \bW' \|_2^2
            + \mfrac{d}{\gamma}
        \Big)^2
    \right]
    }_{=: T}
    \;,
\end{align*}
where in the last line we have applied Lemma \ref{lem:gauss:integral:double}, and 
\begin{align*}
    \bW \sim \cN\left(0,\; \mfrac{1 + \gamma / 2}{2 + \gamma / 2} I_d \right)
    \;, 
    \qquad
    \bW' \sim \cN\left( 0, \; \mfrac{\gamma}{2 + \gamma } I_d \right)
    \;,
    \qquad 
    \alpha_1 \;\coloneqq\; \mfrac{1}{1 + \gamma / 2} \;,
    \qquad 
    \alpha_2 \;\coloneqq\; \mfrac{\gamma + 1}{\gamma^2}\;.
\end{align*}
We now aim to compute the expectation $T$ by first taking an expectation over $\bW'$:
\begin{align*}
    T
    \;=&\;
    \E\left[
    \left(
        (\bW + \bmu)^\top (\bW' + \alpha_1 \bW + \bmu)
        - \alpha_2 \| (1-\alpha_1) \bW - \bW' \|_2^2
        + \mfrac{d}{\gamma}
    \right)^2
    \right]
    \\
    \;=&\;
    \E\bigg[
    \Big(
        - \alpha_2 \big\| \bW' \big\|_2^2
        + \big(\bW + \bmu + 2\alpha_2(1-\alpha_1) \bW \big)^\top \bW'
        \\
        &\quad\quad
        + \underbrace{(\bW + \bmu)^\top (\alpha_1 \bW + \bmu)
        - \alpha_2 (1-\alpha_1)^2 \|  \bW \|_2^2
        + \mfrac{d}{\gamma}}_{=: \beta_\bW}
    \Big)^2
    \bigg]
    \\
    \;=&\;
    \E\Big[
        \alpha_2^2 \big\| \bW' \big\|_2^4
        + \Big( (\bW + \bmu + 2\alpha_2 (1-\alpha_1)\bW )^\top \bW' \Big)^2
        + \beta_\bW^2
        \\
        &\quad\;\;
        - 2\alpha_2 \big(\bW + \mu + 2 \alpha_2(1-\alpha_1)\bW \big)^\top \bW' \big\| \bW' \|_2^2
        - 2 \alpha_2 \beta_\bW \big\| \bW' \big\|_2^2 
        \\
        &\quad\;\;
        + 2 \beta_\bW ( \bW + \bmu + 2\alpha_2(1-\alpha_1)\bW )^\top \bW'
    \Big]
    \;.
\end{align*}
Since $\bW'$ is zero-mean, independent of $\bW$ and follows a distribution symmetric around zero, $\mean[\bW']=\mean[\bW'\|\bW'\|_2^2] = \bzero$. Since $\|\bW'\|_2^2 \sim \mfrac{\gamma}{2+\gamma} \chi^2_d$ where $\chi^2_d$ is a chi-squared distribution with $d$ degrees of freedom, we have $\mean[\|\bW'\|_2^2] = \frac{\gamma}{2+\gamma} d$ and $\mean[\|\bW'\|_2^4] = \frac{\gamma^2}{(2+\gamma)^2} (2d + d^2)$. We also have $\mean[ \bW' \bW'^\top] = \frac{\gamma}{2+\gamma} \bI_d$. Thus
\begin{align*}
    T
    \;=&\;
    \E\left[
        \alpha_2^2 \big\| \bW' \big\|_2^4
        + \Big( (\bW + \bmu + 2\alpha_2 (1-\alpha_1)\bW )^\top \bW' \Big)^2
        + \beta_\bW^2
        - 2 \alpha_2 \beta_\bw \big\| \bW' \big\|_2^2
    \right]
    \\
    \;=&\;
    \E\left[
        \alpha_2^2 \mfrac{\gamma^2}{(2+\gamma)^2} (2d + d^2)
        +\mfrac{\gamma}{2+\gamma}\left\| 2 \alpha_2(1-\alpha_1)\bW + \bW + \bmu \right\|_2^2
        + \beta_\bW^2
        - 2 \alpha_2 \beta_\bW \mfrac{\gamma}{2+\gamma} d
    \right]
    \\
    \;\overset{(a)}{=}&\;
    \E\left[
        \mfrac{(1+\gamma)^2}{\gamma^2(2+\gamma)^2} (2d + d^2)
        +\frac{\gamma}{2+\gamma}\left\| \mfrac{2(1+\gamma)}{\gamma(2+\gamma)}\bW + \bW + \bmu \right\|_2^2
        + \beta_\bW^2
        - 2 \beta_\bW  \mfrac{1+\gamma}{\gamma(2+\gamma)} d
    \right]
    \\
    \;=&\;
    \E\left[
        \mfrac{(1+\gamma)^2}{\gamma^2(2+\gamma)^2} (2d + d^2)
        +\mfrac{\gamma}{2+\gamma}\left\| \mfrac{2+4\gamma+\gamma^2}{\gamma(2+\gamma)}\bW + \bmu \right\|_2^2
        + \beta_\bW^2
        - 2 \beta_\bW  \mfrac{1+\gamma}{\gamma(2+\gamma)} d
    \right]
    \\
    \;\overset{(b)}{=}&\;
    \mfrac{(1+\gamma)^2}{\gamma^2(2+\gamma)^2} (2d + d^2)
    +
    \mfrac{(2+4\gamma+\gamma^2)^2}{\gamma(2+\gamma)^2(4+\gamma)}
    d
    +
    \mfrac{\gamma}{2+\gamma} \|\mu\|_2^2
    +
    \mean\Big[ \beta_\bW^2
        - 2 \beta_\bW  \mfrac{1+\gamma}{\gamma(2+\gamma)} d \Big] 
    \\
    \;=&\;
    \mfrac{(1+\gamma)^2}{\gamma^2(2+\gamma)^2} d^2 
    +
    \Big(\mfrac{2 (1+\gamma)^2}{\gamma^2(2+\gamma)^2}
    +
    \mfrac{(2+4\gamma+\gamma^2)^2}{\gamma(2+\gamma)^2(4+\gamma)}
    \Big) d
    +
    \mfrac{\gamma}{2+\gamma} \|\mu\|_2^2
    +
    \mean\Big[ \beta_\bW^2
        - 2 \beta_\bW  \mfrac{1+\gamma}{\gamma(2+\gamma)} d \Big] 
    \;.
\end{align*}
In $(a)$, we have substituted in $\alpha_1=2/(2+\gamma)$ and $\alpha_2 =(\gamma+1)/\gamma^2$, and in $(b)$ we have taken the expectation of the second term. Now re-express $\beta_\bW$ as
\begin{align*}
    \bbeta_\bW
    \;=&\;
    \mfrac{2}{2 + \gamma} \left\| \bW \right\|_2^2
    + \left( \mfrac{2}{2 + \gamma} + 1 \right) \bmu^\top \bW
    + \| \bmu \|_2^2
    - \mfrac{\gamma+1}{\gamma^2} \left( 1 - \mfrac{2}{2 + \gamma}  \right)^2 \left\| \bW \right\|_2^2
    + \mfrac{d}{\gamma}
    \\
    \;=&\;
    \left( \mfrac{2}{2 + \gamma} - \mfrac{\gamma+1}{(2+\gamma)^2} \right) \left\| \bW \right\|_2^2
    + \mfrac{4 + \gamma}{2 + \gamma} \bmu^\top \bW
    + \| \bmu \|_2^2
    + \mfrac{d}{\gamma}
    \\
    \;=&\;
    \mfrac{\gamma + 3}{(2+\gamma)^2} \left\| \bW \right\|_2^2
    + \mfrac{4 + \gamma}{2 + \gamma} \bmu^\top \bW
    + \| \bmu \|_2^2
    + \mfrac{d}{\gamma}
    \;.
\end{align*}
By noting that odd moments of $\bW$ vanish, we get that
\begin{align*}
    \mean[  - 2 \beta_\bW  \mfrac{1+\gamma}{\gamma(2+\gamma)} d ]
    \;=&\;
    - \mfrac{2(1+\gamma)}{\gamma(2+\gamma)} d 
    \Big(
    \mfrac{\gamma + 3}{(2+\gamma)(4+\gamma)} d
    + \| \bmu \|_2^2
    + \mfrac{d}{\gamma} \Big)
    \\
    \;=&\;
    - \mfrac{2(1+\gamma)}{\gamma(2+\gamma)} \Big( \mfrac{\gamma + 3}{(2+\gamma)(4+\gamma)}  + \mfrac{1}{\gamma} \Big) d^2 - \mfrac{2(1+\gamma)}{\gamma(2+\gamma)} d \| \bmu \|_2^2
    \;,
\end{align*}
and
\begin{align*}
    \mean[ \beta_\bW^2 ]
    \;=&\;
    \mfrac{(\gamma + 3)^2}{(2+\gamma)^2(4+\gamma)^2} (2d+d^2)
    +
    \mfrac{4+\gamma}{2+\gamma} \|\mu\|_2^2
    +
    \Big(\|\mu\|_2^2
    +
    \mfrac{d}{\gamma}\Big)^2
    +
    \mfrac{2(\gamma + 3)}{(2+\gamma)(4+\gamma)}  \Big(\|\mu\|_2^2
    +
    \mfrac{d}{\gamma}\Big) d
    \\
    \;=&\; 
    \Big( 
    \mfrac{(\gamma + 3)^2}{(2+\gamma)^2(4+\gamma)^2}
    +
    \mfrac{1}{\gamma^2}
    +
     \mfrac{2(\gamma + 3)}{(2+\gamma)(4+\gamma)\gamma} 
    \Big)
    d^2
    +
    \Big(
    \mfrac{2(\gamma + 3)^2}{(2+\gamma)^2(4+\gamma)^2}
    \Big) d
    \\
    &\;
    + 
    \Big( \mfrac{2}{\gamma} + \mfrac{2(\gamma + 3)}{(2+\gamma)(4+\gamma)} \Big) d \|\mu\|_2^2
    + 
    \mfrac{4+\gamma}{2+\gamma} 
    \|\mu\|_2^2
    +
    \|\mu\|_2^4
    \;.
\end{align*}
The coefficient of $d^2$ in $T$ can then be computed by noting $\gamma=\omega(1)$ as
\begin{align*}
    &\mfrac{(1+\gamma)^2}{\gamma^2(2+\gamma)^2} 
    - 
    \mfrac{2(1+\gamma)}{\gamma(2+\gamma)} \Big( \mfrac{\gamma + 3}{(2+\gamma)(4+\gamma)}  + \mfrac{1}{\gamma} \Big) 
    +
    \Big( 
    \mfrac{(\gamma + 3)^2}{(2+\gamma)^2(4+\gamma)^2}
    +
    \mfrac{1}{\gamma^2}
    +
     \mfrac{2(\gamma + 3)}{(2+\gamma)(4+\gamma)\gamma} 
    \Big)
    \\
    &\;=\; \mfrac{(2+\gamma)^2}{\gamma^2(4+\gamma)^2}
    \;=\; \mfrac{1}{\gamma^2} + o\Big(\mfrac{1}{\gamma^2}\Big)\;.
\end{align*}
Similarly, the coefficient of $d$ in $T$ can be computed as
\begin{align*}
    \Big(\mfrac{2 (1+\gamma)^2}{\gamma^2(2+\gamma)^2}
    +
    \mfrac{(2+4\gamma+\gamma^2)^2}{\gamma(2+\gamma)^2(4+\gamma)}\Big)
    +
    \Big(
    \mfrac{2(\gamma + 3)^2}{(2+\gamma)^2(4+\gamma)^2}
    \Big)
    \;=&\; 1 + o(1)\;,
\end{align*}
the coefficient of $d\|\mu\|_2^2$ in $T$ can be computed as
\begin{align*}
    -
    \mfrac{2(1+\gamma)}{\gamma(2+\gamma)}
    +
    \Big( \mfrac{2}{\gamma} + \mfrac{2(\gamma + 3)}{(2+\gamma)(4+\gamma)} \Big)
    \;=\;
    \mfrac{2(2+\gamma)}{\gamma(4+\gamma)}
    \;=\;
    \mfrac{2}{\gamma} + o\Big( \mfrac{1}{\gamma}\Big)\;,
\end{align*}
the coefficient of $\|\mu\|_2^2$ in $T$ can be computed as
\begin{align*}
    \mfrac{\gamma}{2+\gamma} + \mfrac{4+\gamma}{2+\gamma}
    \;=\;
    2 + o(1)\;,
\end{align*}
and finally the coefficient of $\|\mu\|_2^4$ in $T$ is $1$. Combining the five computations of coefficients, we get that
\begin{align*}
    T \;=\; 4 d + \mfrac{d^2}{\gamma^2} + \mfrac{2d\|\mu\|_2^2}{\gamma} + 2\|\mu\|_2^2 + \|\mu\|_2^4 + o\Big( d + \mfrac{d^2}{\gamma^2} +  \mfrac{d\|\mu\|_2^2}{\gamma} +  + \|\mu\|_2^2 \Big)\;,
\end{align*}
and therefore the desired quantity is given as
\begin{align*}
    \sigfull^2
    \;=&\;
    \E[ \uPksd(\bX, \bX')^2 ] - \ksd(Q,P)^2
    \\
    \;=&\;
    \left( \mfrac{\gamma}{4 + \gamma} \right)^{d/2}
    T
    -
    \Big( \mfrac{\gamma}{2+\gamma}\Big)^{d} \|\mu\|_2^4
    \\
    \;=&\;
     \left( \mfrac{\gamma}{4 + \gamma} \right)^{d/2}
    T - \left( \mfrac{\gamma}{4 + \gamma} \right)^{d/2} \left( \mfrac{\gamma (4 + \gamma)}{(2 + \gamma)^2} \right)^{d/2} \| \bmu \|_2^4
    \\
    \;=&\;
    \left( \mfrac{\gamma}{4 + \gamma} \right)^{d/2}
    \bigg(
        d 
        + \mfrac{d^2}{\gamma^2}
        + \mfrac{2d \| \bmu \|_2^2}{\gamma}
        + 2\| \bmu \|_2^2
        + \left(1 - \left( \mfrac{\gamma (4 + \gamma)}{(2 + \gamma)^2} \right)^{d/2} \right) \| \bmu \|_2^4
        \\
        &\qquad\qquad\quad\;
        + o\left( 
            d 
            + \mfrac{d^2}{\gamma^2}
            + \mfrac{d \| \bmu \|_2^2}{\gamma}
            +  \| \bmu \|_2^2
        \right)
    \bigg)\;.
\end{align*}

\subsubsection{Proof for upper bound on $\E[ | \gksd(\bX) |^\nu ]$}
Fix $\nu > 2$. We can apply Lemma \ref{lem:gauss:integral:single} to rewrite the $\nu$-th moment of $\gksd(\bZ)$ as
\begin{align*}
    \E[ | \gksd (\bX) |^\nu ]
    \;=&\;
    \left( \mfrac{\gamma}{1 + \gamma} \right)^{\nu d / 2 }
    \E\left[ 
        \exp\left( - \mfrac{\nu}{2(1 + \gamma)} \| \bX - \bmu \|_2^2 \right)
        \left| 
            \mfrac{2 + \gamma}{1 + \gamma} \bmu^\top \bX 
            - \mfrac{1}{1 + \gamma} \| \bmu \|_2^2
        \right|^\nu
    \right]
    \\
    \;=&\;
    \left( \mfrac{\gamma}{1 + \gamma} \right)^{\nu d / 2 }
    \left( \mfrac{(1 + \gamma) / \nu}{1 + (1 + \gamma) / \nu} \right)^{d/2}
    \underbrace{\E\left[ 
        \left| 
            \mfrac{2 + \gamma}{1 + \gamma} \bmu^\top \bW
            - \mfrac{1}{1 + \gamma} \| \bmu \|_2^2
        \right|^\nu
    \right]}_{\eqqcolon T}
    \;,
\end{align*}
where $\bW \sim \cN\left( \bm, a^2 I_d \right)$ with $\bm \coloneqq \frac{(1 + \gamma) / \nu}{1 + (1 + \gamma) / \nu} \big( \bmu + \mfrac{\nu}{1 + \gamma} \bmu \big) + \bmu $ and $a^2 \coloneqq \frac{(1 + \gamma) / \nu}{1 + (1 + \gamma) / \nu} = \frac{1 + \gamma}{1 + \nu = \gamma}$. Defining $\bV \coloneqq W - \bmu$ so that $\bV \sim \cN(0, a^2 I_d)$, we have
\begin{align*}
    T
    \;=&\;
    \E\left[ 
        \left| 
            \mfrac{2 + \gamma}{1 + \gamma} \bmu^\top (\bV + \bmu)
            - \mfrac{1}{1 + \gamma} \| \bmu \|_2^2
        \right|^\nu
    \right]
    \;=\;
    \E\left[ 
        \left| 
            \mfrac{2 + \gamma}{1 + \gamma} \bmu^\top \bV 
            - \| \bmu \|_2^2
        \right|^\nu
    \right]
    \\
    \;\stackrel{(i)}{\leq}&\;
    2^{\nu - 1} \left(
        \left( \mfrac{2 + \gamma}{1 + \gamma} \right)^\nu \E[ | \bmu^\top \bV |^\nu ]
        + \| \bmu \|_2^{2\nu}
    \right)
    \\
    \;=&\;
    2^{\nu - 1} \left(
        C_\nu \left(\mfrac{2 + \gamma}{1 + \gamma} \right)^\nu  a^\nu \| \bmu \|_2^\nu
        + \| \bmu \|_2^{2\nu}
    \right)
    \;,
\end{align*}
where \emph{(i)} follows by the fact that $|u + v|^\nu \leq 2^{\nu - 1} (|u|^\nu + |v|^\nu)$ for any $u, v \in \R$, and in the last line we have computed the expectation by noting that $\bmu^\top \bV $ follows a univariate Gaussian distribution $\cN(0, a^2 \| \bmu \|_2^2)$ and using its moment formula to yield $\E[ | \bmu^\top \bV |^\nu \leq C_\nu a^\nu \| \bmu \|_2^\nu$ for some constant $C_\nu$ that depends only on $\nu$. 
Combining these and substituting the definition of $a^2$ gives
\begin{align*}
    &\;
    \E[ | \gksd (\bX) |^\nu ]
    \\
    \;\leq&\;
    2^{\nu - 1}
    \left( \mfrac{\gamma}{1 + \gamma} \right)^{\nu d / 2 }
    \left( \mfrac{(1 + \gamma) / \nu}{1 + (1 + \gamma) / \nu} \right)^{d/2}
    \left(
        C_\nu \left(\mfrac{2 + \gamma}{1 + \gamma} \right)^\nu \left( \mfrac{1 + \gamma}{1 + \nu + \gamma} \right)^{\nu/2} \| \bmu \|_2^\nu
        + \| \bmu \|_2^{2\nu}
    \right)
    \\
    \;\leq&\;
    \left( \mfrac{\gamma}{1 + \gamma} \right)^{\nu d / 2 }
    \left( \mfrac{1 + \gamma}{1 + \nu + \gamma} \right)^{d/2}
    \left(
        2^{3\nu / 2 - 1} C_\nu \| \bmu \|_2^\nu
        + 2^{\nu - 1}\| \bmu \|_2^{2\nu}
    \right)
    \;,
\end{align*}
where in the last line we have used the assumption that $\nu > 2$ to yield the inequality 
\begin{align*}
    \left( \mfrac{2 + \gamma}{1 + \gamma} \right)^\nu \left( \mfrac{1 + \gamma}{1 + \nu + \gamma} \right)^{\nu/2}
    \;=\;
    \left( \mfrac{2 + \gamma}{1 + \gamma} \right)^{\nu} \left( \mfrac{2 + \gamma}{1 + \nu + \gamma} \right)^{\nu/2}
    \;=\;
    2^{\nu / 2} \times 1
    \;=\;
    2^{\nu / 2} 
    \;.
\end{align*}
Defining the constants $C_1 \coloneqq 2^{3\nu / 2 - 1} C_\nu $ and $C_2 \coloneqq 2^{\nu - 1}$ completes the proof.

\subsubsection{Proof for upper bound on $\E[ | \uPksd(\bX, \bX') |^\nu ]$}
Fix $\nu > 2$. Define $\bZ \coloneqq \bX - \bmu$ and $\bZ' \coloneqq \bX' - \bmu$ so that $\bZ, \bZ'$ are independent draws from $\cN(0, I_d)$. Using \eqref{eqn:defn:ksd:rbf}, we can write the $\nu$-th central moment as
\begin{align*}
    &\;
    \E[ | \uPksd(\bX, \bX') |^\nu ]
    \\
    \;=&\;
    \E\left[ 
        \Big| \exp\left( - \mfrac{1}{2\gamma} \| \bX - \bX' \|_2^2  \right)
        \Big( 
            \bX^\top \bX' 
            - \mfrac{\gamma + 1}{\gamma^2} \| \bX - \bX' \|_2^2
            + \mfrac{d}{\gamma}
        \Big) \Big|^\nu
    \right]
    \\
    \;=&\;
    \E\left[ 
        \exp\left( - \mfrac{\nu}{2\gamma} \| \bZ - \bZ' \|_2^2  \right)
        \Big| 
            (\bZ + \bmu)^\top (\bZ' + \bmu) 
            - \mfrac{\gamma + 1}{\gamma^2} \| \bZ - \bZ' \|_2^2
            + \mfrac{d}{\gamma}
        \Big|^\nu
    \right]
    \\
    \;=&\;
    \left( \mfrac{\gamma/ \nu}{2 + \gamma / \nu} \right)^{d/2}
    \underbrace{
    \E\left[ 
        \Big| (\bW + \bmu)^\top (\bW' + (1-\alpha_2) \bW + \bmu)
        - \alpha_1 \| \bW - \bW' - (1-\alpha_2) \bW \|_2^2
        + \mfrac{d}{\gamma}
        \Big|^{\nu}
    \right]}_{\eqqcolon T}
    \tagaligneq \label{eq:ksd:mfull:pf:eq1}
    \;,
\end{align*}
where the last line follows by using Lemma \ref{lem:gauss:integral:double} and defining the quantities
$\alpha_1 \coloneqq \frac{\gamma + 1}{\gamma^2}$, $\alpha_2 \coloneqq \frac{\gamma}{\nu + \gamma}$, $\alpha_3 \coloneqq \frac{\gamma}{2\nu + \gamma}$, and
\begin{align*}  
    &\bW' \sim \cN\left( \bzero,\; \mfrac{\gamma / \nu}{1 + \gamma / \nu} I_d \right) 
    = \cN\left( \bzero,\; \alpha_2 I_d \right)
    \;,
    &&
    \bW \sim \cN\left( \bzero,\; \mfrac{\gamma / \nu}{2 + \gamma / \nu} I_d \right)
    = \cN\left( \bzero,\; \alpha_3 I_d \right)
    \;,
\end{align*}
while also noting that $1 - \alpha_2 =
    \frac{\gamma / \nu}{1 + \gamma / \nu} \times \frac{\nu}{\gamma} 
    =
    \frac{\nu}{\nu + \gamma} = 1 - \alpha_2$.
By a Jensen's inequality, we get that
\begin{align*}
    T
    \;=&\;
    \E\left[ 
        \Big| (\bW + \bmu)^\top (\bW' + (1 - \alpha_2)\bW + \bmu)
        - \alpha_1 \left\| \alpha_2 \bW - \bW' \right\|_2^2
        + \mfrac{d}{\gamma}
        \Big|^{\nu}
    \right]
    \\
    \;=&\;
    \E\left[
        \Big| 
            (\bW + \bmu)^\top \bW' 
            + (1 - \alpha_2) \| \bW + \bmu \|_2^2
            + \alpha_2 (\bW + \bmu)^\top \bmu
            - \alpha_1 \| \alpha_2 \bW - \bW' \|_2^2
            + \mfrac{d}{\gamma}
        \Big|^\nu
    \right]
    \\
    \;\leq&\;
    5^{\nu - 1}
    \E\Big[
        \big| (\bW + \bmu)^\top \bW' \big|^\nu
        +  |1 - \alpha_2|^\nu \| \bW + \bmu \|_2^{2\nu}
        + \alpha_2^\nu \big| (\bW + \bmu)^\top \bmu \big|^\nu
        \\
        &\qquad\quad
        + \alpha_1^\nu \| \alpha_2 \bW - \bW' \|_2^{2\nu}
        + \Big( \mfrac{d}{\gamma} \Big)^\nu
    \Big]
    \;,
\end{align*}
where the last line follows from a Jensen's inequality applied to the convex function $x \mapsto | x |^\nu$. 

\vspace{.2em}

We next seek to bound the expectation of each term individually. To bound $\E\big[ \big| \big(\bW + \bmu)^\top \bW' \big|^\nu \big]$, we note that $(\bW - \bmu)^\top \bW'$ conditioning on $\bW$ follows a normal distribution $\cN(0, \alpha_2 \|\bW - \bmu \|_2^2 )$. Hence, using the moment formula of univariate Gaussians, we have
\begin{align*}
    \E\big[ \big| \big(\bW + \bmu)^\top \bW' \big|^\nu \big]
    \;=&\;
    \E\big[
        \E\big[ 
            \big| \big(\bW + \bmu)^\top \bW' \big|^\nu
            \big| \bW
        \big]
    \big]
    \;=\;
    \E\big[
        C_\nu \alpha_2^\nu \| \bW + \bmu \|_2^\nu
    \big]
    \;,
\end{align*}
for some constant $C_\nu$ constant depending only on $\nu$. By the convexity of the function $\bx \mapsto \| \bx \|_2^{\nu}$, we can bound the above term as
\begin{align*}
    \E\big[
        C_\nu \alpha_2^\nu \| \bW + \bmu \|_2^\nu
    \big]
    \;\leq&\;
    2^{\nu - 1} C_\nu \alpha_2^\nu 
    \big( \E\big[
        \| \bW \|_2^\nu
    \big]
    + \| \bmu \|_2^\nu
    \big)
    \\
    \;\stackrel{(i)}{\leq}&\;
    C_\nu \alpha_2^\nu \alpha_3^\nu  \big( \E\big[ \| \bW \|_2^{2\nu} \big] \big)^{1/2}
    + C_\nu \alpha_2^\nu \| \bmu \|_2^\nu
    \\
    \;\stackrel{(ii)}=&\;
    C_\nu \alpha_2^\nu \alpha_3^\nu (d^{\nu / 2} + o(d^{\nu / 2}))
    + C_\nu \alpha_2^\nu  \| \bmu \|_2^\nu
    \\
    \;\stackrel{(iii)}{\leq}&\;
    C_\nu d^{\nu / 2} + o( d^{\nu / 2} ) + C_\nu \| \bmu \|_2^{\nu}
    \;,
\end{align*}
where \emph{(i)} holds by a Jensen's inequality, in \emph{(ii)} we have noted that $\alpha_3^{-1} \| \bW \|_2^2$ follows a chi-squared distribution with $d$ degrees of freedom and used the formula for its $\nu$-th moment, and \emph{(iii)} follows since $\alpha_2 = \frac{\gamma}{\nu + \gamma} < 1$ and $\alpha_3 = \frac{\gamma}{2\nu + \gamma} < 1 $. The expectation of the second term can be bounded using a similar argument as
\begin{align*}
    \E\big[
        \| \bW + \bmu \|_2^{2\nu}
    \big]
    \;\leq\;
    2^{\nu - 1}\big(
        \E\big[ \| \bW \|_2^{2\nu} \big]
        + \| \bmu \|_2^{2\nu}
    \big)
    \;=&\;
    2^{\nu - 1} \big(
        \alpha_3^\nu d^{\nu} + o(d^\nu)
        + \| \bmu \|_2^{2\nu}
    \big)
    \\
    \;\leq&\;
    2^{\nu - 1} d^\nu + o(d^\nu) + 2^{\nu - 1} \| \bmu \|_2^{2\nu}
    \;.
\end{align*}
The expectation of the third term is
\begin{align*}
    \E\big[ 
        \big| (\bW + \bmu)^\top \bmu \big|^\nu
    \big]
    \;=\;
    \E\big[ 
        \big| \bW^\top \bmu + \| \bmu \|_2^2 \big|^\nu
    \big]
    \;\leq&\;
    2^{\nu - 1}
    \big(
    \E\big[ 
        \big| \bW^\top \bmu \big|^\nu
    \big]
    + \| \bmu \|_2^{2\nu}
    \big)
    \\
    \;=&\;
    2^{\nu - 1}\big(
        \alpha_3^{\nu/2} \| \bmu \|_2^\nu
        + \| \bmu \|_2^{2\nu}
    \big)
    \\
    \;\leq&\;
    2^{\nu - 1} \| \bmu \|_2^\nu
    + 2^{\nu - 1} \| \bmu \|_2^{2\nu}
    \;,
\end{align*}
where the second last line holds as $\bmu^\top \bW $ is a univariate Gaussian with zero-mean and variance $\alpha_3 \| \bmu \|_2^2$, and the last line holds again as $\alpha_3 \leq 1$. It then remains to bound $\E\big[ \| \alpha_2 \bW - \bW' \|_2^{2\nu} \big]$. Noting that $\alpha_2 \bW - \bW' \sim \cN(0, \alpha_2(\alpha_3 + 1)I_d)$, the random variable $\alpha_2^{-1}(\alpha_3 + 1)^{-1} \| \alpha_2 \bW - \bW' \|_2^{2}$ follows a chi-squared distribution with $d$ degrees of freedom. A similar argument as before gives
\begin{align*}
    \E\big[
        \| \alpha_2 \bW - \bW' \|_2^{2\nu}
    \big]
    \;\leq&\;
    \alpha_2^\nu(\alpha_3 + 1 )^{\nu} \big( d^\nu + o(d^\nu)) 
    \;\leq\;
    2^\nu d^\nu + o(d^\nu)
    \;,
\end{align*}
where in the last inequality we have used the fact that $\alpha_2(\alpha_3 + 1) < 2$. Combining these terms, we can bound $T$ as
\begin{align*}
    T
    \;\leq&\;
    5^{\nu - 1} \Big[
        \big( 
            C_\nu d^{\nu / 2} + o(d^{\nu / 2})
            + C_\nu \| \bmu \|_2^\nu
         \big)
        + | 1 - \alpha_2|^\nu 2^{\nu - 1} \big(
            d^{\nu} 
            + o(d^\nu)
            + \| \bmu \|_2^{2\nu}
        \big)
        \\
        &\;\qquad
        + 2^{\nu - 1}\big(
            \| \bmu \|_2^\nu
            + \| \bmu \|_2^{2\nu}
        \big)
        + \alpha_1^{\nu} 2^{\nu}  \big( d^\nu + o(d^\nu))
        + \Big( \mfrac{d}{\gamma} \Big)^\nu
    \Big]
    \;.
\end{align*}
To proceed, we note that $\alpha_1^\nu = \big(\frac{\gamma + 1}{\gamma^2} \big)^\nu = \big( \frac{1}{\gamma} + \frac{1}{\gamma^2} \big)^\nu = \frac{1}{\gamma^\nu} + o\big(\frac{1}{\gamma^\nu}\big)$ and that $(1 - \alpha_2)^\nu = \big( \frac{\nu}{\nu + \gamma} )^\nu = \frac{1}{\gamma^\nu} + o\big( \frac{1}{\gamma^\nu} \big)$, since $\gamma = \omega(1)$ by assumption. Therefore,
\begin{align*}
    | 1 - \alpha_2|^\nu 2^{\nu - 1} \big(
        d^{\nu} 
        + o(d^\nu)
        + \| \bmu \|_2^{2\nu}
    \big)
    \;=\;
    2^\nu \mfrac{d^\nu}{\gamma^\nu} 
    + 2^\nu \mfrac{\| \bmu \|_2^{2\nu}}{\gamma^\nu}
    + o\Big( 
        \mfrac{d^\nu}{\gamma^\nu} 
        + \mfrac{\| \bmu \|_2^{2\nu}}{\gamma^\nu} 
    \Big)
    \;,
\end{align*}
and 
\begin{align*}
    \alpha_1^{\nu} 2^{\nu}  \big( d^\nu + o(d^\nu))
    \;=\;
    \mfrac{d^\nu}{\gamma^\nu}
    + o\Big( \mfrac{d^\nu}{\gamma^\nu} \Big)
    \;.
\end{align*}
It then follows by grouping and rearranging that $T$ can be bounded as
\begin{align*}
    T
    \;\leq&\;
    5^{\nu - 1} \Big[
        C_\nu d^{\nu / 2} + o(d^{\nu / 2})
        + C_\nu \| \bmu \|_2^\nu
        + 2^\nu \mfrac{d^\nu}{\gamma^\nu} 
        + 2^\nu \mfrac{\| \bmu \|_2^{2\nu}}{\gamma^\nu}
        + o\Big( 
            \mfrac{d^\nu}{\gamma^\nu} 
            + \mfrac{\| \bmu \|_2^{2\nu}}{\gamma^\nu} 
        \Big)
        \\
        &\;\qquad
        + 2^{\nu - 1}\big(
            \| \bmu \|_2^\nu
            + \| \bmu \|_2^{2\nu}
        \big)
        + \mfrac{d^\nu}{\gamma^\nu}
        + o\Big( \mfrac{d^\nu}{\gamma^\nu} \Big)
        + \Big( \mfrac{d}{\gamma} \Big)^\nu
    \Big]
    \\
    \;=&\;
    5^{\nu - 1} \Big[
        C_\nu d^{\nu / 2} 
        + (2 + 2^\nu)\Big( \mfrac{d}{\gamma} \Big)^\nu
        + (C_\nu + 2^{\nu - 1}) \| \bmu \|_2^\nu
        + 2^{\nu - 1} \| \bmu \|_2^{2\nu}
        \\
        &\;\qquad
        + o\Big( 
            d^{\nu / 2}
            + \mfrac{d^\nu}{\gamma^\nu} 
            + \mfrac{\| \bmu \|_2^{2\nu}}{\gamma^\nu} 
        \Big)
    \Big]
    \\
    \;=&\;
    C_3 d^{\nu / 2} 
    + C_4 \Big( \mfrac{d}{\gamma} \Big)^\nu
    + C_5 \| \bmu \|_2^\nu
    + C_6 \| \bmu \|_2^{2\nu}
    + o\Big( 
        d^{\nu / 2}
        + \mfrac{d^\nu}{\gamma^\nu} 
        + \mfrac{\| \bmu \|_2^{2\nu}}{\gamma^\nu} 
    \Big)
    \;,
\end{align*}
where in the last line we have redefined the constants: $C_3 \coloneqq 5^{\nu - 1} C_\nu$, $C_4 \coloneqq 5^{\nu - 1} (2 + 2^\nu)$, $C_5 \coloneqq 5^{\nu - 1} (C_\nu + 2^{\nu - 1})$ and $C_6 \coloneqq 10^{\nu - 1}$. The proof is finished by substituting this bound into \eqref{eq:ksd:mfull:pf:eq1} to yield
\begin{align*}
    \E[ | \uPksd(\bX, \bX') |^\nu ]
    \;\leq&\;
    \left( \mfrac{\gamma}{2\nu + \gamma} \right)^{d/2}
    \Big(
        C_3 d^{\nu / 2} 
        + C_4 \Big( \mfrac{d}{\gamma} \Big)^\nu
        + C_5 \| \bmu \|_2^\nu
        + C_6 \| \bmu \|_2^{2\nu}
        \\
        &\qquad\qquad\qquad
        + o\Big( 
            d^{\nu / 2}
            + \mfrac{d^\nu}{\gamma^\nu} 
            + \mfrac{\| \bmu \|_2^{2\nu}}{\gamma^\nu} 
        \Big)
    \Big)
    \;.
\end{align*}

\subsubsection{Proof for verifying \cref{assumption:moment:ratio}}

First note that when $\gamma=\Omega(d)$, for any fixed $a, b, c > 0$, we have that by a Taylor expansion,
\begin{align*}
    \Big( \mfrac{a+\gamma}{b+\gamma}\Big)^{d/c}
    \;=\;
    \Big( 1 + \mfrac{a-b}{b+\gamma}\Big)^{d/c}
    \;=&\;
    \exp\Big( \mfrac{d}{c} \log\Big( 1 + \mfrac{a-b}{b+\gamma}\Big) \Big)
    \;=\;
    \exp\Big( \mfrac{d(a-b)}{c(b+\gamma)} + o\Big( \mfrac{d}{\gamma^2} \Big) \Big)
    \\
    \;=&\;
    \exp\Big( \mfrac{d(a-b)}{c(b+\gamma)}\Big) \Big( 1 + o\Big( \mfrac{d}{\gamma^2} \Big) \Big)
    \;=\; \Theta(1)
    \;.
\end{align*}
Using this together with the assumption $\|\mu\|_2=\Theta(1)$ and the moment bounds in Lemma \ref{lem:ksd:moments:analytical}(iii)-(vi), we get that 
\begin{align*}
    \sigcond^2 
    \;=&\;
    \Theta\bigg( \left( \mfrac{\gamma^2}{(1 + \gamma)(3 + \gamma)} \right)^{d/2}
    \bigg(
    \mfrac{(2 + \gamma)^2}{(1 + \gamma)(3 + \gamma)}
    + \left( 1 - \left( \mfrac{(1 + \gamma)(3 + \gamma)}{(2 + \gamma)^2} \right)^{d/2} \right)  \bigg) \bigg)
    \;=\;
    \Theta(1)
    \;,
    \\
    \sigfull^2
    \;=&\;
    \Theta\bigg(
            \left( \mfrac{\gamma}{4 + \gamma} \right)^{d/2}
            \bigg(
                d 
                + \mfrac{d^2}{\gamma^2}
                + \mfrac{2d}{\gamma}
                + \left(1 - \left( \mfrac{\gamma (4 + \gamma)}{(2 + \gamma)^2} \right)^{d/2} \right) 
                \;
                 + o\left( 
                    d 
                    + \mfrac{d^2}{\gamma^2}
                    + \mfrac{d}{\gamma}
                \right)
            \bigg) \bigg)
    \\
    \;=&\; \Theta \Big(d 
                + \mfrac{d^2}{\gamma^2}
                + \mfrac{2d}{\gamma}\Big) \;=\; \Theta \Big(d 
                + \mfrac{d^2}{\gamma^2} \Big) \;,
\end{align*}
and for $\nu \in (2,3]$,
\begin{align*}
    \Mcondnu^\nu
    \;\leq&\;  \E[ | \gksd(\bX) |^\nu ]
    \;=\; O\Big( \left( \mfrac{\gamma}{1 + \gamma} \right)^{\nu d / 2 }
            \left( \mfrac{1 + \gamma}{1 + \nu + \gamma} \right)^{d/2} \Big) \;=\;O(1)\;,
    \\
    \Mfullnu^\nu
    \;\leq&\; \E[ | \uPksd(\bX, \bX') |^\nu ]
    \;=\;
    O
    \Big(
    \left( \mfrac{\gamma}{2\nu + \gamma} \right)^{d/2}
            \Big(
                 d^{\nu / 2} 
                + \Big( \mfrac{d}{\gamma} \Big)^\nu
            \Big)
    \Big) \;=\; O\Big(
                 d^{\nu / 2} 
                + \mfrac{d^\nu}{\gamma^\nu} 
            \Big)\;.
\end{align*}
This implies that
\begin{align*}
    \mfrac{\Mcondnu}{\sigcond} \;&=\; O(1)\;,
    &\text{ and }&&
    \mfrac{\Mfullnu}{\sigfull} \;&=\; 
    O\bigg( \Big( d^{1/2} + \mfrac{d}{\gamma} \Big)^{-1} \Big( d^{1/2} + \mfrac{d}{\gamma} \Big) \bigg)
    \;=\; O(1)\;.
\end{align*}
In other words, $\frac{\Mcondnu}{\sigcond}$ and $\frac{\Mfullnu}{\sigfull}$ are both bounded by finite, $d$-independent constants, which verifies \cref{assumption:moment:ratio}.

\subsection{Proof for Lemma \ref{lem:mmd:moments:analytical}}
\label{pf:mmd:moments:analytical}

\subsubsection{Proof for $\gmmd(\bz)$}
Recall that the MMD U-statistic is $\ummd(\bz, \bz')=\kappa(\bx, \bx') + \kappa(\by, \by') - \kappa(\bx, \by') - \kappa(\bx', \by)$ for $\bz \coloneqq (\bx, \by)$ and $\bz' = (\bx', \by')$. Taking expectation with respect to the second argument, we have
\begin{align*}
    &\gmmd(\bz)
    \;\coloneqq\; 
    \E[ \ummd(z, Z') ] 
    \;=\;
    \E[ \kappa(\bx, \bX') + \kappa(\by, \bY') - \kappa(\bx, \bY') - \kappa(\bX', \by) ] \\
    \;=&\;
    \E\Big[ 
    \exp\Big( - \mfrac{\| \bx - \bX'\|_2^2}{2\gamma} \Big) 
    + \exp\Big( - \mfrac{\|\by - \bY'\|_2^2}{2\gamma} \Big) 
    - \exp\Big( - \mfrac{\| \bx - \bY'\|_2^2}{2\gamma} \Big) 
    - \exp\Big( - \mfrac{\| \bX' - \by \|_2^2}{2\gamma} \Big) 
    \Big] \;.
\end{align*}
We can apply Lemma \ref{lem:gauss:integral:single} to compute each term. For example, setting $\ba_1 = 1, \ba_2 = \gamma, \bm_1 = \bmu$ and $\bm_2 = \bx$ in Lemma \ref{lem:gauss:integral:single}, the first term simplifies to
\begin{align*}
    \E\left[ 
    \exp\left( - \mfrac{\| \bx - \bX'\|_2^2}{2\gamma} \right)
    \right]
    \;=&\;
    \left( \mfrac{\gamma}{1 + \gamma} \right)^{d / 2} \exp\left( - \mfrac{1}{2(1 + \gamma)} \| \bx - \bmu \|_2^2 \right) \;.
\end{align*}
Computing similarly the other terms yields the desired result:
\begin{align*}
    \gmmd(\bz) 
    \;&=\;  \left( \mfrac{\gamma}{1 + \gamma} \right)^{d/2} 
            \Big[
            e^{ -\frac{1}{2 (1 + \gamma) } \| \bx - \bmu \|_2^2}
            + e^{ -\frac{1}{2 (1 + \gamma) } \| \by\|_2^2} 
            - e^{ -\frac{1}{2 (1 + \gamma) } \| \bx \|_2^2}
            - e^{ -\frac{1}{2 (1 + \gamma) } \| \by - \bmu \|_2^2}
            \Big] \;.
\end{align*}

\subsubsection{Proof for $\mmd(Q,P)$}
This is a special case of \citet[Proposition 1]{ramdas2015decreasing} with $\bmu_1 = 0$, $\bmu_2 = \bmu$ and $\Sigma = I_d$. Alternatively, applying Lemma \ref{lem:gauss:integral:single} to compute each term in $\E[ g(Z) ]$ yields the same result.

\subsubsection{Proof for $\sigcond^2$}
For $\bZ=(\bX,\bY)$, the second moment of $g(\bZ)$ is
\begin{align*}
    \E[ g(\bZ)^2 ]
    \;=&\;
    \left( \mfrac{\gamma}{1 + \gamma} \right)^{d}
    \E\bigg[ \bigg( e^{ -\frac{1}{2 (1 + \gamma) } \| \bx - \bmu \|_2^2}
            + e^{ -\frac{1}{2 (1 + \gamma) } \| \by\|_2^2} 
            - e^{ -\frac{1}{2 (1 + \gamma) } \| \bx \|_2^2}
            - e^{ -\frac{1}{2 (1 + \gamma) } \| \by - \bmu \|_2^2}
    \bigg)^2 \bigg] \\
    \;=&\;
    \left( \mfrac{\gamma}{1 + \gamma} \right)^{d}
    \E\bigg[ 
    \exp\Big( -\mfrac{ \| \bX  - \bmu \|_2^2}{1 + \gamma} \Big) 
    + \exp\Big( -\mfrac{\| \bY \|_2^2}{1 + \gamma }  \Big)
    + \exp\Big( -\mfrac{\| \bX \|_2^2}{1 + \gamma}  \Big) 
    \\
    &\;\qquad\qquad\quad
    + \exp\Big( -\mfrac{\| \bY  - \bmu \|_2^2}{1 + \gamma}  \Big) 
    + 2 \exp\Big( -\mfrac{\| \bX - \bmu \|_2^2}{2(1 + \gamma)}  \Big) \exp\Big( -\mfrac{\| \bY \|_2^2 }{2(1 + \gamma)} \Big) \\
    &\;\qquad\qquad\quad
    - 2 \exp\Big( - \mfrac{ \| \bX - \bmu \|_2^2 + \| \bX \|_2^2 }{2(1 + \gamma)} \Big) 
    - 2 \exp\Big( - \mfrac{\| \bX - \bmu \|_2^2}{2(1 + \gamma)} \Big) \exp\Big( - \mfrac{\| \bY - \bmu \|_2^2 }{2(1 + \gamma)} \Big) \\
    &\;\qquad\qquad\quad
    - 2 \exp\Big( - \mfrac{\| \bX \|_2^2 }{2(1 + \gamma)} \Big) \exp\Big( - \mfrac{\| \bY \|_2^2 }{2(1 + \gamma)} \Big)
    -2 \exp\Big( - \mfrac{\| \bY \|_2^2 + \| \bY - \bmu \|_2^2 }{2(1 + \gamma)} \Big) \\
    &\;\qquad\qquad\quad
    + 2 \exp\left( - \mfrac{1}{2(1 + \gamma)} \| \bX \|_2^2 \right) \exp\left( - \mfrac{1}{2(1 + \gamma)} \| \bY - \bmu \|_2^2 \right)
    \bigg] \;.
\end{align*}
We can compute each term by applying Lemma \ref{lem:gauss:integral:single}. Noting that $\bY - \bmu$ and $\bX$ are equal in distribution, and also that Lemma \ref{lem:gauss:integral:single} depends on $\bm_1$ and $\bm_2$ only through their difference, we have
\begin{align*}
    \E\left[ \exp\left( - \mfrac{\| \bX - \bmu \|_2^2}{1 + \gamma}  \right) \right] 
    \;=&\; 
    \E\left[ \exp\left( - \mfrac{ \| \bY \|_2^2}{1 + \gamma} \right) \right] 
    \;=\;
    \left( \mfrac{1 + \gamma}{3 + \gamma} \right)^{d / 2} \;,
    \\
    \E\left[ \exp\left(  - \mfrac{\| \bX \|_2^2 }{1 + \gamma} \right) \right]
    \;=&\;
    \E\left[ \exp\left(  -\mfrac{\| \bY - \bmu \|_2^2 }{1 + \gamma} \right) \right] 
    \;=\;
    \left( \mfrac{1 + \gamma}{3 + \gamma} \right)^{d / 2} \exp\left( - \mfrac{ \| \bmu \|_2^2}{3 + \gamma} \right) \;,
    \\
    \E\left[ \exp\left( -\mfrac{\| \bX - \bmu \|_2^2}{2(1 + \gamma)} \right)  
    \right]
    \;=&\;
    \E\left[ \exp\left( -\mfrac{\| \bY \|_2^2}{2(1 + \gamma)} \right) 
    \right]
    \;=\;
    \left( \mfrac{1 + \gamma}{2 + \gamma} \right)^{d/2} \;,
    \\
    \E\left[ \exp\left( -\mfrac{\| \bX \|_2^2}{2(1 + \gamma)} \right)  
    \right]
    \;=&\;
    \E\left[ \exp\left( -\mfrac{\| \bY - \bmu \|_2^2}{2(1 + \gamma)} \right) 
    \right]
    \;=\;
    \left( \mfrac{1 + \gamma}{2 + \gamma} \right)^{d/2} \exp\left( - \mfrac{\| \mu \|_2^2}{2 (2 + \gamma)} \right) \;.
\end{align*}
It remains to calculate the expectations of the sixth and ninth terms, which involve two differently centred quadratic forms of $\bX$ and $\bY$ respectively. The sixth term simplifies to 
\begin{align}
    \E\left[ 
        \exp\left( - \mfrac{\| \bX - \bmu \|_2^2 + \| \bX \|_2^2}{2(1 + \gamma)} \right)
    \right]
    \;=&\;
    \E\left[
        \exp\left( - \mfrac{1}{2(1 + \gamma)} \left( 2\| \bX \|_2^2 - 2\bmu^\top \bX + \| \bmu \|_2^2 \right) \right)
    \right] \nonumber\\
    \;=&\;
    \E\left[
        \exp\left( - \mfrac{1}{1 + \gamma} \left\| \bX - \mfrac{\bmu}{2} \right\|_2^2 \right) 
    \right] \exp\left(  - \mfrac{ \| \bmu\|_2^2}{4(1 + \gamma)} \right) \nonumber\\
    \;=&\;
    \left( \mfrac{1 + \gamma}{3 + \gamma} \right)^{d/2} \exp\left( - \mfrac{\| \bmu \|_2^2}{4(3 + \gamma)} \right) \exp\left( - \mfrac{\| \bmu \|_2^2}{4(1 + \gamma)} \right) \nonumber\\
    \;=&\;
    \left( \mfrac{1 + \gamma}{3 + \gamma} \right)^{d/2} \exp\left( - \mfrac{2 + \gamma}{(2(3 + \gamma)(1 + \gamma)} \| \bmu \|_2^2 \right) 
    \label{eq:mmd:cond:var:cross:term}
    \;,
\end{align}
and a similar calculation gives,
\begin{align*}
    \E\left[
        \exp\left( - \mfrac{\| \bY \|_2^2 + \| \bY - \bmu \|_2^2}{2(1 + \gamma)} \right)
     \right]
     \;=&\;
     \left( \mfrac{1 + \gamma}{3 + \gamma} \right)^{d/2} \exp\left( - \mfrac{2 + \gamma}{(2(3 + \gamma)(1 + \gamma)} \| \bmu \|_2^2 \right) \;.
\end{align*}
Combining the above identities yields
\begin{align*}
    \E[ g(Z)^2 ]
    \;=&\;
    \left( \mfrac{\gamma}{1 + \gamma} \right)^{d}
    \bigg(
        2\left( \mfrac{1 + \gamma}{3 + \gamma} \right)^{d / 2} 
        + 2\left( \mfrac{1 + \gamma}{3 + \gamma} \right)^{d / 2} \exp\left( - \mfrac{ \| \bmu \|_2^2}{3 + \gamma} \right) \\
        &\qquad\qquad\quad
        + 2\left( \mfrac{1 + \gamma}{2 + \gamma} \right)^{d}
        - 4\left( \mfrac{1 + \gamma}{3 + \gamma} \right)^{d/2} \exp\left( - \mfrac{2 + \gamma}{2(3 + \gamma)(1 + \gamma)} \| \bmu \|_2^2 \right) \\
        &\qquad\qquad\quad
        - 4\left( \mfrac{1 + \gamma}{2 + \gamma} \right)^{d} \exp\left( - \mfrac{\| \mu \|_2^2}{2 (2 + \gamma)} \right)
        + 2 \left( \mfrac{1 + \gamma}{2 + \gamma} \right)^d \exp\left( - \mfrac{\| \bmu \|_2^2}{2 + \gamma} \right)
    \bigg) \\
    \;=&\;
    2 \left( \mfrac{\gamma}{1 + \gamma} \right)^{d/2} \left( \mfrac{\gamma}{3 + \gamma} \right)^{d/2}
    \\
    &\;\quad \times \bigg(
        1 
        + \exp\left( - \mfrac{ \| \bmu \|_2^2}{3 + \gamma} \right) 
        + \left( \mfrac{3 + \gamma}{2 + \gamma} \right)^{d/2} \left( \mfrac{1 + \gamma}{2 + \gamma} \right)^{d/2} - 2 \exp\left( - \mfrac{(2 + \gamma)\| \bmu \|_2^2}{2(3 + \gamma)(1 + \gamma)}  \right) \\
        &\;\qquad\;\;\;
        - 2 \left(\mfrac{1 + \gamma}{2 + \gamma} \right)^{d/2} \exp\left( - \mfrac{\| \mu \|_2^2}{2 (2 + \gamma)} \right) 
        + \left(\mfrac{1 + \gamma}{2 + \gamma} \right)^{d/2} \exp\left( - \mfrac{\| \bmu \|_2^2}{2 + \gamma} \right)
    \bigg) \;.
\end{align*}
By noting that $\mmd(Q, P)^2 = 4 \big( \frac{\gamma}{2 + \gamma} \big)^{d} \Big( 1 - \exp\big( -\frac{\| \bmu \|_2^2 }{2 (2 + \gamma) } \big) \Big)^2 $, we hence obtain
\begin{align*}
    \sigcond^2 
    \;=&\;
    \E[ g(Z)^2 ] - \mmd(Q, P)^2 
    \;=\;
    \E[ g(Z)^2 ] - 4 \left( \mfrac{\gamma}{2 + \gamma} \right)^{d} \left[ 1 - \exp\left( -\mfrac{\| \bmu \|_2^2 }{2 (2 + \gamma) } \right) \right]^2
    \\
    \;=&\;
    2 \left( \mfrac{\gamma}{1 + \gamma} \right)^{d/2} \left( \mfrac{\gamma}{3 + \gamma} \right)^{d/2}  \\
    &\; \times
    \bigg(
        1 + \exp\left( -\mfrac{\| \bmu \|_2^2 }{3 + \gamma} \right) 
        + 2 \left( \mfrac{3 + \gamma}{2 + \gamma} \right)^{d/2} \left( \mfrac{1 + \gamma}{2 + \gamma} \right)^{d/2} \exp\left( - \mfrac{\| \bmu \|_2^2 }{2(2 + \gamma)} \right) \\
        &\qquad 
        - 2 \exp\left( - \mfrac{7 + 5\gamma}{4 (1 + \gamma)(3 + \gamma)} \| \mu \|_2^2 \right) 
        - \left( \mfrac{3 + \gamma}{2 + \gamma} \right)^{d/2} \left( \mfrac{1 + \gamma}{2 + \gamma} \right)^{d/2} \\
        &\qquad 
        - \left( \mfrac{3 + \gamma}{2 + \gamma} \right)^{d/2} \left( \mfrac{1 + \gamma}{2 + \gamma} \right)^{d/2} \exp\left( - \mfrac{\| \bmu \|_2^2 }{2 + \gamma} \right)
    \bigg) \;,
\end{align*}
as required.

\subsubsection{Proof for $\sigfull^2$}
The second moment is
\begin{align*}
    &\;
    \E[\ummd(Z, Z')^2] \\
    \;=&\;
    \E\bigg[
        \exp\left( - \mfrac{\| \bX - \bX'\|_2^2}{\gamma} \right)
        + \exp\left( - \mfrac{\| \bY - \bY' \|_2^2}{\gamma} \right) 
        + \exp\left( - \mfrac{\| \bX - \bY' \|_2^2}{\gamma} \right)
        + \exp\left( - \mfrac{\| \bX' - \bY \|_2^2}{\gamma} \right) \\
        &\quad\;
        + 2 \exp\left( - \mfrac{\| \bX - \bX' \|_2^2}{2\gamma} - \mfrac{\| \bY - \bY' \|_2^2}{2\gamma} \right)
        - 2 \exp\left( - \mfrac{\| \bX - \bX' \|_2^2}{2\gamma} - \mfrac{\| \bX - \bY' \|_2^2}{2\gamma} \right) \\
        &\quad\;
        - 2 \exp\left( - \mfrac{\| \bX - \bX' \|_2^2}{2\gamma} - \mfrac{\| \bX' - \bY \|_2^2}{2\gamma} \right)
        - 2 \exp\left( - \mfrac{\| \bY - \bY' \|_2^2}{2\gamma} - \mfrac{\| \bX - \bY' \|_2^2}{2\gamma} \right) \\
        &\quad\;
        - 2 \exp\left( - \mfrac{\| \bY - \bY' \|_2^2}{2\gamma} - \mfrac{\| \bX' - \bY \|_2^2}{2\gamma} \right)
        + 2 \exp\left( - \mfrac{\| \bX - \bY' \|_2^2}{2\gamma} - \mfrac{\| \bX' - \bY \|_2^2}{2\gamma} \right)
    \bigg] \\
    \;=&\;
    \E\bigg[
        \exp\left( - \mfrac{\| \bX - \bX'\|_2^2}{\gamma} \right)
        + \exp\left( - \mfrac{\| \bY - \bY' \|_2^2}{\gamma} \right) 
    \bigg]
    + 2\E\bigg[
        \exp\left( - \mfrac{\| \bX - \bY' \|_2^2}{\gamma} \right)
    \bigg] \\
    &\;
    + 2\E\bigg[
        \exp\left( - \mfrac{\| \bX - \bX' \|_2^2}{2\gamma} - \mfrac{\| \bY - \bY' \|_2^2}{2\gamma} \right)
    \bigg]
    - 4\E\bigg[ 
        \exp\left( - \mfrac{\| \bX - \bX' \|_2^2}{2\gamma} - \mfrac{\| \bX - \bY' \|_2^2}{2\gamma} \right) 
    \bigg] \\
    &\;
    - 4\E\bigg[
        \exp\left( - \mfrac{\| \bY - \bY' \|_2^2}{2\gamma} - \mfrac{\| \bX - \bY' \|_2^2}{2\gamma} \right)
    \bigg]
    + 2\left( \E\bigg[
        \exp\left( - \mfrac{\| \bX - \bY' \|_2^2}{2\gamma} \right)
    \bigg] \right)^2 \;,
\end{align*}
where the second equality follows by the fact that $X, X'$ and $Y, Y'$ are respectively independent copies from $Q$ and $P$. To calculate each term, we apply Lemma \ref{lem:gauss:integral:double} to yield
\begin{align*}
    &\E\bigg[
        \exp\left( - \mfrac{\| \bX - \bX'\|_2^2}{\gamma} \right)
    \bigg]
    \;=\;
    \E\bigg[
        \exp\left( - \mfrac{\| \bY - \bY' \|_2^2}{\gamma} \right) 
    \bigg]
    \;=\;
    \left( \mfrac{\gamma/2}{2 + \gamma / 2} \right)^{d/2}
    \;=\;
    \left( \mfrac{\gamma}{4 + \gamma} \right)^{d/2} 
    \;,
    \\
    &\E\bigg[
        \exp\left( - \mfrac{\| \bX - \bX'\|_2^2}{2\gamma} \right)
    \bigg]
    \;=\;
    \E\bigg[
        \exp\left( - \mfrac{\| \bY - \bY' \|_2^2}{2\gamma} \right) 
    \bigg]
    \;=\;
    \left( \mfrac{\gamma}{2 + \gamma} \right)^{d/2}
    \;,
    \\
    &\E\bigg[
        e^{ - \frac{\| \bX - \bY' \|_2^2}{\gamma} }
    \bigg]
    \;=\;
    \left( \mfrac{\gamma}{4 + \gamma} \right)^{d/2} \exp\left( - \mfrac{\| \bmu \|_2^2}{4 + \gamma} \right) \;,
    \;\;
    \E\bigg[
        e^{- \frac{\| \bX - \bY' \|_2^2}{2\gamma}}
    \bigg]
    \;=\;
    \left( \mfrac{\gamma}{2 + \gamma} \right)^{d/2} \exp\left( - \mfrac{\| \bmu \|_2^2}{2(2 + \gamma)} \right) \;.
\end{align*}
Let $T_1\coloneqq\E\Big[ 
    \exp\Big( - \frac{\| \bX - \bX' \|_2^2}{2\gamma} - \frac{\| \bX - \bY' \|_2^2}{2\gamma} \Big) 
\Big]
$ and $T_2\coloneqq\E\Big[
    \exp\Big( - \frac{\| \bY - \bY' \|_2^2}{2\gamma} - \frac{\| \bX - \bY' \|_2^2}{2\gamma} \Big) 
\Big]$, which are the only remaining terms to compute. The first term can be simplified as
\begin{align*}
    T_1
    \;=&\;
    \E\bigg[ 
        \E\bigg[
            \exp\left( - \mfrac{\| \bX - \bX' \|_2^2}{2\gamma} \right) \Big| \bX
        \bigg]
        \E\bigg[
            \exp\left( - \mfrac{\| \bX - \bY' \|_2^2}{2\gamma} \right) \Big| \bX
        \bigg]
    \bigg] 
    \\
    \;\overset{(a)}{=}&\;
    \E\left[
        \left( \mfrac{\gamma}{1 + \gamma} \right)^{d/2} \exp\left( - \mfrac{\| \bX \|_2^2}{2 (1 + \gamma)} \right)
        \times
        \left( \mfrac{\gamma}{1 + \gamma} \right)^{d/2} \exp\left( - \mfrac{\| \bX - \bmu \|_2^2}{2 (1 + \gamma)} \right)
    \right]
    \\
    \;=&\;
    \left( \mfrac{\gamma}{1 + \gamma} \right)^{d}
    \E\left[
         \exp\left( - \mfrac{\| \bX \|_2^2}{2 (1 + \gamma)} \right)
         \exp\left( - \mfrac{\| \bX - \bmu \|_2^2}{2 (1 + \gamma)} \right)
    \right]
    \\
    \;\overset{(b)}{=}&\;
    \left( \mfrac{\gamma}{1 + \gamma} \right)^{d}
    \left( \mfrac{1 + \gamma}{3 + \gamma} \right)^{d/2} \exp\left( - \mfrac{2 + \gamma}{2(3 + \gamma)(1 + \gamma)} \| \bmu \|_2^2 \right)
    \\
    \;=&\;
    \left( \mfrac{\gamma}{1 + \gamma} \right)^{d/2}
    \left( \mfrac{\gamma}{3 + \gamma} \right)^{d/2} \exp\left( - \mfrac{2 + \gamma}{2(3 + \gamma)(1 + \gamma)} \| \bmu \|_2^2 \right) \;,
\end{align*}
where in $(a)$ we have applied Lemma~\ref{lem:gauss:integral:single} to compute the conditional expectations, and $(b)$ follows from substituting \eqref{eq:mmd:cond:var:cross:term}. The second term can be simplified with a similar calculation as,
\begin{align*}
    T_2
    \;=&\;
    \left( \mfrac{\gamma}{1 + \gamma} \right)^{d/2}
    \left( \mfrac{\gamma}{3 + \gamma} \right)^{d/2} \exp\left( - \mfrac{2 + \gamma}{2(3 + \gamma)(1 + \gamma)} \| \bmu \|_2^2 \right) \;.
\end{align*}
Collecting all terms gives
\begin{align*}
    \E[\ummd(Z, Z')^2] 
    \;=&\;
    2\left( \mfrac{\gamma}{4 + \gamma} \right)^{d/2}
    + 2\left( \mfrac{\gamma}{4 + \gamma} \right)^{d/2} \exp\left(- \mfrac{\| \bmu \|_2^2}{4 + \gamma} \right) 
    + 2 \left( \mfrac{\gamma}{2 + \gamma} \right)^d 
    \\
    &\;
    - 8 \left( \mfrac{\gamma}{1 + \gamma} \right)^{d/2}
    \left( \mfrac{\gamma}{3 + \gamma} \right)^{d/2} \exp\left( - \mfrac{2 + \gamma}{2(3 + \gamma)(1 + \gamma)} \| \bmu \|_2^2 \right)
    \\
    &\;
    + 2 \left( \mfrac{\gamma}{2 + \gamma} \right)^d \exp\left( - \mfrac{\| \mu\|_2^2}{2 + \gamma} \right) \;.
\end{align*}
By noting that 
\begin{align*}
    \mmd(Q, P)^2 
    \;=&\; 
    4 \Big( \mfrac{\gamma}{2 + \gamma} \Big)^{d} \Big( 1 - \exp\big( -\mfrac{\| \bmu \|_2^2 }{2 (2 + \gamma) } \big) \Big)^2
    \\
    \;=&\;
    4 \Big( \mfrac{\gamma}{2 + \gamma} \Big)^{d} \Big( 
        1
        - \exp\big( -\mfrac{\| \bmu \|_2^2 }{2 + \gamma } \big)
        + 2\exp\big( -\mfrac{\| \bmu \|_2^2 }{2 (2 + \gamma) } \big)
    \Big)
    \;,
\end{align*}
the variance takes the following form after subtracting $\mmd(Q, P)^2$ and collecting similar terms
\begin{align*}
    \sigfull^2
    \;=&\;
    \E[\ummd(Z, Z')^2] - \mmd(Q, P)^2
    \\
    \;=&\;
    2\Big( \mfrac{\gamma}{4 + \gamma} \Big)^{d/2} \Big(
    1 + \exp\Big(- \mfrac{\| \bmu \|_2^2}{4 + \gamma} \Big) \Big) 
    - 2 \Big( \mfrac{\gamma}{2 + \gamma} \Big)^d 
    \\
    &\;
    - 8 \Big( \mfrac{\gamma}{1 + \gamma} \Big)^{d/2}
    \Big( \mfrac{\gamma}{3 + \gamma} \Big)^{d/2} \exp\Big( - \mfrac{2 + \gamma}{2(3 + \gamma)(1 + \gamma)} \| \bmu \|_2^2 \Big)
    \\
    &\;
    - 2 \Big( \mfrac{\gamma}{2 + \gamma} \Big)^d \exp\Big( - \mfrac{\| \mu\|_2^2}{2 + \gamma} \Big) 
    + 8 \Big( \mfrac{\gamma}{2 + \gamma} \Big)^d \exp\Big( - \mfrac{\| \bmu \|_2^2}{2(2 + \gamma)} \Big) \;,
\end{align*}
which completes the proof.

\subsection{Proof of Lemma~\ref{lem:linear:mmd:moments:analytical}}
\label{pf:linear:mmd:moments:analytical}
With a linear kernel and under the stated assumption, the MMD statistic is
\begin{align*}
    \ummd( \bz, \bz' )
    \;=\;
    \bx^\top \bx' + \by^\top \by' - \bx^\top \by' - \by^\top \bx'
    \;,
    \quad 
    \text{ where }\;\;
    \bz = (\bx, \by), \bz' = (\bx', \by') \in \R^{2d}\;.
\end{align*}

\subsubsection{Proof for $\gmmd(\bz)$ and $\mmd(Q,P)$}
The expression for $\gmmd$ can be computed as
\begin{align*}
    \gmmd(\bz)
    \;=&\;
    \mean[ \ummd(\bz, \bZ')]
    \;=\;
    \mean[ \bx^\top \bX' + \by^\top \bY' - \bx^\top \bY' - \by^\top \bX' ]
    \;=\;
    \bmu^\top \bx - \bmu^\top \by
    \;.
\end{align*}
The formula for $\mmd(Q,P)$ then follows as
\begin{align*}
    \mmd(Q,P) \;=\; \mean[ \ummd(\bZ, \bZ')]
    \;=\; \mean[\gmmd(\bZ)] \;=\; \mean[\mu^\top \bX - \mu^\top \bY] \;=\; \mu^\top \mu \;=\; \|\mu\|_2^2\;.
\end{align*}

\subsubsection{Proof for $\sigcond^2$}
A direct computation gives
\begin{align*}
    \E[ \gmmd(\bZ)^2 ]
    \;=&\;
    \E\Big[  
        (\bmu^\top \bX)^2
        + (\bmu^\top \bY)^2
        - 2 \bmu^\top \bX \bmu^\top \bY
    \Big]
    \\
    \;=&\;
    \bmu^\top (\Sigma + \bmu \bmu^\top) \bmu
    + \bmu^\top \Sigma \bmu
    \;=\;
    2 \bmu^\top \Sigma \bmu + \| \bmu \|_2^4
    \;.
\end{align*} 
Therefore, $\sigcond^2 = \E[ \gmmd(\bZ)^2 ] - \mmd(Q, P)^2 = 2 \bmu^\top \Sigma \bmu$, as required.

\subsubsection{Proof for $\sigfull^2$}
The second moment is
\begin{align*}
    \E[\ummd(\bZ, \bZ')^2]
    \;=&\;
    \E\big[
        ( \bX^\top \bX' )^2
        + ( \bY^\top \bY' )^2
        + ( \bX^\top \bY' )^2
        + ( \bY^\top \bX' )^2
        \\
        &\quad\;\;
        + 2 \bX^\top \bX' \bY^\top \bY'
        - 2 \bX^\top \bX' \bX^\top \bY'
        - 2 \bX^\top \bX' \bY^\top \bX'
        \\
        &\quad\;\;
        - 2 \bY^\top \bY' \bX^\top \bY'
        - 2 \bY^\top \bY \bY^\top \bX'
        + 2 \bX^\top \bY' \bY^\top \bX'
    \big]
    \\
    \;=&\;
    \E\big[
        ( \bX^\top \bX' )^2
        + ( \bY^\top \bY' )^2
        + ( \bX^\top \bY' )^2
        + ( \bY^\top \bX' )^2
    \big]
    \;.
\end{align*}
In the last equality, we have noted that the cross-terms vanish since $\bX, \bX', \bY$ and $\bY'$ are mutually independent and $\bX, \bX'$ are zero-mean. A direct computation gives
\begin{align*}
    \E\big[ ( \bX^\top \bX' )^2 \big]
    \;=&\;
    \Tr\big( \E[ \bX \bX^\top \bX' (\bX')^\top  ] \big)
    \;=\;
    \Tr(\Sigma^2)
    \;,
    \\
    \E\big[ ( \bY^\top \bY' )^2 \big]
    \;=&\;
    \Tr\big( \E[ \bY \bY^\top \bY' (\bY')^\top  ] \big)
    \;=\;
    \Tr\big( (\Sigma + \bmu \bmu^\top)^2 \big)
    \;=\;
    \Tr(\Sigma^2) + 2 \bmu^\top \Sigma \bmu^\top + \| \bmu \|_2^4
    \;,
    \\
    \E\big[ (\bX^\top \bY')^2 \big]
    \;=&\;
    \E\big[ (\bY^\top \bX')^2 \big]
    \;=\;
    \Tr( \E[ \bY (\bY')^\top \bX (\bX')^\top  ] )
    \;=\;
    \Tr(\Sigma^2) + \bmu^\top \Sigma \bmu
    \;.
\end{align*}
Therefore, $\E[ \ummd(\bZ, \bZ')^2 ] = 4 \Tr(\Sigma^2) + 4  \bmu^\top \Sigma \bmu + \| \bmu \|_2^4$, and
\begin{align*}
    \sigfull^2
    \;=\;
    \E[ \ummd(\bZ, \bZ')^2 ] - \mmd(Q, P)^2
    \;=\;
    4 \Tr(\Sigma^2) + 4  \bmu^\top \Sigma \bmu
    \;,
\end{align*}
which completes the proof.

\subsubsection{Proof for upper bound on $ \Mcondthree^3$}
The 3rd absolute centred moment of $\gmmd(\bZ)$ satisfies
\begin{align*}
    \Mcondthree^3
    =
    \E[ | \gmmd(\bZ) - \mean[\gmmd(\bZ)] |^3 ]
    =
    \E[ | \bmu^\top \bY - \bmu^\top \bX - \bmu^\top \bmu |^3 ]
    =&
    \E[ | \bmu^\top \bY - \bmu^\top \bV |^3 ]
    \;,
\end{align*}
where we have defined $\bV \coloneqq \bX - \bmu$ so that $\bV \sim \cN(0, \Sigma)$. Noting that $| a + b |^3 \leq 2^{3-1} (|a|^3 + |b|^3 )$ for any $a, b\in \R$ by Jensen's inequality, we can bound the above as
\begin{align*}
    \Mcondthree^3
    =
    \E\big[ \big| \bmu^\top \bY - \bmu^\top \bV  \big|^m  \big]
    \leq
    4 \E[ 
        | \bmu^\top \bY |^3 
        + | \bmu^\top \bV|^3 
    ]
    \overset{(a)}{=}
    8 C' (\bmu^\top \Sigma \bmu)^{3/2}
    \overset{(b)}{=}
    C (\bmu^\top \Sigma \bmu)^{3/2}
    \;.
\end{align*}
In $(a)$ we have noted that the absolute 3rd moment of a univariate normal variable $\cN(0,\sigma^2)$ is given as $C'\sigma^3$ for some absolute constant $C'$. In $(b)$, we have defined $C \coloneqq 8 C' $. 

\subsubsection{Proof for upper bound on $\Mfullthree^3$}
For any $\bz = (\bx, \by), \bz' = (\bx', \bz') \in \R^{2d}$ we have
\begin{align*}
    \ummd(\bz, \bz')
    \;=&\;
    \bx^\top \bx' + \by^\top \by - \bx^\top \by' - \by^\top \bx'
    \;=\;
    ( \bx - \by )^\top( \bx' - \by' )
    \;.
\end{align*}
Write $\bV \coloneqq \bX - \mu$ and $\bV' \coloneqq \bX - \mu$ so that $\bV, \bV' \overset{i.i.d.}{\sim} \cN(\bzero, \Sigma)$. We can compute the 3rd absolute central moment as
\begin{align*}
    \Mfullthree^3
    \;=&\;
    \E[ | \ummd(\bZ, \bZ') - \mean[\ummd(\bZ, \bZ') ]|^3 ]
    \\
    \;=&\;
    \E[ | (\bX - \bY)^\top (\bX' - \bY') - \mu^\top \mu |^3 ]
    \\
    \;=&\;
    \E[ | (\bV + \mu - \bY)^\top (\bV' + \mu - \bY') - \mu^\top \mu |^3 ]
    \\
    \;=&\;
    \E[ | (\bV - \bY)^\top (\bV' - \bY') + \mu^\top (\bV'-\bY') + (\bV-\bY)^\top \mu |^3 ]
    \;.
\end{align*}
By a Jensen's inequality applied to the convex function $x \mapsto |x|^3$ and a H\"older's inequality, we get that
\begin{align*}
    \Mfullthree^3
    \;\leq&\;
    9 \big( 
        \E \big[  | (\bV-\bY)^\top (\bV'-\bY') |^3 \big]
        + \E \big[ | \mu^\top (\bV'-\bY') |^3 \big]
        + \E \big[ | (\bV-\bY)^\top \mu |^3 \big]
    \big)
    \\
    \;\leq&\;
    9 \big( 
        \E \big[  | \bU^\top \bU' |^3 \big]
        + 2 \E \big[ | \bU^\top \mu |^3 \big]
    \big)
    \;.
\end{align*}
In the last line, we have used that $\bU \coloneqq \bV-\bY$ and $\bU'\coloneqq \bV'-\bY'$ are identically distributed. In fact they are both $\cN(\bzero, 2\Sigma)$. The second expectation can be computed by the formula for the absolute 3rd moment of a univariate Gaussian as
\begin{align*}
    \E \big[  | \bU^\top \mu |^3 \big]
    \;=\;
    C' (\mu^\top \Sigma \mu)^{3/2}\;
\end{align*}
where $C'$ is some absolute constant. Similarly the first expectation can be computed first by noting that $\bU^\top \bU'$ conditioning on $\bU$ is a univariate Gaussian and secondly by using the moment formula for a Gaussian quadratic form Lemma \ref{lem:quadratic:gaussian:moments}:
\begin{align*}
     \E \big[  | \bU^\top \bU' |^3 &\big]
     \;=\;
     \E \big[ \mean[  | \bU^\top \bU' |^3 | \bU] \big]
     \;=\;
     C' \E \big[ (\bU^\top \Sigma \bU)^{3/2} \big]
     \\
     \;\leq&\; C' \E \big[ (\bU^\top \Sigma \bU)^3 \big]^{1/2}
     =
     C' \big(\Tr(\Sigma^2)^3 + 6 \Tr(\Sigma^2)\Tr(\Sigma^4) + 8 \Tr(\Sigma^6) \big)
     \leq 15 C' \Tr(\Sigma^2)^3 \;.
\end{align*}
In the last line, we have noted that $\Tr(A^m) \leq \Tr(A)^m$ for $m \in \N$ and positive semi-definite matrix $A$, which holds by expressing each trace as a sum of eigenvalues and applying the H\"older's inequality. Combining the two computations and redefining constants, we get that for some constant $C$,
\begin{align*}
    \Mfullthree^3 \;\leq&\; 
    C \big(  \Tr(\Sigma^2)^3 + (\mu^\top \Sigma \mu)^{3/2} \big)
    \;\leq\; C \big( \Tr(\Sigma^2) + \mu^\top \Sigma \mu \big)^{3/2}\;.
\end{align*}

\subsubsection{Proof for verifying \cref{assumption:moment:ratio}}

By the bounds from \emph{(iii)}-\emph{(vi)}, there exists absolute constants $C_1, C_2$ such that 
\begin{align*}
    &
    \mfrac{\Mcondthree}{\sigcond} 
    \;\leq\; 
    \mfrac{ C_1^{1/3} (\mu^\top \Sigma \mu)^{1/2}}{2^{1/2} (\bmu^\top \Sigma \bmu)^{1/2}}
    \;=\; 2^{-1/2} C_1^{1/3}\;,
    &&
    \mfrac{\Mfullthree}{\sigfull} 
    \;\leq\; 
    \mfrac{ C_2^{1/3} \big(\Tr(\Sigma^2) + \mu^\top \Sigma \mu \big)^{1/2}}{2 \big(\Tr(\Sigma^2) + \mu^\top \Sigma \mu \big)^{1/2}}
    \;=\; 2^{-1} C_2^{1/3}\;,
\end{align*}
which prove that \cref{assumption:moment:ratio} holds with $\nu=3$.

\section{Proofs for \cref{appendix:tools}} \label{appendix:proof:tools}

\subsection{Proofs for \cref{appendix:moments}}

The proof of Lemma \ref{lem:martingale:bound} combines the following two results:

\begin{lemma}[Theorem 2, \citet{von1965inequalities}] \label{lem:von:bahn:esseen} Fix $\nu \in [1,2]$. For a martingale difference sequence $Y_1,\ldots,Y_n$ taking values in $\R$,
\begin{align*}
    \mean \big[ \big| \msum_{i=1}^n Y_i \big|^\nu \big] \;\leq\; 2 \msum_{i=1}^n \mean[ |Y_i |^\nu ]\;.
\end{align*}
\end{lemma}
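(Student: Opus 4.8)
The plan is to reproduce the classical martingale argument of von Bahr and Esseen, which reduces the bound to a one-line elementary inequality. First I would dispose of the trivial case: if $\mean[|Y_i|^\nu] = \infty$ for some $i$ the right-hand side is infinite, so assume all these moments finite. Then I would equip the sequence with the natural filtration $\cF_i \coloneqq \sigma(Y_1,\ldots,Y_i)$ (with $\cF_0$ trivial), under which $\mean[Y_i \mid \cF_{i-1}] = 0$ is exactly the martingale-difference hypothesis, and write $S_i \coloneqq Y_1 + \cdots + Y_i$ with $S_0 = 0$.

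The engine of the proof is the pointwise inequality, valid for $\nu \in [1,2]$ and all $a,b \in \R$,
\[
    |a+b|^\nu \;\le\; |a|^\nu \;+\; \nu\, b\, (\operatorname{sgn} a)\, |a|^{\nu-1} \;+\; 2\,|b|^\nu \;,
\]
which I would establish by writing $|a+b|^\nu - |a|^\nu = \nu b \int_0^1 (\operatorname{sgn}(a+tb))\,|a+tb|^{\nu-1}\,dt$, subtracting the linear term $\nu b (\operatorname{sgn} a)|a|^{\nu-1}$, and bounding the remaining integrand via the H\"older-type estimate $\big|(\operatorname{sgn} u)|u|^{\nu-1} - (\operatorname{sgn} v)|v|^{\nu-1}\big| \le 2^{2-\nu}|u-v|^{\nu-1}$ (which holds because $\nu-1 \in [0,1]$), producing a remainder of size at most $2^{2-\nu}|b|^\nu \le 2|b|^\nu$. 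Applying this with $a = S_{i-1}$, $b = Y_i$ and taking $\mean[\,\cdot\mid\cF_{i-1}]$ kills the linear term, since $(\operatorname{sgn} S_{i-1})|S_{i-1}|^{\nu-1}$ is $\cF_{i-1}$-measurable and $\mean[Y_i\mid\cF_{i-1}] = 0$, leaving $\mean[|S_i|^\nu\mid\cF_{i-1}] \le |S_{i-1}|^\nu + 2\,\mean[|Y_i|^\nu\mid\cF_{i-1}]$.

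From there I would take full expectations to get $\mean[|S_i|^\nu] \le \mean[|S_{i-1}|^\nu] + 2\,\mean[|Y_i|^\nu]$ (this also shows inductively that each $\mean[|S_i|^\nu]$ is finite, so nothing above is vacuous), then sum this telescoping relation over $i = 1,\ldots,n$ and use $S_0 = 0$ to conclude $\mean[|S_n|^\nu] \le 2\sum_{i=1}^n \mean[|Y_i|^\nu]$. The only part requiring any care is the elementary inequality — pinning down the H\"older constant for $u \mapsto (\operatorname{sgn} u)|u|^{\nu-1}$ — and even this can be bypassed by a direct single-variable calculus check after normalising $|b| = 1$, since the precise coefficient of the linear term is irrelevant (it disappears upon conditioning) and $2^{2-\nu} \le 2$ controls the $|b|^\nu$ term uniformly on $[1,2]$.
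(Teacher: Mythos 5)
Your proof is correct, and it is exactly the classical von Bahr–Esseen argument (pointwise inequality with the linear term killed by conditioning, then telescoping) from the reference the paper cites; the paper itself imports the lemma without proof, so there is nothing further to compare. The only points needing the care you already flag are the H\"older estimate for $u \mapsto (\operatorname{sgn} u)|u|^{\nu-1}$ (constant $2^{1-(\nu-1)}=2^{2-\nu}\le 2$, via concavity of $t\mapsto t^{\nu-1}$) and the integrability of $|S_{i-1}|^{\nu-1}Y_i$ (handled by your inductive finiteness of $\mean[|S_{i-1}|^\nu]$ plus H\"older), both of which go through.
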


\begin{lemma}[\citet{dharmadhikari1968bounds}] \label{lem:martingale:bound:two} Fix $\nu \geq 2$. For a martingale difference sequence $Y_1,\ldots,Y_n$ taking values in $\R$,
\begin{align*}
    \mean \big[ \big| \msum_{i=1}^n Y_i \big|^\nu \big] \;\leq\; C_{\nu} n^{\nu/2 - 1} \msum_{i=1}^n \mean[ |Y_i |^\nu ]\;,
\end{align*}
where $C_{\nu} =  (8(\nu-1)\max\{1,2^{\nu-3}\})^{\nu}$.
\end{lemma}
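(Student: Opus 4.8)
The statement is the classical martingale moment bound of \citet{dharmadhikari1968bounds}, and the plan is to reproduce its short inductive proof. Write $S_k=\sum_{i=1}^k Y_i$, $\beta_k=\mean[|Y_k|^\nu]$, $B_n=\sum_{k=1}^n\beta_k$, $A_n=\mean[|S_n|^\nu]$, and let $\cF_k$ be the filtration generated by $Y_1,\dots,Y_k$. The goal is to establish $A_n\le C_\nu\, n^{\nu/2-1}B_n$ by induction on $n$; the base case $n=1$ is immediate, so all of the work is in the inductive step, and the whole point is to arrange matters so that the step closes with the stated constant.

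The first ingredient is an elementary pointwise expansion: for $\nu\ge 2$ and all $a,b\in\R$,
\begin{align*}
    |a+b|^\nu \;\le\; |a|^\nu + \nu|a|^{\nu-1}\operatorname{sgn}(a)\,b + c_\nu\bigl(|a|^{\nu-2}b^2+|b|^\nu\bigr),
\end{align*}
obtained from a second-order Taylor expansion of $t\mapsto|a+tb|^\nu$ with Lagrange remainder, combined with $(|a|+|b|)^{\nu-2}\le 2^{\nu-3}(|a|^{\nu-2}+|b|^{\nu-2})$ for $\nu\ge 3$ (the same bound holds with constant $1$ when $2\le\nu\le 3$, by concavity of $x\mapsto x^{\nu-2}$) -- this dichotomy is exactly where the $\max\{1,2^{\nu-3}\}$ in $C_\nu$ comes from. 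Applying this with $a=S_{n-1}$, $b=Y_n$ and taking conditional expectation given $\cF_{n-1}$ kills the linear term because $\mean[Y_n\mid\cF_{n-1}]=0$; taking total expectations and applying H\"older's inequality with exponents $\tfrac{\nu}{\nu-2}$ and $\tfrac{\nu}{2}$ to the cross term then gives
\begin{align*}
    A_n \;\le\; A_{n-1} + c_\nu\bigl(A_{n-1}^{1-2/\nu}\beta_n^{2/\nu} + \beta_n\bigr).
\end{align*}
Feeding in the inductive hypothesis $A_{n-1}\le C_\nu(n-1)^{\nu/2-1}B_{n-1}\le C_\nu(n-1)^{\nu/2-1}B_n$ together with $\beta_n\le B_n$, and writing $(n-1)^{\nu/2-1}=n^{\nu/2-1}(1-1/n)^{\nu/2-1}$, reduces the claim to an inequality among explicit powers of $n$: the slack $n^{\nu/2-1}-(n-1)^{\nu/2-1}$, which by the mean value theorem is of order $n^{\nu/2-2}$, has to be large enough to absorb the two correction terms, and this is what pins down $C_\nu=(8(\nu-1)\max\{1,2^{\nu-3}\})^\nu$.

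I expect the only genuine difficulty to be the constant bookkeeping in this last step: choosing $c_\nu$ and splitting into the regimes $2\le\nu\le 4$ and $\nu>4$ (convexity versus concavity of $x\mapsto x^{\nu/2-1}$, hence the direction of the mean-value estimate for the slack) so that the recursion is self-sustaining with precisely the stated constant -- the probabilistic input is just the vanishing of the conditional mean. If one is willing to give up sharpness of the constant, a quicker route is to invoke Burkholder's square-function inequality $\mean[|S_n|^\nu]\le c_\nu^\nu\,\mean\bigl[(\sum_i Y_i^2)^{\nu/2}\bigr]$ and then apply the power-mean bound $(\sum_i Y_i^2)^{\nu/2}\le n^{\nu/2-1}\sum_i|Y_i|^\nu$, which is Jensen for the convex map $x\mapsto x^{\nu/2}$ (valid since $\nu\ge 2$), before taking expectations.
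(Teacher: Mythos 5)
You should first note that the paper does not actually prove this lemma: it is quoted verbatim from Dharmadhikari--Fabian--Jogdeo (1968) and only used, together with von Bahr--Esseen and Burkholder, inside the proof of Lemma~\ref{lem:martingale:bound}. So the comparison is with the classical argument, not with anything in the appendix. Your preliminary steps are sound: the pointwise expansion $|a+b|^\nu \le |a|^\nu + \nu|a|^{\nu-1}\operatorname{sgn}(a)b + c_\nu(|a|^{\nu-2}b^2+|b|^\nu)$ with $c_\nu=\tfrac{\nu(\nu-1)}{2}\max\{1,2^{\nu-3}\}$ is correct for $\nu\ge 2$ (the map $x\mapsto|x|^\nu$ is $C^2$ there), the martingale property kills the linear term, and H\"older gives $A_n\le A_{n-1}+c_\nu(A_{n-1}^{1-2/\nu}\beta_n^{2/\nu}+\beta_n)$.

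The genuine gap is in how you close the induction. After inserting $A_{n-1}\le C_\nu(n-1)^{\nu/2-1}B_n$ and $\beta_n\le B_n$, every term is proportional to $B_n$ and the claim reduces to
\begin{align*}
(n-1)^{\nu/2-1}\;+\;c_\nu C_\nu^{-2/\nu}(n-1)^{\frac{(\nu-2)^2}{2\nu}}\;+\;c_\nu C_\nu^{-1}\;\le\;n^{\nu/2-1}\;,
\end{align*}
and this is false for every choice of $C_\nu$ once $\nu>2$: the slack $n^{\nu/2-1}-(n-1)^{\nu/2-1}$ is $\Theta(n^{\nu/2-2})$, whereas the middle term has exponent $\tfrac{(\nu-2)^2}{2\nu}=\tfrac{\nu}{2}-2+\tfrac{2}{\nu}$, strictly larger by $2/\nu$; moreover for $\nu<4$ the slack tends to zero while the last term is a positive constant. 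So the "inequality among explicit powers of $n$" cannot pin down any constant, let alone the stated one. The loss comes from replacing $\beta_n$ by $B_n$: you throw away the fact that the target itself grows by $C_\nu n^{\nu/2-1}\beta_n$ when $B_{n-1}$ becomes $B_n$. To repair the induction you must keep $\beta_n$ separate and split the cross term, e.g.\ by Young's inequality $x^{1-2/\nu}y^{2/\nu}\le(1-\tfrac2\nu)\lambda x+\tfrac2\nu\lambda^{-(\nu-2)/2}y$ with an $n$-dependent weight $\lambda\asymp 1/n$, absorbing one piece into the slack times $B_{n-1}$ and the other (of size $\asymp n^{\nu/2-1}\beta_n$) into $C_\nu n^{\nu/2-1}\beta_n$. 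Alternatively, and closer to the original argument, avoid induction on $n$ altogether: telescope $\mean|S_k|^\nu-\mean|S_{k-1}|^\nu$ over $k$, use that $\mean|S_k|^\nu$ is nondecreasing in $k$ (submartingale) to bound $\mean[|S_{k-1}|^{\nu-2}Y_k^2]\le A_n^{1-2/\nu}\beta_k^{2/\nu}$, sum and apply H\"older to get $A_n\le c_\nu A_n^{1-2/\nu}n^{1-2/\nu}B_n^{2/\nu}+c_\nu B_n$, and solve this implicit inequality for $A_n$; that yields $A_n\le C_\nu n^{\nu/2-1}B_n$ with a constant of the stated form. Your Burkholder-plus-power-mean fallback is correct and is all the paper really needs (any $\nu$-dependent constant suffices for Lemma~\ref{lem:martingale:bound}), but it does not by itself reproduce the specific constant $(8(\nu-1)\max\{1,2^{\nu-3}\})^\nu$ in the statement.
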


\begin{proof}[Proof of Lemma \ref{lem:martingale:bound}] We first consider the upper bound. For $\nu \in [1,2]$, the result follows directly from the Von Bahn-Esseen inequality as stated below in Lemma \ref{lem:von:bahn:esseen}, and for $\nu > 1$, the result follows directly from Lemma \ref{lem:martingale:bound:two}. As for the lower bound, by Theorem 9 of \citet{burkholder1966martingale}, there exists an absolute constant $c_\nu > 0$ depending only on $\nu$ such that
\begin{align*}
    \mean \big[ \big| \msum_{i=1}^n Y_i \big|^\nu \big] \;\geq\; c_\nu \, \mean \big[ \big( \msum_{i=1}^n Y_i^2 \big)^{\nu/2} \big] \;.
\end{align*}
For $\nu \in [1,2]$, by applying Jensen's inequality on the concave function $x \mapsto x^{\nu/2}$, we get that
\begin{align*}
    \mean \big[ \big| \msum_{i=1}^n Y_i \big|^\nu \big] 
    \;\geq&\; 
    c_\nu \, \mean \big[ \big( \mfrac{1}{n} \msum_{i=1}^n n Y_i^2 \big)^{\nu/2} \big]
    \;\geq\; 
    c_\nu \, n^{\nu/2 -1} \msum_{i=1}^n \mean[ |Y_i|^\nu]
    \;.
\end{align*}
For $\nu > 2$, by noting that $(a+b)^{\nu/2} \geq a^{\nu/2} + b^{\nu/2}$ for $a,b \geq 0$, we get that
\begin{align*}
    \mean \big[ \big| \msum_{i=1}^n Y_i \big|^\nu \big] 
    \;\geq&\; 
    c_\nu \, \mean \big[ \msum_{i=1}^n \big(  Y_i^2 \big)^{\nu/2} \big]
    \;\geq\; 
    c_\nu  \msum_{i=1}^n \mean[ |Y_i|^\nu]
    \;.
\end{align*}
Combining the two results above give the desired bound.
\end{proof}

\vspace{1em}

\begin{proof}[Proof of Lemma \ref{lem:gauss:integral:single}]
    A direct computation gives
    \begin{align*}
        &\;
        \E\left[ f(\bX) \exp\left( - \mfrac{1}{2a_2^2} \| \bX - \bm_2 \|_2^2 \right) \right] \\
        \;&=\;
        \mfrac{1}{(2\pi)^{d/2} a_1^{d}} \int f(\bx) 
        \exp\left( - \mfrac{1}{2a_2^2} \| \bx - \bm_2 \|_2^2 \right)
        \exp\left( - \mfrac{1}{2a_1^2} \| \bx - \bm_1 \|_2^2 \right) d\bx \\
        \;&=\;
        \mfrac{1}{(2\pi)^{d/2} a_1^{d}} \int f(\bx) \exp\bigg( \underbrace{
        - \mfrac{1}{2} \left( \mfrac{\|\bx\|_2^2}{a_2^2} + \mfrac{\|\bm_2\|_2^2}{a_2^2} - \mfrac{2\bm_2^\top \bx}{a_2^2} + \mfrac{\|\bx\|_2^2}{a_1^2} + \mfrac{\|\bm_1\|_2^2}{a_1^2} - \mfrac{2\bm_1^\top \bx}{a_1^2} \right)
        }_{=: T} \bigg) d\bx \;.
    \end{align*}
    Simplifying $T$ by completing the square yields
    \begin{align*}
        T
        \;=&\; 
        -\mfrac{1}{2} 
        \bigg( 
        \mfrac{\|\bx\|_2^2}{a_2^2} 
        + \mfrac{\|\bx\|_2^2}{a_1^2}
        - \mfrac{2\bm_2^\top \bx}{a_2^2}
        - \mfrac{2\bm_1^\top \bx}{a_1^2}
        \bigg) 
        - \mfrac{1}{2}\left( \mfrac{\|\bm_2\|_2^2}{a_2^2} + \mfrac{\|\bm_1\|_2^2}{a_1^2}\right)
        \\
        \;=&\; 
        -\mfrac{a_1^2 + a_2^2}{2 a_1^2 a_2^2} 
        \Bigg( \| \bx \|_2^2 - \mfrac{2a_1^2 a_2^2}{a_1^2 + a_2^2} \left( \mfrac{\bm_2}{a_2^2} + \mfrac{\bm_1}{a_1^2} \right)^\top \bx
        + \mfrac{a_1^4 a_2^4}{(a_1^2 + a_2^2)^2} \left\| \mfrac{\bm_2}{a_2^2} + \mfrac{\bm_1}{a_1^2} \right\|_2^2 
        \Bigg) \\
        &\; 
        - \mfrac{1}{2}
        \underbrace{\Bigg(
        \mfrac{\| \bm_2 \|_2^2}{a_2^2} 
        + \mfrac{\| \bm_1 \|_2^2}{a_1^2} 
        - \mfrac{a_1^2 a_2^2}{a_1^2 + a_2^2} \left\| \mfrac{\bm_2}{a_2^2} + \mfrac{\bm_1}{a_1^2} \right\|_2^2
        \Bigg)}_{\eqqcolon T'}
        \\
        \;=&\;
        -\mfrac{a_1^2 + a_2^2}{2 a_1^2 a_2^2} \left\| \bx - \mfrac{a_1^2 a_2^2}{a_1^2 + a_2^2} \left( \mfrac{\bm_1}{a_1^2} + \mfrac{\bm_2}{a_2^2} \right) \right\|_2^2
        - \mfrac{1}{2(a_1^2 + a_2^2)} \| \bm_1 - \bm_2 \|_2^2 \;,
    \end{align*}
    where we have simplified $T'$ as
    \begin{align*}
        T'
        \;=&\;
        \mfrac{\| \bm_2 \|_2^2}{a_2^2} 
        + \mfrac{\| \bm_1 \|_2^2}{a_1^2}
        - \mfrac{a_1^2}{a_2^2( a_1^2 + a_2^2)} \| \bm_2 \|_2^2
        - \mfrac{a_2^2}{a_1^2( a_1^2 + a_2^2)} \| \bm_1 \|_2^2
        + \mfrac{2}{a_1^2 + a_2^2} \bm_1^\top \bm_2
        \\
        \;=&\;
        \mfrac{1}{a_1^2 + a_2^2} \| \bm_1 - \bm_2 \|_2^2
        \;.
    \end{align*}
    Substituting this into $\E\left[ f(\bX) \exp\left( - \frac{1}{2a_2^2} \| \bX - \bm_2 \|_2^2 \right) \right]$, we have
    \begin{align*}
        &\;
        \E\left[ f(\bX) \exp\left( - \mfrac{1}{2a_2^2} \| \bX - \bm_2 \|_2^2 \right) \right] \\
        \;=&\;
        \mfrac{1}{(2 \pi)^{d/2} a_1^d}
        \exp\left( - \mfrac{ \| \bm_1 - \bm_2 \|_2^2}{2 (a_1^2 + a_2^2)} \right)
        \int f(x)
        \exp\left( -\mfrac{a_1^2 + a_2^2}{2 a_1^2 a_2^2} \left\| \bx - \mfrac{a_1^2 a_2^2}{a_1^2 + a_2^2} \left( \mfrac{\bm_1}{a_1^2} + \mfrac{\bm_2}{a_2^2} \right) \right\|_2^2 \right)
        dx\\
        \;=&\;
        \left( \mfrac{a_2^2}{a_1^2 + a_2^2} \right)^{d/2}
        \exp\left( - \mfrac{\| \bm_1 - \bm_2 \|_2^2}{2 (a_1^2 + a_2^2)} \right)
        \E[ f(\bW) ] \;,
    \end{align*}
    where $\bW \sim \cN\left(\frac{a_1^2 a_2^2}{a_1^2 + a_2^2} \left( \mfrac{\bm_1}{a_1^2} + \mfrac{\bm_2}{a_2^2} \right), \mfrac{a_1^2 a_2^2}{a_1^2 + a_2^2} I_d \right)$, which completes the proof.
\end{proof}

\vspace{.2em}

\begin{proof}[Proof of Lemma \ref{lem:gauss:integral:double}]
    Rewriting by the tower rule,
    \begin{align*}
        \E\Big[ &f(\bX, \bX') \exp\Big( - \mfrac{1}{2a_3^2} \| \bX - \bX' \|_2^2 \Big) \Big]
        \;=\;
        \E\left[ \E\left[ f(\bX, \bX') \exp\left( - \mfrac{1}{2a_3^2} \| \bX - \bX'\|_2^2 \right) \bigg| \bX \right] \right] \\
        \;=&\;
        \E\left[ \left( \mfrac{a_3^2}{a_2^2 + a_3^2} \right)^{d/2} \exp\left( - \mfrac{1}{2(a_2^2 + a_3^2)} \| \bX - \bm_2 \|_2^2 \right) 
        \E\Big[ f\Big(\bX, \bW' + \mfrac{a_2^2 }{a_2^2+a_3^2} \bX \Big) \Big| \bX \Big] \right]
        \;,
    \end{align*}
    where the last line follows by applying Lemma \ref{lem:gauss:integral:single} to the inner expectation, and where $\bW'  \sim \cN\left( \frac{a_3^2}{a_2^2 + a_3^2} \bm_2, \frac{a_2^2 a_3^2}{a_2^2 + a_3^2} I_d \right)$. Applying Lemma \ref{lem:gauss:integral:single} again gives
    \begin{align*}
        &\;\E\Big[ f(\bX, \bX') \exp\Big( - \mfrac{1}{2a_3^2} \| \bX - \bX' \|_2^2 \Big) \Big]
        \\
        \;=&\;
        \left( \mfrac{a_3^2}{a_2^2 + a_3^2} \right)^{d/2} 
        \left( \mfrac{a_2^2 + a_3^2}{a_1^2 + a_2^2 + a_3^2} \right)^{d/2}
        \exp\left( - \mfrac{1}{2(a_1^2 + a_2^2 + a_3^2)} \| \bm_1 - \bm_2 \|_2^2 \right) 
        \\
        &\;\qquad \times \E\Big[ \E\Big[f(\bW, \bW' + \mfrac{a_2^2 }{a_2^2+a_3^2} \bW) \Big| \bW\Big] \Big] \\
        \;=&\;
        \left( \mfrac{a_3^2}{a_1^2 + a_2^2 + a_3^2} \right)^{d/2} 
        \exp\left( - \mfrac{1}{2(a_1^2 + a_2^2 + a_3^2)} \| \bm_1 - \bm_2 \|_2^2 \right) \E\Big[ f\Big(\bW, \bW' + \mfrac{a_2^2 }{a_2^2+a_3^2} \bW\Big) \Big]
        \;,
    \end{align*}
    where $\bW \sim \cN\left( \mfrac{a_1^2(a_2^2 + a_3^2)}{a_1^2 + a_2^2 + a_3^2} \left( \mfrac{1}{a_1^2} \bm_1 + \mfrac{1}{a_2^2 + a_3^2} \bm_2 \right), \mfrac{a_1^2(a_2^2 + a_3^2)}{a_1^2 + a_2^2 + a_3^2} I_d \right)$.
\end{proof}

\subsection{Proofs for \cref{appendix:u:moments}}

\begin{proof}[Proof of Lemma \ref{lem:u:moments}] Consider the sequence of sigma algebras with $\cF_0$ being the trivial sigma algebra and $\cF_i := \sigma(X_1, \ldots, X_i)$ for $i=1,\ldots,n$. This allows us to define a martingale difference sequence: For $i=1, \ldots, n$, let
\begin{align*}
    Y_i \;:=\; \mean[ D_n | \cF_i ] - \mean[D_n | \cF_{i-1}]\;. 
\end{align*}
This implies that $\mean [ | D_n - \mean D_n |^\nu ] = \mean \big[ \big| \sum_{i=1}^n Y_i \big|^\nu \big]$. By Lemma \ref{lem:martingale:bound}, we get that for some universal constants $c'_\nu, C'_\nu$,
\begin{align*}
    c'_{\nu} \, \msum_{i=1}^n \mean[ |Y_i |^\nu ]
    \;\leq\;
    \mean [ | D_n - \mean D_n |^\nu ] 
    \;\leq\;  C'_{\nu} \, n^{\nu/2 - 1} \msum_{i=1}^n \mean[ |Y_i |^\nu ]\;. \tagaligneq \label{eqn:u:intermediate}
\end{align*}
To compute the $\nu$-th moment of $Y_i$, recall that $D_n = \frac{1}{n(n-1)} \sum_{j,l \in [n], j \neq l} u(\bX_j,\bX_l)$, which implies
\begin{align*}
    \mean[ |Y_i |^\nu ] 
    \;=&\;
    \mean \big[ \big| \mean[ D_n | \cF_i ] - \mean[D_n | \cF_{i-1}] \big|^{\nu} \big]
    \\
    \;=&\;
    \mfrac{1}{n^\nu (n-1)^\nu} \mean \Big[ \big| \msum_{j,l \in [n], j \neq l} \big( \mean[ u(\bX_j, \bX_l) | \cF_i ] - \mean[ u(\bX_j, \bX_l) | \cF_{i-1} ] \big) \big|^\nu \Big]
    \\
    \;\overset{(a)}{=}&\;
    \mfrac{2}{n^\nu (n-1)^\nu} \mean \Big[ \big| \msum_{j \in [n], j \neq i} \big( \mean[ u(\bX_i, \bX_j) | \cF_i ] - \mean[ u(\bX_i, \bX_j) | \cF_{i-1} ] 
    \big) \big|^\nu \Big]
    \\
    \;=:&\;  \mfrac{2}{n^\nu (n-1)^\nu} \, \mean[ | S_i |^{\nu} ]
    \;.
\end{align*}
In $(a)$, we have used that each summand is zero if both $j$ and $l$ do not equal $i$, and that $u$ is symmetric. In the case $j<i$, we can compute each summand of $S_i$ as
\begin{align*}
    \mean[ u(\bX_i, \bX_j) | \cF_i ] - \mean[ u(\bX_i, \bX_j) | \cF_{i-1} ] 
    \;=&\; 
    u(\bX_i, \bX_j) - \mean[ u(\bX_1, \bX_j) | \bX_j]
    \\
    \;=&\; A_{ij} - B_j + B_i\;,
\end{align*}
where $A_{ij} := u(\bX_i, \bX_j) - \mean[u(\bX_i,\bX_1) | \bX_i ]$ and
\begin{align*}
    B_i \;:=\; \mean[u(\bX_i,\bX_1) | \bX_i ]-\mean[u(\bX_1,\bX_2)]
    \;=\; \mean[u(\bX_1,\bX_i) | \bX_i ]-\mean[u(\bX_1,\bX_2)]\;
\end{align*}
by symmetry of $u$. In the case $j>i$, we can compute each summand as
\begin{align*}
    \mean[ u(\bX_i, \bX_j) | \cF_i ] - \mean[ u(\bX_i, \bX_j) | \cF_{i-1} ] 
    \;=\; 
    \mean[ u(\bX_1, \bX_i) | \bX_i] - \mean[ u(\bX_1, \bX_2) ]
    \;=\;
    B_i \;.
\end{align*}
Therefore
\begin{align*}
    S_i
    \;=&\;
    \msum_{j < i} (A_{ij} - B_j) + n B_i
    \;. 
\end{align*}
Consider $R_1 := nB_i$ and $R_2 := \sum_{j < i} (A_{ij} - B_j)$, which forms a two-element martingale difference sequence with respect to the filtration $\sigma(\bX_i) \subseteq \sigma(\bX_i, \bX_1 \ldots, \bX_{i-1})$. By Lemma \ref{lem:martingale:bound} again, there exist constants $c^*_\nu$ and $C^*_\nu$ depending only on $\nu$ such that
\begin{align*}
    \mean[ |S_i|^{\nu} ]
    \;=\;
    \mean \big[ \big| \msum_{l=1}^2 R_l \big|^\nu \big] 
    \;&\leq\; 
    C^*_{\nu} \Big( \mean[ |n B_i |^\nu ] + \mean\big[ \big| \msum_{j < i} (A_{ij} - B_j) \big|^\nu \big] \Big)
    \\
    \;&=\;
    C^*_{\nu} \big( n^\nu \Mcondnu^\nu +  \mean\big[ \big| \msum_{j < i} (A_{ij} - B_j) \big|^\nu \big] \big)
    \;,
    \\
    \mean[ |S_i|^{\nu} ]
    \;=\;
    \mean \big[ \big| \msum_{l=1}^2 R_l \big|^\nu \big] 
    \;&\geq\; 
    c^*_\nu \Big( \mean[ |n B_i |^\nu ] +  \mean\big[ \big| \msum_{j < i} (A_{ij} - B_j) \big|^\nu \big] \Big)
    \\
    \;&=\;
    c^*_\nu  \big( n^\nu \Mcondnu^\nu +  \mean\big[ \big| \msum_{j < i} (A_{ij} - B_j) \big|^\nu \big] \big)
    \;.
\end{align*}
Now consider $T_{j} := A_{ij} - B_j$ for $j=1, \ldots, i-1$, which again forms a martingale difference sequence with respect to $\sigma(\bX_i, \bX_1), \ldots, \sigma(\bX_i, \bX_1 \ldots, \bX_{i-1})$. Then by Lemma \ref{lem:martingale:bound} again, there exist constants $c^\Delta_\nu$ and $C^\Delta_\nu$ depending only on $\nu$ such that
\begin{align*}
    \mean\big[ \big| \msum_{j < i} (A_{ij} - B_j) \big|^\nu \big] 
    \;&\leq\; 
    C^\Delta_{\nu} \, (i-1)^{\nu/2 - 1}  \msum_{j=1}^{i-1}  \mean[ |A_{ij} - B_j|^\nu ]
    \;=\;
    C^\Delta_{\nu} \, (i-1)^{\nu/2} \Mfullnu^\nu
    \;, 
    \\
    \mean\big[ \big| \msum_{j < i} (A_{ij} - B_j) \big|^\nu \big] 
    \;&\geq\; 
    c^\Delta_\nu \, \msum_{j=1}^{i-1}  \mean[ |A_{ij} - B_j|^\nu ] 
    \;=\;
    c^\Delta_\nu \, (i-1) \Mfullnu^\nu
    \;.
\end{align*}
Therefore
\begin{align*}
    \mean[ |S_i|^\nu ] 
    \;\leq&\;
    C^*_{\nu} n^\nu \Mcondnu^\nu 
    +  
    C^*_{\nu} C^\Delta_{\nu} \, (i-1)^{\nu/2} \Mfullnu^\nu
    \;,
    \\
    \mean[ |S_i|^\nu ] 
    \;\geq&\;
    c^*_\nu n^\nu \Mcondnu^\nu 
    +  
    c^*_\nu c^\Delta_\nu \, (i-1) \Mfullnu^\nu
    \;,
\end{align*}
which yield the following bounds on the $\nu$-th moment of $Y_i$:
\begin{align*}
    \mean[ |Y_i |^\nu ] 
    \;&\leq\;
    2 C^*_{\nu} \big(
    (n-1)^{-\nu} \Mcondnu^\nu 
    +  
    C^\Delta_{\nu} n^{-\nu} (n-1)^{-\nu} \, (i-1)^{\nu/2} \Mfullnu^\nu
    \big)
    \;,
    \\
    \mean[ |Y_i |^\nu ] 
    \;&\geq\;
    2 c^*_{\nu} \big(
    (n-1)^{-\nu} \Mcondnu^\nu 
    +  
    c^\Delta_{\nu} n^{-\nu} (n-1)^{-\nu} \, (i-1) \Mfullnu^\nu
    \big)
    \;,
\end{align*}
To sum these terms over $i=1, \ldots, n$, we note that since $\nu/2 > 0$,
\begin{align*}
    &
    \msum_{i=1}^n (i-1)^{\nu/2} 
    \;\leq\; 
    \mint_0^{n} x^{\nu/2} dx \;=\;
    \mfrac{n^{1+\nu/2}}{1+\nu/2}\;,
    &&
    \msum_{i=1}^n (i-1)
    \;=\;
    \mfrac{n(n-1)}{2}\;.
\end{align*}
Define $C_\nu := \frac{2 C'_\nu C^*_\nu  \mmax\{1, C^\Delta_\nu\}}{1+\nu/2}$ and $c_\nu :=  c'_\nu c^*_\nu \min\{1,c^\Delta_\nu\}$. By summing the bounds on $\mean[|Y_i|^\nu]$ and substituting into \eqref{eqn:u:intermediate}, we get the desired bounds
\begin{align*}
    \mean [ | D_n - \mean D_n |^\nu ] 
    \;&\leq\;
    C_{\nu} \, n^{\nu/2 - 1} 
    \, \big( 
    n (n-1)^{-\nu} \Mcondnu^\nu 
    +  
    n^{-\nu} (n-1)^{-\nu} n^{1+\nu/2} \Mfullnu^\nu
    \big)
    \\
    \;&=\;
    C_{\nu} \, n^{\nu/2} 
    (n-1)^{-\nu} \Mcondnu^\nu 
    +  
    C_{\nu} \, (n-1)^{-\nu} \Mfullnu^\nu
    \;,
    \\
    \mean [ | D_n - \mean D_n |^\nu ] 
    \;&\geq\;
    c_{\nu}
    n(n-1)^{-\nu} \Mcondnu^\nu 
    +  
    c_{\nu}
    n^{-(\nu-1)} (n-1)^{-(\nu-1)} \Mfullnu^\nu
    \;.
\end{align*}
\end{proof}

\ 

\begin{proof}[Proof of Lemma \ref{lem:factorisable:moment}] The first result is directly obtained from linearity of expectation and Jensen's inequality:
\begin{align*}
    \Big| 
    D
    -
    \msum_{k=1}^K \lambda_k \mu_k^2
    \Big|
    \;=&\;
    \Big| 
    \mean[ u(\bX_1, \bX_2) ] 
    -
    \msum_{k=1}^K \lambda_k \mean[\phi_k(\bX_1)] \mean[\phi_k(\bX_2)] 
    \Big|
    \\
    \;=&\;
    \Big| 
    \mean\Big[ 
    u(\bX_1, \bX_2) 
    -
    \msum_{k=1}^K \lambda_k \phi_k(\bX_1) \phi_k(\bX_2)
    \Big]
    \Big|
    \\
    \;\leq&\;
    \mean
    \Big| 
    u(\bX_1, \bX_2) 
    -
    \msum_{k=1}^K \lambda_k \phi_k(\bX_1) \phi_k(\bX_2)
    \Big|
    \;=\; \varepsilon_{K;1}
    \;.
\end{align*}

To prove the next few bounds, we first derive a useful inequality: For $a, b \in \R$ and $\nu \geq 1$, by Jensen's inequality, we have
\begin{align*}
    | a + b |^\nu 
    \;=\; 
    \big| \mfrac{1}{2} (2a) + \mfrac{1}{2} (2b) \big|^\nu 
    \;\leq\; 
    \mfrac{1}{2} |2a|^\nu + \mfrac{1}{2} |2b|^\nu
    \;=\;
    2^{\nu-1} ( |a|^\nu + |b|^\nu )\;.
\end{align*}
By a triangle inequality followed by applying the above inequality again with $a$ replaced by $|a|-|b|$ and $b$ replaced by $|b|$, we have
\begin{align*}
     |a + b |^\nu \;\geq\; |  |a| - |b| |^\nu \;\geq\; 2^{-(\nu-1)} |a|^\nu - |b|^\nu \;.
\end{align*}
Since $\nu \in [1,3]$, we have $2^{\nu - 1} \in [1, 4]$. Therefore
\begin{align*}
    \mfrac{1}{4} |a|^\nu - |b|^\nu \;\leq\; |a + b |^\nu \;\leq\; 4 ( |a|^\nu + |b|^\nu )\;. \tagaligneq \label{eqn:factorisable:useful:ineq}
\end{align*}
Now to prove the conditional bound, we make use of the fact that $\bX_1, \bX_2$ are i.i.d.~to see that
\begin{align*}
    &\;
    \mean\Big[ \Big| \msum_{k=1}^K \lambda_k (\phi_k(\bX_1) - \mu_k) \mu_k \Big|^\nu \Big]
    \\
    \;=&\;
    \mean\Big[ \Big| 
    \msum_{k=1}^K \lambda_k \big(
    \mean[ \phi_k(\bX_1) \phi_k(\bX_2) | \bX_1]
    - \mean[ \phi_k(\bX_1) \phi_k(\bX_2)]\big) \Big|^\nu \Big] 
    \\
    \;=&\;
    \mean \Big[ \Big| 
    \mean[ u(\bX_1, \bX_2) | \bX_1] -  \mean[ u(\bX_1, \bX_2) ]
    +
    \Delta_{K;1}
    -
    \Delta_{K;2}
    \Big|^\nu \Big]
    \;, \tagaligneq \label{eqn:factorisable:moment:cond:intermediate}
\end{align*}
where
\begin{align*}
    \Delta_{K;1} 
    \;\coloneqq&\; 
    \msum_{k=1}^K \lambda_k 
    \mean[ \phi_k(\bX_1) \phi_k(\bX_2) | \bX_1]
    - 
    \mean[ u(\bX_1, \bX_2) | \bX_1]\;,
    \\
    \Delta_{K;2} 
    \;\coloneqq&\; 
    \msum_{k=1}^K \lambda_k 
    \mean[ \phi_k(\bX_1) \phi_k(\bX_2) ]
    - 
    \mean[ u(\bX_1, \bX_2) ]\;.
\end{align*}
Moments of the two error terms can be bounded by Jensen's inequality applied to $x \mapsto |x|^\nu$ with respect to the conditional expectation $\mean[ \argdot | \bX_2]$ and the expectation $\mean[ \argdot ]$:
\begin{align*}
    \mean[ | \Delta_{K;1}|^\nu], \mean[ | \Delta_{K;2}|^\nu]
    \;\leq&\;
    \mean\Big[ \Big|
    u(\bX_1, \bX_2) 
    -
    \msum_{k=1}^K \lambda_k \phi_k(\bX_1) \phi_k(\bX_2)
    \Big|^\nu \Big] 
    \\
    \;=&\;
    \Big\|
    u(\bX_1, \bX_2) 
    -
    \msum_{k=1}^K \lambda_k \phi_k(\bX_1) \phi_k(\bX_2)
    \Big\|_{L_\nu}^\nu 
    \;=\; \varepsilon_{K;\nu}^{\nu}\;.
\end{align*}
On the other hand,
\begin{align*}
    (\Mcondnu)^\nu \;=\; 
    \mean \big[ \big| 
    \mean[ u(\bX_1, \bX_2) | \bX_1] -  \mean[ u(\bX_1, \bX_2) ] \big|^\nu \big]\;.
\end{align*}
Therefore applying \eqref{eqn:factorisable:useful:ineq} gives
\begin{align*}
    \mfrac{1}{4} (\Mcondnu)^\nu - \varepsilon_{K;\nu}^{\nu}
    \;\leq\; 
    \mean\Big[ \Big| \msum_{k=1}^K \lambda_k (\phi_k(\bX_1) - \mu_k) \mu_k \Big|^\nu \Big] 
    \;\leq\; 
    4 ( (\Mcondnu)^\nu + \varepsilon_{K;\nu}^{\nu})
\end{align*}

For the last bound, we start by considering the following quantity, which can be thought of as the truncated version of $\Mfullnu^\nu$:
\begin{align*}
    m_K\;\coloneqq&\;\mean \Big[ \Big| 
    \msum_{k=1}^K
    \lambda_k ( \phi_k(\bX_1)\phi_k(\bX_2) - \mu_k^2 )
    \Big|^\nu \Big]
    \\
    \;=&\;
    \mean \Big[ \Big| 
    \msum_{k=1}^K
    \lambda_k (\phi_k(\bX_1) - \mu_k)\phi_k(\bX_2)
    +
    \msum_{k=1}^K
    \lambda_k \mu_k(\phi_k(\bX_2) - \mu_k)
    \Big|^\nu \Big]
    \\
    \;\eqqcolon&\;
    \mean[| T_2 + T_1 |^\nu]\;.
\end{align*}
Since $\{T_1, T_2\}$ forms a two-element martingale difference sequence with respect to $\sigma(\bX_2) \subseteq \sigma(\bX_1,\bX_2)$, by Lemma \ref{lem:martingale:bound}, there exists absolute constants $c'_\nu, C'_\nu > 0$ depending only on $\nu$ such that
\begin{align*}
    c'_\nu \big( \mean[|T_1|^\nu]  + \mean[|T_2|^\nu]  \big) 
    \;\leq\; 
    m_K
    \;\leq\; 
    C'_\nu \big( \mean[|T_1|^\nu]  + \mean[|T_2|^\nu] \big)\;.
\end{align*}
Similarly, by writing
\begin{align*}
    \mean[|T_2|^\nu] \;=&\; 
    \mean \Big[ \Big| 
    \msum_{k=1}^K
    \lambda_k (\phi_k(\bX_1) - \mu_k)\phi_k(\bX_2) \Big|^\nu \Big]
    \\
    \;=&\;
    \mean \Big[ \Big| 
    \msum_{k=1}^K
    \lambda_k (\phi_k(\bX_1) - \mu_k)(\phi_k(\bX_2) - \mu_k) +  
    \msum_{k=1}^K
    \lambda_k (\phi_k(\bX_1) - \mu_k)\mu_k
    \Big|^\nu \Big]
    \\
    \;=&\;
    \mean[ | R_2 + R_1|^\nu ]\;,
\end{align*}
and noting that $\{R_1,R_2\}$ forms a two-element martingale difference sequence with respect to $\sigma(\bX_1) \subseteq \sigma(\bX_1,\bX_2)$, by Lemma \ref{lem:martingale:bound}, there exists absolute constants $c''_\nu, C''_\nu > 0$ depending only on $\nu$ such that
\begin{align*}
    c''_\nu \big( \mean[|R_1|^\nu]  + \mean[|R_2|^\nu]  \big) 
    \;\leq\; 
    \mean[|T_2|^\nu]
    \;\leq\; 
    C''_\nu \big( \mean[|R_1|^\nu]  + \mean[|R_2|^\nu] \big)\;.
\end{align*}
Combining the results and setting $A = \msup_{\nu \in [1,3]} C'_\nu \max\{ C''_\nu, 1\}$ and $a =  \minf_{\nu \in [1,3]}  c'_\nu \min\{ c''_\nu, 1\}$, we have shown that
\begin{align*}
    a \big( \mean[|T_1|^\nu] +  \mean[|R_1|^\nu] +  \mean[|R_2|^\nu] \big)
    \;\leq\;
    m_K
    \;\leq&\; 
    A \big( \mean[|T_1|^\nu] +  \mean[|R_1|^\nu] +  \mean[|R_2|^\nu] \big)
    \;.
\end{align*}
Notice that the quantity we would like to control is exactly
\begin{align*}
    \mean[|R_2|^\nu] \;=&\; 
    \mean\Big[ \Big|
    \msum_{k=1}^K
    \lambda_k (\phi_k(\bX_1) - \mu_k)(\phi_k(\bX_2) - \mu_k)
    \Big|^\nu \Big]\;,
\end{align*}
and that $\mean[|T_1|^\nu]=\mean[|R_1|^\nu]$. By setting $c = A^{-1}$ and $C = a^{-1}$, this allows us to obtain a bound about $\mean[|R_2|^\nu]$ as
\begin{align*}
    c m_K
    - 2 \mean[|T_1|^\nu]
    \leq
    \mean\Big[ \Big|
    \msum_{k=1}^K
    \lambda_k (\phi_k(\bX_1) - \mu_k)(\phi_k(\bX_2) - \mu_k)
    \Big|^\nu \Big]
    \leq
    C m_K
    - 2 \mean[|T_1|^\nu]\;.
\end{align*}
Now notice that 
\begin{align*}
    \mean[|T_1|^\nu] \;=\;
    \mean\Big[
    \Big|\msum_{k=1}^K
    \lambda_k (\phi_k(\bX_1) - \mu_k)\mu_k
    \Big|^\nu \Big]\;,
\end{align*}
which has already been controlled by the second result of the lemma as 
\begin{align*}
    \mfrac{1}{4} (\Mcondnu)^\nu - \varepsilon_{K;\nu}^{\nu}
    \;\leq\; 
    \mean[|T_1|^\nu]
    \;\leq\; 
    4 ( (\Mcondnu)^\nu + \varepsilon_{K;\nu}^{\nu} )\;.
\end{align*}
On the other hand, we can use an exactly analogous argument by using \eqref{eqn:factorisable:useful:ineq} and applying Jensen's inequality to control the errors to show that
\begin{align*}
    \mfrac{1}{4} (\Mfullnu)^\nu - \varepsilon_{K;\nu}^{\nu}
    \;\leq\; 
    m_K
    \;\leq\; 
    4 ( (\Mfullnu)^\nu + \varepsilon_{K;\nu}^{\nu} )\;.
\end{align*}
Applying these two results to the previous bound gives the desired bounds:
\begin{align*}
    \mean\Big[ \Big|
    \msum_{k=1}^K
    \lambda_k (\phi_k(\bX_1) - \mu_k)(\phi_k(\bX_2) - \mu_k)
    \Big|^\nu \Big]
    \leq&\;
    4 C (\Mfullnu)^\nu    
    - 
    \mfrac{1}{2} (\Mcondnu)^\nu
    + 
    (4 C + 2) \varepsilon_{K;\nu}^{\nu}\;,
    \\
    \mean\Big[ \Big|
    \msum_{k=1}^K
    \lambda_k (\phi_k(\bX_1) - \mu_k)(\phi_k(\bX_2) - \mu_k)
    \Big|^\nu \Big]
    \geq&\;
    \mfrac{c}{4}  (\Mfullnu)^\nu    
    - 
    8 (\Mcondnu)^\nu
    -
    (c + 8) \varepsilon_{K;\nu}^{\nu}\;.
\end{align*}
\end{proof}

\

\begin{proof}[Proof of Lemma \ref{lem:factorisable:moment:gaussian}] To compute the first bound, we rewrite the expression of interest as a quantity that we have already considered in the proof of Lemma \ref{lem:factorisable:moment}:
\begin{align*}
    (\mu^K)^\top \Lambda^K \Sigma^K \Lambda^K (\mu^K) 
    \;=&\; 
     (\mu^K)^\top  \Lambda^K  \mean\Big[ \big(  \phi^K(\bX_1) -  \mu^K \big) \big(  \phi^K(\bX_1) -  \mu^K \big)^\top \Big] \Lambda^K  (\mu^K)
    \\
    \;=&\;
    \mean\Big[ \Big( \big(  \phi^K(\bX_1) -  \mu^K \big)^\top \Lambda^K  \mu^K \Big)^2 \Big]
    \\
    \;=&\;
    \mean \Big[ \Big( \msum_{k=1}^K \lambda_k(\phi_k(\bX_1) - \mu_k) \mu_k \Big)^2 \Big]
    \\
    \;=&\;
    \mean \Big[ \Big(
    \mean[ u(\bX_1, \bX_2) | \bX_2] -  \mean[ u(\bX_1, \bX_2) ]
    +
    \Delta_{K;1}
    -
    \Delta_{K;2}
    \Big)^2 \Big]
    \;,
\end{align*}
where we have used the calculation in \eqref{eqn:factorisable:moment:cond:intermediate} with $\nu=2$ and defined the same error terms
\begin{align*}
    \Delta_{K;1} 
    \;\coloneqq&\; 
    \msum_{k=1}^K \lambda_k 
    \mean[ \phi_k(\bX_1) \phi_k(\bX_2) | \bX_2]
    - 
    \mean[ u(\bX_1, \bX_2) | \bX_2]\;,
    \\
    \Delta_{K;2} 
    \;\coloneqq&\; 
    \msum_{k=1}^K \lambda_k 
    \mean[ \phi_k(\bX_1) \phi_k(\bX_2) ]
    - 
    \mean[ u(\bX_1, \bX_2) ]\;.
\end{align*}
Since we are dealing with the second moment, we can provide a finer bound by expanding the square explicitly: 
\begin{align*}
    (\mu^K)^\top \Lambda^K \Sigma^K \Lambda^K (\mu^K) 
    \;=&\; 
    \Var \mean[ u(\bX_1, \bX_2) | \bX_2]
    +
    \mean[ (\Delta_{K;1}
    -
    \Delta_{K;2}
    )^2 ]
    \\
    &\;
    +
    2 
    \mean\big[ \big( \mean[ u(\bX_1, \bX_2) | \bX_2] -  \mean[ u(\bX_1, \bX_2) ] \big) ( 
    \Delta_{K;1}
    -
    \Delta_{K;2}
    )^2 \big]
    \;.
\end{align*}
Then by a Cauchy-Schwartz inequality, we get that
\begin{align*}
    &\;
    \Big| (\mu^K)^\top \Lambda^K \Sigma^K \Lambda^K (\mu^K)  - \Var \mean[ u(\bX_1, \bX_2) | \bX_2] \Big| 
    \\
    \;=&\; 
    2 \big|
    \mean\big[ \big( \mean[ u(\bX_1, \bX_2) | \bX_2] -  \mean[ u(\bX_1, \bX_2) ] \big) ( 
    \Delta_{K;1}
    -
    \Delta_{K;2}
    )^2 \big]
    \big|
    +
    \mean[ (\Delta_{K;1}
    -
    \Delta_{K;2}
    )^2 ]
    \\
    \;\leq&\;
    2 \sqrt{ \Var \mean[ u(\bX_1, \bX_2) | \bX_2] } \sqrt{\mean[ (\Delta_{K;1}
    -
    \Delta_{K;2}
    )^2 ] } 
    +
    \mean[ (\Delta_{K;1}
    -
    \Delta_{K;2}
    )^2 ]\;.
\end{align*}
The variance term is exactly $\sigcond^2$. Since the individual error terms have already been bounded in the proof of Lemma \ref{lem:factorisable:moment} as $\mean[\Delta_{K;1}^2], \mean[\Delta_{K;2}^2] \leq \varepsilon_{K;2}^2$, by a triangle inequality and a Cauchy-Schwarz inequality, we have
\begin{align*}
    | \mean[ (\Delta_{K;1}
    -
    \Delta_{K;2}
    )^2 ] | 
    \;=&\;
    | \mean[ \Delta_{K;1}^2 ] - 2 \mean[ \Delta_{K;1} \Delta_{K;2} ] + \mean [ \Delta_{K;2}^2 ] |
    \\
    \;\leq&\;
    | \mean[ \Delta_{K;1}^2 ] | + 2 \sqrt{ | \mean[ \Delta_{K;1}^2 ] | | \mean[ \Delta_{K;2}^2 ] | } + | \mean[ \Delta_{K;2}^2 ] |
    \;\leq\;
    4 \varepsilon_{K;2}^2\;.
\end{align*}
Combining the bounds gives
\begin{align*}
    \big| (\mu^K)^\top \Lambda^K \Sigma^K \Lambda^K (\mu^K) - (\sigcond)^2 \big|
    \;\leq\;
    4\varepsilon_{K;2}^2 + 4 \sigcond \varepsilon_{K;2}
    \;,
\end{align*}
which rearranges to give
\begin{align*}
    \sigcond^2 - 4 \sigcond \varepsilon_{K;2} -4\varepsilon_{K;2}^2
    \;\leq\;
    (\mu^K)^\top \Lambda^K \Sigma^K \Lambda^K (\mu^K) 
    \;\leq&\; 
    \sigcond^2 + 4 \sigcond \varepsilon_{K;2} + 4\varepsilon_{K;2}^2
    \\
    \;\leq&\;
    (\sigcond + 2\varepsilon_{K;2})^2\;.
\end{align*}
 
The second bound is obtained similarly by giving a finer control than the bound in Lemma \ref{lem:factorisable:moment}. We first rewrite the expression of interest by using linearity of expectation and the cyclic property of trace:
\begin{align*}
    \Tr( ( \Lambda^K \Sigma^K)^2 )
    \;=&\;
    \Tr\big( \Lambda^K \mean\big[ \phi^K(\bX_1) \phi^K(\bX_1)^\top \big]  \Lambda^K
     \mean\big[  \phi^K(\bX_2) \phi^K(\bX_2)^\top \big]
    \big)
    \\
    \;=&\;
    \mean\Big[ \big(  \phi^K(\bX_1)^\top \Lambda^K \phi^K(\bX_2) \big)^2 \Big]
    \;=\;
    \mean\Big[ \Big( \msum_{k=1}^K \lambda_k \phi_k(\bX_1) \phi_k(\bX_2)  \Big)^2 \Big]\;.
\end{align*}
Again by expanding the square explicitly, we get that
\begin{align*}
    \Tr( ( \Lambda^K \Sigma^K)^2 )
    \;=&\;
    \mean\Big[ \Big( \msum_{k=1}^K \lambda_k \phi_k(\bX_1) \phi_k(\bX_2) - \bar u(\bX_1,\bX_2) + \bar u(\bX_1,\bX_2) \Big)^2 \Big]
    \\
    \;=&\;
    \mean\Big[ \Big( \msum_{k=1}^K \lambda_k \phi_k(\bX_1) \phi_k(\bX_2) - \bar u(\bX_1,\bX_2) \Big)^2 \Big] 
    + \mean\big[ \bar u(\bX_1,\bX_2)^2 \big]
    + 2 \Delta_{K;3}
    \\
    \;=&\;
    \varepsilon_{K;2}^2 + \sigfull^2 + 2 \Delta_{K;3}
    \;,
\end{align*}
where we have defined the additional error term as
\begin{align*}
    \Delta_{K;3} 
    \;\coloneqq\;
    \mean\Big[ \Big( \msum_{k=1}^K \lambda_k \phi_k(\bX_1) \phi_k(\bX_2) - \bar u(\bX_1,\bX_2) \Big) \bar u(\bX_1,\bX_2) \Big] \;.
\end{align*}
By a Cauchy-Schwarz inequality, we get that
\begin{align*}
    \big| \Tr( & ( \Lambda^K \Sigma^K)^2 ) - \sigfull^2 - \varepsilon_{K;2}^2 \big| 
    \;=\; 2 | \Delta_{K;3} |
    \\
    \;\leq&\; 
    2 \sqrt{\mean\big[ \big( \msum_{k=1}^K \lambda_k \phi_k(\bX_1) \phi_k(\bX_2) - \bar u(\bX_1,\bX_2) \big)^2 \big] }\sqrt{\mean\big[ \bar u(\bX_1,\bX_2)^2 \big]}
    \;=\;
    2 \varepsilon_{K;2} \sigfull\;.
\end{align*}
Combining the above two bounds yields the desired inequality that
\begin{align*}
    (\sigfull - \varepsilon_{K;2})^2
    \;\leq\;
    \Tr( (\Lambda^K \Sigma^K)^2 ) \;\leq\; (\sigfull + \varepsilon_{K;2})^2\;.
\end{align*}

\vspace{.5em}

To prove the third bound, note that $(\mu^K)^\top \Lambda^K \bZ_1$ is a zero-mean normal random variable with variance given by $(\mu^K)^\top \Lambda^K  \Sigma^K \mu^K$\;, which is already bounded above. By applying the formula of the $\nu$-th absolute moment of a normal distribution and noting that $\nu \leq 3$, we obtain
\begin{align*}
    \mean [| (\mu^K)^\top& \Lambda^K  \bZ_1  |^\nu ] 
    \;=\; 
    \mfrac{2^{\nu/2}}{\sqrt{\pi}} \Gamma\Big( \mfrac{\nu+1}{2} \Big) \big( (\mu^K)^\top  \Lambda^K \Sigma^K \Lambda^K \mu^K \big)^{\nu/2}
    \\
    \;\leq&\;
    \mfrac{2^{\nu/2}}{\sqrt{\pi}} (\sigfull + 2\varepsilon_{K;2})^\nu
    \;\overset{(a)}{\leq}\;
    \mfrac{2^{\nu/2}}{\sqrt{\pi}} \mmax\{1, 2^{\nu-1}\} \big(
    \sigcond^\nu + 2^\nu \varepsilon_{K;2}^\nu \big)
    \;\overset{(b)}{\leq}\;
    7 (
    \sigcond^\nu + 8 \varepsilon_{K;2}^\nu )
    \;.
\end{align*}
In $(a)$, we have noted that given $a,b > 0$, for $\nu/2 \in (0,1]$, $(a+b)^{\nu/2} \leq a^{\nu/2} + b^{\nu/2}$ and for $\nu/2 > 1$, the bound follows from Jensen's inequality. In $(b)$, we have noted that $\nu \leq 3$. This finishes the proof for the third bound.

\vspace{.5em} 

To prove the fourth bound, we can first condition on $\bZ_2$:
\begin{align*}
     \mean [| \bZ_1^\top \Lambda^K \bZ_2  |^\nu ]
     \;=\;
     \mean\big[ \,\mean [| \bZ_1^\top \Lambda^K \bZ_2  |^\nu |\, \bZ_2 ] \, \big]\;.
\end{align*}
The inner expectation is again the $\nu$-th absolute moment of a conditionally Gaussian random variable with variance $\bZ_2^\top  \Lambda^K  \Sigma^K  \Lambda^K  \bZ_2$, so again by the formula of the $\nu$-th absolute moment of a normal distribution, we get that
\begin{align*}
    \mean [| \bZ_1^\top \Lambda^K  \bZ_2  |^\nu ]
    \;\leq\;
    \mfrac{2^{\nu/2}}{\sqrt{\pi}} 
    \,\mean \Big[
    \big( \bZ_2^\top  \Lambda^K \Sigma^K  \Lambda^K \bZ_2 \big)^{\nu/2}
     \Big]
     \;\leq\;
    \mfrac{2^{\nu/2}}{\sqrt{\pi}} 
    \,\mean \Big[
    \big( \bZ_2^\top  \Lambda^K \Sigma^K  \Lambda^K  \bZ_2 \big)^2
     \Big]^{\nu / 4}
     \;.
\end{align*}
We have noted that $\nu \leq 3$ and used a H\"older's inequality. The remaining expectation is taken over a quadratic form of normal variables. Writing $\Sigma_* = (\Sigma^K)^{1/2} \Lambda^K (\Sigma^K)^{1/2}$ for short, the second moment can be computed by the formula from Lemma \ref{lem:quadratic:gaussian:moments} as
\begin{align*}
    \mean \Big[
    \big( \bZ_2^\top  \Lambda^K \Sigma^K  \Lambda^K \bZ_2 \big)^2
     \Big]
     \;=&\;
     \Tr(\Sigma_*^2)^2
     +
     2
     \Tr\big( \Sigma_*^4 \big)
     \;\overset{(a)}{\leq}\;
     3  \Tr(\Sigma_*^2)^2
     \;=\;
     3  \Tr\big( ( \Lambda^K \Sigma^K )^2 \big)^2
     \;.
\end{align*}
Note that in $(a)$, we have used the fact that the square of a symmetric matrix, $\Sigma_*^2$, has non-negative eigenvalues, and therefore $\Tr(\Sigma_*^4) \leq \Tr(\Sigma_*^2)^2$. Since we have already bounded $ \Tr( ( \Lambda^K \Sigma^K )^2 )$ earlier, substituting the above result into the previous bound, we get that
\begin{align*}
    \mean [| \bZ_1^\top \Lambda^K \bZ_2  |^\nu ]
    \;\leq&\;
    \mfrac{2^{\nu/2}}{\sqrt{\pi}} 
    \,\mean \Big[
    \big( \bZ_2^\top  \Lambda^K \Sigma^K  \Lambda^K  \bZ_2 \big)^2
     \Big]^{\nu / 4}
    \leq
    \mfrac{2^{\nu/2} 3^{\nu/4}}{\sqrt{\pi}} 
    \, \Tr\big( ( \Lambda^K \Sigma^K )^2 \big)^{\nu/2}
    \\
    \;\leq&\;
    \mfrac{2^{\nu/2} 3^{\nu/4}}{\sqrt{\pi}} (
    \sigfull^2
    +\varepsilon_{K;2}^2
    )^{\nu/2}
    \\
    \;\leq&\;
     \mfrac{2^{\nu/2} 3^{\nu/4}}{\sqrt{\pi}} 
     \mmax\{1, 2^{\nu/2-1}\} 
     \big( 
    \sigfull^\nu
    + 
     \varepsilon_{K;2}^\nu
     \big)
     \;\leq\;
     6
     \big( 
    \sigfull^\nu
    + 
     \varepsilon_{K;2}^\nu
     \big)
     \;.
\end{align*}
In the last two inequalities, we have used the same argument as in the proof for the third bound to expand the term with $\nu$-th power. This gives the desired bound.

\vspace{.5em}

To prove the final bound, we first condition on $\bX_1$:
\begin{align*}
    \mean \big[ \big| (\phi^K(\bX_1) - \mu^K)^\top \Lambda^K\bZ_1  \big|^\nu \big]
    \;=\;
    \mean \big[ \mean \big[ \big| (\phi^K(\bX_1) - \mu^K)^\top \Lambda^K\bZ_1  \big|^\nu \big| \bX_1 \big] \big]\;.
\end{align*}
The inner expectation is the $\nu$-th absolute moment of a conditionally Gaussian random variable with variance $(\phi^K(\bX_1) - \mu^K)^\top \Lambda^K  \Sigma^K  \Lambda^K (\phi^K(\bX_1) - \mu^K)$, so by the formula of the $\nu$-th absolute moment of a normal distribution with $\nu \leq 3$, we get that
\begin{align*}
    &\;\mean [| (\phi^K(\bX_1) - \mu^K)^\top \Lambda^K  \bZ_2  |^\nu ]
    \\
    \;\leq&\;
    \mfrac{2^{\nu/2}}{\sqrt{\pi}} 
    \,\mean \Big[
    \big( (\phi^K(\bX_1) - \mu^K)^\top \Lambda^K  \Sigma^K  \Lambda^K (\phi^K(\bX_1) - \mu^K) \big)^{\nu/2}
     \Big]
     \\
     \;=&\;
     \mfrac{2^{\nu/2}}{\sqrt{\pi}} 
    \,
    \mean \Big[
    \big( (\phi^K(\bX_1) - \mu^K)^\top \Lambda^K \mean\big[ (\phi^K(\bX_2) - \mu^K) (\phi^K(\bX_2) - \mu^K)^\top \big] \Lambda^K (\phi^K(\bX_1) - \mu^K)  \big)^{\nu/2}
     \Big]
     \\
     \;\overset{(a)}{\leq}&\;
     \mfrac{2^{\nu/2}}{\sqrt{\pi}} 
    \,
    \mean \Big[
    \big| (\phi^K(\bX_1) - \mu^K)^\top \Lambda^K (\phi^K(\bX_2) - \mu^K) \big|^{\nu}
     \Big]
     \\
     \;=&\;
     \mfrac{2^{\nu/2}}{\sqrt{\pi}} 
    \,
    \mean \Big[
    \big| \msum_{k=1}^K \lambda_k \phi_k(\bX_1)\phi_k(\bX_2) \big|^{\nu}
     \Big]
     \\
     \;\overset{(b)}{\leq}&\; 
    8 C (\Mfullnu)^\nu    
    -
     (\Mcondnu)^\nu
    + 
    (8 C + 4) \varepsilon_{K;\nu}^\nu\;.
\end{align*}
In $(a)$, we have applied Jensen's inequality to the convex function $x \mapsto |x|^{\nu/2}$ to move the inner expectation outside the norm. In $(b)$, we have applied the bound in Lemma \ref{lem:factorisable:moment} and noted that $\frac{2^{\nu/2}}{\sqrt{\pi}} < 2$ for $\nu \in [1, 3]$. This gives the desired result.
\end{proof}

\ 

\begin{proof}[Proof of Lemma \ref{lem:WnK:moments}] For the first equality in distribution, we recall that $\{\tau_{k;d}\}_{k=1}^K$ are the eigenvalues of $(\Sigma^K)^{1/2} \Lambda^K (\Sigma^K)^{1/2}$ and $\{\xi_k\}_{k=1}^K$ are a sequence of i.i.d.~standard Gaussian variables. Let $\{\eta_{ik}\}_{i \in [n], k \in[K]}$ be a set of i.i.d.~standard Gaussian variables. Since Gaussianity is preserved under orthogonal transformation, we have
\begin{align*}
    &\;\mfrac{1}{n^{3/2}(n-1)^{1/2}} 
    \Big( \msum_{i,j=1}^n (\eta^K_i)^\top (\Sigma^K)^{1/2} \Lambda^K (\Sigma^K)^{1/2} \eta^K_j
    -
    n \Tr( \Sigma^K \Lambda^K )
    \Big)    
    \\
    \;\overset{d}{=}&\;
    \mfrac{1}{n^{3/2}(n-1)^{1/2}} 
    \Big(
    \msum_{k=1}^K \msum_{i,j=1}^n \tau_{k;d} \eta_{ik} \eta_{jk}
    -
    n \Tr( (\Sigma^K)^{1/2} \Lambda^K (\Sigma^K)^{1/2}  ) \Big)
    \\
    \;=&\;
    \mfrac{1}{n^{3/2}(n-1)^{1/2}} 
     \msum_{k=1}^K 
     \tau_{k;d}
     \Big(
    \big(\msum_{i=1}^n \eta_{ik}\big) \big( \msum_{j=1}^n \eta_{jk}\big)
    -
    n
    \Big)    
    \\
    \;\overset{d}{=}&\;
    \mfrac{1}{n^{1/2}(n-1)^{1/2}} 
    \msum_{k=1}^K 
     \tau_{k;d}
     (\xi_k^2-1)
     \;=\; W_n^K - D\;,
\end{align*}
which proves the desired statement. 

\vspace{1em}

We now use the expression above for moment computation. The expectation is given by $\mean[ W_n^K] = D$ for every $K \in \N$. The variance can be computed by noting that the quantity is a quadratic form in Gaussian, applying Lemma \ref{lem:quadratic:gaussian:moments} and using the cyclic property of trace:
\begin{align*}
    \Var[ W_n^K ] 
    \;=&\; 
    \mfrac{1}{n(n-1)}
    \Var\big[ (\eta^K_1)^\top (\Sigma^K)^{1/2} \Lambda^K (\Sigma^K)^{1/2} \eta^K_1 \big]
    \\
    \;=&\;
    \mfrac{2}{n(n-1)}
    \Tr\big( (\Lambda^K \Sigma^K )^2 \big)\;.
\end{align*}
By Lemma \ref{lem:factorisable:moment:gaussian}, we get the desired bound that
\begin{align*}
    \mfrac{2}{n(n-1)} (\sigfull - \varepsilon_{K;2})^2
    \;\leq\; 
    \Var[ W_n^K ] 
    \;\leq\; 
    \mfrac{2}{n(n-1)} (\sigfull + \varepsilon_{K;2})^2\;.
\end{align*}

\vspace{1em}

The third central moment can be expanded using a binomial expansion and noting that each summand is zero-mean:
\begin{align*}
    \mean\big[ (W_n^K - D)^3 \big] \;=&\; 
    \mfrac{1}{n^{3/2}(n-1)^{3/2}} 
    \mean\Big[ \Big( \msum_{k=1}^K \tau_{k;d} (\xi_k^2 - 1) \Big)^3 \Big]
    \\
    \;=&\;
    \mfrac{1}{n^{3/2}(n-1)^{3/2}} 
    \mean\big[ \msum_{k=1}^K \tau_{k;d}^3 (\xi_k^2 - 1)^3 \big]
    \\
    \;=&\;
    \mfrac{8}{n^{3/2}(n-1)^{3/2}} \msum_{k=1}^K \tau_{k;d}^3\;.
\end{align*}
Meanwhile, the sum can be further expressed as
\begin{align*}
    &\;\msum_{k=1}^K \tau_{k;d}^3
    \\
    \;=&\;
    \Tr\Big( \big( (\Sigma^K)^{1/2} \Lambda^K (\Sigma^K)^{1/2} \big)^3 \Big)
    \;=\;
    \Tr\Big( \big( \Sigma^K \Lambda^K \big)^3 \Big)
    \\
    \;=&\;
    \Tr\Big( \big( \mean\big[ \phi^K(\bX_1) (\phi^K(\bX_1))^\top \big] \Lambda^K \big)^3 \Big)
    \\
    \;=&\;
    \mean \Big[  (\phi^K(\bX_1))^\top \Lambda^K \phi^K(\bX_2) \, (\phi^K(\bX_2))^\top \Lambda^K \phi^K(\bX_3) \, (\phi^K(\bX_3))^\top \Lambda^K \phi^K(\bX_1)  \Big]
    \\
    \;=&\; \mean\Big[ 
    \big(\msum_{k=1}^K \lambda_k \phi_k(\bX_1)\phi_k(\bX_2)\big)
    \big(\msum_{k=1}^K \lambda_k \phi_k(\bX_2)\phi_k(\bX_3)\big)
    \big(\msum_{k=1}^K \lambda_k \phi_k(\bX_3)\phi_k(\bX_1)\big)
    \Big]
    \\
    \;\eqqcolon&\; \mean[ S_{12} S_{23} S_{31} ]\;.
\end{align*}
We now approximate each $S_{ij}$ term by $u(\bX_i,\bX_j)$. For convenience, denote $U_{ij} = u(\bX_i, \bX_j)$ and $\Delta_{ij} = S_{ij} - U_{ij}$. Then
\begin{align*}
    \msum_{k=1}^K \tau_{k;d}^3
    \;=&\;
    \mean\big[ ( U_{12} + \Delta_{12} ) ( U_{23} + \Delta_{23} ) ( U_{31} + \Delta_{31} ) \big]
    \\
    \;=&\;
    \mean[ U_{12} U_{23} U_{31}] + \mean[U_{12}U_{23}\Delta_{31}] +  \mean[ U_{12} \Delta_{23} U_{31}] + \mean[ U_{12} \Delta_{23} \Delta_{31}]
    \\
    &\;
    +
    \mean[ \Delta_{12} U_{23} U_{31}] + \mean[\Delta_{12}U_{23}\Delta_{31}] +  \mean[ \Delta_{12} \Delta_{23} U_{31}] + \mean[ \Delta_{12} \Delta_{23} \Delta_{31}]\;.
\end{align*}
Recall that $\varepsilon_{K;3} = \mean[ |\Delta_{ij}|^3 ]^{1/3}$ for $i \neq j$ by definition. Then by a triangle inequality followed by a H\"older's inequality, we get that
\begin{align*}
    &\;\Big| \msum_{k=1}^K \tau_{k;d}^3 
    -  \mean[ u(\bX_1,\bX_2) u(\bX_2,\bX_3) u(\bX_3,\bX_1) ]
    \Big|
    \\
    \;\leq&\;
    \big| \mean[U_{12}U_{23}\Delta_{31}] \big| +  \big| \mean[ U_{12} \Delta_{23} U_{31}] \big| + \big| \mean[ U_{12} \Delta_{23} \Delta_{31}] \big|
    \\
    &\;
    +
   \big| \mean[ \Delta_{12} U_{23} U_{31}] \big|
   + 
   \big|\mean[\Delta_{12}U_{23}\Delta_{31}]\big| 
   + 
   \big|\mean[ \Delta_{12} \Delta_{23} U_{31}] \big|
   + 
   \big| \mean[ \Delta_{12} \Delta_{23} \Delta_{31}] \big|
   \\
   \;\leq&\;
   3 \mean[ |u(\bX_1,\bX_2)|^3 ]^{2/3} \varepsilon_{K;3}
   +
   3 \mean[ |u(\bX_1,\bX_2)|^3 ]^{1/3} \varepsilon_{K;3}^2
   +
   \varepsilon_{K;3}^3
   \\
   \;=&\;
   3 \Mfullthree^2 \varepsilon_{K;3}
   +
   3 \Mfullthree \varepsilon_{K;3}^2
   +
   \varepsilon_{K;3}^3
   \;.
\end{align*}
This implies that
\begin{align*}
    \msum_{k=1}^K \tau_{k;d}^3 
    \;\leq&\; \mean[u(\bX_1,\bX_2)u(\bX_2,\bX_3)u(\bX_3,\bX_1)] - \Mfullthree^3 + ( \Mfullthree + \varepsilon_{K;3})^3\;,
    \\
    \msum_{k=1}^K \tau_{k;d}^3 
    \;\geq&\;
    \mean[u(\bX_1,\bX_2)u(\bX_2,\bX_3)u(\bX_3,\bX_1)] + \Mfullthree^3 - ( \Mfullthree + \varepsilon_{K;3})^3 \;,
\end{align*}
which gives the desired bounds:
\begin{align*}
    \mean\big[ (W_n^K - D)^3 \big] \;\leq&\; 
    \mfrac{8 \big( \mean[u(\bX_1,\bX_2)u(\bX_2,\bX_3)u(\bX_3,\bX_1)] - \Mfullthree^3 + ( \Mfullthree + \varepsilon_{K;3})^3 \big)}{n^{3/2}(n-1)^{3/2}} \;,
    \\
    \mean\big[ (W_n^K - D)^3 \big] \;\geq&\; 
    \mfrac{8 \big( \mean[u(\bX_1,\bX_2)u(\bX_2,\bX_3)u(\bX_3,\bX_1)] + \Mfullthree^3 - ( \Mfullthree + \varepsilon_{K;3})^3 \big)}{n^{3/2}(n-1)^{3/2}} \;.
\end{align*}

\vspace{1em}

The fourth central moment can again be expanded using a binomial expansion and noting that each summand is zero-mean:
\begin{align*}
    &\;\mean\big[ (W_n^K - D)^4 \big] 
    \\
    \;=&\; 
    \mfrac{1}{n^2(n-1)^2} 
    \mean\Big[ \Big( \msum_{k=1}^K \tau_{k;d} (\xi_k^2 - 1) \Big)^4 \Big]
    \\
    \;=&\;
    \mfrac{1}{n^2(n-1)^2} 
    \Big(
    \mean\Big[ \msum_{k=1}^K \tau_{k;d}^4 (\xi_k^2 - 1)^4 \Big]
    +
    3
    \mean\Big[ \msum_{1 \leq k \neq k' \leq K} \tau_{k;d}^2 (\xi_k^2 - 1)^2 \tau_{k';d}^2 (\xi_{k'}^2 - 1)^2 \Big]
    \Big)
    \\
    \;=&\;
    \mfrac{1}{n^2(n-1)^2} 
    \Big(
    60 \msum_{k=1}^K \tau_{k;d}^4 
    +
    12 \msum_{1 \leq k \neq k' \leq K} \tau_{k;d}^2 \tau_{k';d}^2
    \Big)
    \\
    \;=&\;
    \mfrac{1}{n^2(n-1)^2} 
    \Big(
    48 \msum_{k=1}^K \tau_{k;d}^4 
    +
    12 \msum_{1 \leq k, k' \leq K} \tau_{k;d}^2 \tau_{k';d}^2
    \Big)
    \\
    \;=&\;
    \mfrac{12}{n^2(n-1)^2} 
    \Big(
    4 \msum_{k=1}^K \tau_{k;d}^4 
    +
    \big(\msum_{k=1}^K \tau_{k;d}^2\big)^2
    \Big)
    \;.
\end{align*}
Since we have already controlled $\sum_{k=1}^K \tau_{k;d}^2 = \Tr\big( (\Sigma^K \Lambda^K)^2 \big)$, we focus on bounding the first sum. Using notations from the third moment, we can express the sum as
\begin{align*}
    \msum_{k=1}^K \tau_{k;d}^4 
    \;=&\; \mean\Big[ 
    \big(\msum_{k=1}^K \lambda_k \phi_k(\bX_1)\phi_k(\bX_2)\big)
    \big(\msum_{k=1}^K \lambda_k \phi_k(\bX_2)\phi_k(\bX_3)\big)
    \\
    &\;\quad\;
    \big(\msum_{k=1}^K \lambda_k \phi_k(\bX_3)\phi_k(\bX_4)\big)
    \big(\msum_{k=1}^K \lambda_k \phi_k(\bX_4)\phi_k(\bX_1)\big)
    \Big]
    \\
    \;=&\; \mean[ S_{12} S_{23} S_{34} S_{41} ]
    \\
    \;=&\; \mean\big[ (U_{12} + \Delta_{12}) (U_{23} + \Delta_{23})  (U_{34} + \Delta_{34})   (U_{41} + \Delta_{41}) \big]
    \;.
\end{align*}
A similar argument as before shows that
\begin{align*}
    \Big| 
    \msum_{k=1}^K \tau_{k;d}^4 
    -  
    \mean[ u(\bX_1,\bX_2) u(\bX_2,\bX_3) & u(\bX_3,\bX_4)  u(\bX_4,\bX_1) ]
    \Big|
    \\
    \;\leq&\;
    4 \Mfullfour^3 \varepsilon_{K;4}
    +
    6 \Mfullfour^2 \varepsilon_{K;4}^2
    +
    4 \Mfullfour \varepsilon_{K;4}^3
    +
    \varepsilon_{K;4}^4
    \;.
\end{align*}
This implies that
\begin{align*}
    \msum_{k=1}^K \tau_{k;d}^4
    \;\leq&\; 
    \mean[ u(\bX_1,\bX_2) u(\bX_2,\bX_3) u(\bX_3,\bX_4)  u(\bX_4,\bX_1) ]
    -
    \Mfullfour^4
    + 
    (\Mfullfour + \varepsilon_{K;4})^4
    \;,
    \\
    \msum_{k=1}^K \tau_{k;d}^4 
    \;\geq&\; 
    \mean[ u(\bX_1,\bX_2) u(\bX_2,\bX_3) u(\bX_3,\bX_4)  u(\bX_4,\bX_1) ]
    +
    \Mfullfour^4
    - 
    (\Mfullfour + \varepsilon_{K;4})^4
    \;.
\end{align*}
On the other hand, by Lemma \ref{lem:factorisable:moment:gaussian}, we have 
\begin{align*}
    (\sigfull - \varepsilon_{K;2})^2
    \;\leq\;
    \msum_{k=1}^K \tau_{k;d}^2
    \;=\;
    \Tr( (\Lambda^K \Sigma^K)^2 ) 
    \;\leq\; (\sigfull + \varepsilon_{K;2})^2\;.
\end{align*}
Combining the results give the desired bounds:
\begin{align*}
    \mean\big[ (W_n^K - D)^4 \big] 
    \;\leq&\;
    \mfrac{12}{n^2(n-1)^2} 
    \Big(
    4 \,\mean[ u(\bX_1,\bX_2) u(\bX_2,\bX_3) u(\bX_3,\bX_4)  u(\bX_4,\bX_1) ]
    \\
    &\;\qquad\qquad\quad
    -
    4 \Mfullfour^4
    + 
    4 (\Mfullfour + \varepsilon_{K;4})^4
    +
    (\sigfull + \varepsilon_{K;2})^4
    \Big)
    \;,
    \\
    \mean\big[ (W_n^K - D)^4 \big] 
    \;\geq&\;
    \mfrac{12}{n^2(n-1)^2} 
    \Big(
    4 \,\mean[ u(\bX_1,\bX_2) u(\bX_2,\bX_3) u(\bX_3,\bX_4)  u(\bX_4,\bX_1) ]
    \\
    &\;\qquad\qquad\quad
    +
    4 \Mfullfour^4
    - 
    4 (\Mfullfour + \varepsilon_{K;4})^4
    +
    (\sigfull - \varepsilon_{K;2})^4
    \Big)
    \;.
\end{align*}

\vspace{1em}

For the generic moment bound, we first use a Jensen's inequality to get that
\begin{align*}
    \mean \big[ (W_n^K)^{2m} \big] 
    \;=&\;
    \mean\Big[ \Big(
    \mfrac{1}{n^{1/2}(n-1)^{1/2}} 
    \msum_{k=1}^K 
     \tau_{k;d}
     (\xi_k^2-1) + D \Big)^{2m} \Big]
     \\
     \;\leq&\;
     \mfrac{2^{2m-1}}{n^{m}(n-1)^{m} }
     \mean\Big[\Big(\msum_{k=1}^K 
     \tau_{k;d}
     (\xi_k^2-1)\Big)^{2m} \Big] 
     +
     2^{2m-1} \, D^{2m}
     \;. 
\end{align*}
Denote the set of all possible orderings of a length-$2m$ sequence consisting of elements from $[K]$ by $\cP(K,2m)$ and denote its elements by $p$. Consider the subset
\begin{align*}
    \cP'(K,2m) \;\coloneqq\; \{ p \in \cP(K,2m)\,:\,\text{ every element in } p \text{ appears at least twice } \}\;.
\end{align*}
By noting that $\xi_k-1$ is zero-mean and $\{\xi_k\}_{k=1}^K$ are independent, we can re-express the sum first as a sum over $\cP(K,2m)$ and then as a sum over $\cP'(K,2m)$:
\begin{align*}
     \mean\Big[\Big(\msum_{k=1}^K 
     \tau_{k;d}
     (\xi_k^2-1)\Big)^{2m} \Big] 
     \;=&\;
     \msum_{ p \in \cP(K,2m)} \big(\mprod_{k \in p} \tau_{k;d} \big) \mean\big[ \mprod_{k \in p} (\xi_k^2-1) \big]
     \\
     \;=&\;
     \msum_{ p \in \cP'(K,2m)} \big(\mprod_{k \in p} \tau_{k;d} \big) \mean\big[ \mprod_{k \in p} (\xi_k^2-1) \big]
     \\
     &\;+
     \msum_{ p \in \big( \cP(K,2m) \setminus \cP'(K,2m) \big)} \big(\mprod_{k \in p} \tau_{k;d} \big) \mean\big[ \mprod_{k \in p} (\xi_k^2-1) \big]
     \\
     \;=&\;
     \msum_{ p \in \cP'(K,2m)} \big(\mprod_{k \in p} \tau_{k;d} \big) \mean\big[ \mprod_{k \in p} (\xi_k^2-1) \big]\;.
\end{align*}
Write $C'_{m}$ as the $2m$-th central moment of a chi-squared random variable with degree $1$, which depends only on $m$ and not on $K$ or $\tau_{k;d}$. By a H\"older's inequality and the bound from Lemma \ref{lem:factorisable:moment:gaussian}, we get that
\begin{align*}
    \mean\Big[\Big(\msum_{k=1}^K 
     \tau_{k;d}
     (\xi_k^2-1)\Big)^{2m} \Big] 
     \;\leq&\;
     C'_m \, \msum_{ p \in \cP'(K,2m)} \big(\mprod_{k \in p} \tau_{k;d} \big)
     \\
     \;\leq&\;
     C'_m \, \mbinom{2m}{m} \big(\msum_{k=1}^K \tau_{k;d}^2\big)^{m}
     \\
     \;=&\;
     C'_m \, \mbinom{2m}{m} \, \Tr\big( (\Lambda^K \Sigma^K)^2 \big)^m
     \;\leq\;
      C'_m \, \mbinom{2m}{m} \, (\sigfull + \varepsilon_{K;2})^{2m}
     \;.
\end{align*}
Writing $C_m \coloneqq 2^{2m-1} \max\{1,  C'_m \, \binom{2m}{m} \}$, we get the desired bound that
\begin{align*}
    \mean \big[ (W_n^K)^{2m} \big] 
     \;\leq&\;
     \mfrac{C_m}{n^{m}(n-1)^{m} }
     (\sigfull + \varepsilon_{K;2})^{2m}
     +
     C_m \, D^{2m}\;.
\end{align*}

\vspace{1em}

Finally, if \cref{assumption:L_nu} is true for some $\nu \geq 2$, we have $\varepsilon_{K;2} \rightarrow 0$ as $K$ grows. Taking $K \rightarrow \infty$ in the bound for second moment gives
\begin{align*}
    \lim_{K \rightarrow \infty} \Var[ W_n^K ] 
    \;=\; 
    \mfrac{2}{n(n-1)} \sigfull^2 \;.
\end{align*}
If \cref{assumption:L_nu} holds for $\nu \geq 3$, similarly we have
\begin{align*}
    \lim_{K \rightarrow \infty} 
    \mean\big[ (W_n^K - D)^3 \big] \;=&\; 
    \mfrac{8 \mean[u(\bX_1,\bX_2)u(\bX_2,\bX_3)u(\bX_3,\bX_1)]}{n^{3/2}(n-1)^{3/2}} \;.
\end{align*}
If \cref{assumption:L_nu} holds for $\nu \geq 4$, we have
\begin{align*}
    \lim_{K \rightarrow \infty} 
    \mean\big[ (W_n^K - D)^4 \big] \;=&\; 
    \mfrac{12 (4 \mean[u(\bX_1,\bX_2)u(\bX_2,\bX_3)u(\bX_3,\bX_4)u(\bX_4,\bX_1)] + \sigfull^4)}{n^2(n-1)^2} \;.
\end{align*}
\end{proof}

\subsection{Proofs for \cref{appendix:distribution}}

\begin{proof}[Proof of Lemma \ref{lem:smooth:approx:ind}] Write $\delta' := \delta/(m+1)$ for convenience. Define the $m$-times differentiable function
\begin{align*}
    h_{m;\tau;\delta}(x) 
    \;\coloneqq\; 
    (\delta')^{-(m+1)} 
    \mint_{x}^{x + \delta'} \mint_{y_1}^{y_1 + \delta'} \ldots \mint_{y_{m-1}}^{y_{m-1} + \delta'} \mint_{y_m}^{y_m + \delta'} 
    \ind_{\{ y > \tau \}} \; dy \, dy_m \ldots dy_1
    \;.
\end{align*}
In the case $m=0$, the function is $h_{0;\tau;\delta}(x) \;\coloneqq\; \delta^{-1} \int_{x}^{x + \delta} \ind_{\{ y > \tau \}} \, dy$. By construction, $h_{m;\tau;\delta}(x) = 0$ for $x \leq \tau-\delta$, $h_{m;\tau;\delta}(x) \in [0,1]$ for $x \in (\tau-\delta, \tau]$ and $h_{m;\tau;\delta}(x) = 1$ for $x > \tau$. This implies $\ind_{\{x > \tau\}} \leq h_{m;\tau;\delta}(x) \leq \ind_{\{x > \tau-\delta\}}$ and therefore the desired inequality
$$
    h_{m;\tau+\delta;\delta}(x) \;\leq\; \ind_{\{x > \tau\}} \;\leq\; h_{m;\tau;\delta}(x)\;.
$$
Next, we prove the properties of the derivatives of $h_{m;\tau;\delta}$. Denote recursively
\begin{align*}
    &
    J_{m+1}(x) \;:=\; \mint_x^{x + \delta'}\ind_{\{ y > \tau \}} dy\;,
    &&
    J_r(x) \;:=\; \mint_x^{x + \delta'} J_{r+1}(y) \, dy\;
    \;\text{ for }\; 0 \leq r \leq m\;.
\end{align*}
Since $h_{m;\tau;\delta}(x) = (\delta')^{-(m+1)} J_0(x)$ and $\frac{\partial}{\partial x} J_i(x) = J_{i+1}(x+\delta') - J_{i+1}(x)$ for $0 \leq i \leq m$, by induction, we have that for $0 \leq r \leq m$,
\begin{align*}
    h^{(r)}_{m;\tau;\delta}(x) 
    \;=&\;
    (\delta')^{-(m+1)} \mfrac{\partial^{r}}{\partial x^r} J_0(x) 
    \;=\;
    (\delta')^{-(m+1)} \msum_{i=0}^r \mbinom{r}{i} (-1)^i \, J_{r+1}\big(x+ (r-i) \delta' \big) \;. \tagaligneq \label{eqn:h:m:sum}
\end{align*}
Note that $J_{m+1}$ is continuous, uniformly bounded above by $\delta'$, and satisfies that $J_{m+1}(x)=0$ for $x$ outside $[\tau-\delta', \tau]$. By induction, we get that for $0 \leq r \leq m$, $J_{r+1}$ is continuous, bounded above by $(\delta')^{m+1-r}$ and satisfies that $J_{r+1}(x)=0$ for $x$ outside $[\tau- (m+1-r) \delta', \tau]$. This shows that $h^{(r)}_{m;\tau;\delta}$ is continuous and $h^{(r)}_{m;\tau;\delta}(x)=0$ for $x$ outside $[\tau-\delta, \tau]$, and the uniform bound
\begin{align*}
    \big| h^{(r)}_{m;\tau;\delta}(x)  \big| 
    \;\leq\;
    (\delta')^{-r} \msum_{i=0}^r \mbinom{r}{i} \;=\; \big( \mfrac{2}{m+1} \big)^r \delta^{-r} \;\leq\; \delta^{-r} \;.
\end{align*}
Finally to prove the H\"older property of $h^{(m)}_{m;\tau;\delta}(x)$, we first note that $J_{m+1}$ is constant outside $x \in [\tau-\delta', \tau]$ and linear within the interval with Lipschitz constant $1$. The formula in \eqref{eqn:h:m:sum} suggests that $h^{(m)}_{m;\tau;\delta}(x)$ is piecewise linear and the Lipschitz constant in the interval $[\tau - (m-i+1)\delta', \tau - (m-i)\delta']$ is given by the Lipschitz constant of the $i$-th summand. Therefore, $h^{(m)}_{m;\tau;\delta}$ is also Lipschitz with Lipschitz constant 
\begin{align*}
     L_m \;\coloneqq\;(\delta')^{-(m+1)} \mmax_{0 \leq i \leq m} \mbinom{m}{i}  \;=\; (\delta')^{-(m+1)}\mbinom{m}{\lfloor m/2 \rfloor}\;.
\end{align*}
For $x, y \in [\tau-\delta, \tau]$, we then have
\begin{align*}
    | h^{(m)}_{m;\tau;\delta}(x) - h^{(m)}_{m;\tau;\delta}(y) | 
    \;\leq\; 
    L_m | x - y | 
    \;=&\; 
    L_m \delta \, \big| \mfrac{x-y}{\delta} \big| 
    \\ 
    \;\leq&\; 
    L_m \delta \, \big| \mfrac{x-y}{\delta} \big|^{\epsilon}
    \;=\;
    L_m \delta^{1-\epsilon} | x - y |^{\epsilon} 
    \;, \tagaligneq \label{eqn:h:m:lipschitz}
\end{align*}
where we have noted that $\big|\frac{x-y}{\delta}\big| \leq 1$ and $\epsilon \in [0,1]$. \eqref{eqn:h:m:lipschitz} is trivially true for $x, y$ both outside $[\tau-\delta, \tau]$ since $h^{(m)}_{m;\tau;\delta}$ evaluates to zero. Now consider $x \in [\tau-\delta, \tau]$ and $y < \tau-\delta$. We have that
\begin{align*}
    | h^{(m)}_{m;\tau;\delta}(x) - h^{(m)}_{m;\tau;\delta}(y) |
    \;=\;
    | h^{(m)}_{m;\tau;\delta}(x) - h^{(m)}_{m;\tau;\delta}(\tau-\delta) |
    \;\overset{\eqref{eqn:h:m:lipschitz}}{\leq}&\; 
    L_m \delta^{1-\epsilon} (x - \tau + \delta)^{\epsilon}
    \\
    \;\leq&\;
    L_m \delta^{1-\epsilon} |x - y|^{\epsilon}\;.
\end{align*}
Similarly for $x \in [\tau-\delta, \tau]$ and $y > \tau$, we have that
\begin{align*}
    | h^{(m)}_{m;\tau;\delta}(x) - h^{(m)}_{m;\tau;\delta}(y) |
    \;=\;
    | h^{(m)}_{m;\tau;\delta}(x) - h^{(m)}_{m;\tau;\delta}(\tau) |
    \;\overset{\eqref{eqn:h:m:lipschitz}}{\leq}&\; 
    L_m \delta^{1-\epsilon} (\tau - x)^{\epsilon}
    \;\leq\;
    L_m \delta^{1-\epsilon} |x - y|^{\epsilon}\;.
\end{align*}
Therefore \eqref{eqn:h:m:lipschitz} holds for all $x,y$. The proof for the derivative bound is complete by computing the constant explicitly as
\begin{align*}
    L_m \delta^{1-\epsilon} 
    \;=\;  
    (\delta')^{-(m+\epsilon)}\mbinom{m}{\lfloor m/2 \rfloor}
    \;=\;
    \delta^{-(m+\epsilon)} \mbinom{m}{\lfloor m/2 \rfloor}  (m+1)^{m+\epsilon}\;,
\end{align*}
and therefore
\begin{align*}
    | h^{(m)}_{m;\tau;\delta}(x) - h^{(m)}_{m;\tau;\delta}(y) |\;\leq\; C_{m,\epsilon} \, \delta^{-(m+\epsilon)}  \, |x - y|^{\epsilon}\;, \tagaligneq \label{eqn:h:m:lipschitz:final}
\end{align*}
with respect to the constant $C_{m,\epsilon} = \binom{m}{\lfloor m/2 \rfloor} (m+1)^{m+\epsilon}$.
\end{proof}

\ 

\begin{proof}[Proof of Lemma \ref{lem:approx:XplusY:by:Y}] By conditioning on the size of $Y$, we have that for any $a, b \in \R$ and $\epsilon > 0$,
\begin{align*}
    \P( a \leq X+Y \leq b) 
    \;=&\; 
    \P( a \leq X+Y \leq b \,,\, |Y| \leq \epsilon ) + \P(  a \leq X \leq b \,,\, |Y| \geq \epsilon ) 
    \\
    \;\leq&\;
    \P( a - \epsilon \leq X \leq b+\epsilon ) + \P(  |Y| \geq \epsilon )\;,
\end{align*}
and by using the order of inclusion of events, we have the lower bound
\begin{align*}
    \P( a \leq X+Y \leq b) 
    \;\geq&\;
    \P( a + \epsilon \leq X \leq b-\epsilon \,,\,|Y| \leq \epsilon  )
    \\
    \;=&\;
    \P( a + \epsilon \leq X \leq b-\epsilon ) - \P(  |Y| \geq \epsilon )\;.
\end{align*}

\end{proof}

\subsection{Proof for \cref{appendix:weak:mercer}}

\begin{proof}[Proof of Lemma \ref{lem:mercer}] By Lemma 2.3 of \cite{steinwart2012mercer}, the assumption that $\kappa^*$ is measurable and $\mean[ \kappa^*(\bV_1,\bV_1) ] < \infty$ implies the RKHS $\cH$ associated with $\kappa^*$ is compactly embedded into $L_2(\R^d, R)$. By Lemma 2.12 and Corollary 3.2 of \cite{steinwart2012mercer}, for some index set $\cI \subseteq \N$, there exists a sequence of non-negative, bounded values $\{\lambda_k\}_{k \in \cI}$ that converges to $0$ and a sequence of functions $\{\phi_k\}_{k \in \cI}$ that form an orthonormal basis of $L_2(\R^d,R)$ such that 
\begin{align*}
    \msum_{k \in \cI} \lambda_k \psi_k(\bV_1) \psi_k(\bV_2) \;=\; \kappa^*(\bV_1,\bV_2)\;,
\end{align*}
where the equality holds almost surely when $\cI$ is finite and the convergence holds almost surely when $\cI$ is infinite. We can extend $\cI$ to $\N$ by adding zero values of $\lambda_k$ and $\phi_k$ whenever necessary and drop the requirement that $\{\phi_k\}_{k=1}^\infty$ forms a basis, which gives the desired statement.
\end{proof}

\end{document}